\setlist[enumerate]{itemsep=2pt,topsep=5pt,parsep=2pt,partopsep=2pt}
\newcommand{\cosimp}[3]{\xymatrix@1{#1 \ar@<.4ex>[r] \ar@<-.4ex>[r] & {\ }#2 \ar@<0.8ex>[r] \ar[r] \ar@<-.8ex>[r] & {\ } #3 \ar@<1.2ex>[r] \ar@<.4ex>[r] \ar@<-.4ex>[r] \ar@<-1.2ex>[r] & \cdots }}
\newcommand{\colim}{\mathop{\mathrm{colim}}}
\newcommand{\et}{\mathop{\mathrm{\acute{e}t}}}
\newcommand{\ssp}{\mathop{\mathrm{sp}}}
\newcommand{\Spa}{\mathop{\mathrm{Spa}}}
\newcommand{\Spec}{\mathop{\mathrm{Spec}}}
\newcommand{\Perv}{\mathop{\mathrm{Perv}}}
\newcommand{\Sh}{\mathop{\mathrm{Sh}}}
\newcommand{\inthom}{\mathop{R\mathscr{H}\mathrm{om}}}
\newcommand{\pervleqzero}{\mathop{\phantom{}^{\mathfrak{p}}\!D_{zc}^{\leq 0}}}
\newcommand{\pervgeqzero}{\mathop{\phantom{}^{\mathfrak{p}}\!D_{zc}^{\geq 0}}}
\newcommand{\pervcoh}{\mathop{\phantom{}^{\mathfrak{p}}\mathcal{H}}}
\newcommand{\adjunction}[4]{\xymatrix@1{#1{\ } \ar@<0.3ex>[r]^{ {\scriptstyle #2}} & {\ } #3 \ar@<0.3ex>[l]^{ {\scriptstyle #4}}}}
\begin{document}

\newtheorem{theorem}{Theorem}[section]
\newtheorem*{theorem*}{Theorem}
\newtheorem*{definition*}{Definition}
\newtheorem{proposition}[theorem]{Proposition}
\newtheorem{lemma}[theorem]{Lemma}
\newtheorem{corollary}[theorem]{Corollary}
\newtheorem{conjecture}[theorem]{Conjecture}

\theoremstyle{definition}
\newtheorem{definition}[theorem]{Definition}
\newtheorem{question}[theorem]{Question}
\newtheorem{remark}[theorem]{Remark}
\newtheorem{warning}[theorem]{Warning}
\newtheorem{example}[theorem]{Example}
\newtheorem{notation}[theorem]{Notation}
\newtheorem{convention}[theorem]{Convention}
\newtheorem{construction}[theorem]{Construction}
\newtheorem{claim}[theorem]{Claim}
\newtheorem{assumption}[theorem]{Assumption}

\title{The six functors for Zariski-constructible sheaves in rigid geometry}

\author{Bhargav Bhatt}
\address{Bhargav Bhatt\\Department of Mathematics, University of Michigan\\Ann Arbor, MI 48109, USA}
\email{bhattb@umich.edu}

\author{David Hansen} 
\address{David Hansen\\Max Planck Institute for Mathematics\\Vivatsgasse 7, Bonn 53111, Germany}
\email{dhansen@mpim-bonn.mpg.de}

\keywords{Rigid analytic spaces, \'etale cohomology, generic smoothness, six functors}
\subjclass{14G22, 14F20}

\begin{abstract} 

We prove a generic smoothness result in rigid analytic geometry over a characteristic zero nonarchimedean field. The proof relies on a novel notion of generic points in rigid analytic geometry which are well-adapted to ``spreading out'' arguments, in analogy with the use of generic points in scheme theory. As an application, we develop a six functor formalism for Zariski-constructible \'etale sheaves on characteristic zero rigid spaces. Among other things, this implies that characteristic zero rigid spaces support a well-behaved theory of perverse sheaves.
\end{abstract}

\maketitle

\setcounter{tocdepth}{2}
\tableofcontents

\section{Introduction}

In this paper, we prove a new generic smoothness result for morphisms of rigid analytic spaces (regarded as adic spaces, always), and apply it to set up a $6$ functor formalism for \'etale cohomology of rigid analytic spaces with coefficients in Zariski-constructible sheaves. Our first geometric result is the following (see also Remark~\ref{Ducrosrmk} for an alternative approach through \cite{Ducros}).

\begin{theorem}[Generic smoothness, Theorems \ref{genericflatness}, ~\ref{genericsmoothness} and \ref{genericsmoothnessproper}]
Fix a nonarchimedean field $K$, and let $f:X \to Y$ be a quasicompact map of rigid analytic spaces over $\Spa K$ with $Y$ reduced.

\begin{enumerate}\label{genericmaintheorem}
\item If $Y$ is geometrically reduced (which is automatic from reducedness if $K$ has characteristic $0$, \cite[Lemma 3.3.1]{ConradIrr}), there is a dense open subset $U \subset Y$ such that $f^{-1}(U) \to U$ is flat.
\item If $\mathrm{char}\,K=0$ and $X$ is smooth, there is a dense open subset $U \subset Y$ such that $f^{-1}(U) \to U$ is smooth. If moreover $f$ is proper, then the maximal such $U$ is Zariski-open. 
\end{enumerate}
\end{theorem}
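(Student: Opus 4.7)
The plan is to reduce both claims to a pointwise check at a finite collection of generic points of $Y$, and then spread out, using the novel generic-point machinery promised by the abstract. Since the statements are local on $Y$, we may pass to a quasicompact affinoid open and treat one irreducible component of $Y$ at a time; it then suffices to produce, for each component, a Zariski-open subset on which the desired property holds, since the union will be dense open in $Y$.

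For part (1), fix a generic point $\eta$ of an irreducible component. The geometrically reduced hypothesis forces the stalk at $\eta$ to be a field, so the fiber $X_\eta \to \eta$ is automatically flat. The spreading-out principle attached to these generic points then propagates flatness to a Zariski-open neighborhood of $\eta$ in $Y$, and taking the union over components yields the dense open on which $f$ is flat.

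For part (2), we apply part (1) first, so we may assume $f$ is already flat; smoothness then reduces to geometric regularity of fibers. At a generic point $\eta$, the fiber $X_\eta$ is a rigid space over the residue object at $\eta$, and the standard Jacobian-rank argument in characteristic zero (using that $X$ is smooth over $K$ and that char $0$ rules out inseparability) shows $X_\eta$ is geometrically regular on a dense open. Spreading this out produces a Zariski-open of $Y$ on which $f$ is smooth. When $f$ is further proper, let $Z \subset X$ denote the Zariski-closed non-smooth locus of $f$; properness forces $f(Z) \subset Y$ to be Zariski-closed, and $U := Y \setminus f(Z)$ is then visibly the maximal Zariski-open subset over which $f$ is smooth.

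The main obstacle is the spreading-out step. Classical generic points in adic geometry (maximal generizations, Shilov boundary points, and the like) have local algebra that is too coarse to detect infinitesimal properties such as flatness and smoothness, so the real work is constructing a notion of generic point in rigid geometry whose local ring is rich enough to propagate such properties from the stalk to an honest Zariski-open neighborhood of $\eta$ in $Y$; once such generic points and their spreading-out calculus are in hand, the three assertions fall out of the outline above.
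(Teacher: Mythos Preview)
Your high-level strategy matches the paper's: isolate a good class of ``generic points'' (the paper calls them \emph{weakly Shilov points}), establish the desired property fiberwise at such points, and then spread out. Your argument for the proper case is exactly the paper's: take the Zariski-closed non-smooth locus $Z \subset X$ and use Kiehl's finiteness to see that $f(Z)$ is Zariski-closed. You also correctly flag that the spreading-out calculus is the main obstacle.

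That said, the implementation you sketch for (1) and (2) diverges from the paper in ways that matter. For flatness, you argue ``the stalk at $\eta$ is a field, so $X_\eta \to \eta$ is flat, now spread out.'' The paper does not proceed this way: it passes to a formal model and runs \emph{algebraic} generic flatness on the special fiber $\Spec \tilde{A}$, then lifts the inverted element to produce a Laurent domain in $\Spa A$ over which $f$ is flat (Proposition~\ref{keyflatness}). The point is that the relevant ``localization'' at a Shilov point is a $t$-completed localization of $A^\circ$, not a residue field of $\mathcal{O}_Y$, and the flatness argument goes through the special fiber rather than through any stalk of $\mathcal{O}_Y$ being a field.

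For smoothness, you propose a Jacobian-rank argument showing the fiber $X_\eta$ is geometrically regular on a dense open. The paper instead proves that the \emph{entire} Shilov fiber is regular (Theorem~\ref{keystep}), via a fairly delicate computation with the analytic cotangent complex and Quillen's regularity criterion; this is the technical heart of the section, and a direct Jacobian argument would not obviously control what happens after the non-noetherian completed base change to $K_y^+$. One then combines this with generic flatness and the fibral criterion (flat plus smooth fiber implies smooth, Lemma~\ref{fibralsmoothnesswithflatness}) to conclude.

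Finally, a small but real correction: you repeatedly say the spreading-out yields a \emph{Zariski}-open of $Y$. The paper only produces a dense (analytic) open in (1) and (2); Zariski-openness is established only in the proper case, precisely via the $f(Z)$ argument you gave.
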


In classical algebraic geometry, results like this are easily proved by spreading out from generic points in $Y$. In non-archimedean geometry, at least from the point of view of topology, there are far too many generic points: all rank one points of $Y$, and in particular all classical rigid points, are generic in the sense of locally spectral spaces. Moreover, at most of these points, spreading out cannot work naively, due to the subtle mixture of completions and integral closures which arise when computing the stalks and residue fields of $\mathcal{O}_Y$ and $\mathcal{O}_Y^+$. Our main new observation in the proof of Theorem~\ref{genericmaintheorem} is that there is nevertheless a reasonable rigid analytic analog of generic points from algebraic geometry, given as follows.

\begin{definition}[Weakly Shilov points, \S \ref{ss:Shilov}]

Fix a nonarchimedean field $K$ with a pseudouniformizer $t \in K^\circ$ and residue field $k$. A rank one point $x$ in a rigid space $X/K$ is \emph{weakly Shilov} if any one of the following equivalent conditions is satisfied:

\begin{enumerate}
\item There is an open affinoid subset $\Spa(A,A^\circ) \subset X$ such that $x$ lies in the Shilov boundary of $\Spa(A,A^\circ)$.

\item There is an open affinoid subset $\Spa(A,A^\circ) \subset X$ containing $x$ such that the map $A^\circ \to K_x^+$ identifies\footnote{As $x$ is a rank $1$ point, the ring $K_x^+$ is a rank $1$ valuation ring, and identifies with the subring $\mathcal{O}_{K_x} \subset K_x$ of power bounded elements of the valued field $K_x$.} $K_x^+$ with a $t$-completed localization of $A^\circ$.

\item The transcendence degree of the secondary residue field $K_x^+ / \mathfrak{m}$ over $k$ equals the local dimension of $X$ at $x$.

\item (Applicable only if $X$ is quasiseparated and quasi-paracompact.) There exists a formal model $\mathfrak{X}$ of $X$ such that the specialization map $\ssp: |X| \to |\mathfrak{X}_k|$ carries $x$ to the generic point of an irreducible component of $\mathfrak{X}_k$.

\end{enumerate}
\end{definition}

\begin{example}
If $X = \Spa K\! \left\langle T \right\rangle$ is the closed unit disc, the weakly Shilov points are exactly the points of Type 2 in the usual nomenclature, i.e. the points defined by the Gauss norms on closed subdisks. 
\end{example}

Weakly Shilov points are closely related to divisorial valuations as considered in birational geometry. For our purposes, the utility of these points arises by combining the characterizations (1) and (2) above: the former implies such points are dense in $X$, while the latter (roughly) makes these points amenable to the same commutative algebra arguments as generic points in algebraic geometry. The proof of Theorem \ref{genericmaintheorem} proceeds by making this idea precise: indeed, our arguments show that the subsets $U$ in Theorem \ref{genericmaintheorem} can be chosen to contain all the weakly Shilov points of $Y$.

\begin{remark}[Obtaining Theorem~\ref{genericmaintheorem} from Ducros' work]
\label{Ducrosrmk}
Theorem~\ref{genericmaintheorem} can also be deduced from Ducros' \cite{Ducros} if one switches to Berkovich spaces; we indicate the argument for Theorem~\ref{genericmaintheorem} (2) in the proper case (which is the most essential one for this paper) in Remark~\ref{rmk:DucrosProof}. Note that the idea of using Abhyankar points in \cite{Ducros} seems functionally equivalent to our idea of weakly Shilov points. On the other hand, our differential approach to a key step (see Theorem~\ref{keystep}) differs from the function theoretic approach of the corresponding \cite[Theorem 6.3.7]{Ducros}. We were unaware of \cite{Ducros} when working on Theorem~\ref{genericmaintheorem}, and thank Brian Conrad for bringing \cite{Ducros} to our attention.
\end{remark}

Let us now turn to the application of this result to \'etale cohomology of rigid spaces. Recall that for any rigid space $X/K$, work of Huber \cite{Hub96} and Berkovich \cite{BerEt} shows that the derived category $D(X,\mathbf{Z}/n)$ of \'etale $\mathbf{Z}/n$-sheaves admits a reasonable 6-functor formalism (at least for $n$ invertible on $K$). However, unlike in the case of schemes, it is much more subtle to isolate a reasonable subcategory of ``constructible'' complexes which are stable under the 6 operations.\footnote{For instance, constructible sheaves in the sense of Huber's work \cite{Hub96}, while having many wonderful categorical properties, do not capture the same geometric intuition as the corresponding notion in algebraic or complex geometry. Indeed, even the skyscraper sheaf at a classical point, perhaps the simplest example of a proper pushforward, is not Huber-constructible. Relatedly, analytifications of algebraically constructible sheaves on algebraic varieties are almost never Huber-constructible.}  In \cite{H}, the second author proposed that the following notion should yield the desired theory.

\begin{definition}[Zariski-constructible sheaves, Definition~\ref{zcdefinition}]
Fix a rigid space $X/K$ and $n > 0$. An \'etale sheaf $\mathscr{F}$ of $\mathbf{Z}/n$-modules is called \emph{Zariski-constructible} if $X$ admits a locally finite stratification $X = \coprod_{i \in I } X_i$ into Zariski locally closed subsets $X_i$ such that $\mathscr{F}|X_i$ is locally constant with finite stalks for all $i$. Write $D^{(b)}_{zc}(X,\mathbf{Z}/n) \subset D(X_{\et},\mathbf{Z}/n)$ for the full subcategory of the derived category of $\mathbf{Z}/n$-module sheaves on $X_{\et}$ spanned by complexes that have Zariski-constructible cohomology sheaves and are locally bounded on $X$. 
\end{definition}

The paper \cite{H} only showed the stability of $D_{zc}(X,\mathbf{Z}/n)$ by the 6 operations in some very limited situations.  The techniques in the present paper yield this stability in satisfactory generality over characteristic zero fields. The resulting formalism, which can be regarded as the rigid analytic analog of the classical theory of analytically constructible sheaves on complex analytic spaces (see, e.g., \cite[\S 2]{VerdierHomology}), is summarized as follows:

\begin{theorem}[The $6$ functor formalism for Zariski-constructible sheaves]
\label{ZCIntro}
Let $K$ be a characteristic zero nonarchimedean field of residue characteristic $p \geq 0$. For an integer $n \geq 1$, the assignment $X \mapsto D^{(b)}_{zc}(X,\mathbf{Z}/n)$ enjoys the following properties:
\begin{enumerate}
\item {\em Pullback}: For any map $f:X \to Y$, the pullback $f^*$ preserves $D^{(b)}_{zc}$ (Proposition~\ref{ZCStability}). 
\item {\em Proper pushforward:} For a proper map $f:X \to Y$, the pushforward $Rf_*$ preserves $D^{(b)}_{zc}$ (Theorem~\ref{properdirectimage}).
\item {\em More pushforwards:} For a Zariski-compactifiable map $f:X \to Y$, the pushforwards $Rf_*$ and $Rf_!$ carry lisse objects in $D^{(b)}_{zc}$ into $D^{(b)}_{zc}$ (Corollary~\ref{openpushforward}).
\item {\em $!$-pullback:} Given a map $f:X \to Y$, if either $f$ is finite or $(n,p) = 1$, the pullback $Rf^!$  preserves $D^{(b)}_{zc}$ (Corollary~\ref{finiteshriekpullback}).
\item {\em Verdier duality:} There is a natural dualizing complex $\omega_X \in D^{(b)}_{zc}(X,\mathbf{Z}/n)$ such that the functor $\mathbf{D}_X(-) := \inthom(-,\omega_X)$ induces an anti-equivalence on $D^{(b)}_{zc}$ satisfying biduality (Theorem~\ref{dualizingcomplex}).
\item {\em $\otimes$ and $\inthom$:} Given $\mathscr{F} \in D^{(b)}_{zc}(X,\mathbf{Z}/n)$ with locally finite Tor dimension (e.g., if $n$ is a prime), the functors $\mathscr{F} \otimes^L_{\mathbf{Z}/n} (-)$ and $\inthom(\mathscr{F},-)$ preserves $D^{(b)}_{zc}$ (Corollary~\ref{inthomexists}).
\end{enumerate}
Moreover, proper base change holds (Theorem~\ref{properbc}), and all of these operations are compatible with extensions of the nonarchimedean base field and with analytification of algebraic varieties (Proposition~\ref{schemescomparison2}, Theorem~\ref{dualizingcomplex}, Proposition~\ref{OperationsBC}).
Finally, all of these results admit extensions to $\mathbf{Z}_\ell$-coefficients (Theorem~\ref{operationsZell}).
\end{theorem}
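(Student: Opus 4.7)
The strategy is to treat proper pushforward preservation, item~(2), as the fundamental geometric input and to derive the remaining items by formal categorical manipulation. Item~(1) is essentially immediate: given a stratification $Y = \coprod Y_i$ witnessing Zariski-constructibility of $\mathscr{F}$, the preimages $f^{-1}(Y_i)$ furnish a Zariski locally closed stratification of $X$, and pullback preserves lisseness on each stratum.

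For (2), I would argue by induction on $\dim Y$: working locally on $Y$ and using noetherian induction on closed analytic subsets, it suffices to show that $Rf_* \mathscr{F}$ becomes lisse over a dense Zariski open $U \subset Y$. After refining the stratification of $X$ that witnesses Zariski-constructibility of $\mathscr{F}$ and using excision triangles of the form $j_! \mathscr{L} \to \mathscr{F} \to i_* i^* \mathscr{F}$ along each stratum, I reduce to the case $\mathscr{F} = g_! \mathscr{L}$ where $g: Z \hookrightarrow X$ is a locally closed immersion from a smooth locally closed subspace and $\mathscr{L}$ is lisse on $Z$. Applying Theorem~\ref{genericmaintheorem}(2) to the composite $Z \to X \to Y$, which is valid because $Z$ is smooth and $\mathrm{char}\, K = 0$, yields a dense Zariski open $U \subset Y$ over which this composite is smooth. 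Over such $U$, smooth and proper base change then force the cohomology sheaves of $Rf_*(g_!\mathscr{L})$ to be lisse.

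The main obstacle lies in this step: unlike in algebraic geometry, one must carefully handle the behaviour of Zariski-locally-closed strata relative to the proper total space $X$, and the combination of stratification and properness is delicate. The Zariski-openness of the smooth locus in the proper case, which is precisely the content of the second half of Theorem~\ref{genericmaintheorem}(2), is crucial: it guarantees that the complement where smoothness fails is a closed analytic subset of strictly smaller dimension, so the noetherian induction closes. A secondary subtlety is the treatment of $p$-torsion coefficients when the residue characteristic is $p > 0$, which requires invoking the relevant proper base change theorem in that range rather than Huber's classical statement.

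The remaining items follow by bootstrapping. Item~(4) is handled via the factorization of $f$ into a closed immersion followed by a smooth morphism: for smooth $f$ one has $Rf^! \simeq f^*[2d](d)$ when $(n,p) = 1$ so the Tate twist exists, while the finite case is direct from adjunction, and closed immersions preserve Zariski-constructibility by definition. Item~(3) combines (2) with excision for open immersions inside a Zariski-compactification. Item~(5) constructs $\omega_X := \pi^! \mathbf{Z}/n$ for the structure map $\pi: X \to \Spa K$; Zariski-constructibility of $\omega_X$ reduces to the smooth case via stratification, using (2) and (4), and biduality is verified stratum by stratum, hence reduces to the smooth lisse setting where it is classical. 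Item~(6) is then formal from (5) via the identity $\inthom(\mathscr{F}, \mathscr{G}) \simeq \mathbf{D}_X(\mathscr{F} \otimes^L \mathbf{D}_X \mathscr{G})$. Proper base change and the compatibilities with base field extension and with analytification follow from Huber's general formalism combined with the local nature of Zariski-constructibility on strata, and the $\mathbf{Z}_\ell$ extension is a standard derived-limit argument on inverse systems of $\mathbf{Z}/\ell^n$-coefficients.
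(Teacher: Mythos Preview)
Your outline has the right overall architecture, but there are genuine gaps in the two places that carry the real weight.

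\textbf{Item~(2), proper pushforward.} Your reduction to $\mathscr{F} = g_! \mathscr{L}$ with $g:Z \hookrightarrow X$ locally closed, $Z$ smooth, $\mathscr{L}$ lisse, does not close. Generic smoothness gives you a dense Zariski-open $U \subset Y$ over which $Z \to Y$ is smooth, but $Z \to Y$ is \emph{not proper}, so smooth-and-proper base change does not apply. What you are actually computing is $Rf_*(g_!\mathscr{L}) = R(f\circ g)_! \mathscr{L}$, compactly supported pushforward along a smooth but non-proper map; this need not have lisse cohomology even over $U$. (Already in algebraic geometry: the projection $\mathbf{A}^2 \smallsetminus \{0\} \to \mathbf{A}^1$ is everywhere smooth, yet $R^i\pi_!\mathbf{Z}/n$ jumps at the origin.) The paper avoids this by a different reduction: first, Proposition~\ref{Dbzcgenerators} (which uses the extension of branched covers across Zariski-closed subsets, \cite[Theorem~1.6]{H}) reduces to $\mathscr{F}=\mathbf{Z}/n$ constant; then Temkin's resolution of singularities replaces $X$ by a smooth proper hypercover, so that one is pushing forward the constant sheaf along a map that is both smooth and proper after restricting to a dense Zariski-open in $Y$. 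Both ingredients---the branched-cover extension and resolution---are essential and absent from your sketch.

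\textbf{Item~(5), dualizing complex.} Defining $\omega_X := R\pi_X^! (\mathbf{Z}/n)$ presupposes Huber's $R\pi_X^!$, which is only constructed when $(n,p)=1$. For $p \mid n$ there is no such functor available, and the paper instead builds $\omega_X$ by algebraizing each affinoid $U=\Spa A$ to $\mathcal{U}=\Spec A$, pulling back Gabber's potential dualizing complex on $\mathcal{U}$ from \cite{ILO14}, and gluing via Lemma~\ref{gluing}. This is not a cosmetic difference: the algebraization step (Proposition~\ref{schemescomparison1}) is what makes the construction and biduality go through uniformly in $n$, and it is also what drives the proofs of (3), (4), and (6), all of which reduce to Gabber's results on excellent schemes rather than to Huber's rigid-analytic six functors. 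In particular, your claim that $Ri^!$ for finite $i$ preserves $D^{(b)}_{zc}$ ``directly from adjunction'' is not correct; the paper proves this (Corollary~\ref{finiteshriekpullback}) by an induction on the dimension of the support using Corollary~\ref{openpushforward}, which in turn rests on the algebraization and \cite{ILO14}.
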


Let us make a couple of remarks. First, we do not assume any conditions on $p$ relative to the coefficient ring (except in the case of $Rf^!$, but see Remark \ref{dualityproper}), so this result generalizes some previously known finiteness theorems in $p$-adic Hodge theory (and uses them as input). Secondly, due to the poor behaviour of Zariski closures in rigid geometry, it is unreasonable to expect arbitrary Zariski-constructible sheaves to be stable under pushforward (Warning~\ref{ZCWarning} (1)), so one cannot do much better than (3) above (although see Proposition \ref{morepushforwards}); similar issues also occur in complex geometry.

Next, we briefly comment on the proofs. Preservation under $f^\ast$ and $\otimes$ is straightforward and is stated for completeness. The first key new result is the preservation of Zariski-constructibility under $Rf_\ast$ for proper $f$. This was raised as a conjecture in \cite[Conjecture 1.14]{H}. Here we reduce it to the known statement that $Rf_\ast$ preserves locally constant constructible complexes when $f$ is both smooth and proper. This reduction relies on  Temkin's embedded resolution of singularities for quasi-excellent $\mathbf{Q}$-schemes, results of the second author \cite[Theorem 1.6]{H} on extending branched covers across Zariski-open immersions (building on previous work of Bartenwerfer \cite{Bar76} and L\"utkebohmert \cite{Lut93}), and (most crucially) Theorem \ref{genericmaintheorem}.(2).  

For the remaining stabilities in Theorem~\ref{ZCIntro}, we largely reduce them to analogous results for schemes (e.g., by replacing an affinoid $\mathrm{Spa}(A)$ with the scheme $\mathrm{Spec}(A)$). The classical results from SGA4 treat only finite type objects, and are not sufficient for our purposes. However, Gabber's work on \'etale cohomology of excellent schemes \cite{ILO14} is presented in exactly the right amount of generality, provided we are allowed to localize our questions to affinoids. The latter is possible thanks to our second key new result (which, in particular, settles \cite[Conjecture 1.12]{H}).

\begin{theorem}[Locality of Zariski-constructibility, Theorem~\ref{ZCLocal}]
 For abelian sheaves on characteristic zero rigid spaces, the property of being Zariski-constructible is an \'etale-local property.
\end{theorem}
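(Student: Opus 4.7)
The question is local on $X$, so we may reduce to the case where $X$ is affinoid and is equipped with a single quasicompact étale surjection $f : U \to X$ with $\mathscr{F}|_U$ Zariski-constructible. Replacing $X$ by $X_{\mathrm{red}}$ (which has the same étale site), we may further assume $X$ is reduced, hence geometrically reduced since $\mathrm{char}\,K = 0$. Since $U$ is quasicompact and $\mathscr{F}|_U$ is Zariski-constructible, by refining the given stratification we find a Zariski-open dense $U_0 \subseteq U$ such that $\mathscr{F}|_{U_0}$ is locally constant with finite stalks and such that $W := U \setminus U_0$ is a Zariski-closed subset with $\dim_w W < \dim_w U$ at every point $w \in W$.

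The plan is to induct on $\dim X$, the case $\dim X = 0$ being trivial. For the inductive step we will produce a Zariski-open dense $V \subseteq X$ such that $\mathscr{F}|_V$ is locally constant with finite stalks. Granting this, the closed complement $Z := X \setminus V$ is a Zariski-closed subset with $\dim_z Z < \dim_z X$ at every $z \in Z$, the base-changed cover $f^{-1}(Z) \to Z$ is again quasicompact étale surjective, and $\mathscr{F}|_{f^{-1}(Z)}$ remains Zariski-constructible (by pullback along the closed immersion $f^{-1}(Z) \hookrightarrow U$). The inductive hypothesis applied to $(\mathscr{F}|_Z, f^{-1}(Z) \to Z)$ then gives Zariski-constructibility of $\mathscr{F}|_Z$, which combined with local constancy of $\mathscr{F}|_V$ yields Zariski-constructibility of $\mathscr{F}$ on $X$.

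To produce $V$, we apply generic flatness (Theorem~\ref{genericmaintheorem}~(1)) to the quasicompact map $g : W \hookrightarrow U \xrightarrow{f} X$: since $X$ is geometrically reduced, there is a Zariski-open dense $V \subseteq X$ such that $g^{-1}(V) \to V$ is flat. If $g^{-1}(V)$ were non-empty at some point $w$, then flatness of the local morphism $\mathcal{O}_{X,g(w)} \to \mathcal{O}_{g^{-1}(V),w}$ would give $\dim_w g^{-1}(V) \geq \dim_{g(w)} V = \dim_{g(w)} X$; but $g^{-1}(V) \subseteq W$ and étaleness of $f$ force $\dim_w g^{-1}(V) \leq \dim_w W < \dim_w U = \dim_{f(w)} X$, a contradiction. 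Hence $g^{-1}(V) = \emptyset$, i.e., $f^{-1}(V) \subseteq U_0$, and so $\mathscr{F}|_{f^{-1}(V)}$ is locally constant with finite stalks. By étale descent for locally constant sheaves along the étale cover $f^{-1}(V) \to V$, $\mathscr{F}|_V$ itself is locally constant with finite stalks, as required.

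The main obstacle is the deployment of dimension theory on rigid spaces: one needs that étale morphisms preserve local dimension, that flat morphisms satisfy the dimension inequality $\dim_w X \geq \dim_{f(w)} Y$ at each point (the rigid analog of the commutative-algebra statement for flat local morphisms of Noetherian local rings), and that a proper Zariski-closed subset has strictly smaller local dimension everywhere. Beyond these standard but delicate inputs, the argument is a direct transposition of the classical scheme-theoretic reasoning, with Theorem~\ref{genericmaintheorem}~(1) supplying the rigid-geometric generic-point input that makes the reduction to lower-dimensional strata possible.
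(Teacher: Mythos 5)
There is a genuine gap, and it sits at the single load-bearing step of your argument: the claim that generic flatness produces a \emph{Zariski-open} dense $V \subset X$. Theorem~\ref{genericflatness} only produces a dense open subset in the analytic (adic) topology --- the maximal flat locus $\mathrm{Fl}_{W/X}$, which contains all weakly Shilov points but has no reason to be Zariski-open. (The paper only upgrades such loci to Zariski-open ones for \emph{proper} maps, via Kiehl's theorem in Theorem~\ref{genericsmoothnessproper}; your $g: W \hookrightarrow U \to X$ is very far from proper.) With $V$ merely analytically open, its complement $Z$ is not Zariski-closed, so the induction on dimension does not apply and the devissage of Proposition~\ref{ZCStability}~(2) gives nothing. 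This is not a patchable technicality: all your argument really shows is that $f(W)$ misses the flat locus, and in general the image of a nowhere-dense Zariski-closed $W \subset U$ under an \'etale map need not be contained in \emph{any} nowhere-dense Zariski-closed subset of $X$, because Zariski closures behave badly in rigid geometry (cf.\ Warning~\ref{ZCWarning}~(1); think of a curve in a small polydisc cut out by a power series that does not extend to the big polydisc, whose Zariski closure is everything). Relatedly, your $U_0$ (hence $W$) is a non-canonical choice, so even its images from the various charts need not be compatible.

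The theorem is true only because the bad loci on the different charts, when chosen \emph{canonically}, agree on overlaps and therefore descend. This is how the paper argues: algebraize each affinoid chart via Proposition~\ref{schemescomparison1}, take the \emph{maximal} open where the algebraized sheaf is locally constant (Lemma~\ref{MaxLisseOpen}), prove this maximal open is compatible with pullback along universally generalizing maps (Lemma~\ref{MaxLisseOpenBC}) --- which applies to the algebraizations of rational localizations and of \'etale maps of affinoids since those are flat --- and then glue the complementary Zariski-closed subsets by descent for coherent ideal sheaves to obtain a single nowhere-dense Zariski-closed $Z \subset X$ off of which $\mathscr{F}$ is lisse; induction on dimension then finishes. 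Your overall skeleton (find a dense Zariski-open lisse locus, induct on the complement) matches the paper's, and your \'etale descent and devissage steps at the end are fine, but the mechanism you propose for producing the Zariski-open lisse locus does not work; generic flatness plays no role in the paper's proof of this theorem.
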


Using this toolkit, one can imitate many standard constructions with constructible sheaves found in complex or algebraic geometry. As an example, we show that characteristic zero rigid spaces support a theory of perverse sheaves which has the same pleasant formal properties as its algebraic counterpart \cite{BBDG} (except that we need to restrict to qcqs spaces when working with $\mathbf{Q}_\ell$-coefficients).

\begin{theorem}[Perverse sheaves in rigid geometry, Theorem~\ref{perverseproperties} and Theorem~\ref{PervQell}]
Let $K$ be a characteristic zero nonarchimedean field with residue characteristic $p \geq 0$ and let $X/K$ be a rigid space. Fix a prime $\ell$ and a coefficient ring $\Lambda \in \{ \mathbf{Z}/\ell^n\mathbf{Z}, \mathbf{Q}_{\ell} \}$. If $\Lambda=\mathbf{Q}_\ell$, then assume that $X$ is qcqs and define $D^{(b)}_{zc}(X,\mathbf{Q}_\ell) := D^{(b)}_{zc}(X,\mathbf{Z}_\ell) \otimes_{\mathbf{Z}_\ell} \mathbf{Q}_\ell$.

There is a naturally defined perverse $t$-structure on $D^{(b)}_{zc}(X,\Lambda)$, with abelian heart denoted $\Perv(X,\Lambda)$. This construction has the following stability properties:
\begin{enumerate}
\item {\em Duality:} $\mathrm{Perv}(X,\Lambda)$ is Verdier self-dual inside $D^{(b)}_{zc}(X,\Lambda)$.

\item {\em Finite pushforward:} $\mathrm{Perv}(-,\Lambda)$ is stable under $f_*$ for $f:Y \to X$ a finite map.

\item {\em Intermediate extensions:} There is a notion of intermediate extension of lisse sheaves defined on Zariski-locally closed subsets of $X$.

\item {\em Finiteness:} If $X$ is quasi-compact, then $\Perv(X,\Lambda)$ is noetherian and artinian.

\item {\em Nearby cycles:}  The nearby cycles functor associated with a formal model of $X$ is perverse $t$-exact when $p \neq \ell$. 

\end{enumerate}
Moreover, this construction is compatible with the usual constructions in algebraic geometry under analytification (in the proper case for $\Lambda=\mathbf{Q}_\ell$), and is compatible with extensions of the ground field.
\end{theorem}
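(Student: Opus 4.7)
The plan is to follow the Beilinson--Bernstein--Deligne template of \cite{BBDG}, with the six-functor formalism of Theorem~\ref{ZCIntro} as input. Intrinsically, I would set
\[
\pervleqzero(X,\Lambda) = \{\mathscr{F} : \dim \operatorname{supp} \mathcal{H}^i(\mathscr{F}) \leq -i \text{ for all } i\},
\]
and define $\pervgeqzero(X,\Lambda) := \mathbf{D}_X(\pervleqzero(X,\Lambda))$ via the Verdier duality of Theorem~\ref{ZCIntro}(5), with $\omega_X$ normalized so that $\Lambda_X[\dim_x X]$ is perverse at smooth points. The t-structure axioms reduce, via the recollement $(i_*, i^*, i^!; j_!, j^*, j_*)$ attached to a Zariski-closed immersion $i: Z \hookrightarrow X$ and its open complement $j: U \hookrightarrow X$ (which all preserve $D^{(b)}_{zc}$ by Theorem~\ref{ZCIntro}), to BBD's gluing lemma applied inductively along a Zariski-constructible stratification with smooth strata; such stratifications exist by Theorem~\ref{genericmaintheorem}(2) together with the definition of Zariski-constructibility.

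Once the t-structure is in place, properties (1)--(4) follow the BBD template. Duality (1) is tautological. For finite pushforward (2), a finite map $f$ satisfies $f_* = f_!$ and $f^!\omega_X = \omega_Y$ (Theorem~\ref{ZCIntro}(4)), so $f_*$ commutes with $\mathbf{D}_X$ and preserves dimensions of supports; hence it is perverse t-exact. Intermediate extensions (3) are defined as the image of $\pervcoh^0(j_!-) \to \pervcoh^0(j_*-)$, which lies in $\Perv(X,\Lambda)$ by Theorem~\ref{ZCIntro}(3). For the noetherian/artinian statement (4) on qcqs $X$, I would induct on $\dim X$: Zariski-constructibility combined with noetherianness of $\Lambda$ bounds the length of lisse sheaves on each smooth stratum, and every simple object of $\Perv(X,\Lambda)$ is the intermediate extension of an irreducible lisse sheaf on a locally closed smooth subspace; the $\mathbf{Q}_\ell$-case on qcqs $X$ reduces to the torsion case by a standard Deligne argument, which is why qcqs is imposed there.

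Compatibility with analytification and with nonarchimedean base extension comes essentially for free, since the perverse t-structure is defined purely in terms of support dimensions and Verdier duality, both of which are preserved by these operations (final clauses of Theorem~\ref{ZCIntro}); for $\mathbf{Q}_\ell$ in the proper case one further invokes GAGA to identify dualizing complexes. The substantive remaining task is the nearby-cycles t-exactness (5). For this I would follow the classical strategy: reduce via alterations and Temkin's resolution to the case of a strictly semistable formal model $\mathfrak{X}$, and then verify perverse t-exactness of $R\Psi$ by a local computation on standard semistable charts, using the explicit description of tame vanishing cycles along a normal crossings divisor together with dévissage to the constant sheaf. This direct semistable computation, the rigid analog of Illusie's classical scheme-theoretic result, is the main obstacle: one must carefully patch the scheme-theoretic perverse t-exactness of $R\Psi$ with the specific geometry of the special fiber produced by the chosen formal model, and check that the resulting comparison is compatible with Zariski-constructible stratifications on $X_{\overline{K}}$.
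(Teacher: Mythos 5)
Most of your outline (the definition of the two halves of the $t$-structure, duality, finite pushforward, intermediate extensions, the induction for finite length, and the compatibility statements) matches the paper's route, but there are two genuine problems. First, your construction of the $t$-structure asserts that the recollement functors for a Zariski-closed/open decomposition ``all preserve $D^{(b)}_{zc}$ by Theorem~\ref{ZCIntro}.'' This is false for $j_!$ and $Rj_*$ along a Zariski-open immersion $j:U \to X$: Warning~\ref{ZCWarning}~(1) gives an explicit counterexample (the $j_!$-extension of an infinite sum of skyscrapers on $(\mathbf{A}^1)^{an}$), and Theorem~\ref{ZCIntro}~(3) only guarantees preservation for \emph{lisse} inputs. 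So the gluing of \cite{BBDG} cannot be run on all of $D^{(b)}_{zc}(X,\Lambda)$ at once. The repair, which is what the paper does in its direct proof, is to glue only on the full subcategory $D^b_{zc.U\text{-}lis}(X)$ of complexes whose restriction to $U$ is lisse --- there Corollary~\ref{openpushforward} supplies the needed $Rj_*$ --- and then observe that $D^b_{zc}(X)$ is the filtered colimit of these subcategories as $U$ shrinks. (The paper also gives a second proof: localize to affinoids via Theorem~\ref{ZCLocal} and import Gabber's perverse $t$-structure on $\mathrm{Spec}(A)$ through the algebraization equivalence of Proposition~\ref{schemescomparison1}.) Without one of these devices your induction does not get off the ground.

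Second, for the nearby-cycles exactness (5) you propose reducing to a strictly semistable formal model and performing an Illusie-style local computation of tame vanishing cycles. You rightly flag this as the main obstacle, and it is far from clear it can be completed: one would need a d\'evissage of arbitrary Zariski-constructible coefficients to constant sheaves on semistable charts and control of the perverse $t$-structure under alterations of formal models. The paper's actual argument avoids all of this. By the commutation of nearby cycles with Verdier duality (\cite{H2}) it suffices to prove \emph{right} $t$-exactness, and by \cite[R\'eciproque 4.1.6]{BBDG} that reduces to showing $R\Gamma(Y, j^*\mathscr{F})$ sits in degrees $\leq 0$ for every affinoid $Y$ \'etale over $X$ (obtained by deforming an affine \'etale chart of the special fiber) and every perverse-connective $\mathscr{F}$. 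This is precisely the rigid-analytic Artin--Grothendieck vanishing theorem of \cite{BM, H}. That affinoid cohomological bound is the key input missing from your proposal; with it, item (5) is a few lines rather than a semistable-reduction campaign.
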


As an application, we can define an intersection cohomology complex on any qcqs characteristic $0$ rigid space, and the resulting intersection cohomology groups have reasonable properties:

\begin{corollary}[Intersection cohomology of rigid spaces, Theorem~\ref{ICProperties}]
Let $K$ be a characteristic zero nonarchimedean field of residue characteristic $p \geq 0$; let $C/K$ be a completed algebraic closure and let $\ell$ be any prime. Let $X/K$ be a qcqs rigid space $X/K$.

\begin{enumerate}
\item {\em Existence of intersection cohomology}: There are naturally defined $\ell$-adic intersection cohomology groups $IH^{n}(X_{C},\mathbf{Q}_{\ell})$.  These are finitely generated $\mathbf{Q}_{\ell}$-modules if $\ell \neq p$ or if $X$ is proper.

\item {\em GAGA}: If $X = \mathcal{X}^{an}$ for a proper $K$-scheme $\mathcal{X}$, then $IH^*(X_C,\mathbf{Q}_\ell) \simeq IH^*(\mathcal{X}_C,\mathbf{Q}_\ell)$.

\item {\em Poincar\'e duality}: If $X$ is proper and equidimensional of dimension $d$ and $\ell \neq p$, there is a natural Poincar\'e duality isomorphism \[ IH^{n}(X_C,\mathbf{Q}_{\ell})^{\ast} \cong IH^{-n}(X_C,\mathbf{Q}_{\ell})(d). \]
\end{enumerate}
\end{corollary}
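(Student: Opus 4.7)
The strategy is to imitate the classical construction of intersection cohomology within the perverse-sheaf framework of Theorem~\ref{PervQell}, and then to read off the three assertions from the stability properties of the six-functor formalism (Theorem~\ref{ZCIntro}) together with its compatibilities with base change and analytification. The first step is to construct an intersection complex $IC_X \in \Perv(X,\Lambda)$ for $\Lambda \in \{\mathbf{Z}/\ell^n, \mathbf{Z}_\ell, \mathbf{Q}_\ell\}$: generic smoothness (Theorem~\ref{genericmaintheorem}(2)) furnishes a Zariski-open dense immersion $j\colon U \hookrightarrow X$ with $U$ smooth, the shifted constant sheaf $\Lambda_U[\dim U]$ (interpreted component-wise) is perverse on $U$, and the intermediate extension of Theorem~\ref{PervQell}(3) supplies $IC_X := j_{!\ast}(\Lambda_U[\dim U])$. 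Base change along $X_C \to X$ via Proposition~\ref{OperationsBC} gives $IC_{X_C}$, and one defines $IH^n(X_C,\mathbf{Q}_\ell)$ as the appropriately shifted cohomology of $IC_{X_C,\mathbf{Q}_\ell}$.

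For (1), if $X$ is proper then so is $\pi\colon X_C \to \Spa C$, whence $R\pi_\ast IC_{X_C}$ is Zariski-constructible by Theorem~\ref{ZCIntro}(2) together with its $\mathbf{Z}_\ell/\mathbf{Q}_\ell$-upgrade (Theorem~\ref{operationsZell}, Theorem~\ref{PervQell}); its cohomology groups are therefore finite-dimensional over $\mathbf{Q}_\ell$. When $X$ is non-proper but qcqs and $\ell\neq p$, I would pick a Zariski compactification $\iota\colon X \hookrightarrow \bar X$ (available in characteristic zero), argue that $R\iota_\ast IC_X$ remains Zariski-constructible on $\bar X$, and then reduce to the proper case just established for $\bar X$.

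For (2), analytification is compatible with the six operations, with perverse truncations, and carries smooth Zariski-open dense subsets to smooth Zariski-open dense subsets, so $(IC_\mathcal{X})^{an} \simeq IC_X$ by construction (Theorem~\ref{PervQell}, Proposition~\ref{OperationsBC}); combined with GAGA for proper pushforward of Zariski-constructible complexes, this identifies the intersection cohomology groups. For (3), Verdier self-duality of the intersection complex (Theorem~\ref{PervQell}(1)) gives $\mathbf{D}_{X_C}(IC_{X_C}) \simeq IC_{X_C}$, up to the standard Tate twist by $d$ coming from the normalization of $\omega_{X_C}$ as $\mathbf{Q}_\ell[2d](d)$ on the smooth locus when $\ell\neq p$. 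Combined with compatibility of Verdier duality with proper pushforward along $X_C \to \Spa C$, this identifies $R\Gamma(X_C, IC_{X_C})$ with its own dual up to the required shift and Tate twist, yielding the stated Poincar\'e duality on passage to cohomology.

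I expect the main obstacle to be the finite-generation part of (1) for non-proper $X$ with $\ell\neq p$: clause (3) of Theorem~\ref{ZCIntro} addresses pushforward along Zariski-compactifiable maps only for \emph{lisse} coefficients, whereas $IC_X$ is only generically lisse. A d\'evissage along a stratification of $\bar X \setminus \iota(X)$, or a reduction via locality of Zariski-constructibility (Theorem~\ref{ZCLocal}) to the corresponding statement for qcqs schemes (Gabber), should bridge this gap.
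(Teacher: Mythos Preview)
Your approach to parts (2) and (3), and to the proper case of (1), matches the paper's. The construction of $IC_X$ via intermediate extension from a smooth Zariski-dense open, the GAGA argument via compatibility of $(-)^{an}$ with $j_!,\,Rj_*$, perverse truncations and images, and the Poincar\'e duality argument via self-duality of $IC_X$ up to a Tate twist combined with compatibility of $R\Gamma(X_C,-)$ with Verdier duality on proper spaces are exactly the paper's proofs.

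The genuine gap is in your non-proper case of (1). Your plan assumes a Zariski compactification $\iota\colon X \hookrightarrow \bar X$ exists for any qcqs $X$, but this is false: for instance, the closed unit disc $\Spa K\langle T\rangle$ is qcqs but is not Zariski-open in any proper rigid space (Zariski-opens of $(\mathbf{P}^1)^{an}$ are complements of finite sets of classical points). So the strategy cannot even get started for most qcqs rigid spaces. Moreover, the d\'evissage you suggest to upgrade Theorem~\ref{ZCIntro}(3) from lisse to Zariski-constructible coefficients would need $R\iota_*$ applied to sheaves supported on lower-dimensional Zariski-closed subsets of $X$, and these subsets need not be Zariski-compactifiable either.

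The paper avoids this entirely by a different route: any qcqs rigid space $X$ admits a formal model $\mathfrak{X}$ over $\mathcal{O}_K$ (Raynaud), and for $\ell\neq p$ Huber's constructibility theorem for nearby cycles \cite{Hub98} gives that $R\lambda_{\mathfrak{X}*}(IC_{X,\mathbf{Q}_\ell})$ lies in $D^b_c(\mathfrak{X}_{\bar s},\mathbf{Q}_\ell)$. Since $R\Gamma(X_C,IC_{X,\mathbf{Q}_\ell}) \simeq R\Gamma(\mathfrak{X}_{\bar s}, R\lambda_{\mathfrak{X}*}IC_{X,\mathbf{Q}_\ell})$ and $\mathfrak{X}_{\bar s}$ is a finite-type scheme over an algebraically closed field, finite-dimensionality follows from classical \'etale cohomology. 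This bypasses compactification altogether and works uniformly for any qcqs $X$.
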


We end this paper by formulating some conjectures in \S \ref{ss:conj}, roughly predicting that deep known results on the intersection cohomology of algebraic varieties over $K$ carry forth to the rigid context.

\subsection*{Conventions}

We follow the convention that the term ``nonarchimedean field'' is reserved for valued fields carrying a rank $1$ valuation.

If $K$ is a nonarchimedean field, we use the terms $K$-affinoid algebra and topologically of finite type (henceforth abbreviated tft) $K$-algebra synonymously; recall that these are exactly the Banach $K$-algebras that can receive a continuous surjection from a Tate algebra $K\langle x_1,...,x_n \rangle$. Likewise, we say ``rigid space over $K$'' and ``adic space locally of tft over $\Spa K$'' interchangeably. If $A$ is a tft $K$-algebra, we write $\tilde{A} := A^\circ / A^{\circ \circ} = (A^\circ / t)^{red}$; this is a $k$-algebra of finite type \cite[Corollary 3, \S 6.3.4]{BGR}, where $k$ is the residue field of $K$ and $t$ is a pseudouniformizer (i.e., any nonzero element of $K^{\circ \circ} - \{0\}$).

We warn the reader that $A^\circ$ need not be tfp over $\mathcal{O}_K$ even when $A$ is reduced; this pathology does not occur if either $K$ is discrete, or if $K$ is stable field with $|K^*|$ divisible (if $K$ is algebraically closed); see \cite[\S 3.6]{BGR} for the definition of stability, and \cite[\S 6.4]{BGR} for the finiteness properties.

We write $\Spa A = \Spa(A,A^\circ)$ for any Huber ring $A$.

Say $X$ is an analytic adic space and $x \in X$. We write $K_x$ for the completed residue field of $X$ at $x$. This is a nonarchimedean field. We shall write $|\cdot|_x$ for the associated valuation on functions, though note that is only well-defined up to equivalence. However, if $A$ is a tft $K$-algebra and $x \in \Spa A$ is a rank one point, there is a unique $\mathbf{R}_{\geq 0}$-valued representative of the associated equivalence class which extends the fixed norm on the base field $K$, i.e. a unique representative such that $|t|_x = |t|_K$. We always choose this representative.

Given $X$ and $x$ as above, the secondary residue field of $X$ at a point $x$ is the quotient $ \tilde{K}_x = K_x^{+} / \mathfrak{m}_x$. Here $K_x^+$ is the valuation subring of the residue field $K_x$ defined as the completed image of the map $\mathcal{O}_{X,x}^+ \to K_x$; when $x$ is a rank $1$ point, the ring $K_x^+$ coincides with the subring $\mathcal{O}_{K_x} \subset K_x$ of power bounded elements. Recall that if $y \prec x$ is any specialization, then $K_x \cong K_y$ and $K_y^+ \subset K_x^+$ under this identification. 

For any tft $K$-algebra $A$, we shall write $\ssp: \Spa A \to \Spec \tilde{A}$ for the specialization map, given by taking the center of the valuation. This is a continuous, closed, and spectral map of spectral spaces. 

If $f:X \to Y$ is a map of rigid spaces over $K$, we say $f$ is \emph{Zariski-compactifiable} if it admits a factorization $f=\overline{f} \circ j$, where $j:X \to X'$ is a Zariski-open immmersion and $\overline{f}:X' \to Y$ is a proper morphism. We say a rigid space $X$ over $K$ is Zariski-compactifiable if the structure map $f:X \to \Spa K$ is so.

\subsection*{Acknowledgements} In April 2020, Bogdan Zavyalov asked DH whether \cite[Conjecture 1.14]{H} was within reach, and the present paper grew directly out of that conversation. We thank Bogdan heartily for this crucial initial stimulus, and for some helpful comments on previous drafts of this paper. We are also grateful to Brian Conrad, Johan de Jong, Haoyang Guo, Mattias Jonsson, Shizhang Li, and Jacob Lurie for useful conversations and exchanges, and to the anonymous referees for a number of comments that helped improve the exposition. Bhatt was partially supported by the NSF (\#1801689, \#1952399, \#1840234), a Packard fellowship, and the Simons Foundation (\#622511).

\section{Generic flatness and generic smoothness}
\label{sec:gensmooth}

In this section, we introduce the notion of weakly Shilov points, and  prove our main geometric results on generic smoothness.

\subsection{Shilov and weakly Shilov points}
\label{ss:Shilov}

Fix a complete nonarchimedean field $K$ and a pseudouniformizer $t \in K^\circ$. Recall the following basic example of a ``non-classical'' point of a standard $K$-affinoid.

\begin{example} 
\label{GaussDisc}
If $X=\Spa K\! \left \langle T \right \rangle $ is the one-dimensional affinoid ball over $K$, then the Gauss point $X$ is given by the $t$-adic norm on $K \langle T \rangle$. To describe this norm ring theoretically, recall that the standard formal for $X$ is given by the formal closed disc $\mathfrak{X} := \mathrm{Spf}(\mathcal{O}_K \langle T \rangle)$. The special fibre $\mathfrak{X}_s = \mathrm{Spec}(\mathcal{O}_K/t[T])$ has a unique a generic point $\overline{\eta}$; the $t$-completed localization of $\mathcal{O}_K \langle T \rangle$ at $\overline{\eta}$ is a rank one $t$-complete and $t$-torsionfree valuation ring $V$ equipped with a map $\mathcal{O}_K \langle T \rangle \to V$. The resulting valuation on $K \langle T \rangle$ is the Gauss point $\eta$. Moreover, the canonical map $X \to \mathfrak{X}$ given by taking the center of the valuation carries $\eta$ to $\overline{\eta}$.
\end{example}

We now isolate a general class of points with properties similar to the Gauss point from Example~\ref{GaussDisc}.

\begin{proposition} 
\label{ShilovAffd}
Fix a tft $K$-algebra $A$ and a rank one point $x \in \mathrm{Spa}(A,A^\circ)$. The following are equivalent:
\begin{enumerate}

\item $\ssp(x)$ is the generic point of an irreducible component of $\Spec \tilde{A}$.

\item $\{ x \} = \ssp^{-1}(\ssp(x))$ as subsets of $\Spa A$.

\item $K_{x}^{+}$ is a $t$-completed ind-Zariski localization of $A^\circ$. More precisely, $K_{x}^{+}$ is the $t$-completed local ring of $\Spec A^\circ$ at the point $\ssp(x) \in \Spec \tilde{A} \subset \Spec A^\circ$.

\item The seminorm $|\cdot |_x$ belongs to the Shilov boundary of $A$ in the sense of $K$-Banach algebras.

\end{enumerate}

\end{proposition}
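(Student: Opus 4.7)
The plan is to establish the equivalences via a short cycle of implications, anchored on Berkovich's classical description of the Shilov boundary for strictly affinoid algebras. The equivalence $(1) \Leftrightarrow (4)$ is essentially Berkovich's theorem: the Shilov boundary of $\Spa(A, A^\circ)$ is finite, and the specialization map $\ssp$ identifies it bijectively with the set of generic points of $\Spec \tilde A$.

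The heart of the proof is the implication $(1) \Rightarrow (3)$. Fix $\eta := \ssp(x)$, a generic point of an irreducible component of $\Spec \tilde A$. To handle the potential non-noetherianity of $A^\circ$ (e.g., when $A$ is not reduced or $K$ is not stable), I first reduce to a tfp model: choose a tfp $\mathcal{O}_K$-subalgebra $A_0 \subseteq A^\circ$ with $A_0[t^{-1}] = A$, and lift $\eta$ to a generic point $\eta_0$ of $\Spec \tilde A_0$ via the induced finite integral map $\tilde A_0 \hookrightarrow \tilde A$. The local ring $(A_0)_{\eta_0}$ is then a one-dimensional Noetherian local domain in which $t$ generates the maximal ideal: since $\eta_0$ is generic in the reduced locus of $A_0/t$, the quotient $(A_0)_{\eta_0}/t$ is a field, and $t$ is a nonzerodivisor, so $(A_0)_{\eta_0}$ is a DVR. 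Its $t$-adic completion $V$ is a complete rank $1$ valuation ring inside $A$, and its integral closure in $A$ identifies with the $t$-adic completion of $(A^\circ)_\eta$. The continuous rank $1$ valuation induced by $V$ defines a point $x' \in \Spa(A, A^\circ)$ with $\ssp(x') = \eta$, which must coincide with $x$ by the Berkovich uniqueness above, yielding $K_x^+ \cong \widehat{(A^\circ)_\eta}$.

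The remaining implications are quicker. For $(3) \Rightarrow (2)$: any $y \in \ssp^{-1}(\eta)$ gives, by continuity, power-boundedness, and the center condition $\ssp(y) = \eta$, a factorization $A^\circ \to \widehat{(A^\circ)_\eta} = K_x^+ \to K_y^+$; pulling the valuation $y$ back along $K_x \hookrightarrow K_y$ yields a valuation on $K_x$ whose valuation ring contains $K_x^+$, so is equivalent to $|\cdot|_x$ (since $K_x^+$ has rank $1$), forcing $y = x$. For $(2) \Rightarrow (1)$, I argue by contraposition: if $\mathfrak{p} := \ssp(x)$ is not minimal in $\Spec \tilde A$, pick a minimal prime $\mathfrak{q} \subsetneq \mathfrak{p}$ with its Shilov point $x_\mathfrak{q}$ (available via the equivalence $(1) \Leftrightarrow (4)$), use $(1) \Rightarrow (3)$ for $\mathfrak{q}$ to identify $\tilde K_{x_\mathfrak{q}} \cong \kappa(\mathfrak{q})$, choose a valuation on $\kappa(\mathfrak{q})$ dominating the nontrivial local ring $(\tilde A/\mathfrak{q})_{\mathfrak{p}}$ (by standard commutative algebra), and compose it with $x_\mathfrak{q}$ to produce a continuous rank $2$ point of $\Spa(A, A^\circ)$ with center $\mathfrak{p}$, distinct from $x$, contradicting (2). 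The main obstacle throughout is the local-ring analysis in $(1) \Rightarrow (3)$ — specifically the passage to a noetherian tfp model $A_0$ and the verification that the integral closure of $V$ in $A$ really agrees with the $t$-adic completion of $(A^\circ)_\eta$; once this is in place, the cycle closes without difficulty.
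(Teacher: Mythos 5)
Your overall architecture (importing $(1)\Leftrightarrow(4)$ from Berkovich, producing a rank $\geq 2$ point for $(2)\Rightarrow(1)$, and localizing a tfp model at a lift of $\ssp(x)$ for the hard implication) is close to the paper's, but the step you yourself identify as the heart --- $(1)\Rightarrow(3)$ --- has a genuine gap. The claim that $(A_0)_{\eta_0}$ is a one-dimensional Noetherian local domain whose maximal ideal is generated by $t$, hence a DVR, fails twice over. First, $A_0$ is tfp over $\mathcal{O}_K$, which is Noetherian only when $K$ is discretely valued; for general $K$ the ring $(A_0)_{\eta_0}$ is not Noetherian, and ``DVR'' is not even the right target (compare Example~\ref{GaussDisc}: for the Tate algebra one only obtains a rank one valuation ring after $t$-completion). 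Second, even for discretely valued $K$, the fact that $\eta_0$ is a generic point of $\Spec(A_0/t)$ only makes $(A_0/t)_{\eta_0}$ Artinian local, not a field, since $A_0/t$ is typically non-reduced. Concretely, take $A_0 = \mathcal{O}_K\langle x,y\rangle/(y^2 - t^2x)$, so that $A_0[1/t] \cong K\langle z\rangle$ with $z = y/t$; then $A_0/t = k[x,y]/(y^2)$, and the localization of $A_0$ at the prime $(t,y)$ is a one-dimensional local domain that does not contain $y/t$ and hence is not normal, so it is not a DVR and its $t$-completion is not a valuation ring.

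Consequently the two assertions your argument rests on --- that $V$ is a rank one valuation ring, and that its integral closure in $A$ agrees with the $t$-completion of $(A^\circ)_\eta$ --- are precisely the content that needs proof, and you explicitly defer them as ``the main obstacle.'' The paper supplies exactly this missing input: (i) a Noether normalization $T = K\langle x_1,\dots,x_d\rangle \to A$ reduces the valuation-ring claim to the Tate algebra, where the $t$-completed localization at the generic point of the special fiber is a genuine rank one complete valuation ring, with the general case obtained as its integral closure in a finite extension of its fraction field (using unique extension of complete rank one valuations); and (ii) the identification of $A^\circ_{\ssp(x)}$, and hence of its $t$-completion with $K_x^+$, is carried out via the description of $(\Spa(A,A^\circ),\mathcal{O}^+)$ as an inverse limit over admissible blowups of $\Spec(A_0)$, where the structure sheaf is visibly integrally closed in its $t$-localization. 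Note also that the paper's $A_0$ is chosen more carefully than yours ($A^\circ$ integral over $A_0$ and $\Spec\tilde{A} \to \Spec(A_0/t)$ a universal homeomorphism); with only $A_0[1/t]=A$, the lift $\eta_0$ need not detect the correct residue field, as the example above already shows. Until these points are supplied, $(1)\Rightarrow(3)$, and with it the whole cycle, is incomplete.
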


\begin{proof} 
We begin with two (well-known) observations on the affinoid adic space $\mathrm{Spa}(A,A^\circ)$; the first concerns finding suitable rings of definition, while the second concerns a description via formal models.

First, we claim that there exists an open and bounded tfp $\mathcal{O}_K$-subalgebra $A_0 \subset A^0$ such that $\mathrm{Spec}(\tilde{A}) \to \mathrm{Spec}(A_0/tA_0)$ is a universal homeomorphism and such that $A^\circ$ is the integral closure of $A_0$ in $A$. In fact, the second part follows from \cite[Remark following 6.3.4/1]{BGR}. For the first, using Noether normalization, we can choose a surjective map $T \to A$, where $T  = K \langle x_1,...,x_n \rangle$ is a Tate algebra. By \cite[6.3.4/2]{BGR}, the map $\tilde{T} \to \tilde{A}$ is module finite. Setting $A'_0 = \mathrm{im}(T^\circ \to A^\circ)$, we learn that $A_0'/(A^{\circ\circ} \cap A_0') \to \tilde{A}$ is module finite and injective, with $A_0'$ being open, bounded, and tfp over $\mathcal{O}_K$. Enlarging $A_0'$ inside $A^\circ$ by adding finitely topological generators of $\tilde{A}$ over $A_0'$, we find an open bounded tfp $\mathcal{O}_K$-subalgebra $A_0 \subset A^\circ$ such that $A_0/(A^{\circ\circ} \cap A_0) \to \tilde{A}$ is bijective. But we also know that $\sqrt{tA_0} = A^{\circ\circ} \cap A_0$: any element of the right side is topologically nilpotent and in $A_0$, and must thus have a large enough power inside $tA_0$. Thus, we have found the subalgebra $A_0$ indicated at the start of this paragraph. 

Next, we also recall an alternative description of the locally ringed space $(\mathrm{Spa}(A,A^\circ), \mathcal{O}^+)$. Consider the category of all proper maps $f_i:X_i \to \mathrm{Spec}(A_0)$ of schemes which are isomorphisms after inverting $t$. For each $f_i$, let $X_{i,t=0} \subset X_i$ be the special fibre (regarded merely as a closed subset). Set $Z = \lim_i X_{i,t=0}$, so $Z$ is a spectral space.  Let $\pi_i:Z \to X_i$ be the structure map, and define the structure sheaf $\mathcal{O}_Z$ of $Z$ via $\mathcal{O}_Z := \colim_i \pi_i^{-1} \mathcal{O}_{X_i}$. Then it is a basic fact that $\mathrm{Spa}(A,A^\circ) = Z$ as topological spaces, and $\mathcal{O}^+$ identifies with the $t$-adic completion of $\mathcal{O}_Z$. For future use, we remark that, by passing to a cofinal subsystem, we may (and do) assume that each $X_i$ is $\mathcal{O}_K$-flat, i.e., $\mathcal{O}_{X_i}$ is $t$-torsionfree. This condition implies that the generic fibre $X_i[1/t]$ is dense in each $X_i$, and thus all the transition maps $X_i \to X_j$ in the system are surjective: their image is a closed set containing a dense open. In particular, $\mathcal{O}_Z$ is $t$-torsionfree. By spectrality, the maps $\pi_i:Z \to X_{i,t=0}$ are also surjective  for all $i$. Finally, we also remark that $\mathcal{O}_Z$ is integrally closed in $\mathcal{O}_Z[1/t]$ by generalities on blowups.

Using the preceding two observations, we prove the equivalences. 

$(1) \Rightarrow (2)$: Fix $x \in \mathrm{Spa}(A)$ with $\ssp(x) \in \Spec \tilde{A}$ being a generic point. Using a suitable Noether normalization, we then learn that $\tilde{A}_{\ssp(x)}$ is a rank $1$ valuation ring with pseudouniformizer $t$: this ring is the integral closure of a rank $1$ $t$-complete valuation ring in a finite extension of its fraction field. Any point $y \in \ssp^{-1}(\ssp(x))$ is represented by an equivalence class of maps $A_0 \to V$ where $V$ is a $t$-complete $t$-torsionfree valuation ring with the property that the closed point of $\mathrm{Spec}(V)$ is carried to $\ssp(x)$. But any such map factors uniquely as $A_0 \to \tilde{A}_{\ssp(x)} \to V$. Thus, taking $V$ to be the $t$-completion of $\tilde{A}_{\ssp(x)}$ gives the unique such map up to equivalence, showing that $y=x$. 

$(2) \Rightarrow (3)$: Write $x_i \in X_i$ for the image of $x$, so $\mathcal{O}^{+}_{\mathrm{Spa}(A,A^+),x}$ and $\colim \mathcal{O}_{X_i,x_i}$ identify after $t$-completion. Now if $x = \ssp^{-1}(\ssp(x))$, then $x_i = f_i^{-1}(\ssp(x))$ is the unique preimage of $\ssp(x)$. The map $A_{0,\ssp(x)} \to \mathcal{O}_{X_i,x_i}$ is then integral (by properness of $X_i \to \mathrm{Spec}(A_0)$, base changed to $\mathrm{Spec}(A_{0,\ssp(x)}$) and an isomorphism after inverting $t$. Taking a colimit, we learn that $A_{0,\ssp(x)} \to \colim_i \mathcal{O}_{X_i,x_i}$ is integral and an isomorphism after inverting $t$. But the target is also integrally closed in its $t$-localization, so it must coincide with $A^{\circ}_{\ssp(x)}$; here we implicitly use that $\mathrm{Spec}(\tilde{A}) \simeq \mathrm{Spec}(A_0/t)$ to identify $\ssp(x)$ with a point of $\mathrm{Spec}(A^\circ)$, as well as the fact that $A^\circ$ is the integral closure of $A_0$. Thus, we have shown that the $t$-completion of $A^{\circ}_{\ssp(x)} \to \mathcal{O}^{+}_{\mathrm{Spa}(A,A^+),x}$ is an isomorphism. As the $t$-completion of the target is $K_x^+$, the claim follows. 

$(3) \Rightarrow (2)$: This is clear from the description of points of $\mathrm{Spa}(A,A^\circ)$ as equivalence classes of maps $A^\circ \to V$ to $t$-complete and $t$-torsionfree valuation rings. 

$(2) \Rightarrow (1)$: Assume that $x$ is the unique preimage of $\ssp(x)$ and yet that $\ssp(x) \in \mathrm{Spec}(\tilde{A})$ is not a generic point; we shall obtain a contradiction. Let $\overline{y} \in \mathrm{Spec}(\tilde{A})$ be a generic point of an irreducible component $Y \subset \mathrm{Spec}(\tilde{A})$ containing $\ssp(x)$. Applying the implications $(1) \Rightarrow (3)$ to a lift $y \in \mathrm{Spa}(A,A^+)$ of $\overline{y}$, we learn that $\overline{y}$ has a unique lift $y \in \mathrm{Spa}(A,A^+)$, and that $K_{y}^+$ is the $t$-completed local ring of $A^{\circ}_y$. In particular, the residue field of $K_{y}^+$ identifies with function field $K(Y)$ of the irreducible component $Y$. Choose a valuation subring $\overline{V} \subset K(Y)$ that is a $\tilde{A}$-algebra and has center $\ssp(x) \in \mathrm{Spec}(\tilde{A})$; this valuation has rank $\geq 1$ as $\overline{y} \neq \ssp(x)$.  Let $V \subset K_{y}^+$ be the preimage of $\overline{V}$. Then $V$ is a $t$-complete valuation ring of rank $\geq 2$, is an $A^\circ$-algebra, and has center $\ssp(x)$ on $\mathrm{Spec}(\tilde{A})$. The resulting map $A^\circ \to V$ then gives a point $x' \in \mathrm{Spa}(A,A^\circ)$ with rank $\geq 2$ and image $\ssp(x)$ in $\mathrm{Spec}(\tilde{A})$. But $x$ is the unique preimage of $\ssp(x)$, so $x = x'$. We now obtain a contradiction as $x$ had rank $1$ by assumption, while $x'$ has rank $\geq 2$ by construction. 

$(1) \Longleftrightarrow (4)$: This is proven in \cite[2.4.4]{BerSpectral}.
\end{proof}

\begin{remark}
One may ask if the equivalences proven in Proposition~\ref{ShilovAffd} continue to hold true for the affinoid adic space $\mathrm{Spa}(A,A^+)$ attached to any complete Tate ring $(A,A^+)$. Inspection of the proof shows that the equivalence of $(2)$ and $(3)$ and the implication $(2) \Rightarrow (1)$ hold true in general.  On the other hand, there is a perfectoid affinoid algebra where $(1) \Rightarrow (2)$ and $(1) \Rightarrow (3)$ in Proposition~\ref{ShilovAffd} fail, as we explain next. We are not aware of a broader class of algebras (than the $K$-affinoid ones) where the Proposition~\ref{ShilovAffd} holds true.

Take a complete and algebraically closed extension $C/\mathbf{Q}_p$ whose algebraically closed residue field $k$ has transcendence degree $\geq 1$ over $\mathbf{F}_p$. Write $V = \mathcal{O}_C$, and let $W \subset V$ be the preimage of $\overline{\mathbf{F}_p} \subset k$. Then $W$ is a $p$-complete and $p$-torsionfree local ring that is integrally closed in $W[1/p] = V[1/p]$. Moreover, we have $\sqrt{pW} = \sqrt{pV}$ with $W/\sqrt{pW} \to V/\sqrt{pV}$ identifying with the map $\overline{\mathbf{F}_p} \subset k$. A lift $x \in V \subset W[1/p]$ of any element of $k - \overline{\mathbf{F}_p}$ has the property that $x,x^{-1} \notin W$, so $W$ is not a valuation ring.  Endowing $W$ with the $p$-adic topology, we obtain a complete uniform Tate ring $(W[1/p],W)$ with $W$ perfectoid. The special fibre $\widetilde{W[1/p]} = W/\sqrt{pW}$ identifies with $\overline{\mathbf{F}_p}$, so $\Spec(\widetilde{W[1/p]})$ has a unique point which is thus a generic point. On the other hand, the local ring of $\mathrm{Spec}(W)$ at this point is simply $W$, which is not a valuation ring. This shows that the implication $(1) \Rightarrow (3)$ in Proposition~\ref{ShilovAffd} fails in this example.  To show that $(1) \Rightarrow (2)$ also fails in this example, one calculates that $\mathrm{Spa}(W[1/p],W)$ identifies with the Riemann-Zariski space of $k$, which has more than $1$ element; we omit the argument. 
\end{remark}

\begin{definition}Let $A$ be a tft $K$-algebra. A \emph{Shilov point} $x \in \Spa A$ is any point satisfying the equivalent conditions of Proposition~\ref{ShilovAffd}.
\end{definition}

Recall that in an analytic adic space, every rank one point is generic in the sense of spectral spaces. For algebraic purposes, the following class of points is more relevant.

\begin{definition}
\label{def:WeaklyShilov}
Let $X$ be any rigid space over a non-archimedean field $K$. A point $x \in X$ is \emph{weakly Shilov} if there is an open affinoid subset $ x \in W \subset X$ such that $x$ is a Shilov point of $W$. In particular, such a point has rank one.
\end{definition}

\begin{example}
For $X = \mathrm{Spa}(K \langle T \rangle)$ the closed unit disc, the weakly Shilov points are exactly the points of type $2$ (see \cite[Example 2.20]{ScholzePerfectoid} for the classification of points on $X$). This follows from the characterization in Proposition~\ref{WeakShilovChar}, since the secondary residue field of $x$ is algebraic over $K$ when $x$ has type $1$, $3$ or $4$ (and $x$'s of type 5 have rank $2$, so they are not Shilov). 
\end{example}

\begin{remark}
The valuations isolated in Definition~\ref{def:WeaklyShilov} are sometimes also called {\em divisorial valuations} in birational geometry (see Proposition~\ref{WeakShilovChar} (3)). 
\end{remark}

\begin{lemma} Let $X$ be a  rigid space over a nonarchimedean field $K$. Then weakly Shilov points are dense in $X$.
\end{lemma}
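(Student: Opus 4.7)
The plan is to reduce the claim to the affinoid case and then invoke Proposition~\ref{ShilovAffd}. Density of weakly Shilov points in $X$ amounts to the assertion that every non-empty open $U \subset X$ contains a weakly Shilov point of $X$. Since a rigid space (equivalently, an adic space locally of tft over $\Spa K$) has a basis of open affinoids by definition, I can shrink $U$ and assume $U = \Spa(A, A^\circ)$ is itself a non-empty open affinoid subset of $X$. By Definition~\ref{def:WeaklyShilov}, any Shilov point of the affinoid $U$ is then automatically a weakly Shilov point of $X$, so the problem reduces to exhibiting at least one Shilov point in every non-empty $K$-affinoid $\Spa(A, A^\circ)$.

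For this I would use characterization~(1) of Proposition~\ref{ShilovAffd}: it is enough to produce a rank-one point $x \in \Spa A$ whose specialization $\ssp(x) \in \Spec \tilde{A}$ is a generic point of an irreducible component. Since $A \neq 0$, the unit $1 \in A^\circ$ is not topologically nilpotent, so $\tilde{A}$ is a non-zero finite-type $k$-algebra (as recorded in the Conventions), and hence $\Spec \tilde{A}$ has at least one generic point $\xi$. The proof of the implication $(1) \Rightarrow (2)$ in Proposition~\ref{ShilovAffd} already constructs the required lift: after a suitable Noether normalization, the $t$-adic completion of the localization $\tilde{A}_\xi$ is a rank-one $t$-complete and $t$-torsionfree valuation ring $V$, and the induced map $A^\circ \to V$ defines a rank-one point $x$ of $\Spa(A, A^\circ)$ with $\ssp(x) = \xi$. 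This $x$ is a Shilov point of $U$, hence a weakly Shilov point of $X$ lying in $U$.

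No substantive obstacle arises beyond the machinery of Proposition~\ref{ShilovAffd}: the reduction to the affinoid case is formal, and the existence of a Shilov point in a non-empty affinoid is essentially the content of that proposition (combined with the nonemptiness of $\Spec \tilde{A}$). As a cleaner alternative to the explicit construction above, one could quote characterization~(4) of Proposition~\ref{ShilovAffd} together with the classical non-emptiness of the Shilov boundary of a non-zero $K$-Banach algebra, as in \cite[2.4.4]{BerSpectral}.
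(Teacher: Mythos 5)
Your proof is correct and follows the same route as the paper, whose entire proof is the one-line observation that every open affinoid contains a weakly Shilov point by definition; you simply make explicit the underlying fact that every non-empty $K$-affinoid has a non-empty Shilov boundary (via the generic points of $\Spec \tilde{A}$ or Proposition~\ref{ShilovAffd}(4)).
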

\begin{proof} By definition, any open affinoid $U \subset X$ contains a weakly Shilov point.
\end{proof}

We also have the following alternative characterization of weakly Shilov points.

\begin{proposition}
\label{WeakShilovChar}
Let $X$ be a rigid space over a nonarchimedean field $K$ and let $x \in X$ be a rank one point. The following are equivalent:
\begin{enumerate}
\item $x$ is weakly Shilov.
\item The transcendence degree of the secondary residue field $\tilde{K}_x$ over $K^\circ / \mathfrak{m}$ equals the local dimension $\dim_{x} X$ of $X$ at $x$.
\item (Applicable only when $X$ is quasiseparated and quasi-paracompact) There exists a formal model $\mathfrak{X}$ such that $x$ maps to the generic point of an irreducible component of $\mathfrak{X}_s$. 
\end{enumerate}
\end{proposition}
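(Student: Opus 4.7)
The plan is to prove $(1) \Leftrightarrow (2)$ using Proposition~\ref{ShilovAffd} and an identification of the secondary residue field, and to prove $(1) \Leftrightarrow (3)$ via Raynaud's theory of formal models.

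The preliminary observation is that for any rank $1$ point $x$ of an open affinoid $W = \Spa(A, A^\circ) \subset X$, the natural map $\tilde{A}/\ssp(x) \hookrightarrow \tilde{K}_x$ identifies $\tilde{K}_x$ with the residue field $k(\ssp(x))$ of $\Spec \tilde A$ at $\ssp(x)$: indeed, $K_x^+$ is a rank $1$ valuation ring with center $\ssp(x)$ on $A^\circ$, so its residue field is the fraction field of $\tilde{A}/\ssp(x)$. Since $\tilde{A}$ is a finite-type $k$-algebra, this gives
$$\mathrm{trdeg}_k \tilde{K}_x \;=\; \dim \overline{\{\ssp(x)\}} \quad \text{in } \Spec \tilde{A}.$$

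For $(1) \Rightarrow (2)$: if $x$ is Shilov on some open affinoid $W = \Spa(A, A^\circ) \subset X$, then by Proposition~\ref{ShilovAffd}, $\ssp(x)$ is generic in an irreducible component $Z \subset \Spec \tilde{A}$ of dimension $d = \mathrm{trdeg}_k \tilde{K}_x$. A direct analysis using Proposition~\ref{ShilovAffd}(3), after inverting $t$, shows that $A \to K_x$ factors through the localization of $A$ at the minimal prime of the unique irreducible component $Y \subset \Spec A$ whose special fibre contains $Z$, with $\dim Y = d$. Consequently $x$ lies on a single irreducible component of $X$ near $W$, so $\dim_x X = d$. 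For $(2) \Rightarrow (1)$: choose a small open affinoid $W \ni x$ such that $\dim W = \dim_x X$ (feasible by shrinking to exclude irreducible components of $X$ not passing through $x$, using the local finiteness of components). Then the displayed equality combined with $\dim \tilde{A} = \dim A = \dim W$ yields $\dim \overline{\{\ssp(x)\}} = \dim \tilde{A}$, so $\overline{\{\ssp(x)\}}$ is a top-dimensional irreducible component of $\Spec \tilde{A}$ and $\ssp(x)$ is generic in $\Spec \tilde{A}$. Proposition~\ref{ShilovAffd} then gives that $x$ is Shilov on $W$.

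For $(1) \Leftrightarrow (3)$ (assuming $X$ is quasiseparated and quasi-paracompact): in $(3) \Rightarrow (1)$, pick a formal affine open $\Spf(A_0) \subset \mathfrak{X}$ whose special fibre contains the distinguished generic point; then $W := \Spa(A_0[1/t])$ is an open affinoid of $X$ containing $x$, and $\ssp_W(x)$ is generic in $\Spec \tilde{A}$ (since generic points are preserved under the universal homeomorphism $\Spec \tilde{A} \to \Spec(A_0/t)_{\mathrm{red}}$), so $x$ is Shilov on $W$. In $(1) \Rightarrow (3)$, take $W = \Spa(A)$ with $x$ Shilov on $W$ and a tfp subring $A_0 \subset A^\circ$ as in the proof of Proposition~\ref{ShilovAffd}; then $\mathfrak{W} = \Spf(A_0)$ is a formal model of $W$ with $\ssp(x)$ generic in its special fibre. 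The main step, which is also the chief technical obstacle, is to extend $\mathfrak{W}$ to a formal model $\mathfrak{X}$ of $X$: this is accomplished via Raynaud's theory of admissible formal models, and is precisely where the quasiseparation and quasi-paracompactness hypotheses are used.
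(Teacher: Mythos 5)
Your equivalence $(1)\Leftrightarrow(3)$ follows the paper's route (Raynaud/Bosch to fit the affinoid into a formal model, then refine so that $R_0/t\to\tilde R$ is a homeomorphism on spectra) and is fine. The problem is your ``preliminary observation'': for a general rank one point $x\in W=\Spa(A,A^\circ)$ it is \emph{not} true that $\tilde K_x$ equals the residue field $k(\ssp(x))$ of $\Spec\tilde A$. The center of the valuation only gives an embedding $\mathrm{Frac}(\tilde A/\ssp(x))\hookrightarrow \tilde K_x$, so in general one only has the inequality $\mathrm{trdeg}_k\tilde K_x\geq \dim\overline{\{\ssp(x)\}}$; the residue field of $K_x^+$ need not be generated by the image of $A^\circ$. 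Concretely, take $W=\Spa K\langle T\rangle$ and let $x$ be the Gauss point of the subdisc $|T|\leq|t|$: then $\ssp(x)$ is the origin of $\mathbf{A}^1_k$, so $k(\ssp(x))=k$, while $\tilde K_x=k(\overline{T/t})$ has transcendence degree $1$. The identification you want \emph{is} valid at Shilov points, because there Proposition~\ref{ShilovAffd}(3) says $K_x^+$ is the $t$-completed local ring of $A^\circ$ at $\ssp(x)$ (and completion preserves residue fields); so your $(1)\Rightarrow(2)$ can be repaired, modulo the separate (nontrivial, and only sketched) dimension claim $\dim_xW=\dim\overline{\{\ssp(x)\}}$.

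The direction $(2)\Rightarrow(1)$, however, genuinely fails as written. You apply the false identity at a point not yet known to be Shilov in order to conclude $\dim\overline{\{\ssp(x)\}}=\dim\tilde A$. In the example above, condition (2) holds ($\mathrm{trdeg}=1=\dim_xX$) and $X$ is irreducible, so your shrinking step (discarding components not through $x$) leaves $W=X$ unchanged, yet $\dim\overline{\{\ssp(x)\}}=0\neq 1=\dim\tilde A$ and $x$ is not Shilov on $W$. The true content of $(2)\Rightarrow(1)$ is that one must find a genuinely smaller affinoid neighborhood on which $x$ becomes a Shilov point — e.g.\ by choosing $f_1,\dots,f_d\in\mathcal{O}_{X,x}^+$ whose residues are algebraically independent in $\tilde K_x$ and using them to map a small neighborhood to a polydisc sending $x$ to the Gauss point. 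This is exactly the nontrivial input the paper outsources to Poineau (Lemme 4.4 and Corollaire 4.15 of \cite{P}); your proposal does not supply a substitute for it.
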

\begin{proof}
$(1) \Longleftrightarrow (2)$ follows by combining Lemme 4.4 and Corollaire 4.15 in \cite{P}.

$(3) \Rightarrow (1)$: if there exists a formal model $\mathfrak{X}$ as in (3), then taking $W \subset X$ be the preimage of any formal affine open $\mathfrak{X}$ containing the image of $x$ shows that $x$ is weakly Shilov.

$(1) \Rightarrow (3)$: Assume $x \in X$ is weakly Shilov. We can then find an affinoid open $\mathrm{Spa}(R) \subset X$ containing $x$ such that $x \in \mathrm{Spa}(R)$ is Shilov. By Raynaud, there is a formal model $\mathfrak{X}$ of $X$ such that $\mathrm{Spa}(R) \subset X$ is the preimage of a formal affine open $\mathrm{Spf}(R_0) \subset \mathfrak{X}$, cf. \cite[Theorem 8.4.3]{Bosch}. By passing to a refinement, we can assume that the map $R_0/t \to \tilde{R}$ gives a homeomorphism of spectra; see first paragraph of the proof of Proposition~\ref{ShilovAffd}. As $x$ is weakly Shilov, its image in $\mathrm{Spec}(\tilde{R})$ is a generic point. But then its image in $\mathrm{Spec}(R_0/t)$ is also a generic point since $\mathrm{Spec}(\tilde{R}) \to \mathrm{Spec}(R_0/t)$ is a homeomorphism by construction. As $\mathrm{Spf}(R_0) \subset \mathfrak{X}$ is a formal open immersion, it follows that $x$ gives a generic point of $\mathfrak{X}_s$ as well
\end{proof}

\begin{corollary}
\label{ShilovReg}
Say $X$ is a rigid space over a nonarchimedean field $K$, and $Z \subset X$ is a nowhere dense Zariski closed set. Then $Z$ does not contain any weakly Shilov point of $X$. In particular, if $X$ is reduced, then any weakly Shilov point of $X$ lies in the regular locus of $X$.
\end{corollary}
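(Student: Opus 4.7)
The plan is to combine Proposition~\ref{WeakShilovChar}(2) with the underlying Abhyankar-type inequality $\mathrm{tr.deg}_k \tilde K_y \leq \dim_y Y$ valid for every rank-one point $y$ of any rigid space $Y$; this inequality is implicit in the cited characterization (it is the content one half of Lemme~4.4 and Corollaire~4.15 of \cite{P}, equality being what distinguishes weakly Shilov points). The crucial observation is that the secondary residue field $\tilde K_x$ depends only on the valuation carried by $x$: the map $\mathcal{O}_{Z,x}^+ \to K_x$ factors through $\mathcal{O}_{X,x}^+ \to K_x$ via the surjection $\mathcal{O}_{X,x}^+ \twoheadrightarrow \mathcal{O}_{Z,x}^+$, so for $x \in Z \subset X$ with $Z$ Zariski closed, the quantity $\mathrm{tr.deg}_k \tilde K_x$ is the same whether computed in $Z$ or in $X$.

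Assuming for contradiction that a weakly Shilov point $x$ of $X$ lies in a nowhere-dense Zariski closed subset $Z$, applying the equality case of Proposition~\ref{WeakShilovChar}(2) in $X$ and the Abhyankar inequality in $Z$ yields
\[ \dim_x X \;=\; \mathrm{tr.deg}_k \tilde K_x \;\leq\; \dim_x Z. \]
I would then derive a contradiction by arguing $\dim_x Z < \dim_x X$ directly: a Zariski closed set $Z \subset X$ is nowhere dense iff it contains no irreducible component of $X$, so each irreducible component $W$ of $Z$ passing through $x$ sits strictly inside some irreducible component $V$ of $X$, whence $\dim W < \dim V \leq \dim_x X$. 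Taking the maximum over such $W$ gives $\dim_x Z < \dim_x X$, contradicting the displayed inequality.

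For the ``in particular'' statement, I would observe that when $X$ is reduced the singular locus $X^{\mathrm{sing}}$ is Zariski closed (by the Jacobian criterion) and nowhere dense, since affinoid $K$-algebras are excellent and the regular locus of a reduced excellent local ring of an irreducible component meets every generic point. The first part applied to $Z = X^{\mathrm{sing}}$ then forces every weakly Shilov point of $X$ to lie in the regular locus $X \setminus X^{\mathrm{sing}}$. I do not expect any serious obstacle here; the entire argument reduces to a clean dimension count once the Abhyankar-type inequality from Proposition~\ref{WeakShilovChar}(2) is in hand, together with the standard fact that nowhere density for Zariski closed sets in rigid spaces is equivalent to containing no irreducible component.
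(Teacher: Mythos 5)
Your proof is correct and follows essentially the same route as the paper: the paper's (terser) argument is exactly the dimension count via Proposition~\ref{WeakShilovChar}(2), using $\dim_x Z < \dim_x X$ for nowhere dense Zariski closed $Z$ and excellence for the ``in particular'' part. You have merely made explicit the two ingredients the paper leaves implicit, namely the Abhyankar inequality on $Z$ and the fact that $\tilde{K}_x$ is intrinsic to the point and hence agrees whether computed in $Z$ or in $X$.
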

\begin{proof}
As $Z \subset X$ is a nowhere dense Zariski closed set, we have $\dim_x(Z) < \dim_x(X)$ for all $x \in X$. The first statement now follows from the characterization of weak Shilov points in Proposition~\ref{WeakShilovChar} (2). The second statement follows from the first statement applied to the locus $Z \subset X$ of points that are not regular, which is a nowhere dense Zariski closed set by excellence considerations.
\end{proof}

\subsection{Tools involving the cotangent complex}

To prove our generic smoothness result, it will be convenient to use the analytic cotangent complex as this provides a homologically well-behaved object detecting smoothness in non-noetherian situations (such as topologically finitely presented algebras over non-discrete valuation rings). In this subsection, we recall some results on this object.

\begin{notation}
Fix a complete nonarchimedean field $K$ with valuation ring $V \subset K$ and a pseudouniformizer $t \in V$.  For any map $A \to B$ of $V$-algebras, write $L^{an}_{B/A}$ for the derived $t$-completion of $L_{B/A}$. A {\em tft (or topologically finite type)} $V$-algebra is a $V$-algebra $A$ of the form $V[x_1,...,x_n]^{\wedge}/I$ (where the completion is $t$-adic). If moreover $I$ is finitely generated, we say that $A$ is {\em tfp (or topologically finitely presented)}. The class of tfp $V$-algebras has good properties such as coherence and classical $t$-adic completenesss, see \cite[Proposition 7.1.1]{GR}. Moreover, any tft $V$-algebra which is $t$-torsion-free is in fact tfp, see \cite[Corollary 7.3.6]{FGK}.
\end{notation}

\begin{theorem}[Gabber-Ramero]
\label{GRCC}
Say $A \to B$ is a map of tfp $V$-algebras. Then the following hold true:
\begin{enumerate}
\item $L^{an}_{B/A}$ is a pseudocoherent $B$-complex.

\item Assume that $A \to B$ induces a smooth map of relative dimension $n$ on taking adic generic fibres. Then $L^{an}_{B/A}[1/t]$ is a finite projective $B[1/t]$-module of rank $n$.

\item If $A \to B$ is surjective, then $L_{B/A} \simeq L^{an}_{B/A}$. 

\end{enumerate}
 \end{theorem}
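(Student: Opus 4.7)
My plan is to establish the three parts in the order $(3) \to (1) \to (2)$, since $(3)$ is the enabling technical lemma that lets us convert statements about $L^{an}$ into statements about the ordinary cotangent complex in the surjective case, and $(2)$ follows by inverting $t$ in the resolution built in $(1)$.

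For $(3)$, the goal is to show $L_{B/A}$ is already derived $t$-complete for $A \twoheadrightarrow B$ a surjection of tfp $V$-algebras. First, I would argue that the kernel $I$ is finitely generated: $B$ is tfp over $V$ hence over $A$, and together with coherence of tfp $V$-algebras (cited from \cite[Proposition 7.1.1]{GR}) this forces $I$ to be a finitely generated ideal of $A$. Choosing generators $f_1,\dots,f_k$ of $I$, I would build a simplicial polynomial resolution $P_\bullet \to B$ of $A$-algebras in which each $P_n$ is a polynomial $A$-algebra on finitely many variables (e.g.\ via the Koszul construction followed by inductively adding variables to kill higher homotopy). Then $L_{B/A}\simeq \Omega^1_{P_\bullet/A}\otimes_{P_\bullet}B$ is computed by a simplicial $B$-module which is levelwise finite free, hence pseudocoherent as a $B$-complex. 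Finally, I would invoke the standard fact that a pseudocoherent complex over a coherent classically $t$-adically complete ring like $B$ is automatically derived $t$-complete (each cohomology is finitely presented, hence classically $t$-complete, hence derived $t$-complete).

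For $(1)$, choose a presentation $A\langle x_1,\dots,x_N\rangle \twoheadrightarrow B$ exhibiting $B$ as tfp over $A$, and consider the transitivity triangle
\[
L^{an}_{A\langle x\rangle/A} \otimes^L_{A\langle x\rangle} B \longrightarrow L^{an}_{B/A} \longrightarrow L^{an}_{B/A\langle x\rangle}.
\]
The first term is the derived $t$-completion of the free module spanned by $dx_i$; by completeness of the free module of finite rank, it is just $A\langle x\rangle^N$, whose base change to $B$ is finite free of rank $N$ and in particular pseudocoherent. For the last term, applying $(3)$ to the surjection $A\langle x\rangle \twoheadrightarrow B$ gives $L^{an}_{B/A\langle x\rangle} \simeq L_{B/A\langle x\rangle}$, and the argument sketched in $(3)$ (applied now with $A\langle x\rangle$ in place of $A$) shows this is pseudocoherent. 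Hence $L^{an}_{B/A}$, sitting in a fiber sequence of pseudocoherent $B$-complexes, is itself pseudocoherent.

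For $(2)$, I would invert $t$ throughout the triangle above. The first term becomes the finite free $B[1/t]$-module of rank $N$ spanned by $dx_i$. Let $I := \ker(A\langle x\rangle \to B)$; since $B[1/t]$ is smooth over $A[1/t]$ of relative dimension $n$, the ideal $I[1/t]\subset A\langle x\rangle[1/t]$ cuts out a smooth (hence regularly immersed) closed subscheme of codimension $N-n$ after inverting $t$. Consequently $L_{B[1/t]/A\langle x\rangle[1/t]} \simeq (I/I^2)[1/t][1]$ is a finite projective $B[1/t]$-module of rank $N-n$ concentrated in degree $-1$, and the triangle after inverting $t$ collapses to a two-term complex $[\,(I/I^2)[1/t] \to B[1/t]^{N}\,]$ whose differential is the Jacobian. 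By smoothness this map is locally split injective with cokernel the module of K\"ahler differentials of a smooth morphism, giving the desired finite projective $B[1/t]$-module of rank $n$ in degree $0$.

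The main obstacle is the derived $t$-completeness assertion at the heart of $(3)$: one needs to control that simplicial resolutions built from finite polynomial $A$-algebras really produce pseudocoherent $B$-complexes whose cohomologies are finitely presented, and that the coherence results of Gabber-Ramero propagate finite presentation to automatic classical (and hence derived) $t$-completeness. Everything else is formal manipulation of the transitivity triangle once $(3)$ is in hand.
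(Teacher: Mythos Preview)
Your outline is sound and in essence reconstructs the Gabber--Ramero arguments. The paper itself does not give an independent proof: each of (1), (2), (3) is simply a citation to \cite{GR} (Theorems 7.1.31, 7.2.39, and 7.1.29 respectively), so there is nothing to compare against beyond noting that your ordering $(3)\to(1)\to(2)$ via the transitivity triangle for $A\to A\langle x\rangle\to B$ is exactly the natural devissage, and your identification of the coherence input (finitely generated homotopy at each stage of the CW resolution, so that only finitely many cells need be attached) as the crux is correct.

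One point in your argument for (2) deserves care. The smoothness hypothesis is \emph{rigid-analytic}: the ring map $A[1/t]\to A\langle x\rangle[1/t]$ is not smooth in the scheme-theoretic sense, so the phrase ``smooth (hence regularly immersed) closed subscheme'' is not justified by algebraic smoothness alone. What makes the argument work is that $\Spa(B[1/t])\hookrightarrow\Spa(A\langle x\rangle[1/t])$ is a closed immersion of rigid spaces smooth over $\Spa(A[1/t])$; regularity of this immersion then follows from the rigid Jacobian criterion (or from flatness together with fibrewise regularity via the local flatness criterion), and passes to the schemes $\Spec(B[1/t])\hookrightarrow\Spec(A\langle x\rangle[1/t])$ because affinoid algebras are noetherian and Jacobson. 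With that adjustment, your conormal-sequence computation goes through.
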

 \begin{proof}
 (1) is \cite[Theorem 7.1.31]{GR}.
 
(2) is the key assertion checked in the proof of \cite[Theorem 7.2.39]{GR}.

 (3) is \cite[Theorem 7.1.29]{GR}.

\end{proof}

\begin{lemma}
\label{CCValFinite}
Let $R$ be a finitely presented flat $V$-algebra. 
\begin{enumerate}
\item $L_{R/V}$ is a pseudocoherent $R$-complex.
\item If $R$ is also $V$-finite, then $L_{R/V}$ is derived $t$-complete. 
\end{enumerate}
\end{lemma}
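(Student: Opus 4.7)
The plan for (1) is to reduce to the classical Noetherian setting via a spreading-out argument. First, I would write $V$ as the filtered union of its finitely generated $\mathbf{Z}$-subalgebras $V_\alpha \subset V$; each $V_\alpha$ is Noetherian. Since $R$ is finitely presented over $V$, standard limit theory descends $R$ to a finitely presented $V_\alpha$-algebra $R_\alpha$ with $R \simeq R_\alpha \otimes_{V_\alpha} V$, and by the Raynaud--Gruson descent of flatness, after enlarging $\alpha$ one may arrange that $R_\alpha$ is flat over $V_\alpha$. The flatness of $R_\alpha/V_\alpha$ then supplies the Tor-vanishing needed for the base change identity $L_{R/V} \simeq L_{R_\alpha/V_\alpha} \otimes_{R_\alpha} R$. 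Finally, $L_{R_\alpha/V_\alpha}$ is pseudocoherent over the Noetherian ring $V_\alpha$ by the classical theory (via a simplicial polynomial resolution with finite free generators in each level), and pseudocoherence is preserved under base change along $R_\alpha \to R$.

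For (2), the key structural observation is that $R$ is in fact finite free as a $V$-module. Indeed, $R$ being finitely presented as a $V$-algebra and module-finite over $V$ implies that $R$ is finitely presented as a $V$-module (if $g_1,\ldots,g_k$ generate $R$ as a $V$-module, then the finitely many algebra relations translate into finitely many module relations among the $g_i$). By Lazard's theorem, a finitely presented flat module over any ring is projective; and since $V$ is local, every finitely generated projective $V$-module is free, so $R \cong V^k$ for some $k$. From part (1), each cohomology group $H^i(L_{R/V})$ is finitely presented as an $R$-module, and hence, via $R \cong V^k$ as $V$-modules, also finitely presented as a $V$-module.

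I would then invoke the structure theorem for finitely presented modules over a valuation ring: every such module is a finite direct sum of cyclic modules $V/(v)$ for various $v \in V$. Each $V/(v)$ is derived $t$-complete: for $v = 0$, this reduces to $V$ itself being $t$-adically complete; for $v \neq 0$, the $t$-adic separation of $V$ forces $|v| \geq |t^n|$ for some $n \geq 1$, so $t^n \in (v)$, whence $V/(v)$ is $t$-torsion and thus automatically derived $t$-complete. Combined with the fact that $L_{R/V}$ lives in nonpositive cohomological degrees and that the subcategory of derived $t$-complete objects in $D(V)$ is closed under limits and extensions (so a bounded-above complex with derived $t$-complete cohomology in each degree is itself derived $t$-complete), this gives the conclusion.

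The main obstacle I anticipate is the structural step in (2): packaging ``finitely presented flat $V$-algebra which is module-finite'' as a genuine finite free $V$-module, combining the ``finite presentation passes from algebra to module'' step with Lazard's theorem and the localness of $V$. Once this is in hand, the derived completeness of each cohomology group follows by an easy inspection of cyclic modules. Part (1) is comparatively routine once one commits to the Noetherian approximation strategy.
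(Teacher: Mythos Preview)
Your proof is correct and follows essentially the same approach as the paper: noetherian approximation plus Tor-independent base change for (1), and for (2) the observation that $R$ is finite free over $V$, so pseudocoherence over $R$ yields pseudocoherence over $V$, whence derived $t$-completeness. The paper's endgame for (2) is a touch more direct---it just says any pseudocoherent $V$-complex is derived $t$-complete since $V$ is---while you unpack this via the structure theorem for finitely presented modules over a valuation ring and then reassemble; this detour is correct but unnecessary, since finitely presented $V$-modules are already derived $t$-complete as cokernels of maps between finite free $V$-modules.
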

\begin{proof}
For (1), by noetherian approximation, we can write $V \to R$ as the base change of a finitely presented flat map $V_0 \to R_0$ of finitely generated $\mathbf{Z}$-algebras along some map $V_0 \to V$. Then $L_{R_0/V_0}$ is pseudocoherent, and $L_{R_0/V_0} \otimes_{R_0}^L R \simeq L_{R/V}$ by Tor independent base change, so $L_{R/V}$ is also pseudocoherent.  

For (2), observe that if $R$ is $V$-finite, then $R$ is a finite free $V$-module (as any finitely presented flat $V$-module is finite free). In particular, a pseudocoherent $R$-complex is also pseudocoherent as a $V$-complex. The claim now follows as any pseudocoherent $V$-complex is derived $t$-complete (since $V$ itself is so).
\end{proof}

\begin{corollary}
\label{NACC}
Let $E/K$ be a finite extension, and let $W_0 \subset E^\circ$ be an open tfp $V$-subalgebra.  
\begin{enumerate}
\item $L_{W_0/V}$ is pseudocoherent so $L_{W_0/V} \simeq L^{an}_{W_0/V}$, whence $L_{E/K} \simeq L^{an}_{W_0/V}[1/t]$.
\item $H_i(L^{an}_{W_0/V}[1/t]) = 0$ for $i \neq 0,1$. 
\end{enumerate}
\end{corollary}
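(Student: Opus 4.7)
The plan is to deduce both parts of the corollary from Lemma~\ref{CCValFinite} applied to $W_0$. Verifying its hypotheses amounts to showing that $W_0$ is a finite free $V$-module, which will be the main obstacle since $V$ need not be noetherian; the argument exploits the integrality of the tft generators coming from $W_0 \subset E^\circ$.

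To see $W_0$ is $V$-finite, fix a tft presentation $W_0 = V\langle x_1, \dots, x_n\rangle/I$. The images of the $x_i$ in $W_0 \subset E^\circ$ lie in the integral closure of $V$ in $E$, so each $x_i$ satisfies a monic relation $p_i(x_i) = 0$ with $p_i \in V[T]$ of degree $d_i$. These relations reduce every monomial in $W_0$ to a $V$-linear combination of monomials in the finite set $B = \{ x^\alpha : \alpha_j < d_j \}$; since the coefficients produced under this reduction remain in $V$, the $t$-adic convergence of any representative $\sum_\alpha a_\alpha x^\alpha \in V\langle x\rangle$ ensures that its image in $W_0$ is a finite $V$-linear combination of elements of $B$. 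Hence $W_0$ is generated as a $V$-module by $B$; being also $V$-torsion-free and finitely generated over a valuation ring, it is finite free. A finite free $V$-algebra is automatically finitely presented (using the basis, multiplication table, and expansion of $1$ as generators and relations).

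Both parts of Lemma~\ref{CCValFinite} now apply, giving that $L_{W_0/V}$ is pseudocoherent and derived $t$-complete, whence $L_{W_0/V} \simeq L^{an}_{W_0/V}$. Inverting $t$ and using compatibility of the cotangent complex with localization then yields
\[
L^{an}_{W_0/V}[1/t] \simeq L_{W_0/V}[1/t] \simeq L_{W_0[1/t]/V[1/t]} \simeq L_{E/K},
\]
which proves (1). For (2), I would show more generally that $L_{E/K}$ sits in homological degrees $0$ and $1$ for any finite field extension $E/K$. Fix a presentation $E \simeq K[x_1,\dots,x_n]/\mathfrak{m}$ and localize to obtain a surjection $R := K[x_1,\dots,x_n]_\mathfrak{m} \twoheadrightarrow E$ from a regular local ring of dimension $n$ with kernel the maximal ideal. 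Since $\mathfrak{m}R$ is generated by a regular sequence, the standard formula gives $L_{E/R} \simeq (\mathfrak{m}R/\mathfrak{m}^2 R)[1]$, concentrated in degree $1$. On the other hand, $L_{R/K}$ sits in degree $0$ (the étale localization $K[x] \to R$ together with smoothness of $K[x]/K$ identifies it with $\Omega^1_{R/K}$). The transitivity triangle
\[
L_{R/K} \otimes^L_R E \to L_{E/K} \to L_{E/R}
\]
then pins $L_{E/K}$ in homological degrees $0$ and $1$, as required.
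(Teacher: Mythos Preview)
Your proof is correct. Part (1) follows the same path as the paper: use the containment $W_0 \subset E^\circ$ to exhibit monic equations for the generators, conclude that $W_0$ is a finitely presented flat (in fact finite free) $V$-algebra, and apply Lemma~\ref{CCValFinite}. You spell out the convergence argument for finiteness more carefully than the paper, which simply asserts that ``choosing monic equations defining generators of $W_0$ shows that any such $W_0$ is a finitely presented flat $V$-algebra.''

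For part (2) you take a genuinely different route. The paper proves the stronger statement that $L_{E/K}$ lies in degrees $0,1$ for \emph{any} field extension $E/K$, by writing $K$ as an ind-smooth algebra over its (perfect) prime subfield and using transitivity. Your argument instead exploits that $E/K$ is finite: you realize $E$ as the residue field of a regular local $K$-algebra $R$, so that $L_{E/R}$ sits in degree $1$ (regular sequence) and $L_{R/K}$ in degree $0$ (localization of a smooth algebra), and the triangle for $K \to R \to E$ finishes. Your approach is more elementary and self-contained for the case at hand; the paper's approach yields a more general statement but invokes generic smoothness over perfect fields as a black box.
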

\begin{proof}
For (1):  since $W_0 \subset E^\circ$, choosing monic equations defining generators of $W_0$ shows that any such $W_0$ is a finitely presented flat $V$-algebra. Lemma~\ref{CCValFinite} then implies that $L_{W_0/V}$ is pseudocoherent and thus already derived $t$-complete, so $L_{W_0/V} \simeq L^{an}_{W_0/V}$. The last part of (1) follows by inverting $t$ and noting that formation of cotangent complexes commutes with localization.

(2) follows from (1) and a general fact: the cotangent complex of {\em any} extension of fields only has homology in degrees $1$ and $0$. Indeed, this follows from transitivity triangles and the fact that any field is ind-smooth over its prime subfield (which is perfect) by generic smoothness in algebraic geometry.
\end{proof}

We need the following result later.

\begin{theorem}[Quillen]
\label{QuillenRegCrit}
Fix a noetherian ring $A$ and a maximal ideal $\mathfrak{m}$ with residue field $k = A/\mathfrak{m}$. If $L_{k/A}$ is concentrated in degree $-1$, then $A$ is regular at $\mathfrak{m}$. 
\end{theorem}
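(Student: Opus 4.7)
The plan is to reduce to a complete intersection computation using Cohen structure and the Jacobi--Zariski triangle. First I would pass to the $\mathfrak{m}$-adic completion $\hat A$: by flat base change we have $L_{k/\hat A} \simeq L_{k/A} \otimes^L_A \hat A$, and both the hypothesis and the conclusion are insensitive to completion, so we may assume $A$ is complete. By the Cohen structure theorem I would then write $A = R/I$ with $(R, \mathfrak{m}_R)$ a regular complete local ring and $I \subset \mathfrak{m}_R^2$, setting $c := \dim_k \mathfrak{m}_R/\mathfrak{m}_R^2 - \dim_k \mathfrak{m}_A/\mathfrak{m}_A^2$.

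The main input is then the Jacobi--Zariski cofiber sequence
\[ L_{A/R} \otimes^L_A k \longrightarrow L_{k/R} \longrightarrow L_{k/A} \]
in $D(k)$. Since $R$ is regular, a Koszul computation identifies $L_{k/R}$ with $\mathfrak{m}_R/\mathfrak{m}_R^2[1]$, while $L_{k/A}$ lives in a single (homological) degree $1$ by hypothesis. Reading off the long exact sequence, I would deduce that $L_{A/R} \otimes^L_A k$ is also concentrated in homological degree $1$, with $H_1$ identified with the kernel $V := \ker(\mathfrak{m}_R/\mathfrak{m}_R^2 \to \mathfrak{m}_A/\mathfrak{m}_A^2)$, a $k$-vector space of dimension $c$.

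Next I would apply derived Nakayama. Because $R \to A$ is a surjection of noetherian rings, $L_{A/R}$ is pseudocoherent and supported in homological degrees $\geq 1$, so the hypothesis that $L_{A/R} \otimes^L_A k$ sits only in degree $1$ with $H_1$ of $k$-dimension $c$ forces $L_{A/R}$ itself to be concentrated in degree $1$ with $H_1 = I/I^2$ finite free of rank $c$ over $A$. This derived-Nakayama step, propagating a condition from the residue field to the whole ring, is what I expect to require the most care, since one needs the standard fact that a connective pseudocoherent complex whose derived reduction modulo $\mathfrak{m}$ is free in a single degree must itself be free in that degree.

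Finally, the classical characterization of regularly embedded ideals via the cotangent complex (Quillen/Lichtenbaum--Schlessinger) shows that $L_{A/R} \simeq I/I^2[1]$ with $I/I^2$ free of rank $c$ implies $I$ is generated by a regular sequence of length $c$; equivalently, the Koszul complex on lifts of any basis of $I/I^2$ resolves $A$ over $R$. Thus $A = R/I$ is a complete intersection of codimension $c$, so $\dim A = \dim R - c$. Combining this with the identity $\dim_k \mathfrak{m}_A/\mathfrak{m}_A^2 = \dim R - c$ obtained from the kernel computation above gives $\dim A = \dim_k \mathfrak{m}_A/\mathfrak{m}_A^2$, i.e.\ $A$ is regular at $\mathfrak{m}$.
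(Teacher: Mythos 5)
Your argument is correct, but it takes a longer and more self-contained route than the paper, which simply quotes Quillen's Corollary 10.5: $H_2(L_{k/A})=0$ implies $\mathfrak{m}$ is generated by a regular sequence, and a noetherian local ring whose maximal ideal is generated by a regular sequence is regular. Each step of your reduction checks out: the localization/completion step via flat base change, the computation $L_{k/R}\simeq \mathfrak{m}_R/\mathfrak{m}_R^2[1]$ for $R$ regular, the long exact sequence identifying $L_{A/R}\otimes^L_A k$ with $V[1]$, the derived Nakayama upgrade to $L_{A/R}\simeq A^{\oplus c}[1]$ (which is indeed the step needing care, since $H_2(L_{A/R}\otimes^L k)=0$ alone does not give $H_2(L_{A/R})=0$ --- you need the full freeness statement to kill the $\mathrm{Tor}_1(H_1,k)$ contribution), and the dimension count at the end. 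What you should notice, however, is that your final input --- ``$L_{A/R}\simeq I/I^2[1]$ with $I/I^2$ free implies $I$ is generated by a regular sequence'' --- is essentially the \emph{same} theorem of Quillen/Andr\'e that the paper cites, merely applied to the ideal $I\subset R$ rather than directly to $\mathfrak{m}\subset A$ (where the freeness of the conormal module is automatic). So the Cohen presentation, transitivity triangle, and derived Nakayama are extra bookkeeping that relocate rather than remove the hard input; the payoff of your version is only that it makes the complete-intersection structure of $A$ explicit. One small inconsistency in the write-up: you cannot always arrange $I\subset\mathfrak{m}_R^2$ in mixed characteristic (when $p\in\mathfrak{m}_A^2$ the class of $p$ is forced into the kernel $V$), and when you \emph{can}, your $c$ is $0$ and the argument collapses to ``$L_{A/R}\otimes^L k=0$, hence $I=0$.'' Fortunately your argument never uses the containment $I\subset\mathfrak{m}_R^2$ --- it works verbatim for an arbitrary Cohen presentation with general $c$ --- so you should simply drop that clause.
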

\begin{proof}
\cite[Corollary 10.5]{QuillenCCNotes} shows that $\mathfrak{m}$ is generated by a regular sequence if $H_2(L_{k/A}) = 0$. As the maximal ideal of a noetherian local ring is generated by a regular sequence exactly when the ring is regular, the claim follows. 
\end{proof}

The following lemma will be useful later as well.

\begin{lemma}
\label{noethlemma}
Say $A$ is a reduced Jacobson noetherian ring. Let $M \to N$ be a map of finitely generated $A$-modules such that $M/\mathfrak{m}M \to N/\mathfrak{m}N$ is injective for every maximal ideal $\mathfrak{m}$. If $M$ is a locally free $A$-module, then $M \to N$ is injective.
\end{lemma}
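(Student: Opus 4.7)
The plan is to proceed by contradiction: assuming the kernel of $M \to N$ contains a nonzero element $v$, I will exhibit a maximal ideal $\mathfrak{m}$ at which $v$ still represents a nonzero class in $M/\mathfrak{m}M$, contradicting the hypothesis that $M/\mathfrak{m}M \hookrightarrow N/\mathfrak{m}N$.

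Given such a $v$, I would study the ``nonvanishing locus''
\[
U := \{ x \in \mathrm{Spec}(A) : v|_x \neq 0 \text{ in } M \otimes_A \kappa(x) \}.
\]
Openness of $U$ is a straightforward local computation using local trivializations of $M$: on a principal open $\mathrm{Spec}(A[1/f])$ where $M[1/f] \cong A[1/f]^n$ and $v$ corresponds to a tuple $(a_1,\ldots,a_n)$, the intersection $U \cap \mathrm{Spec}(A[1/f])$ equals $\mathrm{Spec}(A[1/f]) \setminus V(a_1,\ldots,a_n)$. For nonemptiness, I would invoke the standard fact that $\mathrm{Ass}(M) \subseteq \mathrm{Ass}(A)$ when $M$ is locally free; combined with reducedness of $A$ (which forces $\mathrm{Ass}(A) = \mathrm{Min}(A)$), this yields an embedding $M \hookrightarrow \prod_{\mathfrak{p} \in \mathrm{Min}(A)} M_\mathfrak{p}$. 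Each $A_\mathfrak{p}$ appearing here is already a field, so $M_\mathfrak{p} = M \otimes_A \kappa(\mathfrak{p})$, and $v \neq 0$ forces $v|_\mathfrak{p} \neq 0$ for at least one minimal prime $\mathfrak{p}$, putting $\mathfrak{p}$ inside $U$.

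With $U$ a nonempty open subset of $\mathrm{Spec}(A)$, the Jacobson hypothesis guarantees that $U$ contains a closed point, i.e., a maximal ideal $\mathfrak{m}$, since closed points are dense in every nonempty open subset of a Jacobson spectrum. By construction $v$ has nonzero image in $M/\mathfrak{m}M$, while $\phi(v) = 0$ forces this image to lie in the kernel of $M/\mathfrak{m}M \to N/\mathfrak{m}N$, contradicting the hypothesis. I do not anticipate any serious obstacle; the only mildly subtle ingredient is the associated-primes identification, which is a routine consequence of local freeness and reducedness.
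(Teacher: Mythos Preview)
Your proof is correct and rests on the same core idea as the paper's: in a reduced Jacobson ring, the intersection of all maximal ideals is zero, so a nonzero element must survive modulo some maximal ideal. The paper's version is simply the direct (non-contrapositive) form and is more elementary: after localizing so that $M = A^{\oplus n}$ is free, a kernel element $x=(x_1,\dots,x_n)$ has each $x_i \in \bigcap_{\mathfrak{m}} \mathfrak{m} = \mathrm{nil}(A) = 0$, done. Your detour through $\mathrm{Ass}(M)$ and minimal primes is correct but unnecessary---once you trivialize locally, nonemptiness of $U$ is just the statement that a nonzero element of a reduced ring is not nilpotent.
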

\begin{proof}
The question is local, so we can assume $M = A^{\oplus n}$ is a free module. Let $x = (x_1,...,x_n) \in M$ lie in the kernel of $M \to N$. The hypothesis implies that $x_i \in \mathfrak{m}$ for every maximal ideal $\mathfrak{m} \subset A$. But the intersection of all maximal ideals of $A$ is its nilradical (as $A$ is Jacobson) which is $0$ (as $A$ is reduced). So $x = 0$, as wanted.
\end{proof}

 \subsection{Regularity of Shilov fibers}
 
 In this subsection, we establish the key technical ingredient behind our generic smoothness result, using the analytic cotangent complex.
 
 \begin{notation}
 Fix a complete nonarchimedean field $C$ with valuation ring $\mathcal{O}_C \subset C$ and a pseudouniformizer $t \in \mathcal{O}_C$.\footnote{Contrary to modern notational conventions, we are not assuming $C$ is algebraically closed. We hope this causes no confusion.}
 \end{notation}

Our goal is the following.

\begin{theorem}
\label{keystep}
Let $T \to R$ be a map of tfp $\mathcal{O}_C$-algebras. Assume $\mathrm{Spa}(R[1/t])$ is smooth over $C$. Let $V$ denote the $t$-completed Zariski local ring of $T$ at a generic point $\eta \in \mathrm{Spec}(T/t) \subset \mathrm{Spec}(T)$, and set $R_V = R \widehat{\otimes}_T V$. Then $R_V[1/t]$ is regular.
\end{theorem}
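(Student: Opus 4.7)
The plan is to apply Quillen's regularity criterion (Theorem~\ref{QuillenRegCrit}) in combination with a cotangent complex analysis built from the smoothness hypothesis on $R$. Since $R_V$ is tfp over $V$---as the $t$-completion of the finitely presented base change of the tfp map $T\to R$---the ring $R_V[1/t]$ is a $K_x$-affinoid algebra, where $K_x := V[1/t]$ is the nonarchimedean field at the weakly Shilov point corresponding to $\eta$. In particular, $R_V[1/t]$ is noetherian and Jacobson. It therefore suffices to fix a maximal ideal $\mathfrak{m} \subset R_V[1/t]$ with residue field $\kappa$ (a finite extension of $K_x$) and show $L_{\kappa/R_V[1/t]}$ is concentrated in homological degree one.

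To compute $L_{\kappa/R_V[1/t]}$, I would pass through two transitivity triangles. The first, for $\mathcal{O}_C \to V \to R_V$, controls $L^{an}_{R_V/V}[1/t] = L_{R_V[1/t]/K_x}$ via $L^{an}_{R_V/\mathcal{O}_C}[1/t]$ and $L^{an}_{V/\mathcal{O}_C}[1/t]$. For the former, the vanishing $L^{an}_{V/T} = 0$ (since $V$ is a $t$-completed Zariski localization of $T$) together with base change gives
\[
L^{an}_{R_V/\mathcal{O}_C} \simeq L^{an}_{R/\mathcal{O}_C}\widehat{\otimes}^L_R R_V,
\]
so by GRCC (Theorem~\ref{GRCC}) and the smoothness of $R[1/t]/C$, the module $L^{an}_{R_V/\mathcal{O}_C}[1/t]$ is finite projective over $R_V[1/t]$ of rank $d := \dim R[1/t]$. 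For the latter, the weak Shilov structure of $\eta$ (Proposition~\ref{ShilovAffd}), combined with the regularity of Shilov points (Corollary~\ref{ShilovReg}), implies that $L^{an}_{V/\mathcal{O}_C}[1/t]$ is a finite projective $K_x$-module of rank $n := \dim_x \Spa(T[1/t])$.

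The second transitivity, for $K_x \to R_V[1/t] \to \kappa$, reads
\[
L^{an}_{R_V/V}[1/t]\otimes_{R_V[1/t]}\kappa \to L_{\kappa/K_x} \to L_{\kappa/R_V[1/t]} \to [1].
\]
Since $L_{\kappa/K_x}$ is supported in degrees $0$ and $1$ by Corollary~\ref{NACC}, a long exact sequence chase reduces the desired concentration of $L_{\kappa/R_V[1/t]}$ in degree one to two conditions: (a) the induced map $H_0(L^{an}_{R_V/V}[1/t]\otimes\kappa) \to \Omega^1_{\kappa/K_x}$ is surjective, and (b) the induced map $H_1(L^{an}_{R_V/V}[1/t]\otimes\kappa) \to H_1(L_{\kappa/K_x})$ is injective. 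Condition (a) is automatic because $R_V[1/t] \twoheadrightarrow \kappa$ is a surjection of $K_x$-algebras and Kähler differentials are right exact.

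The main obstacle will be condition (b). I would attack it by first using Lemma~\ref{noethlemma} on the noetherian Jacobson ring $R_V[1/t]$ to upgrade fiber-wise injectivity to injectivity of the underlying module map, then unpacking both $H_1$'s via explicit Koszul-type presentations of the relevant quotients. In characteristic zero, $\kappa/K_x$ is automatically separable, $H_1(L_{\kappa/K_x}) = 0$, and the injectivity becomes a differential expression of smoothness of $R_V[1/t]/K_x$ at $\mathfrak m$. In positive residue characteristic, both $H_1$'s can be nontrivial (as in the model $R_V[1/t] = K_x(y^{1/p})$), and one must match them via the inseparability of $\kappa/K_x$; the smoothness of $R/C$ is exactly what prevents ``extra'' relations from being introduced along $T \to R$, ensuring every obstruction class in the source $H_1$ traces back to an inseparability obstruction in the target. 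This differential approach is the distinctive content of the proof, and differs from the function-theoretic route taken in Ducros' work as noted in Remark~\ref{Ducrosrmk}.
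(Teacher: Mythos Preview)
Your overall strategy (Quillen's criterion plus a cotangent-complex analysis) matches the paper's, but the specific transitivity triangles you choose create difficulties that the paper avoids, and two of your key steps are not justified.

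First, your claim that $L^{an}_{V/\mathcal{O}_C}[1/t]$ is a finite projective $K_x$-module is not supported by Corollary~\ref{ShilovReg}. That corollary only says the local ring of $\Spa(T[1/t])$ at a Shilov point is regular \emph{when $T[1/t]$ is reduced}; neither reducedness of $T[1/t]$ nor the passage from regularity to projectivity of the cotangent complex (which fails in positive characteristic) is available to you. The paper handles this by first replacing $T$ with a finite Noether-normalization $T' = \mathcal{O}_C\langle x_1,\dots,x_n\rangle$, which is formally smooth over $\mathcal{O}_C$; then $L^{an}_{T'/\mathcal{O}_C}$ is finite free in degree $0$, and since $V$ is a $t$-completed localization of $T'$, so is $L^{an}_{V/\mathcal{O}_C}$. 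You omit this reduction entirely.

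Second, your condition (b) --- injectivity of $H_1(L^{an}_{R_V/V}[1/t]\otimes\kappa)\to H_1(L_{\kappa/K_x})$ --- is the entire content of the theorem, and you do not prove it. The paragraph invoking Lemma~\ref{noethlemma} and ``matching inseparability obstructions'' is a hope, not an argument; Lemma~\ref{noethlemma} concerns injectivity of module maps over Jacobson rings and has no obvious bearing here. Unwinding your triangles, condition (b) is exactly the vanishing of $H_2(L_{\kappa/R_V[1/t]})$, which is what you set out to prove.

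The paper sidesteps both issues by a different pair of triangles. Writing $W_0\subset E^\circ$ for the image of $R_V$ in the residue field $E=\kappa$, the transitivity triangle for $\mathcal{O}_C\to R_V\to W_0$ has $L^{an}_{R_V/\mathcal{O}_C}[1/t]\otimes E$ as its \emph{first} term, and this is projective in degree $0$ by the smoothness of $R[1/t]/C$. This immediately reduces the problem to showing $L^{an}_{W_0/\mathcal{O}_C}[1/t]$ lives in degrees $0,1$; no injectivity statement is needed. That in turn follows from the triangle for $\mathcal{O}_C\to V\to W_0$: the first term is free (by the Noether-normalization reduction) and the third is in degrees $0,1$ by Corollary~\ref{NACC}. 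The point is that placing the ``known projective'' term first in the triangle gives a pure degree count, whereas your ordering forces you to control a kernel on $H_1$ that you cannot access directly.
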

\begin{proof}

Our strategy is to first simplify $T$, and then argue that $R_V[1/t]$ is regular using the cotangent complex.

First, observe that the statement of the theorem is Zariski local around $\eta \in \mathrm{Spf}(T)$, so we may shrink $\mathrm{Spf}(T)$ around $\eta$ to assume $\mathrm{Spec}(T/t)$ is irreducible with generic point $\eta$. If $T' = \mathcal{O}_C \langle x_1,...,x_n \rangle \to T$ is a finite injective map, then $\mathrm{Spec}(T/t) \to \mathrm{Spec}(T'/t)$ is finite and surjective. As both these schemes are irreducible, the image of the generic point $\eta \in \mathrm{Spec}(T/t)$ is the generic point $\eta' \in \mathrm{Spec}(T'/t)$, and $\eta$ is the unique preimage of $\eta'$. In particular, we must have $T \widehat{\otimes}_{T'} V' \simeq V$ by base change. But then $R \widehat{\otimes}_{T'} V' \simeq R_V$. Thus, we may replace $T$ with $T'$ to assume that $T$ is a standard Tate algebra over $\mathcal{O}_C$.  In particular, $T$ is formally smooth over $\mathcal{O}_C$. In this case, we have $T = T[1/t]^{\circ}$, so $V$ is actually a $t$-complete and $t$-torsionfree rank one valuation ring (either by explicit calculation, or as explained in Proposition~\ref{ShilovAffd}).

Next, we describe $L^{an}_{V/\mathcal{O}_C}$. By definition, the ring $V$ is a $t$-completed ind-Zariski localization of $T$. As the formation of the $t$-completed cotangent complex is compatible with $t$-completed localizations and $t$-completed filtered colimits, we learn that $L^{an}_{V/\mathcal{O}_C} \simeq L^{an}_{T/\mathcal{O}_C} \widehat{\otimes}_T^L V$. But $T/\mathcal{O}_C$ is formally smooth, so $L^{an}_{T/\mathcal{O}_C}$ is a finite projective $T$-module placed in degree $0$, whence  $L^{an}_{V/\mathcal{O}_C}$ is a finite free $V$-module placed in degree $0$. 

We now begin proving the theorem. Since $V$ is a $t$-complete and $t$-torsionfree rank $1$ valuation ring, the ring $K := V[1/t]$ is a nonarchimedean field extension of $C$ with $K^\circ  = V$. As $R_V$ is a tfp $V$-algebra, we can (and will) regard $R_V[1/t]$ as an affinoid $K$-algebra. To show regularity of $R_V[1/t]$, it is enough to show that the local rings of $R_V[1/t]$ at all closed points are regular: the regular locus in $R_V[1/t]$ is open (as affinoid $K$-algebras are noetherian and excellent) and the maximal ideals are dense (as affinoid $K$-algebras are Jacobson). A closed point is given by a quotient $R_V[1/t] \to E$ where $E/K$ is a finite extension; fix such a point. By Theorem~\ref{QuillenRegCrit}, it suffices to show that $L_{E/R_V[1/t]}$ has homology only in degrees $0,1$. (In fact, there is no $H_0$ as $R_V[1/t] \to E$ is surjective, but it will be convenient to formulate things this way.) Let $W_0 \subset E^\circ$ be the image of $R_V$ under this map. As $E^\circ$ is the integral closure of $V$ in $E$, the ring $W_0$ is a finitely presented finite flat $V$-algebra. Since the map $R_V \to W_0$ is a surjection of tfp $V$-algebras, we have $L_{W_0/R_V} \simeq L^{an}_{W_0/R_V}$ by Theorem~\ref{GRCC} (3), and hence $L_{E/R_V[1/t]} \simeq L^{an}_{W_0/R_V}[1/t]$ by inverting $t$, so it is enough to show that $L^{an}_{W_0/R_V}[1/t]$ has homology only in degrees $0,1$. Consider the transitivity triangle for $\mathcal{O}_C \to R_V \to W_0$:
\begin{equation}
\label{cc0}
 L^{an}_{R_V/\mathcal{O}_C} \widehat{\otimes}_{R_V}^L W_0 \to L^{an}_{W_0/\mathcal{O}_C} \to L^{an}_{W_0/R_V}.
 \end{equation}
As $R \to R_V$ is a $t$-completed Zariski localization, we have 
\[ L^{an}_{R/\mathcal{O}_C} {\otimes}_R^L R_V \simeq  L^{an}_{R/\mathcal{O}_C} \widehat{\otimes}_R^L R_V \simeq  L^{an}_{R_V/\mathcal{O}_C},\] 
where the first isomorphism is from the pseudocoherence of $L^{an}_{R/\mathcal{O}_C}$ coming from Theorem~\ref{GRCC} (1). The same reasoning also allows us to drop the completion on the leftmost term in \eqref{cc0}. Inverting $t$ in \eqref{cc0} then gives a triangle
\begin{equation}
\label{cc1}
 L^{an}_{R_V/\mathcal{O}_C}[1/t] {\otimes}_{R_V[1/t]}^L E \to L^{an}_{W_0/\mathcal{O}_C}[1/t] \to L^{an}_{W_0/R_V}[1/t].
 \end{equation}
Our task was to show that the term on the right has homology only in degrees $0,1$. The term on the left is a finite projective $E$-module in degree $0$: indeed, $L^{an}_{R/\mathcal{O}_C}[1/t]$ is a finite projective $R[1/t]$-module by Theorem~\ref{GRCC} (2) and the smoothness assumption on $\mathrm{Spa}(R[1/t])$, and we have $L^{an}_{R/\mathcal{O}_C} \otimes_{R[1/t]}^L R_V[1/t] \simeq L^{an}_{R_V/\mathcal{O}_C}[1/t]$ by the reasoning explained above. By the long exact sequence, it thus suffices to check that $L^{an}_{W_0/\mathcal{O}_C}$ has homology only in degrees $0,1$. For this, we consider transitivity triangle for $\mathcal{O}_C \to V \to W_0$:
\[ L^{an}_{V/\mathcal{O}_C} \widehat{\otimes}_V^L E \to L^{an}_{W_0/\mathcal{O}_C}[1/t] \to L^{an}_{W_0/V}[1/t].\]
Now $L^{an}_{V/\mathcal{O}_C}[1/t]$ is a finite free $K$-module as explained previously, so the term on the left is a finite free $E$-module. It remains to observe that $L^{an}_{W_0/V}[1/t]$ has homology in degrees $0,1$ by Corollary~\ref{NACC}.
\end{proof}

\begin{remark}
In Theorem~\ref{keystep}, one cannot strengthen the conclusion from regularity to smoothness. For example, the Frobenius map on the Tate algebra over any characteristic $p$ nonarchimedean field satisfies the hypothesis of Theorem~\ref{keystep}, but does not have a single smooth fibre. 
\end{remark}

\begin{remark}
The proof of Theorem~\ref{keystep} relies on the analytic cotangent complex. When $C$ is discretely valued, it is possible to prove Theorem~\ref{keystep} by avoiding the cotangent complex and using instead Popescu's desingularization theorem. Indeed, one first observes that $R$ is an excellent noetherian ring: by Elkik's theorem, we can write $R$ as the $t$-completion of a finite type $\mathcal{O}_C$-algebra, so $R$ is excellent since $\mathcal{O}_C$ is so. But then the maps $R \to R \otimes_T T_\eta \to R_V$ are regular: the first map is a localization, while the second one is the completion of an excellent ring. By Popescu's theorem, the map $R \to R_V$ is ind-smooth. It follows that $R_V[1/t]$ must be regular since $R[1/t]$ is so. In fact, this reasoning shows that any property of $R[1/t]$ that is local for the smooth topology passes to $R_V[1/t]$. We do not know how to prove the analogous statement in the general case. 
\end{remark}

\subsection{Generic flatness}
Fix a nonarchimedean base field $K$. Our goal in this section is the following theorem.

\begin{theorem}\label{genericflatness}Let $f:X \to Y$ be a quasicompact map of rigid spaces over $K$.  Assume that $Y$ is geometrically reduced. Let $\mathrm{Fl}_{X/Y} \subset Y$ be the maximal open subset such that $X\times_Y U \to U$ is flat. Then $\mathrm{Fl}_{X/Y}$ contains all weakly Shilov points of $Y$. In particular, $\mathrm{Fl}_{X/Y}$ is a dense open subset of $Y$.
\end{theorem}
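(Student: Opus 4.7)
My plan is to reduce the statement to the affinoid setting and then apply classical generic flatness for schemes at the minimal prime of $A$ corresponding to $y$. Since the claim is local on $Y$, I will fix a weakly Shilov $y$ and pick an affinoid $\Spa A \subset Y$ containing $y$ on which $y$ is Shilov (Definition~\ref{def:WeaklyShilov}). Geometric reducedness of $Y$ forces $A$ to be reduced, and by Corollary~\ref{ShilovReg} the point $y$ lies in the regular locus of $\Spec A$, which is Zariski-open in the excellent ring $A$. Shrinking $\Spa A$ to a rational subdomain of $y$ inside the preimage of this regular locus, the local rings of the new affinoid ring at classical points are still regular; Jacobsonness then forces the ring itself to be regular. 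So I may assume $A$ is regular, hence a finite product of regular integral domains indexed by its connected components. The injection $A \hookrightarrow K_y$ then factors through the projection $A \to A_1$ onto the factor containing $y$, so the support $\mathfrak{p}_y := \ker(A \to K_y) = \ker(A \to A_1)$ is a minimal prime of $A$.

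Next, quasicompactness of $f$ implies $f^{-1}(\Spa A)$ is covered by finitely many affinoid opens $\Spa B_i$, with each $B_i$ a finitely generated $A$-algebra. For any minimal prime $\mathfrak{q}$ of the reduced ring $A$, the localization $A_\mathfrak{q}$ is a field, and $B_i \otimes_A A_\mathfrak{q}$ is therefore automatically $A_\mathfrak{q}$-flat. Combining this with classical generic flatness (EGA~IV, 6.9.1) and openness of the flat locus for finitely generated maps of noetherian rings produces a dense Zariski-open $\Omega \subset \Spec A$ on which all $B_i$ are simultaneously $A$-flat and which contains every minimal prime of $A$, in particular $\mathfrak{p}_y$.

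Finally, I will transport this back to the adic setting via the continuous support map $\pi: \Spa A \to \Spec A$ sending a valuation to its kernel: the preimage $V := \pi^{-1}(\Omega)$ is an open of $\Spa A$ containing $y$. For any $x' \in f^{-1}(V) \cap \Spa B_i$ with image $y' \in V$, the scheme-theoretic stalk map $A_{\mathfrak{p}_{y'}} \to (B_i)_{\mathfrak{p}_{x'}}$ is flat by the choice of $\Omega$; the adic stalks $\mathcal{O}_{Y,y'}$ and $\mathcal{O}_{X,x'}$ are filtered colimits of flat rational localizations of these scheme-theoretic local rings, so the analytic stalk map is flat as well. This gives $V \subset \mathrm{Fl}_{X/Y}$, and hence $y \in \mathrm{Fl}_{X/Y}$; density of $\mathrm{Fl}_{X/Y}$ is then immediate from the density of weakly Shilov points. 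I expect the principal subtlety to lie in this final step—passing from scheme-theoretic flatness on $\Omega$ to flatness of the analytic stalk maps on $V$—which hinges on the well-known fact that every rational localization of a tft $K$-algebra is flat over the original algebra.
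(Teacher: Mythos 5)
There is a genuine gap at the heart of your argument: the rings $B_i$ are \emph{not} finitely generated $A$-algebras, only \emph{topologically} finitely generated ones (quotients of $A\langle x_1,\dots,x_n\rangle$, which involves a completion). Consequently neither classical generic flatness (EGA~IV, 6.9.1, which requires $B$ of finite type over a noetherian domain, or at least a module finite over a finite-type $A$-algebra) nor openness of the flat locus (EGA~IV, 11.3.1, which requires finite presentation) applies to $A \to B_i$. The observation that $B_i \otimes_A A_{\mathfrak{q}}$ is flat over the field $A_{\mathfrak{q}}$ is true but cannot be spread out to a Zariski-neighborhood of the minimal prime without exactly these finite-type inputs; producing such a spreading-out for tft algebras is essentially the content of the theorem, so the argument is circular at this point. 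A symptom of the problem is that your proof never uses geometric reducedness beyond plain reducedness of $A$, whereas the hypothesis of geometric reducedness is what the statement actually demands over a general nonarchimedean field.

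The paper's proof circumvents this precisely by moving to a setting where the maps \emph{are} of finite type: using the Bosch--L\"utkebohmert--Raynaud reduced fiber theorem (which is where geometric reducedness enters), one passes to a finite extension $K'/K$ over which $A^\circ_{K'}$ is tfp over $\mathcal{O}_{K'}$ with reduced special fiber; one then applies classical generic flatness to the induced map of finite-type algebras over the residue field $k'$, lifts flatness back to the formal models via the fibral flatness criterion of Fujiwara--Kato, and finally descends to $K$ by a Galois/norm argument. Your reductions in the first paragraph (making $A$ regular, identifying $\ker(A \to K_y)$ with a minimal prime) are fine but do not address this core difficulty; if you want to work at the level of $\mathrm{Spec}(A)$ rather than a formal model, you would need an independent proof of generic flatness for topologically finitely presented algebras, which is not available off the shelf.
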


Observe that even the non-emptiness of $\mathrm{Fl}_{X/Y}$ is not clear a priori. Moreover, if $K$ has characteristic $0$, then it suffices to assume that $Y$ is reduced: as in algebraic geometry, this implies geometric reducedness in characteristic $0$ (see \cite[Lemma 3.3.1]{ConradIrr}).

\begin{lemma} Let $A$ be a ring with a locally nilpotent ideal $J \subset A$, and let $f:R \to S$ be any map of finitely presented flat $A$-algebras. Then $f$ is flat if and only if $\overline{f}: R/J R \to S / J S$ is flat. 
\end{lemma}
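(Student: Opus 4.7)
My plan is to handle the forward direction by flat base change and the reverse direction via the local criterion of flatness. In the trivial direction, I note that if $R \to S$ is flat, then $S/JS \simeq S \otimes_R R/JR$ is flat over $R/JR$ by base change.

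For the reverse direction, I assume $\overline{f}$ is flat and will invoke the local criterion of flatness in its nilpotent-ideal form (which requires no Noetherianity): if $I \subset R$ is a nilpotent ideal, $M$ is an $R$-module with $\mathrm{Tor}_1^R(R/I, M) = 0$, and $M/IM$ is flat over $R/I$, then $M$ is flat over $R$. I will apply this with $M = S$ and $I = JR$.

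The main input to verify is the Tor vanishing $\mathrm{Tor}_1^R(R/JR, S) = 0$. I will establish this via a change-of-rings computation: picking a free $A$-resolution $F_\bullet \to A/J$ and using the $A$-flatness of $R$, the complex $F_\bullet \otimes_A R$ is a free $R$-resolution of $R/JR$. Applying $- \otimes_R S$ and taking homology then identifies
\[
\mathrm{Tor}_*^R(R/JR, S) \;\cong\; H_*(F_\bullet \otimes_A S) \;=\; \mathrm{Tor}_*^A(A/J, S),
\]
which vanishes in positive degrees by the $A$-flatness of $S$. Combined with the hypothesis that $S/JS$ is flat over $R/JR$, the local criterion then yields flatness of $R \to S$ whenever $JR$ is nilpotent, e.g.\ whenever $J$ itself is nilpotent.

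For the general case of locally nilpotent $J$, I will extend the conclusion via a finite-data reduction: failure of the equational criterion of flatness for $R \to S$ would involve only finitely many elements of $R$ and $S$, and (in the lifting through $\overline{f}$) only finitely many elements of $J$. These generate a finitely generated, hence nilpotent, subideal $J_0 \subset J$, and an iterative lifting argument against the powers $J_0^k$ reduces the general case to the nilpotent case already handled. The main obstacle will be implementing this last reduction cleanly; the Tor vanishing and the nilpotent local criterion are both standard ingredients.
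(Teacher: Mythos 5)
Your forward direction and your treatment of the nilpotent case are both fine. The change-of-rings computation $\mathrm{Tor}_*^R(R/JR,S)\cong \mathrm{Tor}_*^A(A/J,S)$ (valid because $R$ and $S$ are $A$-flat) combined with the non-Noetherian local criterion for a nilpotent ideal is a correct, self-contained argument; the paper instead just cites \cite[Proposition 0.8.3.7]{FK} for this step, so your version is if anything more explicit. Note that finite presentation of $R$ and $S$ over $A$ plays no role in your argument so far — only flatness does — which is the first warning sign about the last step.

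The gap is in the passage from nilpotent to locally nilpotent $J$. Your plan is to extract a finitely generated (hence nilpotent) subideal $J_0\subset J$ from the finitely many elements appearing in a given instance of the equational criterion and then ``iterate against powers of $J_0$.'' This does not work as described, for two reasons. First, the nilpotent case cannot be applied with $J_0$ in place of $J$: the hypothesis only gives flatness of $R/JR\to S/JS$, and there is no way to deduce flatness of $R/J_0R\to S/J_0S$ from it. Second, if you instead try to lift relations step by step, each correction is performed modulo the full ideal $J$ and so introduces finitely many \emph{new} elements of $J$; the subideal you are working with grows at every stage, its nilpotency exponent grows with it, and the iteration has no reason to terminate. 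The paper's resolution is different and crucially uses the finite presentation of $R$ and $S$ over $A$: write $A=\colim_i A_i$ as a filtered colimit of finite type $\mathbf{Z}$-algebras, so that each $J_i=J\cap A_i$ is a finitely generated ideal of a Noetherian ring with nilpotent generators, hence nilpotent; descend $f$ to $f_{i}:R_i\to S_i$ at some finite stage; and then use \cite[Th\'eor\`eme 11.2.6]{EGAIV3} twice to spread out both the $A_i$-flatness of $R_i,S_i$ and the flatness of $\overline{f}=\colim \overline{f_i}$ to the flatness of $\overline{f_i}$ for large $i$. The nilpotent case applied over $A_i$ then gives flatness of $f_i$, hence of $f$ by base change. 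Some descent of this kind to a Noetherian (or at least finitely generated) situation is needed; your sketch is missing exactly this mechanism.
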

\begin{proof}``Only if'' is clear. Conversely, suppose $\overline{f}$ is flat. If $J$ is nilpotent (e.g. if $A$ is Artinian), then flatness of $f$ follows from Proposition 0.8.3.7 in \cite{FK}. 

In the general case, write $A$ as a filtered colimit $A \simeq \colim_{i \in I} A_i$ where $A_i \subset A$ is a filtered system of $\mathbf{Z}$-algebras of finite type. Set $J_i = J \cap A_i$, so $J_i \subset A_i$ is a nilpotent ideal. By standard approximation arguments, there is an index $i_0 \in I$ such that $f$ is the base change along $A_{i_0} \to A$ of a map $f_{i_0} : R_{i_0} \to S_{i_0}$ of finitely presented $A_{i_0}$-algebras. For all $i \geq i_0$, write $f_i : R_i \to S_i$ for the evident base change of $f_{i_0}$. By two applications of \cite[Th\'eor\`eme 11.2.6]{EGAIV3}, $R_i$ and $S_i$ are flat $A_i$-algebras for all sufficiently large $i$. 

Next, note that $\overline{f}$ is the colimit of the diagrams $\overline{f_i}: R_i / J_i R_i \to S_i / J_i S_i$. Since $\overline{f}$ is flat by assumption, $\overline{f_i}$ is flat for all sufficiently large $i$ by another application of \cite[Th\'eor\`eme 11.2.6]{EGAIV3}. But $J_i$ is nilpotent, so flatness of $\overline{f_i}$ implies flatness of  $f_i$ by the special case of the lemma treated in the first paragraph of the proof. Therefore $f_i$ is flat for all sufficiently large $i$, so $f$ is flat.
\end{proof}

Recall that an adic ring $A$ admitting a finitely generated ideal of definition $I$ is called \emph{topologically universally (t.u.) rigid-noetherian} if the scheme $\Spec A \left \langle T_1,\dots,T_n \right \rangle \smallsetminus V(I A \left \langle T_1,\dots,T_n \right \rangle)$ is Noetherian for all $n\geq 0$. If $A$ is a tft $K$-algebra, then any ring of definition $A_0 \subset A$ is t.u. rigid-Noetherian.

\begin{proposition} Let $A$ be a t.u. rigid-Noetherian ring, and let $J\subset A$ be any open ideal consisting of topologically nilpotent elements. Let $f: R\to S$ be a morphism of flat and topologically finitely presented $A$-algebras. Then $f$ is flat if and only if $\overline{f}: R/JR \to S/JS$ is flat.
\end{proposition}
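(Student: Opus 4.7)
The ``only if'' direction is immediate from base change, so we focus on the converse. The plan is to reduce to the previous (purely algebraic) lemma by passing modulo powers of an ideal of definition. Fix a finitely generated ideal of definition $I \subset A$, and choose $N \geq 1$ with $I^N \subset J$; this is possible since $J$ is open. For each $n \geq N$, let $A_n := A/I^n$ and write $\overline{J}_n \subset A_n$ for the image of $J$. Since every $j \in J$ is topologically nilpotent, $j^m \in I^n$ for some $m$, so every element of $\overline{J}_n$ is nilpotent in $A_n$; a standard multinomial argument then upgrades this to local nilpotence of $\overline{J}_n$. Both $R_n := R/I^n R$ and $S_n := S/I^n S$ are flat and finitely presented $A_n$-algebras, so the previous lemma applied to the triple $(A_n, \overline{J}_n, R_n \to S_n)$ shows that $R_n \to S_n$ is flat if and only if its further reduction $R_n/\overline{J}_n R_n \to S_n/\overline{J}_n S_n$ is flat. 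Since $I^n \subset J$, this reduction coincides with $\overline{f}: R/JR \to S/JS$, which is flat by hypothesis. Hence $R_n \to S_n$ is flat for all $n \geq N$, and thus for all $n \geq 1$ by further base change along the quotients $A_n \to A_{n'}$ with $n' \leq n$.

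The remaining step is to upgrade flatness of each $R_n \to S_n$ to honest flatness of $R \to S$. Since $R$ and $S$ are tfp over the $I$-adically complete t.u.\ rigid-Noetherian ring $A$, both are $I$-adically complete, and $R$ itself is t.u.\ rigid-Noetherian. Moreover, the $A$-flatness of $R$ yields the change-of-rings identity $\operatorname{Tor}_i^R(S, R/I^n R) \simeq \operatorname{Tor}_i^A(S, A/I^n)$, which vanishes for all $i \geq 1$ by $A$-flatness of $S$. Combined with the flatness of each $R_n \to S_n$, the standard adic local criterion of flatness (in the form available for tfp algebras over t.u.\ rigid-Noetherian bases, cf.\ \cite{FK}) then yields flatness of $R \to S$. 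This last step, which constitutes the main technical point of the argument, is where the t.u.\ rigid-Noetherian hypothesis on $A$ enters essentially (via its propagation to $R$).
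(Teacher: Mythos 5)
Your proof is correct and follows essentially the same route as the paper's: reduce modulo powers of an ideal of definition, apply the preceding lemma over each Artinian-type quotient $A/I^n$ to get flatness of $R/I^nR \to S/I^nS$, and then invoke the adic local criterion of flatness from \cite{FK} (the paper cites Corollary 0.8.3.9 there) to conclude. The only cosmetic difference is that the paper chooses the ideal of definition $I$ inside $J$ at the outset rather than passing to $I^N \subset J$, and it leaves the Tor-vanishing input to \cite{FK} rather than spelling out the change-of-rings computation as you do.
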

\begin{proof} ``Only if'' is clear. For the converse, choose a finitely generated ideal of definition $I \subset A$ contained in $J$; let $\overline{J}_n \subset A/I^n$ be the image of $J$, so $\overline{J}_n$ is locally nilpotent. Since $R/JR \to S/JR$ is flat, the previous lemma implies that $R/I^nR \to S/I^n S$ is flat for all $n\geq 1$. By Corollary 0.8.3.9 in \cite{FK}, we then deduce that $R \to S$ is flat. 
\end{proof}

\begin{proposition}\label{keyflatness} Let $K$ be a nonarchimedean field, and let $f: A \to B$ be a map of tft $K$-algebras with $A$ geometrically reduced. Then there is a (nonempty) rational subset $U \subset \Spa A$ containing the Shilov boundary such that $\Spa B \times_{\Spa A} U \to U$ is flat.
\end{proposition}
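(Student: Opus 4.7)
The plan is to reduce the generic-flatness question to classical commutative algebra on the reduced special fibre of a suitable formal model, exploiting the bijection between weakly Shilov points of $\Spa A$ and generic points of $\Spec \tilde{A}$ provided by the specialization map.

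I begin by choosing open bounded tfp and $\mathcal{O}_K$-flat subalgebras $A_0 \subset A^{\circ}$ and $B_0 \subset B^{\circ}$, together with a tfp $A_0$-algebra structure on $B_0$ realizing $f$ after inverting $t$. Following the construction in the first paragraph of the proof of Proposition~\ref{ShilovAffd}, these can be chosen so that $\Spec(\tilde{A}) \hookrightarrow \Spec(A_0/t)$ is a universal homeomorphism. Since $A$ is geometrically reduced, the finite-type $k$-algebra $\tilde{A}$ is reduced. Grothendieck's classical generic flatness theorem, applied over the reduced Noetherian base $\tilde{A}$ to the morphism $\Spec(B_0 \otimes_{A_0} \tilde{A}) \to \Spec \tilde{A}$, yields a dense Zariski open $V \subset \Spec \tilde{A}$ containing every generic point of $\Spec \tilde{A}$ over which $B_0 \otimes_{A_0} \tilde{A}$ is flat over $\tilde{A}$.

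Next, I pull $V$ back via the specialization map $\ssp \colon \Spa A \to \Spec \tilde{A}$: the open $\ssp^{-1}(V)$ is quasi-compact and contains every weakly Shilov point of $\Spa A$ (since such points are precisely the rank-one points specializing to generic points of $\Spec \tilde{A}$). Using that rational subsets form a neighbourhood basis of quasi-compact opens in an affinoid, I refine to a rational subset $U \subset \Spa A$ containing the entire Shilov boundary. On $U$, the restrictions $A_0' \to B_0'$ remain tfp and $\mathcal{O}_K$-flat, and $B_0' \otimes_{A_0'} \tilde{A}'$ is flat over $\tilde{A}'$ (where $\tilde{A}' = (A_0'/t)_{\mathrm{red}}$). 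By the preceding proposition applied with base ring $\mathcal{O}_K$ and ideal $J = (t)$, checking flatness of $A_0' \to B_0'$ (equivalently, flatness of $\Spa B \times_{\Spa A} U \to U$ after inverting $t$) reduces to checking flatness of the $k$-algebra map $A_0'/t \to B_0'/t$.

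The main obstacle is this last step: bridging flatness over the reduced quotient $\tilde{A}'$ to flatness over the possibly non-reduced nilpotent thickening $A_0'/t$ itself. I would handle this by invoking Raynaud--Gruson flattening-by-blow-up applied to the finite-type morphism $\Spec(B_0'/t) \to \Spec(A_0'/t)$: since the already-flat locus contains all generic points of $\Spec(A_0'/t)$, the R--G centre is a nowhere-dense closed subset, and the resulting blow-up corresponds to a further rational refinement of $U$ that still contains every Shilov point. A more hands-on alternative uses Proposition~\ref{ShilovAffd}(3): at each Shilov point $\eta$, the $t$-completed local ring $K_\eta^+$ is a rank-one valuation ring, so $B_0' \widehat{\otimes}_{A_0'} K_\eta^+$ is $t$-torsion-free by $\mathcal{O}_K$-flatness of $B_0'$ and hence flat over $K_\eta^+$; one then propagates this fibrewise flatness to a Zariski-open neighbourhood in $\Spec(A_0')$ of each specialized Shilov point via openness of the flat locus, and intersects the (finitely many) resulting neighbourhoods.
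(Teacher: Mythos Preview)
Your overall strategy---reduce to classical generic flatness on the special fibre of a formal model and then lift via the ``flat mod $J$ implies flat'' proposition---matches the paper's. But the ``main obstacle'' you flag is genuine, and neither of your proposed fixes closes it.

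First, a small tell: the sentence ``since $A$ is geometrically reduced, the finite-type $k$-algebra $\tilde{A}$ is reduced'' is vacuously true, because $\tilde{A} = (A^\circ/t)^{\mathrm{red}}$ by definition. So the geometric reducedness hypothesis is not being used anywhere in your argument. In the paper it is used precisely to resolve your obstacle: by \cite[Theorem 1.3]{BLR} there is a finite \'etale Galois extension $K'/K$ such that $A_{K'}^{\circ}$ is itself tfp over $\mathcal{O}_{K'}$ and $A_{K'}^{\circ}/K'^{\circ\circ}A_{K'}^{\circ}$ is \emph{genuinely reduced}, not just reduced after passing to $\tilde{A}$. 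One then runs generic flatness directly on this reduced special fibre, picks a single non-zero-divisor $f$, forms the Laurent domain $U(1/\tilde f)$, and applies the previous proposition with $J = K'^{\circ\circ}$; descent back to $K$ is via the norm of $\tilde f$. There is no gap to bridge.

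As for your fixes: approach (b) does not work. The assertion that $B_0' \widehat{\otimes}_{A_0'} K_\eta^+$ is $t$-torsion-free does not follow from $\mathcal{O}_K$-flatness of $B_0'$ alone, since the map $A_0' \to K_\eta^+$ need not be flat and can introduce $t$-torsion in the tensor product. More seriously, even if it held, $t$-torsion-freeness over the rank-one valuation ring $K_\eta^+$ is a statement about the \emph{fibre}, not about flatness of the structure map at $\eta$; after inverting $t$ the fibre sits over the field $K_\eta$, where everything is flat, so this gives no information. Openness of the flat locus is a statement in the source, not in the target, so ``propagating to a neighbourhood of $\ssp(\eta)$'' does not follow. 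Approach (a) via Raynaud--Gruson is not obviously wrong but is not a proof as written: RG flattens the \emph{strict transform} on a blow-up, and translating that into a rational subdomain of $\Spa A$ on which the original $B_0/t$ (not its strict transform) is flat over $A_0/t$ requires care you have not supplied. The BLR base-change trick is both cleaner and the place where the geometric-reducedness hypothesis earns its keep.
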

\begin{proof} 
By (the proof of) Theorem 1.3 in \cite{BLR}, we can find a finite \'etale Galois extension $K'/K$ such that the unit ball $A_{K'}^{\circ} \subset A_{K'} \overset{def}{=} A \otimes_K K'$ is topologically finitely presented over $K'^\circ$ and the special fiber $A_{K'}^{\circ} / K'^{\circ \circ} A_{K'}^{\circ}$ is (geometrically) reduced. Choose an open tfp $K'^\circ$-algebra $B_0 \subset B \otimes_K K'$ such that $(f \otimes_K K')(A_{K'} ^\circ) \subset B_0$. Let $k'$ be the residue field of $K'^\circ$. Then $A_{K'}^{\circ} \otimes_{K'^\circ} k' \to B_0 \otimes_{K'^\circ} k'$ is a map of finite-type $k'$-algebras with reduced source, so there exists a non-zero-divisor $f \in A_{K'}^{\circ} \otimes_{K' \circ} k'$ such that $(A_{K'}^{\circ} \otimes_{K'^\circ} k')[1/f] \to (B_0 \otimes_{K'^\circ} k')[1/f]$ is flat. Choose any lift $\tilde{f} \in A_{K'}^{\circ}$, and let $C$ be the $\pi$-adic completion of $A_{K'}^{\circ}[1/\tilde{f}]$; similarly, let $D$ be the $\pi$-adic completion of $B_0[1/\tilde{f}]$.  Applying the previous proposition with $A=K'^\circ$, $J = K'^{\circ \circ}$, $R=C$, and $S=D$, we deduce that the map $C \to D$ is flat. Then $\Spa C[1/\pi] \to \Spa A_{K'}$ is the inclusion of the Laurent domain $U(\frac{1}{\tilde{f}})$, and $\Spa B_{K'} \times_{\Spa A_{K'}} U(\frac{1}{\tilde{f}}) \cong \Spa D[1/\pi]$ by design, so $\Spa B_{K'} \times_{\Spa A_{K'}} U(\frac{1}{\tilde{f}}) \to U(\frac{1}{\tilde{f}})$ is flat. Moreover, $U(\frac{1}{\tilde{f}})$ contains all the Shilov points of $\Spa A_{K'}$ by construction.

It remains to undo the base change from $K$ to $K'$. For this, let $h \in A^\circ$ be the image of $\tilde{f}$ under the norm map $A_{K'} \to A$, and let $U=U(\frac{1}{h}) \subset \Spa A$ be the associated Laurent domain. We claim that $U$ satisfies the conclusions of the theorem. Indeed, writing $\pi: \Spa A_{K'} \to \Spa A$ for the evident finite \'etale map, it is clear from the definitions that $\pi^{-1}(U) = \cap_{g \in \mathrm{Gal}(K'/K)} U(\frac{1}{\tilde{f}})g$ as subsets of $\Spa A_{K'}$. Since each $g$-translate of $U(\frac{1}{\tilde{f}})$ still contains all Shilov points of $\Spa A_{K'}$, we see that $\pi^{-1}(U)$ contains all Shilov points of $\Spa A_{K'}$, and so $U$ contains all Shilov points of $\Spa A$. Moreover, the results in the previous paragraph show that $f_U: \Spa B \times_{\Spa A} U \to U$ becomes flat after base change along the surjective finite \'etale map $\pi^{-1}(U) \to U$, so $f_U$ is flat by \cite[Proposition 3.1.12]{W}. This concludes the proof.
\end{proof}

\begin{proof}[Proof of Theorem \ref{genericflatness}]
It suffices to check that $\mathrm{Fl}_{X/Y}$ contains an open neighborhood of every weakly Shilov point $y \in Y$. The formation of $\mathrm{Fl}_{X/Y}$ commutes with base change along open immersions $Y' \to Y$ in the evident sense, so we're reduced to showing that if $Y$ is affinoid, then $\mathrm{Fl}_{X/Y}$ contains an open neighborhood of every Shilov point of $Y$.  For this, cover $X$ by finitely many open affinoid subsets $X_i$. Then each $\mathrm{Fl}_{X_i/Y} \subset Y$ contains all Shilov points of $Y$ by Proposition \ref{keyflatness}. Since $\mathrm{Fl}_{X/Y} = \cap_i \mathrm{Fl}_{X_i/Y}$, we deduce that $\mathrm{Fl}_{X/Y}$ contains all Shilov points of $Y$.
\end{proof}

\subsection{Generic smoothness}
 
In this subsection, we combine Theorem \ref{keystep} and Theorem \ref{genericflatness} to prove our main generic smoothness result. We begin by translating Theorem \ref{keystep} into geometric language.

\begin{theorem}\label{keystepgeom} Fix a nonarchimedean field $K$, and let $f:X=\Spa B \to Y = \Spa A$ be a map of $K$-affinoid rigid spaces. Suppose that $X$ is smooth over $\Spa K$. Then for any Shilov point $y \in Y$, the adic fiber $X_y = \Spa(B\widehat{\otimes}_{A} K_y)$ is regular.  In particular, if $K$ has characteristic zero, then $X_y \to \Spa K_y$ is smooth.
\end{theorem}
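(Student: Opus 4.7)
The plan is to deduce this geometric statement directly from the algebraic version, Theorem \ref{keystep}, by choosing rings of definition adapted to the Shilov point $y$. Following the first paragraph of the proof of Proposition \ref{ShilovAffd}, I would first pick an open bounded tfp $\mathcal{O}_K$-subalgebra $T \subset A^\circ$ such that $A^\circ$ is the integral closure of $T$ in $A$ and the closed immersion $\Spec \tilde A \hookrightarrow \Spec T/t$ is a bijection on underlying topological spaces. Since $y$ is a Shilov point, $\ssp(y) \in \Spec \tilde A$ is a generic point of an irreducible component, and so also gives a generic point $\eta \in \Spec T/t$. Next, pick any open bounded tfp $\mathcal{O}_K$-subalgebra $R \subset B^\circ$ containing the image of $T$, yielding a map $T \to R$ of tfp $\mathcal{O}_K$-algebras with $T[1/t] = A$ and $R[1/t] = B$.

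With this setup, the hypotheses of Theorem \ref{keystep} are satisfied (using that $X = \Spa(R[1/t])$ is smooth over $\Spa K$ by assumption), so $R_V[1/t]$ is regular, where $V$ is the $t$-completed local ring of $T$ at $\eta$. It remains to identify $R_V[1/t]$ with $B \hat\otimes_A K_y$. For this, I would observe that, as in the proof of Theorem \ref{keystep} (after Noether normalization), $V$ is a $t$-complete and $t$-torsionfree rank one valuation ring with $V[1/t]$ a nonarchimedean extension of $K$. Using Proposition \ref{ShilovAffd} (3), which identifies $K_y^+$ with the $t$-completed local ring of $A^\circ$ at $\ssp(y)$, together with integrality of $A^\circ$ over $T$ and the fact that $\ssp(y)$ is the unique point of $\Spec \tilde A$ mapping to $\eta$, I would argue that the induced map $V \to K_y^+$ becomes an isomorphism $V[1/t] \cong K_y$ after inverting $t$. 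The universal property of completed tensor products then gives $R_V[1/t] \cong B \hat\otimes_A K_y$ as $K_y$-affinoid algebras, establishing that $X_y$ is regular.

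Finally, in characteristic zero, regularity of $X_y$ should upgrade to smoothness over $\Spa K_y$: every closed point of $X_y$ has residue field a finite and hence separable extension of $K_y$, so the transitivity triangle for the analytic cotangent complex together with Corollary \ref{NACC} forces $L^{an}_{X_y/K_y}[1/t]$ to be concentrated in degree zero at every closed point, and hence on all of $X_y$ by pseudocoherence. The main obstacle in carrying out this plan is the bookkeeping in the second paragraph: keeping track of the various completed local rings and establishing the identification $V[1/t] \cong K_y$ without invoking the Noether normalization shortcut used inside the proof of Theorem \ref{keystep} itself.
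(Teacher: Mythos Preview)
Your approach is correct and matches the paper's: reduce to Theorem \ref{keystep} via the description of $K_y^+$ as a $t$-completed localization in Proposition \ref{ShilovAffd}(3), after choosing compatible tfp rings of definition; the paper's proof is a two-sentence sketch, and you are essentially filling in the bookkeeping it leaves implicit. For the characteristic-zero upgrade, the paper simply invokes the standard fact that a rigid space over a characteristic-zero nonarchimedean field is smooth iff all its local rings are regular, which is quicker than the cotangent-complex argument you outline.
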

\begin{proof}
The first part follows immediately from Theorem~\ref{keystep} thanks to the characterization of Shilov points in Proposition~\ref{ShilovAffd} once one observes that the local rings of $\mathrm{Spa}(R)$ for a regular tft $K$-algebra $R$ are regular. The last statement follows as a rigid analytic space over a characteristic zero nonarchimedean field is smooth exactly when all of its local rings are regular. 
\end{proof}

We also need the following result, relating smoothness and fibral smoothness.

\begin{lemma}\label{fibralsmoothnesswithflatness} Let $f:X \to Y$ be any map of rigid spaces, and let $x \in X$ be any point with image $y = f(x)$. Then the following are equivalent

\begin{enumerate}
\item $f$ is smooth at $x$.
\item $f$ is flat in a neighborhood of $x$ and $X_y = X \times_Y \Spa(K_y, K_y^+) \to \Spa(K_y, K_y^+)$ is smooth at $x$.
\end{enumerate}

\end{lemma}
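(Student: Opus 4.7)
The forward direction $(1) \Rightarrow (2)$ is routine. Smoothness of $f$ at $x$ is a local condition, and by the usual local structure theorem for rigid analytic smooth maps, we can factor $f$ locally around $x$ as $X \to \mathbb{B}^n_Y \to Y$ with the first map \'etale. Both \'etale maps and the projection from a relative ball are flat, so $f$ is flat in a neighborhood of $x$. Moreover, smoothness is stable under arbitrary base change (applied to the map $\Spa(K_y,K_y^+) \to Y$), so $X_y \to \Spa(K_y,K_y^+)$ is smooth at $x$.

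For the nontrivial direction $(2) \Rightarrow (1)$, the statement is local near $x$, so I would first reduce to the case where $X = \Spa B$ and $Y = \Spa A$ are affinoid, $f$ is flat on all of $X$ (by replacing $X$ with the flat locus, which is open and contains $x$), and furthermore, by openness of smoothness on the fiber, $X_y \to \Spa K_y$ is smooth everywhere of some constant relative dimension $n$ in a neighborhood of $x$. Choose open tfp $K^\circ$-subalgebras $A_0 \subset A^\circ$ and $B_0 \subset B^\circ$ fitting into a model for $f$; the aim is to show that $L^{an}_{B_0/A_0}[1/t]$ is a finite projective $B$-module of rank $n$ in a neighborhood of $x$, which together with flatness would force $f$ to be smooth at $x$.

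The plan is to use the analytic cotangent complex to transport the fibral smoothness information to an open neighborhood. Pick a tfp $K_y^+$-model $(B_0)_y$ of $B \widehat{\otimes}_A K_y$ compatible with $B_0$. Smoothness of $X_y$ near $x$ together with Theorem~\ref{GRCC}(2) implies that $L^{an}_{(B_0)_y/K_y^+}[1/t]$ is finite projective of rank $n$. By the flatness hypothesis, there are no higher $\mathrm{Tor}$ contributions in the base change along $A_0 \to K_y^+$, so the natural comparison map
\[ L^{an}_{B_0/A_0} \widehat{\otimes}_{A_0}^L K_y^+ \xrightarrow{\sim} L^{an}_{(B_0)_y/K_y^+} \]
is a quasi-isomorphism. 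Inverting $t$ and using pseudocoherence of $L^{an}_{B_0/A_0}$ from Theorem~\ref{GRCC}(1), I can apply Nakayama's lemma at the local ring of $B$ at $x$ to conclude that $L^{an}_{B_0/A_0}[1/t]$ is concentrated in degree $0$ and locally free of rank $n$ in a neighborhood of $x$. This finite projectivity of the cotangent complex together with flatness of $f$ near $x$ delivers smoothness of $f$ at $x$.

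The main obstacle is justifying the base-change isomorphism for $L^{an}$ under the (topological) flatness hypothesis, especially since the target involves a completed tensor product, and then performing the Nakayama-type passage from the point $x$ to a full neighborhood in the pseudocoherent setting. An alternative (and perhaps more elementary) route would replace the cotangent complex by a direct Jacobian argument: present $B$ locally as $A \langle T_1, \dots, T_N \rangle / I$, use the fibral Jacobian criterion to find elements $g_1, \dots, g_{N-n} \in I$ with Jacobian of maximal rank at $x$, and exploit flatness of $f$ to show these generate $I$ near $x$; this bypasses the cotangent complex but requires careful bookkeeping with completions.
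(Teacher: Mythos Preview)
The paper's own proof is simply a one-line citation to \cite[Lemma 2.9.2]{W}, so any detailed argument you give is necessarily more explicit than what the paper provides. Your forward direction $(1)\Rightarrow(2)$ is fine.

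For $(2)\Rightarrow(1)$ via the analytic cotangent complex, there are two genuine gaps. First, the base change isomorphism you write at the integral level,
\[
L^{an}_{B_0/A_0}\widehat{\otimes}^L_{A_0}K_y^+\xrightarrow{\sim}L^{an}_{(B_0)_y/K_y^+},
\]
is attributed to ``the flatness hypothesis'', but the hypothesis is flatness of $B$ over $A$ (after inverting $t$), not flatness of $B_0$ over $A_0$. The integral models need not be flat, and the completed derived tensor product $B_0\widehat{\otimes}^L_{A_0}K_y^+$ need not coincide with $(B_0)_y$ before inverting $t$. You would have to either work entirely on the generic fibre (which requires setting up the base-change formalism for the rigid-analytic cotangent complex along $A\to K_y$, itself a nontrivial completion issue since $K_y$ is not a localization of $A$) or control the $t$-torsion discrepancy explicitly. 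Second, even granting that $L^{an}_{B_0/A_0}[1/t]$ is finite projective of rank $n$ near $x$, the implication ``flat $+$ cotangent complex finite projective $\Rightarrow$ smooth'' in the rigid-analytic category is not a result stated in the paper: Theorem~\ref{GRCC}(2) only gives the forward direction. You would need to supply this converse, which essentially amounts to producing the \'etale factorization through a polydisc anyway.

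Your ``alternative route'' via the Jacobian criterion is in fact the standard approach and is almost certainly what \cite{W} does: pick $g_1,\dots,g_n\in B$ whose differentials form a basis of $\Omega^1_{X_y/K_y}$ at $x$, use them to define a map $X\to\mathbb{B}^n_Y$ near $x$, and verify this map is \'etale at $x$ by checking it is unramified (immediate from the choice of $g_i$) and flat (using the given flatness of $f$ together with a fibre-dimension argument). This bypasses all the cotangent-complex base-change issues and gives smoothness directly from the definition.
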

\begin{proof} This follows from (the proof of) Lemma 2.9.2 in \cite{W}.
\end{proof}

\begin{theorem}\label{genericsmoothness} Let $f: X \to Y$ be any quasicompact map of rigid spaces in characteristic zero. Assume that $X$ is smooth and $Y$ is reduced. Then there is a dense open subset $U \subset Y$ such that $f^{-1}(U) \to U$ is smooth.
\end{theorem}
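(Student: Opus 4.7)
The strategy is to apply Theorem \ref{genericflatness} twice, first to $f$ and then to the non-smooth locus $Z \subset X$ of $f$, and then exploit the openness of flat morphisms of rigid spaces on their bases to conclude. Working locally on $Y$ and using that $f$ is quasicompact, we reduce to the case where $Y = \Spa A$ and $X = \Spa B$ are affinoid, and in this setup we aim to produce an open $U \subset Y$ containing every weakly Shilov point of $Y$ over which $f$ becomes smooth. Since weak Shilovness is an intrinsic condition by the transcendence-degree characterization of Proposition \ref{WeakShilovChar}(2), such a $U$ automatically contains every weakly Shilov point of $Y$, and finite intersections across a finite affinoid cover of $X$ inherit both openness and this containment property; by the density of weakly Shilov points, any such intersection is dense in $Y$, giving the desired dense open. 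Note that we may invoke Theorem \ref{genericflatness} here because reducedness and geometric reducedness coincide in characteristic zero.

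Applying Theorem \ref{genericflatness} to $f$ produces a dense open $V_1 \subset Y$ containing every weakly Shilov point of $Y$ such that $f$ is flat over $V_1$. Let $Z \subset f^{-1}(V_1)$ be the non-smooth locus of $f|_{f^{-1}(V_1)}$; since $f$ is flat there, smoothness at a point is cut out by the vanishing of an appropriate Fitting ideal of $\Omega^1_{X/Y}$, so $Z$ is Zariski closed in $f^{-1}(V_1)$ and $Z \to V_1$ is a quasicompact morphism of rigid spaces. A second application of Theorem \ref{genericflatness}, now to $Z \to V_1$, yields a dense open $V_2 \subset V_1$ containing every weakly Shilov point of $Y$ such that the base change $Z \times_{V_1} V_2 \to V_2$ is flat. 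By Huber's theorem that flat morphisms of rigid spaces are open on the base, the image $f(Z) \cap V_2$ is an open subset of $V_2$.

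On the other hand, any weakly Shilov point $y \in V_2$ satisfies $y \notin f(Z)$: choosing an affinoid $W \subset Y$ with $y \in W$ Shilov in $W$, Theorem \ref{keystepgeom} applied to the affinoid map $X \times_Y W \to W$ (with smooth source, as an open in $X$) gives that the fiber $X_y$ is smooth over $K_y$, and combining with flatness of $f$ near $X_y$ and Lemma \ref{fibralsmoothnesswithflatness} shows that $f$ is smooth at every point of $X_y$, whence $X_y \cap Z = \emptyset$. Thus $f(Z) \cap V_2$ is an open subset of $V_2$ avoiding a dense subset, so must be empty, and $f$ is smooth over $V_2$. The one input beyond what has been developed earlier is the openness of flat morphisms of rigid spaces on their bases; without it, the pointwise information at weakly Shilov points would only give smoothness on individual Shilov fibers rather than over a full open neighborhood, and this openness is the essential step that converts Theorem \ref{keystepgeom} into the claimed generic smoothness.
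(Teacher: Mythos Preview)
Your proof is correct but takes a genuinely different route from the paper's. The paper argues pointwise: for each weakly Shilov $y \in Y$, it combines Theorem~\ref{keystepgeom} and Lemma~\ref{fibralsmoothnesswithflatness} to see that $f$ is smooth at every point of the fiber $f^{-1}(y)$; then, using openness of the smooth locus in the \emph{source} together with quasicompactness of $f$, it produces (via a tube-lemma-style shrinking) an open neighborhood $U(y) \ni y$ with $f^{-1}(U(y)) \to U(y)$ smooth, and finally takes the union of all the $U(y)$. Your approach instead applies Theorem~\ref{genericflatness} a second time, to the non-smooth locus $Z \to V_1$, and invokes openness of flat morphisms on the \emph{target} to see that $f(Z) \cap V_2$ is open and hence empty by density of weakly Shilov points. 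So your closing remark is not quite accurate: openness of flat maps is not the only way to upgrade smoothness of Shilov fibers to smoothness over an open, since the paper achieves the same via source-side openness of the smooth locus plus a quasicompactness argument. That said, openness of flat maps of rigid spaces is a true fact (deducible from the scheme-theoretic statement via formal models, and present in the rigid-geometry literature), though your attribution to Huber is somewhat vague and you should supply a precise reference.
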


\begin{remark}Consideration of standard examples (for instance, quasi-elliptic fibrations in characteristics $2$ and $3$) shows that no result like this can hold in positive characteristic, not even with the weaker conclusion that $f^{-1}(U) \to U$ is smooth up to a universal homeomorphism.
\end{remark}

\begin{proof} Let $y \in Y$ be any weakly Shilov point. By Theorem \ref{genericflatness}, we can choose some open subset $U(y) \subset Y$ containing $y$ such that $X \times_Y U(y) \to U(y)$ is flat. Moreover, by Theorem \ref{keystepgeom}, the entire fiber $X_y$ is smooth over $y$. Applying Lemma \ref{fibralsmoothnesswithflatness} and the openness of the smooth locus in the source, we deduce that every point $x \in f^{-1}(y)$ admits a quasicompact open neighborhood $W_x$ in $X$ such that $W_x \to Y$ is smooth. Forming a suitable union of the $W_x$'s and using the quasicompacity of $f$, we deduce that there is a quasicompact open neighborhood $W \subset X $ of the fiber $f^{-1}(y)$ such that $W \to Y$ is smooth. Shrinking $U(y)$ further, we may assume by an easy quasicompacity argument that $f^{-1}(U(y)) \subset W$. In particular, $X \times_Y U(y) \to U(y)$ is smooth. 

Since weakly Shilov points are dense in $Y$, setting $U = \cup_{y\,\mathrm{weakly\,Shilov}} U(y)$ concludes the proof.
\end{proof}

In the proper case, we can do even better.

\begin{theorem}\label{genericsmoothnessproper} Let $f:X \to Y$ be a proper map of rigid spaces in characteristic zero, with $X$ smooth and $Y$ reduced. Then the maximal open subset $S_f \subset Y$ over which $f$ becomes smooth is a dense Zariski-open subset.
\end{theorem}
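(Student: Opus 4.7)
The plan is to realize $S_f$ as the complement in $Y$ of the image under $f$ of the non-smooth locus $W \subset X$ of $f$, and then to use the proper mapping theorem in rigid geometry to conclude that $S_f$ is Zariski-open. Density comes essentially for free from Theorem \ref{genericsmoothness}.

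The first step is to show that $W := \{x \in X : f \text{ is not smooth at } x\}$ is Zariski-closed in $X$. Since $X$ is smooth over $K$, the sheaf $\Omega^1_{X/K}$ is locally free of rank $n = \dim X$, and smoothness of $f$ at a point $x$ is equivalent (in characteristic zero) to the analytic cotangent complex $L^{\mathrm{an}}_{X/Y}$ being concentrated in degree zero and locally free of rank $n - \dim_{f(x)} Y$ near $x$. Theorem \ref{GRCC} guarantees pseudocoherence of $L^{\mathrm{an}}_{X/Y}$, and the transitivity triangle for $K \to \mathcal{O}_Y \to \mathcal{O}_X$ expresses this failure in terms of the coherent map $f^* \Omega^1_{Y/K} \to \Omega^1_{X/K}$: the locus where the cokernel is not locally free of the correct rank (and where $H^{-1}$ of $L^{\mathrm{an}}_{X/Y}$ fails to vanish) is cut out by Fitting ideals on pseudocoherent/coherent sheaves, hence is Zariski-closed.

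The second step is to push forward: since $f$ is proper, Kiehl's proper mapping theorem implies that $f(W) \subset Y$ is Zariski-closed. A point $y \in Y$ lies in $S_f$ precisely when every point of the fibre $f^{-1}(y)$ is smooth for $f$, i.e.\ $f^{-1}(y) \cap W = \emptyset$, i.e.\ $y \notin f(W)$. Thus
\[ S_f \;=\; Y \setminus f(W), \]
so $S_f$ is Zariski-open. Density of $S_f$ is then immediate from Theorem \ref{genericsmoothness}, which exhibits a dense (topologically) open subset of $Y$ over which $f$ is smooth, and which is therefore contained in the Zariski-open set $S_f$.

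The main obstacle is the first step: checking that the non-smooth locus on $X$ is genuinely Zariski-closed, as opposed to merely closed in the analytic topology. This requires combining the pseudocoherence input from Gabber--Ramero (Theorem \ref{GRCC}) with a Fitting-ideal analysis of the transitivity triangle, and being careful about the flatness component of smoothness (which, in the presence of a smooth source in characteristic zero, is detected by the same cotangent-complex condition). Once this step is in hand, properness does the remaining work through Kiehl's theorem.
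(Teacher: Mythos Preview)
Your proof is correct and takes the same approach as the paper: show the non-smooth locus $Z \subset X$ is Zariski-closed, use Kiehl's theorem (coherence of $g_*\mathcal{O}_Z$ for $g = f|_Z$, whose support is $f(Z)$) to deduce $f(Z) \subset Y$ is Zariski-closed so that $S_f = Y \setminus f(Z)$ is Zariski-open, and invoke Theorem~\ref{genericsmoothness} for density. The only difference is that the paper simply asserts Zariski-openness of the smooth locus as a standard fact, whereas you flag it as the ``main obstacle'' and sketch a cotangent-complex justification---which works (the criterion ``$\Omega^1_{X/Y}$ locally free and $H^{-1}(L^{\mathrm{an}}_{X/Y})=0$'' does characterize smoothness, and both conditions are cut out by coherent ideals), though your insistence on a specific rank $n - \dim_{f(x)} Y$ is an unnecessary complication.
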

\begin{proof} 
Let $W \subset X$ be the Zariski-open subset where $X \to Y$ is smooth, and let $Z = X-W$ be the complement regarded as a rigid space with its induced reduced structure. The composite morphism $g:Z \to Y$ is proper, so by Kiehl's results $g_{\ast} \mathcal{O}_Z$ is a coherent sheaf on $Y$. Since $\mathrm{Supp}(g_{\ast} \mathcal{O}_Z)=f(Z)$, we deduce that $f(Z) \subset Y$ is Zariski-closed, so then $S_f=Y - f(Z)$ is Zariski-open, and density follows from the previous theorem.
\end{proof}

\begin{remark}[Deducing Theorem~\ref{genericsmoothnessproper} from \cite{Ducros}]
\label{rmk:DucrosProof}
Let us indicate how to prove the Berkovich variant of Theorem~\ref{genericsmoothnessproper}, using results from \cite{Ducros}.\footnote{We thank a referee for encouraging us to flesh out this deduction (and in fact for providing a complete argument).} Precisely, given a nonarchimedean base field $K$ of characteristic $0$, we claim that if $f:X \to Y$ is a proper map of $K$-analytic spaces in the sense of Berkovich with $X$ quasi-smooth and $Y$ reduced, then $f$ is smooth over a dense Zariski-open subset of $Y$. To see this, let $T \subset Y$ be image under $f$ of the non-smooth locus of $f$;  the latter  is Zariski-closed  by \cite[Theorem 10.7.2]{Ducros}, so the former is Zariski-closed by properness (as in the proof of Theorem~\ref{genericsmoothnessproper}). As $f$ is smooth over the Zariski-open subset $Y-T \subset T$, it suffices to show that $Y-T$ is dense in $Y$. In fact, we claim that $Y-T$ contains all the Abhyankar points $y \in Y$. By \cite[Theorem 10.3.7]{Ducros}, the map $f$ is flat at $y$, so by \cite[Theorem 5.3.4]{Ducros}, it suffices to note that $f^{-1}(y)$ is quasi-smooth (or equivalently regular, as $K_y$ has characteristic $0$) by \cite[Theorem 6.3.7]{Ducros}. 
\end{remark}

\section{The six functors for Zariski-constructible sheaves}
\label{sec:SixFun}

In this section, we use the geometric results of \S \ref{sec:gensmooth} to develop the six functor formalism for Zariski-constructible sheaves in rigid  analytic geometry over a characteristic $0$ field.

\subsection{Definition of Zariski-constructible sheaves}

In this subsection we briefly review the definition and basic properties of Zariski-constructible sheaves on rigid analytic spaces. Most of this material is taken from \cite{H}; the exception is Theorem~\ref{ZCLocal}.

\begin{definition}\label{zcdefinition} Let $X$ be a rigid analytic space over a nonarchimedean field $K$, and let $\Lambda$ be a finite commutative ring.

\begin{enumerate}
\item An \'etale sheaf $\mathscr{F} \in \mathrm{Sh}(X,\Lambda)$ is \emph{lisse} there exists an \'etale cover $\{U_i \to X\}$ such that $\mathscr{F}|_{U_i}$ is the constant sheaf associated to a finitely generated $\Lambda$-module. 

\item  A complex $A \in D(X,\Lambda)$ is {\em lisse} if the cohomology sheaves $\mathcal{H}^n(A)$ are lisse for all $n$. We write $D_{lis}(X,\Lambda) \subset D(X,\Lambda)$ for the full subcategory spanned by lisse complexes.

\item  An \'etale sheaf $\mathscr{F} \in \mathrm{Sh}(X,\Lambda)$ is \emph{Zariski-constructible} if $X$ admits a locally finite stratification $X = \coprod_{i \in } X_i$ into Zariski locally closed subsets $X_i$ such that $\mathscr{F}|X_i$ is a lisse sheaf of $\Lambda$-modules for all $i \in I$. We write $\mathrm{Sh}_{zc}(X,\Lambda)$ for the full subcategory of Zariski-constructible sheaves.

\item A complex $A \in D(X,\Lambda)$ is Zariski-constructible if the cohomology sheaves $\mathcal{H}^n(A)$ are Zariski-constructible for all $n$. We write $D_{zc}(X,\Lambda)$ for the full subcategory spanned by Zariski-constructible complexes.
\end{enumerate}
One has the bounded below variant $D^+_{zc}(X,\Lambda)$; similarly for $D^{-}$ and $D^b$. Finally, let $D^{(b)}_{zc}(X,\Lambda) \subset D_{zc}(X,\Lambda)$ denotes the full triangulated subcategory of complexes which are locally bounded; similarly for $D^{(-)}$ and $D^{(+)}$. The natural $\infty$-categorical refinements of $D(X,\Lambda), D^{(b)}_{zc}(X,\Lambda)$, etc. shall be denoted $\mathcal{D}(X,\Lambda), \mathcal{D}^{(b)}_{zc}(X,\Lambda)$, etc., as usual.
\end{definition}

\begin{warning} 
\label{ZCWarning}
Let us record some subtleties concerning this notion.
\begin{enumerate}

\item Given a Zariski-open immersion $j:U \to X$ and a Zariski-constructible sheaf $\mathscr{F}$ on $U$, the extension $j_! \mathscr{F}$ can fail to be Zariski-constructible $X$, unlike the situation in algebraic geometry (see next example). The main problem is that the operation of taking Zariski-closures in $X$ of Zariski-closed subsets of $U$ is poorly behaved in general (e.g., it does something non-trivial over $U$); this issue does not arise if $\mathscr{F}$ is itself locally constant. 

\begin{example}
Let $\mathscr{F}$ be the direct sum of skyscraper sheaves supported at an infinite discrete set of classical points in $(\mathbf{A}^1)^{an}$, and let $j:(\mathbf{A}^1)^{an} \to  (\mathbf{P}^1)^{an}$ be the standard open immersion. Then $j_! \mathscr{F}$ is not Zariski-constructible on $(\mathbf{P}^1)^{an}$: any Zariski-closed set of $(\mathbf{P}^1)^{an}$ must be either finite or all of $(\mathbf{P}^1)^{an}$ by rigid GAGA. 
\end{example}

This phenomenon should not be regarded as a pathology: similar examples occur in complex analytic geometry as well, and are a natural consequence of the non-quasi-compactness of affine space in any kind of analytic geometry.

\item Huber's book \cite{Hub96} defines a notion of ``constructible'' sheaves that is very well-behaved from a topos theoretic perspective. However, these sheaves are typically not Zariski-constructible; for instance, if $j$ is the qcqs open immersion defined by including a closed disc of radius (say) $1/2$ inside a closed disc of radius $1$, then $j_! \Lambda$ is constructible in Huber's sense but is not Zariski-constructible. In fact, the overlap between these two notions is exactly the lisse sheaves.
\end{enumerate}
\end{warning}

Next, we record some simple stability properties of this notion.

\begin{proposition}
\label{ZCStability}
\begin{enumerate}

\item $\mathrm{Sh}_{zc}(X,\Lambda)$ is a weak Serre subcategory of $\mathrm{Sh}(X,\Lambda)$, and $D_{zc}(X,\Lambda)$ is a thick triangulated subcategory of $D(X,\Lambda)$.

\item \emph{(Devissage)}  A sheaf $\mathscr{F} \in \Sh(X,\Lambda)$ is Zariski-constructible iff there is a dense Zariski-open subset $U \subset X$ such that $\mathscr{F}|U$ is lisse and $\mathscr{F}|(X\smallsetminus U)$ is Zariski-constructible.

\item Zariski-constructibility is stable under $f^{\ast}$ for $f$ any morphism of rigid spaces, and under $f_\ast$ for finite morphisms.

\item A sheaf $\mathscr{F} \in \Sh(X,\Lambda)$ is Zariski-constructible iff $\mathscr{F}|X_i$ is Zariski-constructible for all irreducible components $X_i \subset X$.

\end{enumerate}
\end{proposition}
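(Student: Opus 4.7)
My strategy is to handle the four items largely in parallel using the unifying technique of forming common refinements of stratifications. The foundational observation is that finite intersections of Zariski locally closed subsets of $X$ remain Zariski locally closed, so any finite collection of ZC stratifications of $X$ admits a common refinement that is again a locally finite stratification by Zariski locally closed subsets. I would first establish the devissage statement (2): the ``only if'' direction takes $U$ to be the Zariski-open lisse locus of $\mathscr{F}$, which is dense because every irreducible component of $X$ contributes at least one open stratum to any ZC stratification (by local finiteness); the ``if'' direction concatenates the given open stratum $U$ with a ZC stratification of the Zariski-closed complement $X \setminus U$, using that Zariski locally closed subsets of $X \setminus U$ remain Zariski locally closed in the ambient space.

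For (1), given a morphism $\varphi: \mathscr{F} \to \mathscr{G}$ of ZC sheaves, I would pass to a common refinement $\{X_i\}$ of witnessing stratifications, on which both restrictions are lisse. On each stratum, pulling back further to a common étale cover trivializing both $\mathscr{F}|X_i$ and $\mathscr{G}|X_i$ reduces the computation of kernels, cokernels, and extensions to the corresponding operations on finitely generated $\Lambda$-modules; since $\Lambda$ is Noetherian, the results remain finitely generated, so the outputs are lisse on each $X_i$. This gives the weak Serre property, and thickness of $D_{zc}(X,\Lambda)$ then follows from the long exact sequence of cohomology sheaves attached to a distinguished triangle.

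For (3), pullback is immediate since $f^*$ commutes with forming stratifications and preserves lisseness via pullback of étale covers. For finite pushforward along $f: Y \to X$, I would induct on $\dim Y$ using devissage: pick a dense Zariski-open $V \subset Y$ with $\mathscr{F}|V$ lisse, let $Z = Y \setminus V$, and set $W = X \setminus f(Z)$, which is Zariski-open since finite maps are closed. Over $W$, further stratifying so that the restricted finite map makes $f_*\mathscr{F}|W$ lisse on each piece (generic étaleness of finite morphisms) shows this restriction is ZC. Over $X \setminus W$, the induction hypothesis applied to the finite map $Z \to f(Z)$ gives ZC. Devissage (2) then assembles these into ZC on all of $X$. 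For (4), given ZC restrictions $\mathscr{F}|X_i$ to each irreducible component, the common-refinement technique applied to ZC stratifications of each $X_i$ yields a ZC stratification of $X$ for $\mathscr{F}$, using that the irreducible components form a locally finite Zariski-closed cover; the converse follows from pullback stability applied to the closed immersions $X_i \hookrightarrow X$.

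The main obstacle will be the generic étaleness input for finite morphisms in (3): for an arbitrary finite map of rigid spaces, one needs that over a dense Zariski-open of the target, pushforward of a lisse sheaf preserves lisseness (after further stratification). In characteristic zero this follows cleanly from generic smoothness (Theorem~\ref{genericsmoothness}) applied to the finite map; in general one reduces to stratifying the target so that the geometric fiber structure of $f$ is constant on each Zariski locally closed stratum, invoking analogous results for finite morphisms of excellent schemes via local models.
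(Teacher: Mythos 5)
Your proposal is essentially correct, but it is worth noting that it proves more than the paper does and takes a different route where the two overlap. The paper simply cites \cite{H} for items (1)--(3) and only writes out a proof of (4), which it deduces from (1) and (3): it pushes forward along the normalization $f_0:\tilde{X}\to X$ and its fiber product, and exhibits $\mathscr{F}$ as the kernel of a map of Zariski-constructible sheaves via the descent sequence $0\to\mathscr{F}\to f_{0*}f_0^*\mathscr{F}\to f_{1*}f_1^*\mathscr{F}$. Your proof of (4) by refining the stratifications of the components (stratifying by which set $S$ of components a point lies on, i.e.\ by pieces of the form $\bigcap_{i\in S}(\text{stratum of }X_i)\smallsetminus\bigcup_{i\notin S}X_i$, which are Zariski locally closed because locally finite unions of Zariski-closed sets are Zariski-closed) is more elementary and avoids finite pushforward entirely; the paper's route is shorter on the page but leans on the harder input (3). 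Your arguments for (1) and (2) are the standard common-refinement and devissage arguments and are fine, though in (2) the phrase ``every irreducible component contributes an open stratum'' is not literally true (a stratum dense in a component need not be open in $X$); the correct statement is that the union of the closures of the nowhere-dense strata is a nowhere-dense Zariski-closed set whose complement works as $U$.

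The one place where your argument has real soft spots is finite pushforward in (3). First, $(f_*\mathscr{F})|_{f(Z)}$ is computed by base change as the pushforward of $\mathscr{F}|_{f^{-1}(f(Z))}$ along $f^{-1}(f(Z))\to f(Z)$, not of $\mathscr{F}|_Z$ along $Z\to f(Z)$; since $f^{-1}(f(Z))$ is generally strictly larger than $Z$, the induction must be run on the dimension of the target (or on $f^{-1}(f(Z))$, which still has dimension $<\dim Y$ locally), with $\mathscr{F}|_{f^{-1}(f(Z))}$ Zariski-constructible by pullback stability. Second, the Zariski-closedness of $f(Z)$ is not a consequence of $f$ being topologically closed; it follows from coherence of $f_*\mathcal{O}_Z$ (Kiehl), exactly as in the proof of Theorem~\ref{genericsmoothnessproper}. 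Third, ``generic \'etaleness of finite morphisms'' fails in positive characteristic, and the proposition carries no characteristic hypothesis; the fix is that, over a dense Zariski-open of the (reduced) target, a finite map factors up to universal homeomorphism through a finite \'etale cover, and universal homeomorphisms are invisible to the \'etale topos, so pushforward of lisse sheaves is still generically lisse. Your closing sentence gestures at this but does not supply the argument; as stated, the char.~$p$ case of (3) is the only genuinely incomplete step.
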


\begin{proof} (1)-(3) are proved in \cite{H}. For (4), one direction is clear. For the other direction, let $f_0: \tilde{X} \to X$ be the normalization of $X$, and let $f_1: \tilde{X} \times_X \tilde{X} \to X$ be the evident map. The hypothesis guarantees that $f_0^\ast \mathscr{F}$ (and then also $f_1^{\ast} \mathscr{F}$) is Zariski-constructible, since $\tilde{X} = \coprod_i \tilde{X}_i$ is the disjoint union of normalizations of the irreducible components of $X$. Since  $f_0$ and $f_1$ are both finite, pushforward along these maps preserves Zariski-constructibility by (3). The exact sequence $ 0 \to \mathscr{F} \to f_{0 \ast} f_0^{\ast} \mathscr{F} \to f_{1 \ast} f_1^{\ast} \mathscr{F}$ now exhibits $\mathscr{F}$ as the kernel of a map between Zariski-constructible sheaves, so we conclude by (1).
\end{proof}

In view of Warning~\ref{ZCWarning} (1), the following result on the analytic (or even \'etale) locality of the notion of Zariski-constructibility is somewhat surprising:

\begin{theorem}
\label{ZCLocal}
Let $X$ be a rigid space over a characteristic zero nonarchimedean field $K$, equipped with an \'etale $\Lambda$-sheaf $\mathscr{F}$. If there exists an \'etale cover $\{U_i\}$ of $X$ such that $\mathscr{F}|_{U_i}$ is Zariski-constructible, then $\mathscr{F}$ is Zariski-constructible.

In particular, the assignment carrying a rigid space $X$ to the $\infty$-category $\mathcal{D}^{(b)}_{zc}(X,\Lambda)$ is a stack for the \'etale topology.
\end{theorem}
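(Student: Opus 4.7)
The statement is local on $X$ in the analytic topology, since the defining stratification in Definition~\ref{zcdefinition} need only be locally finite. We may thus assume $X$ is affinoid, so the coordinate ring is Noetherian and Zariski-closed subsets of $X$ satisfy the descending chain condition. By Noetherian induction on Zariski-closed subsets combined with the devissage criterion of Proposition~\ref{ZCStability}(2), the problem reduces to producing a dense Zariski-open $V \subset X$ with $\mathscr{F}|_V$ lisse: the hypothesis is inherited by any Zariski-closed subset (the \'etale cover restricts), so the inductive hypothesis handles $X \setminus V$.

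Choose an \'etale cover $f: U \to X$ with $\mathscr{F}|_U$ Zariski-constructible; by quasi-compactness we may take $U$ to be a finite disjoint union of affinoids. Pick a dense Zariski-open $V_U \subset U$ with $\mathscr{F}|_{V_U}$ lisse, and set $Y := U \setminus V_U$, a nowhere-dense Zariski-closed subset. The key geometric step will be to locate a nowhere-dense Zariski-closed subset $Z \subset X$ containing $f(Y)$. Once $Z$ is available, take $V := X \setminus Z$; then $f^{-1}(V) \subset V_U$, so $\mathscr{F}|_{f^{-1}(V)}$ is lisse, and since lisse sheaves form a stack for the \'etale topology, $\mathscr{F}|_V$ is lisse.

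The guiding idea for producing $Z$ is that weakly Shilov points are preserved by \'etale morphisms: if $f$ is \'etale and $f(y)=x$, then $x$ is weakly Shilov in $X$ iff $y$ is weakly Shilov in $U$. Indeed, by Proposition~\ref{WeakShilovChar}(2), this reduces to the invariance of local dimension under \'etale maps together with the fact that the finite separable extension $K_y/K_x$ induces a finite extension $\tilde{K}_y/\tilde{K}_x$ of secondary residue fields, thereby preserving transcendence degree over $k$. Combined with Corollary~\ref{ShilovReg} (whereby $Y$ contains no weakly Shilov point of $U$), this shows $f(Y)$ contains no weakly Shilov point of $X$. To upgrade this avoidance to containment in a nowhere-dense Zariski closed set, I would invoke a rigid-analytic analog of Zariski's Main Theorem: Zariski-locally on $U$ (and after possibly absorbing a nowhere-dense Zariski-closed subset of $X$ into $Z$), we can factor $f$ as $U \hookrightarrow U' \xrightarrow{g} X$ with $U \hookrightarrow U'$ a Zariski-open immersion and $g$ finite \'etale. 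Then $\overline{Y} \subset U'$ (Zariski closure) is nowhere-dense of dimension strictly less than $\dim X$, and $g$ being finite (hence closed) sends $\overline{Y}$ to a Zariski-closed subset of the same dimension. Taking the union over the pieces gives the desired $Z$.

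The main obstacle is this final reduction to a finite \'etale cover via a rigid-analytic Zariski's Main Theorem; modulo such a factorization (or an equivalent construction bypassing it), the rest of the argument is formal, relying on weakly Shilov preservation under \'etale maps and \'etale descent for lisse sheaves.
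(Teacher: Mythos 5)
Your overall skeleton (reduce to finding a dense Zariski-open $V\subset X$ where $\mathscr{F}$ is lisse, handle the complement by induction, use that lisseness is \'etale-local) matches the paper, but the step you yourself flag as the main obstacle is a genuine gap, and it cannot be filled in the form you propose. There is no rigid-analytic Zariski's Main Theorem factoring an \'etale map of affinoids, Zariski-locally on the source, as a \emph{Zariski}-open immersion followed by a finite map. Already the simplest \'etale map --- the inclusion of the disc $\{|T|\le 1/2\}$ into the unit disc --- admits no such factorization: any finite cover $U'$ of the unit disc is a union of irreducible components, each either finite over a point or surjecting onto the disc, and a Zariski-open subset of such a $U'$ cannot have image equal to the small disc. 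Huber's local structure theorem for \'etale maps only produces a factorization through an \emph{adic} open immersion (e.g.\ a rational subdomain), and taking Zariski closures across such immersions is precisely the poorly behaved operation highlighted in Warning~\ref{ZCWarning}~(1). Your weakly Shilov observation is correct but does not substitute for this: knowing that $f(Y)$ avoids all weakly Shilov points of $X$ does not place it inside a nowhere-dense Zariski-closed subset, since the non-weakly-Shilov points (e.g.\ all classical points) are themselves dense and contained in no proper Zariski-closed set. A secondary but real problem is your opening reduction: ``the statement is local on $X$ in the analytic topology'' is itself a special case of the theorem being proved (strata on members of an open cover need not assemble into Zariski-locally-closed strata of $X$), so reducing to affinoid $X$ at the outset is circular.

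The paper's proof closes exactly this gap by making the bad locus \emph{canonical} rather than pushing it forward. One algebraizes $\mathscr{F}|_{U_i}$ to a constructible sheaf $\mathscr{F}_i$ on $\mathrm{Spec}(A_i)$ via Proposition~\ref{schemescomparison1}, takes the \emph{maximal} open $\mathcal{V}_i\subset\mathrm{Spec}(A_i)$ where $\mathscr{F}_i$ is locally constant (Lemma~\ref{MaxLisseOpen}; it contains all generic points, so its complement is nowhere dense), and proves that the formation of this maximal open commutes with pullback along universally generalizing --- in particular flat --- maps of schemes (Lemma~\ref{MaxLisseOpenBC}). Since the algebraizations of \'etale maps of affinoids are flat, the complements $Z_i\subset U_i$ agree on overlaps and hence glue, by \'etale descent for coherent ideal sheaves, to a single nowhere-dense Zariski-closed $Z\subset X$ with $\mathscr{F}|_{X-Z}$ lisse; induction on $\dim$ finishes. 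This descent of the canonical bad locus, performed globally on $X$, is what replaces both your ZMT step and your affinoid reduction. If you want to salvage your approach, you would need to replace the arbitrary choice of $V_U$ by this canonical one and prove a gluing statement for its complement --- at which point you have reproduced the paper's argument.
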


We shall prove this result in \S \ref{ss:AlgZC}.  Finally, the following result is often very useful.

\begin{proposition}\label{Dbzcgenerators}If $X$ is a quasicompact rigid space over a nonarchimedean field $K$ of characteristic zero, then $D^{b}_{zc}(X,\Lambda)$ is the thick triangulated subcategory of $D(X,\Lambda)$ generated by objects of the form $f_{\ast}M$ for $f:Y\to X$ a finite morphism and $M$ a constant constructible $\Lambda$-sheaf on $Y$.
\end{proposition}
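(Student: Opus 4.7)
Let $\mathcal{T}\subseteq D(X,\Lambda)$ denote the thick triangulated subcategory generated by the indicated objects. The containment $\mathcal{T}\subseteq D^{b}_{zc}(X,\Lambda)$ is immediate from Proposition~\ref{ZCStability}: each generator $f_{*}M$ is Zariski-constructible (since $f$ is finite and $M$ is constant constructible), and $D^{b}_{zc}(X,\Lambda)$ is itself a thick triangulated subcategory of $D(X,\Lambda)$.

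For the reverse inclusion, standard truncation reduces us to showing every $\mathscr{F}\in \mathrm{Sh}_{zc}(X,\Lambda)$ lies in $\mathcal{T}$. The plan is to induct on $d=\dim\mathrm{supp}(\mathscr{F})$, with the inductive hypothesis asserting the statement (in its derived form, for any complex in $D^{(b)}_{zc}$ of support dimension $<d$) for every quasicompact rigid space. Replacing $X$ by $\mathrm{supp}(\mathscr{F})$ (pushforward along a closed immersion, which is finite, is harmless), we may assume $\mathrm{supp}(\mathscr{F})=X$ and $d=\dim X$. The base case $d\le 0$ reduces to showing that any lisse sheaf on a finite disjoint union of classical points lies in the analogous $\mathcal{T}$; such a sheaf corresponds to a continuous Galois representation factoring through some finite quotient $G$, and under this correspondence the generators $f_{*}\underline{N}$ translate to permutation $\Lambda[G]$-modules $\Lambda[G/H]\otimes_{\Lambda} N$. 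A representation-theoretic argument---combining finite composition series with the fact that every simple $\Lambda[G]$-module is a composition factor of the regular representation $\Lambda[G]$---shows every finitely generated $\Lambda[G]$-module is built from permutation modules by iterated extensions, and hence lies in $\mathcal{T}$.

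For the inductive step $d\ge 1$, dévissage produces a dense Zariski-open $U\subseteq X$ with $\mathscr{F}|_{U}$ lisse and closed complement $Z:=X\setminus U$ of dimension $<d$. With $j:U\hookrightarrow X$ and $i:Z\hookrightarrow X$ the immersions, consider the localization triangle $i_{*}Ri^{!}\mathscr{F}\to \mathscr{F}\to Rj_{*}j^{*}\mathscr{F}$. The left term lies in $\mathcal{T}$ by the inductive hypothesis: $Ri^{!}\mathscr{F}\in D^{b}_{zc}(Z,\Lambda)$ by Theorem~\ref{ZCIntro}(4) (since $i$ is finite), has support of dimension $<d$, hence lies in $\mathcal{T}_{Z}$; its pushforward along $i$ (which is finite) then lies in $\mathcal{T}_{X}$. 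For the right term, the base-case argument applied componentwise to $U$ shows that $\mathscr{F}|_{U}$ lies in the thick subcategory of $D(U,\Lambda)$ generated by $f_{*}\underline{N}$ for $f:W\to U$ finite étale. Since $Rj_{*}$ preserves cones, shifts, and direct summands, it suffices to verify that $Rj_{*}(f_{*}\underline{N})\in \mathcal{T}_{X}$ for each such generator. Apply \cite[Theorem 1.6]{H} (after a preliminary normalization of $X$ if necessary) to extend $f$ to a finite morphism $\bar{f}:\bar{W}\to X$ with $\bar{f}^{-1}(U)=W$. Then $\bar{f}_{*}\underline{N}\in \mathcal{T}_{X}$ by construction, and proper base change for the finite morphism $\bar{f}$ along the open immersion $j$ identifies the localization triangle for $\bar{f}_{*}\underline{N}$ as
\[ i_{*}Ri^{!}(\bar{f}_{*}\underline{N}) \to \bar{f}_{*}\underline{N} \to Rj_{*}(f_{*}\underline{N}), \]
with both outer terms on the left in $\mathcal{T}_{X}$ (the first by the inductive hypothesis on $Z$, the second by construction); hence $Rj_{*}(f_{*}\underline{N})\in \mathcal{T}_{X}$, closing the induction.

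The hard step will be the representation-theoretic input in the base case, namely establishing that every finitely generated $\Lambda[G]$-module lies in the thick triangulated subcategory of $D^{b}(\Lambda[G])$ generated by permutation modules $\Lambda[G/H]\otimes_{\Lambda} N$ (the trivial case $G=1$ is immediate from $\mathrm{id}_{*}$-pushforwards, but the modular situation requires genuine care). The rest is a dévissage combining the six-functor stabilities already established in the paper---especially for $Rf^{!}$ along finite morphisms and proper base change---with Hansen's theorem \cite[Theorem 1.6]{H} on the extension of finite étale covers.
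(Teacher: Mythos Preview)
Your argument has a circularity problem. You invoke Theorem~\ref{ZCIntro}(4) (that is, Corollary~\ref{finiteshriekpullback}) to conclude $Ri^{!}\mathscr{F}\in D^{b}_{zc}(Z,\Lambda)$, both for the localization triangle of $\mathscr{F}$ and again for that of $\bar{f}_{*}\underline{N}$. But in the paper's logical order, the proof of Corollary~\ref{finiteshriekpullback} rests on Corollary~\ref{openpushforward}, which rests on Theorem~\ref{properdirectimage}, whose proof explicitly invokes Proposition~\ref{Dbzcgenerators}. So you are assuming the proposition you are proving. The paper sidesteps this by using the \emph{other} localization triangle $j_{!}j^{*}\mathscr{F}\to\mathscr{F}\to i_{*}i^{*}\mathscr{F}$: here $i_{*}i^{*}\mathscr{F}$ is Zariski-constructible on dimension grounds alone (Proposition~\ref{ZCStability}), so induction reduces one to showing $j_{!}(\mathscr{F}|_{U})\in\mathcal{T}$ for $\mathscr{F}|_{U}$ lisse, and no $Ri^{!}$ or $Rj_{*}$ is ever needed.

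Your representation-theoretic step is also not justified by the reasoning you give. The claim that every finitely generated $\Lambda[G]$-module lies in the thick subcategory generated by permutation modules is correct, but it does not follow from ``every simple is a composition factor of $\Lambda[G]$'': that observation runs the wrong way, since to isolate one composition factor you would first need the others already in hand. The paper's argument (translated to group theory) is the right one: reduce to $\Lambda=\mathbf{F}_{\ell}$, take a subgroup $H\leq G$ of prime-to-$\ell$ index over which the module has a filtration by trivial representations, and use that the trace splits $M\hookrightarrow\mathrm{Ind}_{H}^{G}\mathrm{Res}_{H}^{G}M$. In sheaf language this is exactly what the paper does: choose a finite \'etale cover $g:U'\to U$ of prime-to-$\ell$ degree trivializing $\mathscr{F}$, realize $\mathscr{F}$ as a summand of an iterated extension of copies of $g_{*}\mathbf{F}_{\ell}$, extend $g$ to a finite cover $f:X'\to X$ via \cite[Theorem~1.6]{H}, and finish with the short exact sequence $0\to f_{*}j'_{!}\mathbf{F}_{\ell}\to f_{*}\mathbf{F}_{\ell}\to (f\circ i)_{*}\mathbf{F}_{\ell}\to 0$, which involves only $j_{!}$ and finite pushforwards.
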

\begin{proof}
By induction on $\dim X$ and devissage, it's enough to show that if $j:U \to X$ is a dense Zariski-open and $\mathscr{F}$ is lisse and killed by a prime $\ell$, then $j_!{\mathscr{F}}$ lies in the claimed subcategory. For this, choose (as in \cite[Tag 0A3R]{Stacks}) a finite \'etale cover $g:U'\to U$ of prime-to-$\ell$ degree such that $g^\ast \mathscr{F}$ is an iterated extension of copies of $\mathbf{F}_\ell$. Then $\mathscr{F}$ is a summand of $g_\ast g^\ast \mathscr{F}$, so $\mathscr{F}$ is a summand of an iterated extension of copies of $g_{\ast}\mathbf{F}_\ell$. Now extend $g$ to a finite cover $f:X' \to X$ as in \cite{H}, so $j_!\mathscr{F}$ is a summand of an iterated extension of copies of $\mathscr{G} = j_!g_{\ast}\mathbf{F}_\ell = f_{\ast} j'_!\mathbf{F}_\ell$, where $j':U' \to X'$ is the evident map. Letting $i:Z\to X'$ be the complement of $j'$, the exact sequence $0 \to f_{\ast} j'_!\mathbf{F}_\ell \to f_{\ast} \mathbf{F}_\ell \to (f \circ i)_\ast \mathbf{F}_\ell \to 0$ shows that $\mathscr{G}$ lies in the desired subcategory.
\end{proof}

\subsection{Zariski-constructible sheaves via algebraic geometry}
\label{ss:AlgZC}

In this subsection, we describe Zariski-constructible sheaves on affinoids purely in terms of algebraic geometry, and deduce that the property of being Zariski-constructible is \'etale local. 

Let $K$ be a characteristic zero nonarchimedean field. Recall from \cite{H} that for any affinoid $K$-algebra $A$ and any scheme $\mathcal{X}$ locally of finite type over $\Spec A$, there is a naturally associated rigid space $X= \mathcal{X}^{an}$ over $\Spa A$, and a natural  map $X_{\et} \to \mathcal{X}_{\et}$ of sites, inducing a $t$-exact pullback functor $\mu_X : D(\mathcal{X},\mathbf{Z}/n) \to D(X,\mathbf{Z}/n)$ carrying $D_c$ into $D_{zc}$. Here we change notation slightly from \cite{H}, and write $(-)^{an}$ interchangeably for $\mu_X^{\ast}(-)$.

\begin{proposition}[Algebraization of Zariski-constructible sheaves over affinoids]
\label{schemescomparison1}

Fix an affinoid $K$-algebra $A$, and write $\mathcal{S} = \Spec\,A$ and $S=\Spa A$.

\begin{enumerate}

\item If $f:\mathcal{X} \to \mathcal{Y}$ is any finite type map of finite type $\mathcal{S}$-schemes, then for any $\mathscr{F} \in D^b_c(\mathcal{X},\mathbf{Z}/n)$ the natural base change map $(Rf_\ast \mathscr{F})^{an} \to Rf^{an}_{\ast}\mathscr{F}^{an}$ is an isomorphism. In particular, $Rf^{an}_{\ast}\mathscr{F}^{an}$ lies in $D^b_{zc}(Y,\mathbf{Z}/n)$.

\item If $\mathcal{X}$ is any finite type $\mathcal{S}$-scheme, the functor $(-)^{an} : D^b_c(\mathcal{X},\mathbf{Z}/n) \to D^b_{zc}(X,\mathbf{Z}/n)$ is fully faithful. If $\mathcal{X}$ is proper over $\mathcal{S}$, it is an equivalence of categories.

\item If $\mathcal{X}$ is any finite type $\mathcal{S}$-scheme, the fully faithful functor $(-)^{an} : D^b_c(\mathcal{X},\mathbf{Z}/n) \to D^b_{zc}(X,\mathbf{Z}/n)$ from (2) identifies the full subcategory of lisse objects on both sides.
\end{enumerate}
\end{proposition}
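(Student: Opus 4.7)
The plan is to establish part (1) first, as (2) and (3) follow from it combined with standard manipulations. Throughout I use that analytification commutes with fiber products of finite type $\mathcal{S}$-schemes and sends proper algebraic morphisms to proper rigid morphisms. For part (1), the strategy is to reduce to two extremal cases via Nagata compactification: writing $f = \overline{f} \circ j$ with $\overline{f}$ proper and $j$ a Zariski-open immersion, it suffices to treat each factor. The proper case should follow from combining algebraic proper base change with rigid proper base change (\cite[\S 4]{Hub96}), with the comparison checked at stalks (the specialization map $\ssp$ reducing things to residue fields of classical points). The open immersion case $j: \mathcal{U} \hookrightarrow \overline{\mathcal{X}}$ is more delicate: I would apply Temkin's embedded resolution of singularities (valid in characteristic zero) to arrange that $\overline{\mathcal{X}} \setminus \mathcal{U}$ is a strict normal crossings divisor in a smooth scheme, then use devissage on $\mathscr{F}$ to reduce to $\mathscr{F} = \Lambda_{\mathcal{U}}$, whereupon $Rj_\ast \Lambda$ is computed by a local formula on the NCD that is manifestly the same in the algebraic and rigid settings.

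For part (2), I would obtain full faithfulness from (1) via the internal Hom. Given $\mathscr{F}, \mathscr{G} \in D^b_c(\mathcal{X}, \mathbf{Z}/n)$, the complex $H := \inthom(\mathscr{F}, \mathscr{G})$ is bounded below and constructible, its analytification identifies with $\inthom(\mathscr{F}^{an}, \mathscr{G}^{an})$ by a stalkwise check at classical points, and (1) applied to the structure map $\pi: \mathcal{X} \to \mathcal{S}$ gives $(R\pi_\ast H)^{an} \simeq R\pi^{an}_\ast H^{an}$. Passing to global sections on $\mathcal{S}$ versus $S$, combined with the comparison of affinoid \'etale cohomology for constructible sheaves on $\mathcal{S}$, then matches the two $R\mathrm{Hom}$-complexes. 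For essential surjectivity when $\mathcal{X}$ is proper, Proposition~\ref{Dbzcgenerators} says that every object of $D^b_{zc}(X, \mathbf{Z}/n)$ is built from pushforwards $g_\ast M$ along finite maps $g: Y \to X$ with $M$ constant constructible; by rigid GAGA (K\"opf), every such finite rigid cover of a proper $X$ is the analytification of a finite algebraic cover, and constant sheaves are trivially analytifications, so the generators lie in the essential image. Combined with the full faithfulness already in hand, this upgrades to an equivalence of triangulated subcategories.

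For part (3), the direction from algebraic lisse to rigid lisse is immediate by analytifying a finite \'etale trivializing cover. For the converse, if $\mathscr{F}^{an}$ is rigid-lisse then a trivializing rigid \'etale cover descends to an algebraic \'etale cover via the equivalence of finite \'etale sites of $\mathcal{X}$ and $X = \mathcal{X}^{an}$ (a standard fact that holds for affinoids and propagates to finite type $\mathcal{S}$-schemes). The main obstacle will be the open immersion case in (1): unlike the proper case, it cannot be routed directly through Huber-type comparisons without some restriction on $n$ relative to the residue characteristic $p$. The reduction to a strict NCD model via Temkin, where $Rj_\ast \Lambda$ admits a local computation that is manifestly the same in the algebraic and rigid contexts, is the essential input for handling this case unconditionally in residue characteristic $p \geq 0$.
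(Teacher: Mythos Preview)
Your proposal diverges from the paper's proof in substance. The paper defers (1) and the full faithfulness in (2) to \cite[Theorems~1.8,~1.10.ii]{H}, and proves essential surjectivity in (2) by induction on $\dim\mathcal{X}$, reducing to the claim that $\mathrm{F\acute{E}t}(\mathcal{U}) \simeq \mathrm{F\acute{E}t}(U)$ for $\mathcal{U}$ a Zariski-open in a normal proper $\mathcal{S}$-scheme; this equivalence is established by extending a finite \'etale cover $V \to U$ to a branched cover of the normalized compactification via \cite[Theorem~1.6]{H} and then algebraizing by relative rigid GAGA. Part (3) is then handled by compactifying $\mathcal{X}$ inside a normal proper $\mathcal{S}$-scheme and invoking the same argument.

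There are two genuine gaps in your proposal.

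\textbf{Part (1), the open immersion case.} Your assertion that after Temkin's resolution ``$Rj_\ast\Lambda$ is computed by a local formula on the NCD that is manifestly the same in the algebraic and rigid settings'' is precisely the content of the comparison, not a triviality. The algebraic stalk at an $r$-fold crossing is the cohomology of $\mathbf{G}_m^r$ over a strictly henselian base; the rigid stalk is a colimit over punctured polydiscs. For $\ell \neq p$ this comparison is in Huber's book, but for $p \mid n$ it requires separate $p$-adic input (e.g.\ one must control $\mathcal{O}^\ast/(\mathcal{O}^\ast)^p$ on rigid annuli, which is not $\mathbf{Z}/p$ on the nose). Calling this ``manifest'' elides the entire difficulty. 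Your devissage step ``reduce to $\mathscr{F} = \Lambda_{\mathcal{U}}$'' is also more involved than stated: an arbitrary constructible $\mathscr{F}$ on $\mathcal{U}$ requires further stratification and further resolution, so this becomes a nested induction rather than a single reduction.

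\textbf{Part (3), the ``propagation'' step.} The equivalence $\mathrm{F\acute{E}t}(\mathcal{X}) \simeq \mathrm{F\acute{E}t}(X)$ is indeed trivial when $\mathcal{X}=\Spec A$ is affine (both sides are finite \'etale $A$-algebras), but your claim that it ``propagates to finite type $\mathcal{S}$-schemes'' does not follow by gluing: the affinoid open cover of $X=\mathcal{X}^{an}$ does not arise from a Zariski cover of $\mathcal{X}$ (e.g.\ closed balls in $(\mathbf{A}^1)^{an}$ are not analytifications of Zariski opens), so algebraizations over affinoid pieces do not glue to anything over $\mathcal{X}$. The paper's route---compactify $\mathcal{X}$ to a normal proper $\overline{\mathcal{X}}$, then extend the finite \'etale cover across the boundary via \cite[Theorem~1.6]{H} and algebraize the resulting branched cover by GAGA---is the actual mechanism, and it is not ``standard''.

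Your approach to essential surjectivity in (2) via Proposition~\ref{Dbzcgenerators} and relative GAGA is a legitimate alternative to the paper's direct induction; both ultimately rest on \cite[Theorem~1.6]{H} and GAGA, just packaged differently. Your full faithfulness argument via $\inthom$ and (1) is also fine in outline, modulo the gap in (1) itself.
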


Note that part (2) applies notably when $\mathcal{X}=\mathcal{S}$.

\begin{proof}
\begin{enumerate}
\item This is exactly \cite[Theorem 1.8]{H}.

\item The full faithfulness is a special case of \cite[Theorem 1.10.ii]{H}. Essential surjectivity in the case where $\mathcal{X} / \mathcal{S}$ is proper can be checked on hearts. We can also assume that $\mathcal{X}$ and $X$ are reduced. Arguing by induction on $\dim \mathcal{X}$ as in the proof of \cite[Theorem 1.7]{H}, one reduces to checking that any sheaf on $X$ of the form $j_! \mathscr{F}$ is in the essential image of $(-)^{an}$; here $j:U \subset X$ is the inclusion of any normal Zariski-open subset and $\mathscr{F}$ is any lisse sheaf.  To do this, first note that the complement $Z = X-U$ algebraizes to a closed subscheme $\mathcal{Z} \subset \mathcal{X}$ by relative rigid GAGA, so then $\mathcal{U} = \mathcal{X} - \mathcal{Z}$ is an algebraization of $U$. We're now reduced to proving that the analytification functor $\mathrm{F\acute{E}t}(\mathcal{U}) \to \mathrm{F\acute{E}t}(U)$ is an equivalence of categories. We explain the construction of an essential inverse. Suppose $V \to U$ is any finite \'etale map. By \cite[Theorem 1.6]{H}, this extends uniquely to a branched covering $V' \to X^n$, where $X^n$ is the normalization of $X$. By relative rigid GAGA again, this algebraizes to a branched covering $\mathcal{V}' \to \mathcal{X}^n$, and then $\mathcal{V}:= \mathcal{V}' \times_{\mathcal{X}^n} \mathcal{U} \to \mathcal{U}$ is the desired algebraization of $V$. 

\item Since we already have full faithfulness, it suffices to prove essential surjectivity on the hearts, i.e., we want to realize a lisse sheaf on $X$ as the analytification of a unique lisse sheaf on $\mathcal{X}$. By uniqueness and Zariski/analytic descent for lisse sheaves in algebraic/analytic geometry, we may assume that $\mathcal{X}$ is separated (or even affine). By finite descent for lisse sheaves in both algebraic and analytic geometry, we may also assume $\mathcal{X}$ is normal. In this case, we can realize $\mathcal{X}$ as an an open subscheme of a normal proper $\mathcal{S}$-scheme $\overline{\mathcal{X}}$; the argument used in the proof of (2) now yields the desired algebraization.
\end{enumerate}
\end{proof}

We shall use the above description to prove Theorem~\ref{ZCLocal} by a topological argument. To run this argument, we need a couple of lemmas in pure algebraic geometry on the existence and properties of the maximal open set where a constructible sheaf is lisse.

\begin{lemma}
\label{MaxLisseOpen}
Let $X$ be a scheme and let $\mathscr{F}$ be a constructible sheaf on $X$. Then there exists a maximal open subset $U_{\mathscr{F}} \subset X$ such that $\mathscr{F}|_{U_{\mathscr{F}}}$ is locally constant. Moreover, $U_{\mathscr{F}}$ is given by either of the following equivalent descriptions:
\begin{enumerate}
\item The set of all $x \in X$ such that $\mathscr{F}|_{X_x}$ is locally constant. (Here $X_x$ is the local scheme of $X$ at $x$.)
\item The set of all $x \in X$ admitting  an open neighborhood $x \in U \subset X$ with $\mathscr{F}|_U$ being locally constant.
\end{enumerate}
In particular, $U_{\mathscr{F}}$ contains all the generic points of $X$.
\end{lemma}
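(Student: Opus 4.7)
I would take $U_{\mathscr{F}}$ to be exactly the set described in (2), namely the locus of points admitting an open neighborhood on which $\mathscr{F}$ is locally constant. This set is manifestly open, the restriction $\mathscr{F}|_{U_{\mathscr{F}}}$ is locally constant (since being locally constant is Zariski-local), and any open $V \subset X$ with $\mathscr{F}|_V$ locally constant is contained in $U_{\mathscr{F}}$ by definition, so $U_{\mathscr{F}}$ is the maximal such open subset. This handles the opening assertion together with the description (2).

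The main step is checking the equivalence with (1). Let $A$ denote the set in (1). The inclusion $U_{\mathscr{F}} \subset A$ is clear by restriction, since if $\mathscr{F}|_V$ is locally constant for an open $V \ni x$, then so is its pullback to $X_x$. For $A \subset U_{\mathscr{F}}$, take $x \in A$ and choose an affine open $V_0 \ni x$ on which $\mathscr{F}|_{V_0}$ is constructible; writing $X_x = \lim V$ as the cofiltered limit over affine opens $V \subset V_0$ containing $x$ (with affine transition maps), the hypothesis gives a finite \'etale cover $\pi: Y \to X_x$ and an isomorphism $\pi^\ast \mathscr{F}|_{X_x} \simeq \underline{M}$ for some finitely generated $\Lambda$-module $M$. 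By the standard approximation/descent theorems for \'etale morphisms and sheaves over cofiltered limits of qcqs schemes with affine transition maps (SGA~4, Exp.~VII; see also the Stacks Project tags on limits of \'etale sheaves), both $\pi$ and this trivialization descend, after shrinking, to an open neighborhood $V$ of $x$; thus $\mathscr{F}|_V$ is locally constant and $x \in U_{\mathscr{F}}$. This approximation/descent step is the only nontrivial input in the proof; everything else is formal bookkeeping.

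Finally, if $\eta$ is a generic point of $X$, then $\mathcal{O}_{X,\eta}$ is a zero-dimensional local ring, so $X_\eta = \Spec \mathcal{O}_{X,\eta}$ has a single underlying point and its \'etale site is equivalent to that of $\Spec\,\kappa(\eta)_{\mathrm{red}}$. The pullback $\mathscr{F}|_{X_\eta}$ is constructible, and any constructible sheaf on a one-point scheme is automatically locally constant (the only possible stratification is trivial, and a finitely generated module over the finite coefficient ring has finite automorphism group, so its Galois action factors through a finite quotient). Hence $\mathscr{F}|_{X_\eta}$ is locally constant, giving $\eta \in A = U_{\mathscr{F}}$.
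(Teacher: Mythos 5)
Your proposal is correct and follows essentially the same route as the paper: define $U_{\mathscr{F}}$ as the union of all opens on which $\mathscr{F}$ is locally constant, note the trivial containment of (2) in (1), and deduce the reverse containment from the fact that locally constant (and constructible) sheaves spread out along the cofiltered limit $X_x = \lim V$ — your explicit descent of the finite \'etale trivialization is just an unpacking of the paper's appeal to the category of locally constant sheaves being locally finitely presented. The generic-point observation likewise matches the paper's use of description (1).
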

\begin{proof}
As local constancy is a local property, the collection of all opens $V \subset X$ such that $\mathscr{F}|_V$ is locally constant is stable under taking unions. Taking the union of all such opens then gives the maximal open $U_{\mathscr{F}}$ such that $\mathscr{F}|_{U_{\mathscr{F}}}$ is open. It is also clear from this description that $U_{\mathscr{F}}$ agrees with the set in (2). The set in (2) is trivially contained in the set in (1). Conversely, as the functor sending a scheme $Y$ to its category of locally constant sheaves (resp. constructible sheaves) is locally finitely presented (i.e., carries cofiltered limits of affine schemes to direct limits), the set in (1) is also contained in the set in (2), so the two sets coincide. The final statement is clear from the description of $U_{\mathscr{F}}$ given by the set in (1).
\end{proof}

\begin{lemma}
\label{MaxLisseOpenBC}
The formation of the open set $U_{\mathscr{F}} \subset X$ associated with a pair $(X,\mathscr{F})$ as in Lemma~\ref{MaxLisseOpen} is compatible with pullback along universally generalizing maps of schemes.
\end{lemma}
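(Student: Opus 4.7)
The plan is to use characterization (1) from Lemma~\ref{MaxLisseOpen}, which identifies $U_\mathscr{F}$ with $\{x \in X : \mathscr{F}|_{X_x} \text{ is locally constant}\}$. Let $f \colon Y \to X$ be universally generalizing and $\mathscr{F}$ constructible on $X$. The inclusion $f^{-1}(U_\mathscr{F}) \subseteq U_{f^*\mathscr{F}}$ is immediate since the pullback of a locally constant sheaf is locally constant. For the reverse inclusion, given $y \in U_{f^*\mathscr{F}}$, the task is to show that $\mathscr{F}|_{X_{f(y)}}$ is locally constant, knowing that $(f^*\mathscr{F})|_{Y_y}$ is.

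First I would reduce to a question about local schemes. The map $Y_y \to Y \to X$ factors through $X_{f(y)}$, since every point of $Y_y$ is a generization of $y$ and so maps to a generization of $f(y)$. The induced map $f_y \colon Y_y \to X_{f(y)}$ is both surjective and universally generalizing: surjectivity follows from the generalizing property of $f$ applied at $y$ (every point of $X_{f(y)}$ is a generization of $f(y)$, so it lifts to a generization of $y$, which lies in $Y_y$), and the universal generalizing property is inherited from $f$ since it is preserved under base change. It therefore suffices to prove: if $g \colon Y_0 \to X_0$ is a surjective generalizing map of (locally Noetherian) schemes with $X_0$ local, and $\mathscr{G}$ is a constructible sheaf on $X_0$ whose pullback $g^*\mathscr{G}$ is locally constant, then $\mathscr{G}$ is locally constant.

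For this step I would invoke the standard criterion going back to SGA~4.IX (and also in the Stacks project) that a constructible sheaf on a locally Noetherian scheme is locally constant if and only if all of its geometric specialization maps are isomorphisms. Given any specialization datum in $X_0$ (a generization of the closed point $x_0$, together with compatible geometric points), the surjective generalizing property of $g$ furnishes a compatible lift to $Y_0$, and the specialization map for $\mathscr{G}$ on the original pair is canonically identified with the one for $g^*\mathscr{G}$ on the lifted pair. Since $g^*\mathscr{G}$ is locally constant, all its specialization maps are isomorphisms, so the same holds for $\mathscr{G}$, and we conclude.

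I expect the only delicate point is making the specialization criterion and its compatibility under pullback along $g$ fully precise; this requires coherently tracking choices of geometric points, but is essentially formal. Everything else is a direct consequence of the generalizing lifting property supplied by the hypothesis.
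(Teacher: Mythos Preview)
Your proposal is correct and follows essentially the same route as the paper: both reduce to the induced map $Y_y \to X_{f(y)}$, observe it is surjective and universally generalizing, and then transfer local constancy using (co)specialization maps. The only difference is packaging: where you invoke the specialization criterion for local constancy as a black box and say the lifting of specialization data is ``essentially formal,'' the paper makes this step concrete by choosing an absolutely integrally closed valuation ring $V$ with a map $\mathrm{Spec}(V)\to X_{f(y)}$ witnessing a bad specialization, then using the \emph{universal} part of ``universally generalizing'' to lift this map (after extending $V$) to $\mathrm{Spec}(V)\to Y_y$, yielding the contradiction directly.
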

\begin{proof}
Let $f:Y \to X$ be a universally generalizing map of schemes. We have an obvious containment $f^{-1} (U_{\mathscr{F}}) \subset U_{f^* \mathscr{F}}$, and we must show it is an equality. Assume towards contradiction that there exists some $y \in U_{f^* \mathscr{F}} - f^{-1}(U_{\mathscr{F}})$. Thus, $(f^* \mathscr{F})|_{Y_y}$ is locally constant but $\mathscr{F}|_{X_{f(y)}}$ is not locally constant. There is then a specialization $x_1 \rightsquigarrow x_2$ of geometric points of $X_{f(y)}$ such that the corresponding cospecialization map $\mathscr{F}_{x_2} \to \mathscr{F}_{x_1}$ is not an isomorphism. Choose an absolutely integrally closed valuation ring $V$ and a map $\eta_X:\mathrm{Spec}(V) \to X_{f(y)}$ that witnesses the specialization $x_1 \rightsquigarrow x_2$, so $\eta_X^*(\mathscr{F}|_{X_{f(y)}})$ is not locally constant. Now that the map $Y_y \to X_{f(y)}$ is universally generalizing (it factors as $Y_y \to X_{f(y)} \times_X Y \to X_{f(y)}$, with both maps being universally generalizing) and surjective (all points of $X_{f(y)}$ specialize to $y$, so surjectivity follows from the universally generalizing property). By stability of universally generalizing surjective maps under base change, we can replace $V$ with an extension if necessary to lift $\eta_X$ to a map $\eta_Y:\mathrm{Spec}(V) \to Y_y$. But then we have $\eta_X^* (\mathscr{F}|_{X_{f(y)}}) = \eta_Y^* (f^* \mathscr{F}|_{Y_y})$; this is a contradiction as the left side is not locally constant by choice of $\eta_X$, while the right side is locally constant by choice of $y$.
\end{proof}

\begin{proof}[Proof of Theorem~\ref{ZCLocal}]
Let us first give the argument when $\{U_i\}$ is a cover of $X$ for the analytic topology. We proceed by induction on $\dim(X)$. The $\dim(X)=0$ case is clear: $X$ is a disjoint union of points in this case. In general, as Zariski-constructibility is stable under pullback, we may assume each $U_i = \mathrm{Spa}(A_i)$ is affinoid. Write $\mathcal{U}_i = \mathrm{Spec}(A_i)$ for the obvious algebraization of $U_i$.  The map $U_i \to \mathcal{U}_i$ identifies constructible $\mathbf{Z}/n$-sheaves on the target with Zariski-constructible $\mathbf{Z}/n$-sheaves on the source by Proposition~\ref{schemescomparison1}, so there is a unique constructible $\mathbf{Z}/n$-sheaf $\mathscr{F}_i$ over $\mathcal{U}_i$ descending $\mathscr{F}|_{U_i}$. Let $\mathcal{V}_i := \mathcal{U}_{i,\mathscr{F}_i} \subset \mathcal{U}_i$ be the maximal Zariski open over which $\mathcal{F}_i$ is locally constant as in Lemma~\ref{MaxLisseOpen}, let $V_i \subset U_i$ be its Zariski-open preimage, and let $Z_i \subset U_i$ be the Zariski-closed complement of $V_i$ (regarded as a reduced rigid space); note that $Z_i \subset V_i$ is nowhere dense as $\mathcal{V}_i \subset \mathcal{U}_i$ contains all the generic points. As the natural algebraizations of the maps given by rational localizations of affinoids are universally generalizing, Lemma~\ref{MaxLisseOpenBC} implies that for all $i,j$, the Zariski-open subsets $V_i \cap (U_i \cap U_j)$ and $V_j \cap (U_i \cap U_j)$ of $U_i \cap U_j$ agree, and consequently their complements also agree. By descent for coherent ideal sheaves applied to the ideal sheaves $\mathcal{I}_{Z_i} \subset \mathcal{O}_{U_i}$ of the $Z_i$'s, there is a unique Zariski-closed subset $Z \subset X$ such that $Z \cap U_i = Z_i$. As $Z_i$ is nowhere dense in $U_i$ for all $i$, we must have $\dim(Z) < \dim(X)$. Moreover, the sheaf $\mathscr{F}$ is lisse over $X-Z$ by construction. Induction on dimension shows that $\mathscr{F}|_Z$ is also Zariski-constructible, so we win.

To adapt this argument to the \'etale topology, note the proof above has two essential ingredients:
\begin{enumerate}
\item For each pair of indices $i,j$, if $V \subset U_i \cap U_j = U_i \times_X U_j$ is an open affinoid, then the natural algebraization of $V \to U_i$ (resp. $V \to U_j$) is a universally generalizing map of affine schemes.
\item Descent for coherent sheaves holds true with respect to the cover $\{U_i\}$.
\end{enumerate}
These properties are also true for \'etale covers of $X$, so the descent claim also holds true in the \'etale topology. Indeed, \'etale descent for coherent sheaves on rigid spaces is \cite[Corollary 3.2.3]{dJvdP}, while the first property reduces to the well-known fact that for any \'etale map of affinoid rigid spaces, the associated ring map is flat.  
\end{proof}

\subsection{Pushforward, $\otimes$, and $\inthom$}

In this subsection, we prove some of our main stability properties for Zariski-constructible sheaves. Until further notice, we fix a characteristic zero nonarchimedean base field $K$ of residue characteristic $p$ (with $p=0$ allowed).  The first main result in this section is the following theorem, which was conjectured by the second author \cite[Conjecture 1.14]{H}.

\begin{theorem}
[Proper direct images]
\label{properdirectimage}
Let $f:X\to Y$ be a proper map of rigid spaces over $K$. Then $Rf_*$ preserves $D^{(b)}_{zc}(-, \mathbf{Z}/n)$.
\end{theorem}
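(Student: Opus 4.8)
The plan is to reduce the statement, by standard dévissage and the locality results already available, to the single known input that $Rf_*$ preserves lisse constructible complexes for $f$ both smooth and proper. First I would reduce to the case where $Y$ is affinoid (the question of whether $Rf_*\mathscr{F}$ is Zariski-constructible is Zariski-local on $Y$, indeed étale-local by Theorem~\ref{ZCLocal}, and one can check local boundedness separately since $f$ is proper hence quasicompact), and then, using the compatibility with finite pushforward and Proposition~\ref{ZCStability}(4) applied to the irreducible components of $X$ together with their normalizations, reduce to the case where $X$ is reduced and irreducible (hence, after further localization, normal and integral). By the usual noetherian induction on $\dim X$ combined with the dévissage criterion Proposition~\ref{ZCStability}(2) and Proposition~\ref{Dbzcgenerators}, it suffices to treat a single sheaf of the form $j_!\mathscr{F}$ with $\mathscr{F}$ lisse on a dense Zariski-open $j:U\hookrightarrow X$; writing the excision triangle $j_!\mathscr{F}\to \overline{j}_*\mathscr{F}\to i_*i^*\overline{j}_*\mathscr{F}$ (or rather: realizing $\mathscr{F}$ as a direct summand of a pushforward from a finite cover as in the proof of Proposition~\ref{Dbzcgenerators}), and using that the closed complement has strictly smaller dimension, one is reduced to proving that $Rf_*$ of a \emph{lisse} complex on a Zariski-open of $X$ is Zariski-constructible on $Y$.

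The heart of the argument is then a ``spreading out along weakly Shilov points'' step, which is exactly where Theorem~\ref{genericmaintheorem}(2) (i.e.\ Theorem~\ref{genericsmoothnessproper}) enters. After replacing $X$ by a resolution — here I would invoke Temkin's embedded resolution of singularities for quasi-excellent $\mathbf{Q}$-schemes, so that $X$ becomes smooth and the boundary $X\setminus U$ becomes a strict normal crossings divisor, all compatibly with the proper map to $Y$ (one can do this because the question is local on the affinoid $Y$, where one can algebraize via Proposition~\ref{schemescomparison1} and Temkin applies to the algebraic model) — we may assume $X$ is smooth. Now apply Theorem~\ref{genericsmoothnessproper} to $f:X\to Y$: there is a \emph{dense Zariski-open} $S_f\subset Y$ over which $f$ is smooth, and moreover (as emphasized in the paper) $S_f$ contains all weakly Shilov points of $Y$. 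Over $S_f$, with $f$ smooth and proper and the coefficients lisse, the known preservation theorem ($Rf_*$ of lisse constructible is lisse constructible for smooth proper $f$ — valid without hypotheses on $p$ since it rests on $p$-adic Hodge-theoretic finiteness inputs, as the introduction notes) shows $Rf_*\mathscr{F}|_{S_f}$ is lisse. On the closed complement $Y\setminus S_f$, which has strictly smaller dimension, one uses proper base change (Theorem~\ref{properbc}) to identify $i^*Rf_*\mathscr{F}$ with $R(f_{Y\setminus S_f})_*$ of the restriction, and concludes by noetherian induction on $\dim Y$ that this is Zariski-constructible. Then the dévissage criterion Proposition~\ref{ZCStability}(2) assembles a lisse piece over the dense Zariski-open $S_f$ with a Zariski-constructible piece over the complement into a Zariski-constructible complex on $Y$.

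The main obstacle — and the genuinely new ingredient relative to \cite{H} — is securing the smooth-proper locus as a \emph{dense Zariski-open} containing the weakly Shilov points, so that the inductive scheme on $\dim Y$ actually closes up: a merely dense open would not suffice, since its complement need not be Zariski-closed and one could not run the excision/dévissage at the level of Zariski-constructibility. This is precisely the content of Theorem~\ref{genericsmoothnessproper}, whose proof in turn rested on Theorem~\ref{keystep} (regularity of Shilov fibers) via the analytic cotangent complex. A secondary technical point requiring care is the bookkeeping of resolution of singularities in families: Temkin's theorem must be applied to an algebraic model of $X\to Y$ over the affinoid base, and one must check that the analytification of the resolution is still proper over $Y$ and that $X\setminus U$ pulls back to a normal crossings divisor — this is routine given rigid GAGA (Proposition~\ref{schemescomparison1}) but must be done before invoking generic smoothness. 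Finally, the passage from the fixed-ring case $\mathbf{Z}/n$ to local boundedness of the output is immediate since $f$ proper is in particular quasicompact, so $Rf_*$ has bounded cohomological amplitude computed fiberwise.
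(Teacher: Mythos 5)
Your overall architecture is correct and matches the paper's: localize to affinoid $Y$ via Theorem~\ref{ZCLocal}, use Proposition~\ref{Dbzcgenerators} and d\'evissage, resolve via Temkin, invoke Theorem~\ref{genericsmoothnessproper} to produce a dense \emph{Zariski}-open over which $f$ is smooth, apply the smooth-proper preservation theorem there, and close the induction on $\dim Y$ via proper base change. You have also correctly identified generic smoothness (in the Zariski-open form) as the crucial new input, and correctly noted that a merely dense (non-Zariski) open would not suffice for the induction.

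However, there is a genuine gap in the middle of the argument, at the point where the d\'evissage is supposed to hand off to the "heart." You reduce to $Rf_*(j_!\mathscr{F})$ with $\mathscr{F}$ lisse only on a dense Zariski-open $U\subsetneq X$, and after applying embedded resolution and generic smoothness you write ``with $f$ smooth and proper and the coefficients lisse, the known preservation theorem\ldots shows $Rf_*\mathscr{F}|_{S_f}$ is lisse.'' But the coefficients are $j_!\mathscr{F}$, which is \emph{not} lisse on $X$ -- only on $U$. The cited smooth-proper preservation theorem (Huber for $\ell\neq p$, Scholze--Weinstein for $\ell=p$) is a statement about lisse complexes on the total space of a smooth proper morphism; it says nothing about $j_!$-extensions from a strict Zariski-open. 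Even granting an embedded resolution making $X$ smooth and $D=X\setminus U$ an SNC divisor, the SNC divisor need not be (stratum-wise) smooth over $Y$, and even after shrinking $S_f$ to arrange this by generic smoothness applied to each stratum, one would still need a stronger ``preservation theorem for relatively SNC pairs,'' which is much harder (and of unclear status when $\ell=p$). So the reduction as written does not close.

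The paper avoids this entirely by using Proposition~\ref{Dbzcgenerators} \emph{at full strength}: since $D^b_{zc}(X)$ is generated by objects $h_*M$ for $h\colon W\to X$ finite and $M$ constant, and $Rf_*(h_*M)=R(f\circ h)_*M$ with $f\circ h$ still proper, one may replace $X$ by $W$ and reduce to showing $Rf_*\underline{\mathbf{Z}/n}\in D^b_{zc}(Y)$ for an arbitrary proper $f$ -- i.e.\ to \emph{globally constant} coefficients, with no boundary divisor to worry about. The resolution step is then not an embedded resolution but a proper \emph{hypercover} $\epsilon\colon X^\bullet\to X$ by $K$-smooth rigid spaces (Temkin), followed by cohomological descent: since each $\mathcal{H}^i$ of the totalization depends only on the truncation $X^{\leq i+1}$, one reduces to finitely many smooth $X^j\to Y$ with constant coefficients, where generic smoothness and the smooth-proper theorem apply directly. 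Your parenthetical remark ``(or rather: realizing $\mathscr{F}$ as a direct summand of a pushforward from a finite cover\ldots)'' gestures at this, but you do not carry it through: going all the way would replace $X$ by the finite cover and turn your coefficient sheaf into a constant one, after which the $j_!$-problem disappears and your final paragraph becomes correct.
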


Note that $p|n$ is allowed here. We caution the reader that if $f:X \to Y$ is a proper map of rigid spaces with $Y$ irreducible, then in contrast with the situation for algebraic varieties, it is not always true that $X$ has finitely many irreducible components, or that the fibers of $f$ have bounded dimension.\footnote{ For a simple example where both conditions fail, let $Y=(\Spec K[T])^{an}$ be the rigid affine line, set $X_n = (\mathbf{P}^n)^{an}$ and let $f_n: X_n \to Y$ be the map which factors over the inclusion of the closed point $T=p^{-n}$.  Then $f  = \coprod f_n : \coprod X_n \to Y$ is proper.} Thus, it is necessary to use $D^{(b)}$ and not $D^b$ in the above formulation, and similarly in many other places in this section.

\begin{proof}
By Theorem~\ref{ZCLocal}, we may assume $Y=\mathrm{Spa}(A)$ for an affinoid $K$-algebra $A$. In particular, $X$ and $Y$ are both qcqs.  We may clearly also assume that $Y$ is reduced. Our task is to show that $Rf_*$ preserves $D^b_{zc}(-,\mathbf{Z}/n)$ in this situation. As $X$ is quasi-compact, we may apply Proposition~\ref{Dbzcgenerators}, so it suffices to show that $Rf_* \mathbf{Z}/n \in D^b_{zc}$. In fact, as the fibres of $f$ have bounded dimension (e.g., by checking on formal models), we know by cohomological dimension estimates and proper base change that $Rf_*$ has finite cohomological dimension, so it is enough to show that $Rf_* \mathbf{Z}/n \in D_{zc}$, i.e., that each $R^i f_*\mathbf{Z}/n$ is Zariski-constructible on $Y$. By proper base change and induction on $\dim(Y)$, it suffices to find a dense open $U \subset Y$ such that $(R^if_* \mathbf{Z}/n)|_U$ is locally constant. As $K$ has characteristic $0$, Temkin's \cite[Theorem 1.1.13 (i)]{Temkin} gives a proper hypercover $\epsilon:X^\bullet \to X$ with each $X^i$ being $K$-smooth. Cohomological descent enables us to compute $Rf_* \mathbf{Z}/n$ as the totalization of $Rg_* \mathbf{Z}/n$, where $g  = f \circ \epsilon$. As $\mathcal{H}^i$ of the totalization of a cosimplicial object $K(\bullet):\Delta \to \mathcal{D}^{\geq 0}$ only depends on truncated cosimplicial object $K(\bullet)|_{\Delta_{\leq i+1}}$, we can replace $X^\bullet$ with the finite diagram $X^{\leq i+1}$ and then by each $X^j$ to assume that $X$ is smooth. As $Y$ is reduced, our generic smoothness result (Theorem~\ref{genericsmoothnessproper}) yields a Zariski-dense Zariski-open $U \subset X$ such that $f:X \to Y$ is smooth over $U$. It then suffices to show that $R^i g_* \mathbf{Z}/n$ is locally constant when $g$ is both proper and smooth. To check this, we may assume $n=\ell$ is a prime. Now if $\ell \neq p$, the claim reduces to \cite[Corollary 6.2.3]{Hub96}, while the claim for $\ell = p$ reduces to \cite[Theorem 10.5.1]{SWBerkeley}.
\end{proof}

As a consequence of Theorem \ref{properdirectimage}, we also get some additional stability results. First, locally constant sheaves are carried to Zariski-constructible complexes via pushforward along a fairly general class of maps.

\begin{corollary}[Direct images of lisse complexes]
\label{openpushforward} 
Let $f:X \to Y$ be a Zariski-compactifiable map of rigid spaces. Then $Rf_!$ and $Rf_*$ carry $D^{(b)}_{lis}(-,\mathbf{Z}/n)$ into $D^{(b)}_{zc}(-,\mathbf{Z}/n)$.
\end{corollary}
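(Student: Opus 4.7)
The plan is to exploit the factorization $f = \overline{f} \circ j$ coming from Zariski-compactifiability, where $j: X \hookrightarrow X'$ is a Zariski-open immersion and $\overline{f}: X' \to Y$ is proper. Properness of $\overline{f}$ gives $Rf_! \simeq R\overline{f}_* \circ j_!$ and $Rf_* \simeq R\overline{f}_* \circ Rj_*$, so by Theorem~\ref{properdirectimage} it suffices to show that $j_! \mathscr{F}$, respectively $Rj_* \mathscr{F}$, lies in $D^{(b)}_{zc}(X', \mathbf{Z}/n)$ for every $\mathscr{F} \in D^{(b)}_{lis}(X, \mathbf{Z}/n)$. The $j_!$ case is essentially formal: the obstruction to Zariski-constructibility of $j_!$ noted in Warning~\ref{ZCWarning}(1) stems from the poor behaviour of Zariski closures in $X'$ of strata in $X$, and is harmless when $\mathscr{F}$ itself is locally constant (use the trivial stratification). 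This already disposes of $Rf_!$.

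For $Rj_* \mathscr{F}$, Theorem~\ref{ZCLocal} makes the question \'etale-local on $X'$, so we may assume $X'$ is quasi-compact. Applying Temkin's embedded resolution of singularities to the pair $(X', X' \setminus X)$, as in the proof of Theorem~\ref{properdirectimage}, produces a proper morphism $\pi: \widetilde{X'} \to X'$ which is an isomorphism over $X$, with $\widetilde{X'}$ smooth and $D := \pi^{-1}(X' \setminus X)$ a strict normal crossings divisor. Writing $\widetilde{j}: X \hookrightarrow \widetilde{X'}$ for the induced open immersion and using $Rj_* \simeq R\pi_* \circ R\widetilde{j}_*$, a further application of Theorem~\ref{properdirectimage} reduces the problem to the snc situation: $\widetilde{X'}$ smooth, $D$ an snc divisor, $\mathscr{F}$ lisse on $X = \widetilde{X'} \setminus D$, and one must show $R\widetilde{j}_* \mathscr{F}$ is Zariski-constructible on $\widetilde{X'}$.

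In this snc setting the problem becomes local on $\widetilde{X'}$: around any point one can choose analytic coordinates in which $D$ is a union of coordinate hyperplanes, so $X$ locally looks like a product of a polydisk with a product of punctured disks, and the stalks and stratum-wise restrictions of $R\widetilde{j}_* \mathscr{F}$ are then controlled by cohomology of punctured polydisks with lisse coefficients, yielding stratum-wise local constancy. Local boundedness in $D^{(b)}$ follows from the bounded cohomological dimension on these local pieces. The main technical obstacle is precisely this local computation when $n$ shares a factor with the residue characteristic $p$: the $n$-coprime-to-$p$ case is classical via tame cohomology and Abhyankar's lemma to trivialize $\mathscr{F}$ along a tame finite cover, whereas the $p$-torsion case requires the sort of $p$-adic finiteness input (for example \cite[Theorem 10.5.1]{SWBerkeley}) already invoked in the proof of Theorem~\ref{properdirectimage}.
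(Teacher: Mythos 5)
Your factorization $f = \overline{f}\circ j$ and the treatment of $Rf_!$ (hence of $j_!$ on lisse objects) match the paper; the divergence, and the problems, are in the $Rj_*$ step. A first, in-principle fixable, error: embedded resolution of the pair $(X', X'\setminus X)$ is an isomorphism over $X$ only away from the singular locus of $X$, and $X$ here is an arbitrary rigid space. For singular $X$ the identity $Rj_* \simeq R\pi_*\circ R\widetilde{j}_*$ fails, since it would require $\mathscr{F} \to R\pi_{X,*}\pi_X^*\mathscr{F}$ to be an isomorphism over $X$ for a non-trivial modification $\pi_X$. One could repair this with a proper hypercover and cohomological descent as in the proof of Theorem~\ref{properdirectimage}, but as written your reduction to the snc case does not go through.

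The more serious issue is that the snc-local computation you defer to \emph{is} the entire content of the statement, and the input you cite does not supply it. When $p\mid n$, the \'etale cohomology of a punctured polydisk (indeed, already of the closed disk) with $\mathbf{Z}/p$-coefficients is infinite-dimensional, so the finiteness of the stalks of $R\widetilde{j}_*\mathscr{F}$ along the boundary --- i.e.\ that the colimit over shrinking punctured polydisks collapses to something finite --- is a genuinely nontrivial assertion; and \cite[Theorem 10.5.1]{SWBerkeley} concerns proper \emph{smooth} morphisms and says nothing about open immersions. One also needs local constancy along the strata of $D$, not just stalkwise finiteness. The paper sidesteps any local analytic computation: after localizing to affinoid targets via Theorem~\ref{ZCLocal}, it algebraizes the Zariski-open immersion and the lisse coefficient (Proposition~\ref{schemescomparison1} (1), (3)) and invokes Gabber's constructibility theorem for quasi-excellent schemes \cite[Exp.\ XIII, Th.\ 1.1.1]{ILO14}; the key point is that $\mathrm{Spec}(A)$ is a $\mathbf{Q}$-scheme, so every $n$ (including powers of $p$) is invertible on it and Gabber's theorem applies without restriction. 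If you insist on the resolution route, you would have to prove the snc-local statement for $p$-torsion coefficients from scratch, which is essentially as hard as the comparison theorem \cite[Theorem 1.8]{H} underlying the algebraization.
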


As explained in Warning~\ref{ZCWarning} (1), the functor $Rf_*$ does not preserve $D^{(b)}_{zc}$ in general, even for Zariski-open immersions. Thus, the above seems to be the best general statement one can expect.

\begin{proof}
The statement is local on the target by Theorem~\ref{ZCLocal}, so we may assume $Y=\mathrm{Spa}(A)$ is affinoid. By assumption, we can factor $f$ as $X \xrightarrow{j} \overline{X} \xrightarrow{g} Y$ with $j$ being a Zariski-open immersion and $g$ being proper. Using Theorem~\ref{properdirectimage}, we can reduce to the case $f=j$ is a Zariski-open immersion. The claim for $Rf_!$ is clear from the definition of Zariski-constructible sheaves (using $\{X,\overline{X}-X\}$ as the stratification on $\overline{X}$ witnessing Zariski-constructibility), so it remains to check the assertion for $Rf_*$. As $Y$ is affinoid, the Zariski-open immersion $f:X \hookrightarrow Y$ is the algebraization of a unique open immersion $g:\mathcal{X} \to \mathcal{Y} = \mathrm{Spec}(A)$. Moreover, any object of $D^b_{lis}(X,\mathbf{Z}/n)$ is the analytification of a unique object in $D^b_{lis}(\mathcal{X},\mathbf{Z}/n)$ by Proposition~\ref{schemescomparison1} (3). Using the compatibility of pushforwards with analytification from Proposition~\ref{schemescomparison1} (1), the claim follows from Gabber's constructibility theorem in \cite[Expose XIII, Theorem 1.1.1]{ILO14}.
\end{proof}

Secondly, $!$-pullback along finite morphisms preserves Zariski-constructibility.

\begin{corollary}[Finite $!$-pullback]
\label{finiteshriekpullback}
Let $f:X \to Y$ be a finite morphism of rigid spaces over $K$. Then the right adjoint $Rf^!:D^+(Y,\mathbf{Z}/n) \to D^+(X,\mathbf{Z}/n)$ to $f_! = f_*$ constructed in \cite[\S 7.1]{Hub96} preserves $D^{(b)}_{zc}(-)$.
\end{corollary}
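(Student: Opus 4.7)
My plan is to reduce to the analogous algebraic statement, which is a theorem of Gabber for finite morphisms of excellent schemes.

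First, by Theorem~\ref{ZCLocal} we may assume $Y = \Spa(A)$ is affinoid, so $X = \Spa(B)$ is also affinoid; let $\bar{f}: \mathcal{X} := \Spec(B) \to \mathcal{Y} := \Spec(A)$ denote the algebraization, a finite (hence proper) morphism of excellent schemes. By Proposition~\ref{schemescomparison1}\,(2), analytification gives an equivalence $D^b_c(\mathcal{X}, \mathbf{Z}/n) \simeq D^b_{zc}(X, \mathbf{Z}/n)$. Using Proposition~\ref{Dbzcgenerators} together with the thickness of $D^{(b)}_{zc}$ under triangulated operations, I reduce to showing $Rf^!(g_* M)$ is Zariski-constructible for $g: Z \to Y$ finite and $M$ a constant constructible sheaf on $Z$. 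Base change for proper morphisms (valid since $f_* = f_!$ and $g_* = g_!$) gives $Rf^!(g_* M) \simeq g'_* R(f')^! M$, where $g', f'$ are the base changes in the Cartesian square; as $g'_*$ preserves Zariski-constructibility (Proposition~\ref{ZCStability}\,(3)), I reduce further to the case where $\mathscr{F} = M$ is a constant constructible sheaf on an affinoid $Y$.

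Next, I would identify $Rf^! M$ with $(R\bar{f}^! M)^{an}$. Analytifying the algebraic counit $\bar{f}_* R\bar{f}^! M \to M$ (using Proposition~\ref{schemescomparison1}\,(1) for $\bar{f}_*$ and the constancy of $M$) and transposing under the $(f_*, Rf^!)$-adjunction produces a natural map $\alpha: (R\bar{f}^! M)^{an} \to Rf^! M$. Since $f_*$ is exact and conservative for finite $f$, checking that $\alpha$ is an isomorphism reduces to checking after applying $f_*$; using the formula $f_* Rf^! \mathscr{G} \simeq \inthom_Y(f_* \mathbf{Z}/n, \mathscr{G})$ and its algebraic counterpart, the claim becomes
\[
\inthom_{\mathcal{Y}}(\bar{f}_* \mathbf{Z}/n, M)^{an} \simeq \inthom_Y(f_* \mathbf{Z}/n, M^{an}).
\]
Once $\alpha$ is established as an isomorphism, Gabber's constructibility theorem \cite[Expose XVII]{ILO14} on $Rf^!$ for finite-type morphisms of excellent schemes provides $R\bar{f}^! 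M \in D^b_c(\mathcal{X}, \mathbf{Z}/n)$, and its analytification lies in $D^b_{zc}(X, \mathbf{Z}/n)$ by Proposition~\ref{schemescomparison1}\,(2).

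The main obstacle is the internal Hom compatibility displayed above. The sheaf $\bar{f}_* \mathbf{Z}/n$ is constructible on $\mathcal{Y}$ (as $\bar{f}$ is finite), so a dévissage on the first argument—iterating pushforwards of constant sheaves on finite covers along the lines of Proposition~\ref{Dbzcgenerators}—should reduce this compatibility to the lisse case, which is accessible via Proposition~\ref{schemescomparison1}\,(3) together with the standard compatibility of $\inthom$ with pullback and with pushforward along finite maps. I expect that this dévissage, together with the careful tracking of adjunctions identifying $\alpha$ as an isomorphism, will constitute the technical heart of the argument; everything else is a direct assembly of prior results.
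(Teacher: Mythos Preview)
Your overall strategy --- reduce to affinoid, then to constant $\mathscr{F}=M$, then identify $Rf^!M$ with $(R\bar f^!M)^{an}$ via the formula $f_*Rf^!(-)\simeq\inthom(f_*\mathbf{Z}/n,-)$ --- is sound and leads to a correct proof. It is genuinely different from the paper's argument: the paper never algebraizes $Rf^!$ directly, but instead runs an induction on $\dim S_F$ for arbitrary $F$, choosing at each stage a dense Zariski-open $U$ over which \emph{both} $F$ is lisse and $f$ is finite \'etale (up to universal homeomorphisms), then applying $Rf^!$ to the triangle $i_*Ri^!F\to F\to Rj_*(F|_U)$ and using proper base change plus Corollary~\ref{openpushforward}. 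Your route has the bonus that it simultaneously proves the compatibility $(R\bar f^!)^{an}\simeq R(f^{an})^!$, which the paper records only later in Proposition~\ref{schemescomparison2}.

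However, your proposed d\'evissage for the displayed $\inthom$ compatibility is circular as stated. Writing $\bar f_*\mathbf{Z}/n$ (via Proposition~\ref{Dbzcgenerators}) in terms of pushforwards $h_*N$ along finite maps $h$ with $N$ constant gives
\[
\inthom(h_*N,M)\;\simeq\;h_*\inthom(N,Rh^!M)\;\simeq\;h_*\bigl(N^\vee\otimes Rh^!M\bigr),
\]
so checking compatibility with analytification forces you to already know $(Rh^!M)^{an}\simeq R(h^{an})^!M$ for finite $h$ --- precisely the statement you are proving. The correct d\'evissage is the open/closed one: choose a dense Zariski-open $j:\mathcal U\hookrightarrow\mathcal Y$ over which $\bar f_*\mathbf{Z}/n$ is lisse (equivalently, $\bar f$ is finite \'etale over $\mathcal U$), with complement $i:\mathcal Z\hookrightarrow\mathcal Y$, and apply $\inthom(-,M)$ to $j_!j^*\to\mathrm{id}\to i_*i^*$. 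The $j$-term becomes $Rj_*(\text{lisse})$, compatible with analytification by Proposition~\ref{schemescomparison1}\,(1); the $i$-term becomes $i_*\inthom(i^*\bar f_*\mathbf{Z}/n,Ri^!M)$, and for the closed immersion $i$ the compatibility $(Ri^!)^{an}\simeq R(i^{an})^!$ follows non-circularly from the triangle $Ri^!\to i^*\to i^*Rj_*j^*$ and the compatibilities for $i^*,j^*,Rj_*$ already in Proposition~\ref{schemescomparison1}. One then finishes by induction on dimension. This is, in effect, the same induction the paper runs directly on $F$; once you make this correction, the two arguments are close cousins organized around different intermediate reductions.
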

\begin{proof}
By Theorem~\ref{ZCLocal}, the assertion is \'etale-local on $Y$, so we may assume both $X$ and $Y$ are affinoid, corresponding to a finite map $A \to B$ of affinoid $K$-algebras. Fix $F \in D^b_{zc}(Y,\mathbf{Z}/n)$.  Let $S_F \subset Y$ be the smallest Zariski-closed subset of $Y$ containing the support of $F$. We shall prove the claim by induction on $d_F = \dim(S_F)$. 

If $d_F = 0$, then $F$ is supported at finitely many points. As the claim is \'etale local on $Y$, we may then assume that $F$ is a finite direct sum of sheaves of the form $k_* \mathbf{Z}/n$, where $k:W \to Y$ is the inclusion of a Zariski-closed point. Now $f^!$ and $k_*$ commute: the corresponding statement for left adjoints is the proper base change theorem for $f$. We are thus reduced to checking the statement when $Y$ (and thus $X$) are $0$-dimensional. In this case, up to universal homeomorphisms, the map $f$ is finite \'etale, so $Rf^! = f^*$, so the claim is clear.

Now assume $d_F > 0$. We may then choose a Zariski open subset $j:U \hookrightarrow Y$ such that $U \cap S_F$ is dense is $S_F$, the restriction $L := F|_{U \cap S_F}$ is lisse, and $f$ is finite \'etale (up to universal homeomophisms) over $U$: one can find such an open $U$ as the algebraization of an open $\mathcal{U} \subset \mathrm{Spec}(A)$ satisfying the analogous properties for the map $\mathrm{Spec}(B) \to \mathrm{Spec}(A)$ and the algebraization $\mathcal{F}$ of $F$ (in the sense of Proposition~\ref{schemescomparison1}). Let $i:Z \hookrightarrow Y$ be the closed complement, so we have the standard exact triangle
\[ i_* Ri^! F \to F \to Rj_* (F|_U).\]
Now $Rj_*(F|_U) \simeq k_* Rj'_* L$, where $j':U \cap S_F \to S_F$ and $k:S_F \to X$ are the natural maps (and thus Zariski open and Zariski closed immersions respectively). By Corollary~\ref{openpushforward}, the third term in the triangle above is then Zariski-constructible. The remaining term $i_* Ri^! F$ in the triangle is then also Zariski-constructible, so $Ri^! F$ is itself Zariski-constructible. Applying $Rf^!$ to the above triangle gives a triangle
\[ Rf^!i_* i^! F \to f^! F \to Rf^! k_* Rj'_* L.\]
By proper base change for $f$ as in the previous paragraph, the last term identifies with $k_{X,*} Rj'_{X,*} (f|_{U \cap S_F})^! L$, where $k_X$ and $j'_X$ are the base changes of $k$ and $j'$ along $f$. As $f$ is finite \'etale up to universal homeomorphisms over $U$, we have $(f|_{U \cap S_F})^! L \simeq (f|_{U \cap S_F})^* L$, so this object is lisse on $f^{-1}(U \cap S_F)$. Corollary~\ref{openpushforward} then implies that the third term in the triangle above is lisse. For the first term, using proper base change again lets us write it as $i_{X,*} f_Z^! i^! F$, where $i_X$ and $f_Z$ are the base changes of $i$ and $f$ against $f$ and $i$. As $i^! F$ is known to be Zariski constructible, the induction hypothesis then shows that the first term is also Zariski constructible.
\end{proof}

\begin{remark}
Using results from \cite[\S 7]{Hub96}, in the special case $(p,n)=1$, we can extend Corollary~\ref{finiteshriekpullback} to much larger generality. Indeed, if $f:Y \to X$ is any separated taut morphism of rigid spaces over $K$, then $Rf^!$ sends $D^{(b)}_{zc}(X,\mathbf{Z}/n)$ into $D^{(b)}_{zc}(Y,\mathbf{Z}/n)$. To see this, by Theorem~\ref{ZCLocal}, this assertion can be checked locally on $X$ and $Y$, so we can assume they are affinoid. The map $f$ can be then be factored as the composition of a Zariski-closed immersion followed by a smooth map of pure dimension $d$. The claim for Zariski-closed immersions follows from Corollary~\ref{finiteshriekpullback}, while that for smooth morphisms follows from Huber's \cite[Theorem 7.5.3]{Hub96}, which identifies $Rf^!$ with $f^*(d)[2d]$. 
\end{remark}

We deduce the existence of  $\otimes$ and $\inthom$.

\begin{corollary}[$\otimes$ and $\inthom$]
\label{inthomexists}
Let $X$ be a rigid space. For any $\mathscr{F}, \mathscr{G} \in D^{(b)}_{zc}(X,\mathbf{Z}/n)$ with $\mathscr{F}$ having finite Tor dimension, both $\mathscr{F} \otimes \mathscr{G}$ and $\inthom(\mathscr{F},\mathscr{G})$ lie in $D^{(b)}_{zc}(X,\mathbf{Z}/n)$. 
\end{corollary}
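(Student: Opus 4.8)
The plan is to treat the two assertions separately, reducing the statement about $\inthom$ --- the harder one --- to the stability of $Rf^{!}$ under finite morphisms (Corollary~\ref{finiteshriekpullback}) via a dévissage in the first variable. First I would reduce to the case where $X=\Spa A$ is affinoid: formation of $\otimes^{L}_{\mathbf{Z}/n}$ and of $\inthom$ commutes with open (indeed étale) localization, and Zariski-constructibility is étale-local by Theorem~\ref{ZCLocal}. Since an affinoid $X$ is qcqs, every locally bounded Zariski-constructible complex on it is bounded, so $\mathscr{F},\mathscr{G}\in D^{b}_{zc}(X,\mathbf{Z}/n)$. I would then dispose of the local boundedness of the two outputs once and for all using the finite-Tor-dimension hypothesis: for $\mathscr{F}\otimes^{L}\mathscr{G}$ this is immediate, and for $\inthom(\mathscr{F},\mathscr{G})$ it holds because a locally bounded complex with constructible --- hence stalkwise finitely presented --- cohomology which has finite Tor dimension is locally perfect, so $\inthom(\mathscr{F},-)$ has locally bounded amplitude and $\mathscr{G}$ is locally bounded. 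It then remains only to show that $\mathscr{F}\otimes^{L}\mathscr{G}$ and $\inthom(\mathscr{F},\mathscr{G})$ have Zariski-constructible cohomology sheaves.

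For $\mathscr{F}\otimes^{L}\mathscr{G}$, I would pass to a common refinement of locally finite stratifications of $X$ into Zariski locally closed subsets witnessing Zariski-constructibility of $\mathscr{F}$ and of $\mathscr{G}$ --- intersections of Zariski locally closed subsets are again such, and the refinement stays locally finite --- reducing to the case where both $\mathscr{F}$ and $\mathscr{G}$ are lisse; and a derived tensor product over $\mathbf{Z}/n$ of two lisse complexes is lisse, being étale-locally the constant sheaf on a derived tensor product of complexes of $\mathbf{Z}/n$-modules. This is the easy half, included for completeness.

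For $\inthom(\mathscr{F},\mathscr{G})$, I would run a dévissage in $\mathscr{F}$. Since $X$ is qcqs and $K$ has characteristic $0$, Proposition~\ref{Dbzcgenerators} places $\mathscr{F}$ in the thick triangulated subcategory of $D(X,\mathbf{Z}/n)$ generated by objects $f_{\ast}M$ with $f:Y\to X$ finite and $M$ a constant constructible $\mathbf{Z}/n$-sheaf on $Y$. As $\inthom(-,\mathscr{G})$ is an exact contravariant functor and $D_{zc}(X,\mathbf{Z}/n)$ is a thick subcategory of $D(X,\mathbf{Z}/n)$ (Proposition~\ref{ZCStability}(1)), it suffices to show $\inthom(f_{\ast}M,\mathscr{G})\in D_{zc}(X,\mathbf{Z}/n)$ for each such generator. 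For this I would invoke the sheaf-level adjunction between $f_{\ast}$ and $Rf^{!}$ for finite $f$ (which follows from Huber's construction in \cite[\S 7]{Hub96}):
\[ \inthom_{X}(f_{\ast}M,\mathscr{G}) \;\simeq\; f_{\ast}\,\inthom_{Y}(M,Rf^{!}\mathscr{G}). \]
Here $Rf^{!}\mathscr{G}\in D^{b}_{zc}(Y,\mathbf{Z}/n)$ by Corollary~\ref{finiteshriekpullback} together with $Y$ being qcqs; resolving $M=\underline{N}$ (with $N$ a finite $\mathbf{Z}/n$-module) by finitely generated free $\mathbf{Z}/n$-modules, which is possible since $\mathbf{Z}/n$ is Noetherian, shows $\inthom_{Y}(M,-)$ carries $D_{zc}(Y,\mathbf{Z}/n)$ into itself, as its cohomology sheaves are built from those of the argument by finitely many kernels, cokernels and finite direct sums and $\mathrm{Sh}_{zc}$ is a weak Serre subcategory; and $f_{\ast}$ preserves Zariski-constructibility by Proposition~\ref{ZCStability}(3). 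Thus $\inthom(f_{\ast}M,\mathscr{G})\in D_{zc}(X,\mathbf{Z}/n)$, the dévissage gives $\inthom(\mathscr{F},\mathscr{G})\in D_{zc}(X,\mathbf{Z}/n)$, and combining with the local boundedness established above yields $\inthom(\mathscr{F},\mathscr{G})\in D^{(b)}_{zc}(X,\mathbf{Z}/n)$.

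The point I expect to require the most care is the mismatch between the two hypotheses on $\mathscr{F}$: the dévissage of Proposition~\ref{Dbzcgenerators} is not available inside the finite-Tor-dimension subcategory, since the generators $f_{\ast}M$ need not have finite Tor dimension. The way around this, as above, is to decouple the two conclusions --- Zariski-constructibility of $\inthom(\mathscr{F},\mathscr{G})$, obtained from the dévissage while working in the (possibly unbounded) thick subcategory $D_{zc}$, and its local boundedness, obtained separately from $\mathscr{F}$ being locally perfect. A more routine point is extracting the sheaf-level $(f_{\ast},Rf^{!})$-adjunction for finite $f$ from Huber's formalism.
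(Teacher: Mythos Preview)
Your d\'evissage via Proposition~\ref{Dbzcgenerators} and the sheaf-level adjunction $\inthom_X(f_*M,\mathscr{G})\simeq f_*\inthom_Y(M,Rf^!\mathscr{G})$ is a valid and genuinely different route to the Zariski-constructibility of the cohomology sheaves of $\inthom(\mathscr{F},\mathscr{G})$. The paper instead argues by induction on $\dim X$: choosing a dense Zariski-open $j:U\hookrightarrow X$ where both $\mathscr{F}$ and $\mathscr{G}$ are lisse, it applies $\inthom(-,\mathscr{G})$ to the recollement triangle $j_!j^*\mathscr{F}\to\mathscr{F}\to i_*i^*\mathscr{F}$ to obtain
\[
i_*\inthom(i^*\mathscr{F},Ri^!\mathscr{G})\to\inthom(\mathscr{F},\mathscr{G})\to Rj_*(\mathscr{F}|_U^\vee\otimes\mathscr{G}|_U),
\]
and then invokes the induction hypothesis together with Corollary~\ref{finiteshriekpullback} for the first term and Corollary~\ref{openpushforward} for the last. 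Your approach trades the use of Corollary~\ref{openpushforward} for Proposition~\ref{Dbzcgenerators}, and in fact yields the stronger statement that $\inthom(\mathscr{F},\mathscr{G})\in D_{zc}$ for \emph{all} $\mathscr{F}\in D^b_{zc}$, not just those of finite Tor dimension.

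There is, however, a genuine gap in your boundedness argument. The assertion that a Zariski-constructible complex of finite Tor dimension is ``locally perfect'' in a sense strong enough to force $\inthom(\mathscr{F},-)$ to have locally bounded amplitude is not correct as stated. Perfectness in the ringed topos $(X_{\et},\mathbf{Z}/n)$ means being locally isomorphic to a bounded complex of finite free $\underline{\mathbf{Z}/n}$-modules, hence in particular lisse; a sheaf such as $j_!\mathbf{Z}/n$ for a Zariski-open immersion $j$ has finite Tor dimension and perfect stalks but is not perfect in this sense, and one computes $\inthom(j_!\mathbf{Z}/n,\mathscr{G})=Rj_*(\mathscr{G}|_U)$, whose amplitude is governed by $Rj_*$ rather than by the stalks of $j_!\mathbf{Z}/n$. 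Nor can one read off boundedness on stalks, since the stalk of $\inthom(\mathscr{F},\mathscr{G})$ is not $\mathrm{RHom}(\mathscr{F}_{\bar x},\mathscr{G}_{\bar x})$ in general. In the paper's approach, boundedness and Zariski-constructibility are obtained simultaneously from the dimension induction: the finite Tor dimension is used precisely to identify $\inthom(\mathscr{F}|_U,\mathscr{G}|_U)$ with $\mathscr{F}|_U^\vee\otimes\mathscr{G}|_U$ on the open stratum, which is then lisse and bounded, so Corollary~\ref{openpushforward} applies. To repair your argument you would need either to run this same open/closed induction for the amplitude bound, or to first establish compatibility of $\inthom$ with analytification (Proposition~\ref{schemescomparison2}) and import the scheme-theoretic bound; but the paper's proof of that compatibility goes through the very same d\'evissage, so this does not give an independent shortcut.
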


\begin{proof}
We may work locally on $X$ on Theorem~\ref{ZCLocal}, so assume $X$ is affinoid. The claim about tensor products is clear (e.g., by Proposition~\ref{schemescomparison1} and the corresponding statement in algebraic geometry). For $\inthom$, we proceed by induction on $d=\dim(X)$, the case $d=0$ being trivial. Choose a dense Zariski-open $j:U \subset X$ such that both $\mathscr{F}|_U$ and $\mathscr{G}|_U$ are lisse.  Applying $\inthom(-,\mathscr{G})$ to the triangle $j_! j^\ast \mathscr{F} \to \mathscr{F} \to i_{\ast} i^\ast \mathscr{F} \to$, we get a triangle 
\[ i_{\ast} \inthom(i^{\ast} \mathscr{F},i^! \mathscr{G}) \to \inthom(\mathscr{F},\mathscr{G}) \to Rj_{\ast} (\mathscr{F}|_U^\vee \otimes \mathscr{G}|_U) \to,\]
where we simplified the first term using the adjunction defining $i^!$, and the last term by using $\inthom(A,B) = A^\vee \otimes B$ for $A,B \in D(\mathbf{Z}/n)$ with $A \in D_{perf}(\mathbf{Z}/n)$. Now induction on dimension and Corolary~\ref{finiteshriekpullback} ensure that the first term lies in $D^b_{zc}$. The last term lies in $D^b_{zc}$ by Corollary~\ref{openpushforward}, so we win.
\end{proof}

We also deduce the proper base change theorem.

\begin{theorem}\label{properbc}Let 
\[
\xymatrix{X'\ar[d]^{g'}\ar[r]^{f'} & Y'\ar[d]^{g}\\
X\ar[r]^{f} & Y
}
\]
be a Cartesian diagram of rigid spaces over $K$, with $f$ proper. Then for any $\mathscr{F} \in D^{(b)}_{zc}(X,\mathbf{Z}/n)$, the natural base change map $g^\ast Rf_\ast \mathscr{F} \to Rf'_\ast g'^\ast \mathscr{F}$ is an isomorphism.
\end{theorem}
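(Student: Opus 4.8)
The plan is to separate the two things the statement conflates: the fact that the objects occurring remain Zariski-constructible (the only place the new results of this section are needed), and the fact that the base change morphism is an isomorphism (which is just the classical proper base change theorem for torsion \'etale sheaves).

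First I would check that all the relevant complexes lie in $D^{(b)}_{zc}$. Properness is stable under base change, so $f'$ is proper; and $g'^\ast$ preserves both Zariski-constructibility and local boundedness by Proposition~\ref{ZCStability}, so $g'^\ast\mathscr{F}\in D^{(b)}_{zc}(X',\mathbf{Z}/n)$. Theorem~\ref{properdirectimage} then gives $Rf_\ast\mathscr{F}\in D^{(b)}_{zc}(Y,\mathbf{Z}/n)$ and $Rf'_\ast g'^\ast\mathscr{F}\in D^{(b)}_{zc}(Y',\mathbf{Z}/n)$, and one more application of Proposition~\ref{ZCStability} to $g$ gives $g^\ast Rf_\ast\mathscr{F}\in D^{(b)}_{zc}(Y',\mathbf{Z}/n)$. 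So both source and target of the base change morphism are (locally bounded) objects of $D^{(b)}_{zc}(Y',\mathbf{Z}/n)$, and it remains only to check that this morphism is an isomorphism.

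For that, I would argue on stalks at geometric points $\bar y'$ of $Y'$, which form a conservative family for the \'etale topos. Let $\bar y$ be the image of $\bar y'$ in $Y$. Since $f$ is proper, the fibre $X_{\bar y}$ is a proper, hence quasicompact, rigid space over $\bar y$, so $\mathscr{F}|_{X_{\bar y}}$ is bounded; similarly $X'_{\bar y'} = X_{\bar y}\times_{\bar y}\bar y'$ is quasicompact. Proper base change over a geometric point identifies the stalk of $g^\ast Rf_\ast\mathscr{F}$ at $\bar y'$ with $R\Gamma((X_{\bar y})_{\et},\mathscr{F}|_{X_{\bar y}})$ and the stalk of $Rf'_\ast g'^\ast\mathscr{F}$ at $\bar y'$ with $R\Gamma((X'_{\bar y'})_{\et},g'^\ast\mathscr{F}|_{X'_{\bar y'}})$; under these identifications the base change morphism becomes the pullback map along the extension $\bar y\hookrightarrow\bar y'$ of algebraically closed nonarchimedean fields, which is an isomorphism by invariance of \'etale cohomology (with torsion coefficients) under extensions of the algebraically closed base field.

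The only step that calls for genuine care rather than bookkeeping is ensuring that the two classical inputs just used — proper base change over a point, and base-field invariance — are available for $\mathbf{Z}/n$-coefficients even when $n$ is divisible by the residue characteristic $p$. For $(n,p)=1$ these are due to Huber~\cite{Hub96} and Berkovich~\cite{BerEt}. For $p$-power torsion I would reduce these comparisons along a proper hypercover with $K$-smooth terms to the smooth proper case, exactly as in the proof of Theorem~\ref{properdirectimage}, and invoke \cite[Theorem~10.5.1]{SWBerkeley}. This $p$-torsion point is where the real work sits; the rest of the argument is formal.
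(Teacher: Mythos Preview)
Your strategy matches the paper's: for $p\nmid n$ invoke Huber's general proper base change \cite[Theorem~4.1.1(c)]{Hub96} directly; for $p$-power torsion, use Theorem~\ref{properdirectimage} to know both sides are Zariski-constructible (hence overconvergent), reduce to stalks at rank-one geometric points, identify those stalks with fibre cohomology via \cite[Example~2.6.2]{Hub96}, and conclude by invariance under extension of the algebraically closed base field. One small point: your worry about ``proper base change to a point'' for $p$-torsion is unnecessary --- Huber's Example~2.6.2 holds for arbitrary torsion coefficients, and the paper uses it without restriction.

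There is, however, a real gap in your treatment of base-field invariance at $p$. After the hypercover reduction to the smooth proper case you invoke \cite[Theorem~10.5.1]{SWBerkeley}, but that theorem asserts \emph{local constancy} of $R^if_\ast\mathbf{F}_p$ for a smooth proper morphism $f$ --- a statement about families --- and does not by itself compare $R\Gamma(X,\mathbf{F}_p)$ with $R\Gamma(X_{C'},\mathbf{F}_p)$ for a fixed smooth proper $X$ over $C$ and an extension $C'/C$ of algebraically closed fields. Extracting the latter from the former would require either a spreading-out argument or proper base change itself, which is circular here. The paper closes this gap with Lemma~\ref{changeofbasefieldp}: reduce via Proposition~\ref{Dbzcgenerators} to the constant sheaf $\mathbf{F}_p$, apply Scholze's primitive comparison theorem to rewrite both sides as almost-$\mathcal{O}^+/p$-cohomology, and check directly (via a perfectoid $\mathbf{Z}_p^d$-tower on affinoids) that the latter base-changes correctly along $\mathcal{O}_C\to\mathcal{O}_{C'}$. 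That primitive-comparison argument is the ingredient you are missing; your instinct that ``this is where the real work sits'' is exactly right, but the citation does not supply it.
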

\begin{proof} This easily splits into the two disjoint cases where $p \nmid n$ or $n=p^a$. When $p \nmid n$, the result follows from Huber's much more general base change results \cite[Theorem 4.1.1.(c)]{Hub96}. We may thus assume that $n=p^a$. By Theorem \ref{properdirectimage}, we know that $g^\ast Rf_\ast \mathscr{F}$ and $Rf'_\ast g'^\ast \mathscr{F}$ are Zariski-constructible, hence overconvergent, so it suffices to show that the base change map induces an isomorphism on stalks at all rank one geometric points $\overline{y} \to Y'$. By two applications of \cite[Example 2.6.2]{Hub96}, we compute that $(g^\ast Rf_\ast \mathscr{F})_{\overline{y}} \simeq R\Gamma(X \times_{Y} \overline{g(y)},\mathscr{F})$ and $(Rf'_\ast g'^\ast \mathscr{F})_{\overline{y}} \simeq R\Gamma(X \times_{Y} \overline{y},\mathscr{F})$. We now conclude by Lemma \ref{changeofbasefieldp}.
\end{proof}

We end this section by recording that the equivalence in Proposition~\ref{schemescomparison1} is compatible with all the operations we have seen so far. 

\begin{proposition}\label{schemescomparison2}

Fix an affinoid $K$-algebra $A$, and write $\mathcal{S} = \Spec\,A$ and $S=\Spa A$.  The functor $(-)^{an} : D^b_c(\mathcal{X},\mathbf{Z}/n) \to D^b_{zc}(X,\mathbf{Z}/n)$ for finite type $\mathcal{S}$-schemes $\mathcal{X}$ with $X = \mathcal{X}^{an}$ is compatible with $ \otimes $, $\inthom$ when the first argument has finite Tor dimension, $f^{\ast}$, $Rf_\ast$, $Rf_!$ for compactifiable $f$, and $Ri^!$ for any finite morphism $i$. If $p \nmid n$, it is compatible with $Rf^!$ for compactifiable $f$.
\end{proposition}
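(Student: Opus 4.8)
The plan is to bootstrap everything from Proposition~\ref{schemescomparison1}(1), which already supplies compatibility of $(-)^{an}$ with $Rf_\ast$ for an \emph{arbitrary} finite type map $f$ of finite type $\mathcal{S}$-schemes --- in particular for proper maps and for open immersions. This is the one genuinely deep input; all the remaining compatibilities should follow by formal manipulation with the morphism of sites $\mu_X : X_{\et}\to\mathcal{X}_{\et}$ together with the stability theorems already proved in this section. The formal operations come essentially for free: $\mu_X^\ast$ is pullback along a morphism of ringed sites (with constant ring $\mathbf{Z}/n$), hence is exact, symmetric monoidal, and composes functorially with algebraic pullback, which handles $f^\ast$ and $\otimes^L$ simultaneously. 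For $Rf_!$ with $f:\mathcal{X}\to\mathcal{Y}$ compactifiable, I would fix a factorization $\mathcal{X}\xrightarrow{j}\overline{\mathcal{X}}\xrightarrow{g}\mathcal{Y}$ with $j$ an open immersion and $g$ proper, so that $Rf_! = Rg_\ast\circ j_!$ both algebraically and, after analytifying the factorization, analytically; since $\mu^\ast$ is exact and commutes with restriction to the open $\mathcal{X}$ and to its closed complement $\mathcal{Z}=\overline{\mathcal{X}}-\mathcal{X}$, and since $\mathcal{Z}^{an}$ is precisely the closed complement of $j^{an}$, the characterization of $j_!$ by the conditions $j^\ast j_!\simeq\mathrm{id}$ and $i_{\mathcal{Z}}^\ast j_!=0$ gives $\mu^\ast j_!\simeq j_!\mu^\ast$, and combining this with Proposition~\ref{schemescomparison1}(1) applied to $g$ settles the case of $Rf_!$ (the target being Zariski-constructible since $Rf_!\mathscr{F}$ is algebraically constructible).

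For finite $!$-pullback with $i:\mathcal{X}\to\mathcal{Y}$ finite, I would first construct the exchange transformation $\mu_X^\ast Ri^!_{\mathrm{alg}}\to Ri^!_{\mathrm{an}}\mu_Y^\ast$ by taking mates of the isomorphism $\mu^\ast i_\ast\simeq i_\ast\mu^\ast$ (a case of Proposition~\ref{schemescomparison1}(1), as $i$ is proper) against the counits of the adjunctions $i_\ast\dashv Ri^!$, and then show it is an isomorphism by mimicking the dévissage in the proof of Corollary~\ref{finiteshriekpullback}: induct on the dimension $d$ of the support of $\mathscr{F}\in D^b_c(\mathcal{Y},\mathbf{Z}/n)$. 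For $d=0$ one reduces, working \'etale-locally and using the compatibility with pushforward along closed points, to the case $\dim\mathcal{Y}=0$, where $i$ is finite \'etale up to a universal homeomorphism, so $Ri^! = i^\ast$ is manifestly compatible with $(-)^{an}$. For the inductive step one picks a dense Zariski-open $\mathcal{U}'\subset\mathcal{Y}$, with closed complement $i':\mathcal{Z}'\hookrightarrow\mathcal{Y}$, over which $\mathscr{F}$ restricts to a lisse complex and $i$ is finite \'etale up to universal homeomorphism, applies $Ri^!$ to the triangle $i'_\ast Ri'^!\mathscr{F}\to\mathscr{F}\to Rj'_\ast(\mathscr{F}|_{\mathcal{U}'})$, and treats each term using the already-known compatibilities with $Rj'_\ast$ and $i'_\ast$ (both cases of Proposition~\ref{schemescomparison1}(1)), with $(i')^\ast$, with finite \'etale pullback, and with the inductive hypothesis applied to $Ri'^!$. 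Zariski-constructibility of the target is Corollary~\ref{finiteshriekpullback}.

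For $\inthom(\mathscr{F},\mathscr{G})$ with $\mathscr{F}$ of finite Tor dimension, I would run the same induction on $\dim X$ as in the proof of Corollary~\ref{inthomexists}: choose a dense Zariski-open $j:U\subset X$, the analytification of a dense open of $\mathcal{X}$, over which $\mathscr{F}$ and $\mathscr{G}$ are lisse, apply $\inthom(-,\mathscr{G})$ to the standard triangle $j_!j^\ast\mathscr{F}\to\mathscr{F}\to i_\ast i^\ast\mathscr{F}$ to get
\[ i_\ast\inthom(i^\ast\mathscr{F},i^!\mathscr{G})\to\inthom(\mathscr{F},\mathscr{G})\to Rj_\ast(\mathscr{F}|_U^\vee\otimes\mathscr{G}|_U), \]
and note that the base change transformation is a map of such triangles which is an isomorphism on the two outer terms by the compatibilities with $i_\ast$, $i^\ast$, $Ri^!$ (just established), $\otimes^L$, dualization of lisse sheaves, and $Rj_\ast$, together with the inductive hypothesis --- hence an isomorphism on the middle term. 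Finally, for $Rf^!$ with $f$ compactifiable and $p\nmid n$, the exchange transformation can be checked to be an isomorphism Zariski-locally on $\mathcal{X}$, so I would factor $f=p\circ\iota$ locally with $\iota$ a closed immersion and $p$ smooth of relative dimension $d$; then $Rf^!=R\iota^!\circ Rp^!$ and, since $p\nmid n$, $Rp^!\simeq p^\ast(d)[2d]$ both algebraically and analytically --- the latter by Huber's \cite[Theorem 7.5.3]{Hub96} --- so the claim reduces to the compatibilities with $p^\ast$ and with $R\iota^!$ for the closed immersion $\iota$.

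The deepest input, the $Rf_\ast$-compatibility, is already in hand, so the bulk of the work is organizational; I expect the finite $!$-pullback to be the most delicate step, since there one must set up the exchange transformation correctly and propagate it through the dimension dévissage, the remaining cases being essentially transcriptions of the arguments that prove the corresponding stability statements.
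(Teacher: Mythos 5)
Your proposal is correct and follows essentially the same route as the paper: bootstrap from Proposition~\ref{schemescomparison1}(1) for $Rf_\ast$, handle $Rf_!$ by factoring through a compactification, run the dimension d\'evissage of Corollary~\ref{finiteshriekpullback} for finite $Ri^!$ and of Corollary~\ref{inthomexists} for $\inthom$, and factor locally as closed immersion followed by smooth for $Rf^!$ with $p\nmid n$. One small organizational difference worth flagging: the paper first settles $Ri^!$ for a \emph{closed} immersion $i$ directly, via the triangle $Ri^!\to i^\ast \to i^\ast Rj_\ast j^\ast\to$, and only then runs the d\'evissage for general finite maps; you fold closed immersions into a single d\'evissage, which does work, but your phrase ``the inductive hypothesis applied to $Ri'^!$'' is slightly misleading --- the inductive hypothesis is on the dimension of the support for the fixed finite map $i$, and is applied to the complex $i'_\ast Ri'^!\mathscr{F}$ (whose analytification is already pinned down by the first two terms of the triangle, so one need not know $Ri'^!$-compatibility for the closed immersion $i'$ separately).
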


\begin{proof}
The compatibilities for $f^\ast$, $\otimes$, and $j_!$ for open immersions $j$ are easy and left to the reader. The compatibility for $Rf_\ast$ is a special case of Proposition~\ref{schemescomparison1} (1). For proper $f$, the claim for $Rf_{\ast}=Rf_!$ is \cite[Theorem 3.7.2]{Hub96} This implies the claim for $Rf_!$ for compactifiable $f$. The result for $Ri^!$ in the case of a closed immersion follows from the triangle $Ri^! \to i^{\ast} \to i^{\ast} Rj_\ast j^\ast \to $ and its analytic counterpart, where $j$ is the complementary open immersion, using the known compatibilities for $i^{\ast}$, $j^{\ast}$, and $Rj_{\ast}$.  Given these compatibilities, the claim for $\inthom$ now follows by induction on the dimension, imitating the devissage carried out in Corollary~\ref{inthomexists}. The result for $Ri^!$ for general finite maps $i: \mathcal{X} \to \mathcal{Y}$ can be proven by following the argument in Corollary~\ref{finiteshriekpullback}. 

Finally, the claim for $Rf^!$ in the case $(n,p)=1$ is local on the source, so we may factor $f$ as $g\circ i$ where $g$ is smooth of some pure relative dimension $d$ and $i$ is a closed immersion. Then $Rf^! = Ri^! \circ Rg^! =Ri^! \circ g^{\ast}[2d](d)$ by Poincar\'e duality for schemes, and $Rf^{an !} = Ri^{an !} \circ g^{an \ast}[2d](d)$ by Poincar\'e duality for rigid spaces as in Huber's book. The result now follows from the known compatibilities for $g^{\ast}$ and $Ri^!$. 
\end{proof}

\subsection{Verdier duality}

It remains to discuss Verdier duality. Recall that if $p$ is invertible in the coefficient ring $\mathbf{Z}/n$, then \cite[\S 7]{Hub96} shows that any separated taut morphism $f: X \to Y$ of rigid spaces over $K$ induces a well-behaved functor $Rf_!: D(X,\mathbf{Z}/n) \to D(Y,\mathbf{Z}/n)$ with a well-behaved right adjoint $Rf^!$. In particular, for any separated taut rigid space $X$ and any $n$ prime to $p$, we can define the dualizing complex $\omega_X=R\pi_X^{!}(\mathbf{Z}/n)$, where $\pi_X : X \to \Spa\,K$ is the structure map. We shall construct dualizing complexes in a different way using results from \cite{ILO14}; our construction works without restriction on $p$, and is equivalent to Huber's if $(p,n)=1$. We will need the following form of unbounded BBDG gluing \cite{BBDG} for complexes. 

\begin{lemma}\label{gluing}Let $(\mathcal{C},\mathcal{O})$ be a ringed site with a final object $X$ and fiber products, and with enough points. Let $\mathcal{B}$ be a collection of open subobjects $U \subset X$ such that $X=\cup_{U \in \mathcal{B}}U$ and for all $U,V\in \mathcal{B}$ we have $U\cap V =\cup_{W \in \mathcal{B}, W \subset U \cap V} W$. 

Suppose we are given objects $K_U \in D(U,\mathcal{O})$ for all $U\in \mathcal{B}$ together with isomorphisms $\rho_{V}^{U}: K_U|_V \to K_V$ for all $V \subset U$ with $V,U \in \mathcal{B}$ which are compatible with composition. Finally, suppose that $\mathrm{Ext}^{i}(K_U,K_U)=0$ for all $U \in \mathcal{B}$ and all $i<0$.

Then there exists a pair $(K,\{ \rho_U\}_{U \in \mathcal{B}} )$ consisting of an object $K \in D(X,\mathcal{O})$ and isomorphisms $\rho_U:K|_U \to K_U$ such that $\rho_{V}^{U} \circ \rho_U = \rho_V$ for all $V \subset U$ with $V,U \in \mathcal{B}$. The pair $(K,\{ \rho_U\} )$ is unique up to unique isomorphism.
\end{lemma}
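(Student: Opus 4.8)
The plan is to combine descent for the derived $\infty$-category with an obstruction-theoretic rectification of the given datum. Regard $\mathcal{B}$ as a poset ordered by inclusion. Since $\mathcal{B}$ covers $X$ and satisfies the stated intersection-covering condition, it is a basis for the topology, and the basic descent property of the assignment $U \mapsto \mathcal{D}(U,\mathcal{O}|_U)$ along open covers yields an equivalence
\[
\mathcal{D}(X,\mathcal{O}) \;\xrightarrow{\ \sim\ }\; \lim_{U \in \mathcal{B}^{\mathrm{op}}} \mathcal{D}(U,\mathcal{O}|_U),
\]
the limit being taken over the nerve of $\mathcal{B}^{\mathrm{op}}$; for a basis closed under covering-intersections this poset limit agrees with the \v{C}ech limit computing $\mathcal{D}(X,\mathcal{O})$ (the two-open case of this being the familiar Mayer--Vietoris pullback square $\mathcal{D}(U\cup V) \simeq \mathcal{D}(U)\times_{\mathcal{D}(U\cap V)}\mathcal{D}(V)$). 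That this descent holds for the \emph{unbounded} derived $\infty$-category on a general ringed site follows from the recollement formalism for an open immersion $j$ --- namely $j^{\ast}$ has an exact left adjoint $j_{!}$ and a fully faithful right adjoint $Rj_{\ast}$ --- and the hypothesis that $\mathcal{C}$ has enough points is convenient for checking that the relevant canonical maps are equivalences by passing to stalks. Granting this, the lemma is reduced to the purely homotopy-theoretic problem: the datum $(K_U,\rho^U_V)$ is a descent datum only at the level of homotopy categories --- objects, $1$-morphisms, and strict compatibility with composition --- and we must lift it to an object of $\lim_{\mathcal{B}^{\mathrm{op}}}\mathcal{D}(U,\mathcal{O}|_U)$.

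I would carry this out by the standard skeletal induction over the nerve $N(\mathcal{B}^{\mathrm{op}})$, whose nondegenerate $n$-simplices are strict chains $U_0 \supsetneq U_1 \supsetneq \cdots \supsetneq U_n$ with $U_n \in \mathcal{B}$. An object of the limit is a coherent assignment of the $K_U$, the equivalences $\rho^U_V \colon K_U|_V \simeq K_V$, homotopies witnessing $\rho^U_W \simeq \rho^V_W \circ \rho^U_V$, then $2$-homotopies, and so on; we are handed the objects and the $\rho$'s, and strict compatibility with composition lets us choose all the triple-overlap homotopies to be identities. Given a coherent lift over the $(n-1)$-skeleton, extending it over an $n$-simplex $U_0 \supsetneq \cdots \supsetneq U_n$ amounts to filling a mapping space in $\mathcal{D}(U_n)$ between objects each equivalent (via the $\rho$'s) to $K_{U_n}$; the obstruction to a filling lies in $\pi_{n-2}\,\mathrm{Map}_{\mathcal{D}(U_n)}(K_{U_n},K_{U_n}) = \mathrm{Ext}^{2-n}(K_{U_n},K_{U_n})$, and once it vanishes the fillings form a torsor under $\mathrm{Ext}^{1-n}(K_{U_n},K_{U_n})$. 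For $n \geq 3$ these are $\mathrm{Ext}$-groups in strictly negative degree and hence vanish by hypothesis, so the lift exists and is unique at every stage; assembling (transfinitely, if $\mathcal{B}$ is infinite) over all skeleta produces $K \in \mathcal{D}(X,\mathcal{O})$ together with isomorphisms $\rho_U \colon K|_U \simeq K_U$ compatible with the $\rho^U_V$.

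For the uniqueness clause one applies the same analysis to the space of lifts, or argues directly: given two solutions $(K,\{\rho_U\})$ and $(K',\{\rho'_U\})$, the isomorphisms $(\rho'_U)^{-1}\circ\rho_U \colon K|_U \simeq K'|_U$ agree on overlaps, and the negative-$\mathrm{Ext}$ vanishing forces the local-to-global spectral sequence for $\inthom(K,K')$ to collapse enough that $\mathrm{Hom}_{\mathcal{D}(X)}(K,K')$ is precisely the set of such compatible families of local isomorphisms; this yields a unique global $K \simeq K'$ compatible with all the structure.

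The step I expect to be the main obstacle is establishing the descent equivalence $\mathcal{D}(X,\mathcal{O}) \simeq \lim_{\mathcal{B}^{\mathrm{op}}}\mathcal{D}(U,\mathcal{O}|_U)$ cleanly for unbounded complexes on an arbitrary ringed site (rather than a topological space), together with the bookkeeping in the skeletal induction when $\mathcal{B}$ is infinite. An essentially equivalent but more hands-on route, closer to \cite{BBDG}, avoids $\infty$-categories: for two opens one has the unconditional gluing $K = \mathrm{fib}\big(Rj_{U\ast}K_U \oplus Rj_{V\ast}K_V \to Rj_{(U\cap V)\ast}(K_U|_{U\cap V})\big)$, whose restrictions to $U$ and $V$ recover $K_U$ and $K_V$ by open base change, and one then runs a transfinite induction over a well-ordering of $\mathcal{B}$, at each successor stage using the vanishing of $\mathrm{Ext}^{i}(K_U,K_U)$ for $i<0$ precisely to glue the comparison isomorphisms over the relevant overlap. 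Either way, the role of the hypothesis is exactly to rigidify these higher coherences.
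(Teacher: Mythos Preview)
Your approach is sound and complete in outline; the paper's own proof is the single sentence ``This follows from the proof of \cite[Tag 0D6C]{Stacks}.'' That Stacks argument is essentially your \emph{alternative} route at the end: a direct triangulated-category construction, gluing two opens at a time via the fiber of $Rj_{U\ast}K_U \oplus Rj_{V\ast}K_V \to Rj_{(U\cap V)\ast}K_{U\cap V}$, and then running a transfinite induction over a well-ordering of $\mathcal{B}$, using the vanishing of negative Exts at each stage to rigidify the comparison isomorphisms (via a local-to-global/sheafified-Hom argument). No $\infty$-categories, and no global descent statement, are invoked.

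Your primary $\infty$-categorical route is genuinely different. Its advantage is conceptual clarity: once descent $\mathcal{D}(X,\mathcal{O})\simeq\lim_{\mathcal{B}^{\mathrm{op}}}\mathcal{D}(U,\mathcal{O}|_U)$ is in hand, the skeletal obstruction theory you sketch is exactly right --- the mapping spaces have $\pi_i=\mathrm{Ext}^{-i}$, so all path-components are contractible and the rectification problem has a unique solution. The cost is that the descent step for \emph{unbounded} complexes, over possibly infinite open covers of an arbitrary ringed site, is the whole difficulty (as you yourself flag); the paper's remark immediately following the lemma is precisely that one is \emph{avoiding} the cohomological-dimension hypotheses needed for general hyperdescent results like \cite{LO} or \cite[Tag 0DCC]{Stacks}, and your justification via recollement really only handles finite covers cleanly. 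It can be made to work --- the key point is that $j^\ast$ for open immersions preserves all limits, so the Čech-type colimit presentation $K\simeq\mathrm{colim}\,j_{U!}j_U^\ast K$ holds unconditionally --- but spelling this out is comparable in length to the direct Stacks argument, so neither approach is obviously shorter.
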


Recall that a subobject $U \subset X$ is open if the associated map $\mathrm{Sh}(\mathcal{C}_U) \to \mathrm{Sh}(\mathcal{C})$ is an open immersion of topoi, cf. \cite[Tag 08M0]{Stacks} for the latter notion. Note that unlike the unbounded gluing results proved  in \cite{LO} or \cite[Tag 0DCC]{Stacks}, we do not assume here that the underlying site locally has finite cohomological dimension. The price to pay is that we only prove gluing for open covers.
\begin{proof}This follows from the proof of \cite[Tag 0D6C]{Stacks}.
\end{proof}

Next, we recall some results from \cite{ILO14} on the existence and uniqueness of \'etale dualizing complexes for a fairly general class of noetherian schemes. For this, we need good dimension functions.

\begin{proposition}\label{dimensionfunction}
Let $\mathcal{X}$ be a Noetherian universally catenary scheme whose irreducible components are {\em equicodimensional} in the sense of EGA, i.e. such that $\dim(\mathcal{O}_{\mathcal{Y},y}) =\dim \mathcal{Y}$ for every irreducible component $\mathcal{Y} \subset \mathcal{X}$ and every closed point $y \in \mathcal{Y}$.  Then the function $\delta(x)=\dim \overline{ \{ x \} }$ is a dimension function on $\mathcal{X}$, which we call the \emph{canonical} dimension function.
\end{proposition}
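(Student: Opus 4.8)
The plan is to verify directly the defining property of a dimension function: that $\delta(x)=\delta(y)+1$ for every \emph{immediate} specialization $x\rightsquigarrow y$ in $\mathcal{X}$ (so $y\in\overline{\{x\}}$, $y\neq x$, with no point lying strictly between). Finiteness (hence integrality) of $\delta$ will be automatic: each irreducible component $\mathcal{Y}$ is nonempty and quasicompact, so has a closed point $z$, whence $\dim\mathcal{Y}=\dim\mathcal{O}_{\mathcal{Y},z}<\infty$ by Krull's height theorem; and since $\overline{\{x\}}$ lies in some component, $\dim\overline{\{x\}}<\infty$ as well.

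The key reduction is to pass to a single integral component. Fix an irreducible component $\mathcal{Y}$ of $\mathcal{X}$ containing $x$, with its reduced (hence integral) structure; then $\overline{\{x\}}\subseteq\mathcal{Y}$, so $y\in\mathcal{Y}$, $\overline{\{y\}}\subseteq\mathcal{Y}$, and all the relevant closures agree whether computed in $\mathcal{X}$ or in $\mathcal{Y}$. The scheme $\mathcal{Y}$ is Noetherian and catenary (a closed subscheme of the universally catenary $\mathcal{X}$), and --- this is the point --- all its local rings are domains, so are trivially equidimensional; consequently the catenary dimension formula $\dim\mathcal{O}_{\mathcal{Y},q}=\dim\mathcal{O}_{\mathcal{Y},p}+\dim\mathcal{O}_{\overline{\{p\}},q}$ holds for every $p\in\mathcal{Y}$ and every $q\in\overline{\{p\}}$ (EGA~IV, \S5.1). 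I will combine this with the elementary fact that $\dim Z=\sup_{z}\dim\mathcal{O}_{Z,z}$ for any nonempty quasicompact scheme $Z$, the supremum taken over closed points $z\in Z$ (because the bottom of any maximal chain of irreducible closed subsets is a closed point).

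Now choose a closed point $z\in\overline{\{y\}}$; it is closed in $\mathcal{X}$, hence in $\mathcal{Y}$, and lies in $\overline{\{x\}}$ too. By hypothesis every closed point $z'$ of $\mathcal{Y}$ satisfies $\dim\mathcal{O}_{\mathcal{Y},z'}=\dim\mathcal{Y}=:N$; applying the dimension formula at all such $z'$ lying in $\overline{\{x\}}$ (which are exactly the closed points of $\overline{\{x\}}$) shows that $\dim\mathcal{O}_{\overline{\{x\}},z'}=N-\dim\mathcal{O}_{\mathcal{Y},x}$ is independent of $z'$, whence $\dim\overline{\{x\}}=N-\dim\mathcal{O}_{\mathcal{Y},x}$ by the ``sup over closed points'' fact; likewise $\dim\overline{\{y\}}=N-\dim\mathcal{O}_{\mathcal{Y},y}$. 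Subtracting gives $\dim\overline{\{x\}}-\dim\overline{\{y\}}=\dim\mathcal{O}_{\mathcal{Y},y}-\dim\mathcal{O}_{\mathcal{Y},x}$. Finally, the dimension formula applied to the triple $\mathcal{Y}\supseteq\overline{\{x\}}\ni y$ gives $\dim\mathcal{O}_{\mathcal{Y},y}=\dim\mathcal{O}_{\mathcal{Y},x}+\dim\mathcal{O}_{\overline{\{x\}},y}$, and $\dim\mathcal{O}_{\overline{\{x\}},y}=1$ precisely because $x\rightsquigarrow y$ is immediate and $\overline{\{x\}}$ is integral with generic point $x$ (its local ring at $y$ has exactly the two primes $(0)\subsetneq\mathfrak{m}_y$). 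Hence $\dim\overline{\{x\}}-\dim\overline{\{y\}}=1$, as required.

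The only real subtlety --- and the sole place where the equicodimensionality hypothesis actually enters --- is the passage from the local quantity $\dim\mathcal{O}_{\overline{\{x\}},z}$, which the catenary dimension formula produces naturally, to the global invariant $\dim\overline{\{x\}}$: without knowing that all closed points of $\mathcal{Y}$ have the same codimension $N$ in $\mathcal{Y}$, these two numbers can differ, and $\delta$ then fails to be a dimension function. The remainder is routine bookkeeping: that closed points of closed subschemes are closed in the ambient space, that the closures may be computed inside $\mathcal{Y}$, and the two standard facts about catenary schemes quoted above.
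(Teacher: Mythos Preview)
Your proof is correct and follows essentially the same strategy as the paper's: reduce to a single irreducible component and verify the dimension function property there. The paper simply cites \cite[Corollaire XIV.2.4.4]{ILO14} for the per-component statement and then observes that the resulting functions agree on overlaps, whereas you unpack the component-level argument directly via the catenary dimension formula; your version is more self-contained but not a different route.
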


The hypotheses here are satisfied for any scheme $\mathcal{X}$ of finite type over $\mathbf{Z}$, over a field, or over $\Spec A$ for some affinoid $K$-algebra $A$. We will only need the latter case.

\begin{proof} By \cite[Corollaire XIV.2.4.4]{ILO14}, the map $x\mapsto \dim \overline{ \{ x \} }$ defines a dimension function on each irreducible component of $\mathcal{X}$. Since these functions agree on overlaps of irreducible components, this implies the claim.
\end{proof}

In the next theorem, we shall use the language introduced in \cite[\S XVII]{ILO14}. In particular, we shall use the notion of a {\em potential dualizing complex} from \cite[\S XVII.2]{ILO14}. Recall that such a complex on a scheme $\mathcal{X}$ equipped with a dimension function $\delta$ is an object $\omega_{\mathcal{X}} \in D^+(\mathcal{X},\mathbf{Z}/n)$ equipped with some additional data: one has specified isomorphisms $R\Gamma_{\overline{x}} (\omega_{\mathcal{X}}) \cong \mathbf{Z}/n[2\delta(x)](\delta(x))$, called {\em pinnings}, for each geometric point $\overline{x} \to \mathcal{X}$ lying over a point $x \in \mathcal{X}$, and these isomorphisms are required to be compatible with immediate specializations in the appropriate sense. The following theorem asserts that such complexes exist in large generality, and have good properties.

\begin{theorem}[Existence of dualizing complexes in algebraic geometry]
\label{DCExistScheme}
Let $\mathcal{X}$ be an excellent Noetherian $\mathbf{Z}[1/n]$-scheme satisfying the hypotheses of Proposition \ref{dimensionfunction}, and set $\Lambda=\mathbf{Z}/n$. Then $\mathcal{X}$ admits a potential dualizing complex $\omega_{\mathcal{X}} \in D^b_{ctf}(\mathcal{X},\Lambda)$ relative to the canonical dimension function, which is unique up to unique isomorphism. The functor $\mathbf{D}_{\mathcal{X}}(-)=\inthom(-,\omega_{\mathcal{X}})$ preserves $D^b_c(\mathcal{X},\Lambda)$ and the biduality map $\mathscr{F} \to \mathbf{D}_{\mathcal{X}} \mathbf{D}_{\mathcal{X}} \mathscr{F}$ is an isomorphism for all $\mathscr{F} \in D^b_c(\mathcal{X},\Lambda)$.
\end{theorem}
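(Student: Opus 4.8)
The plan is to obtain this statement by repackaging Gabber's theory of dualizing complexes for quasi-excellent schemes developed in \cite[\S XVII]{ILO14}; the only role of Proposition~\ref{dimensionfunction} is to produce the global dimension function that Gabber's construction takes as input. First I would note that the hypotheses force $\dim\mathcal{X}<\infty$: each irreducible component $\mathcal{Y}$ is Noetherian, hence has a closed point $y$, and $\dim\mathcal{O}_{\mathcal{Y},y}<\infty$, so equicodimensionality gives $\dim\mathcal{Y}=\dim\mathcal{O}_{\mathcal{Y},y}<\infty$. Using the canonical dimension function $\delta(x)=\dim\overline{\{x\}}$ from Proposition~\ref{dimensionfunction}, I would then invoke Gabber's existence and uniqueness theorem for potential dualizing complexes relative to $\delta$ on a quasi-excellent $\mathbf{Z}[1/n]$-scheme (\cite[\S XVII]{ILO14}): this produces the desired $\omega_{\mathcal{X}}$ together with its pinnings, uniquely up to unique isomorphism (the pinnings rigidify, and there are no negative self-$\mathrm{Ext}$ groups, so both uniqueness and gluing work, exactly in the spirit of Lemma~\ref{gluing}).

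For the reader who wants the construction unwound: after the reduction $\mathbf{Z}/n\simeq\prod_{\ell\mid n}\mathbf{Z}/\ell^{v_\ell(n)}$ one may assume $n=\ell^v$ is a prime power; one then works affine-locally, uses a prime-to-$\ell$ alteration $\mathcal{X}'\to\mathcal{X}$ with $\mathcal{X}'$ regular (available by \cite{ILO14}), observes that on a regular scheme $f^!\Lambda$ for the map $f$ to a point is, by absolute purity, an explicit shift-twist of $\Lambda$ dictated by $\delta$, builds the candidate complex and its pinnings there, checks compatibility with immediate specializations, and descends along the alteration by cohomological descent. That $\omega_{\mathcal{X}}$ lands in $D^b_{ctf}(\mathcal{X},\Lambda)$ follows from boundedness ($\dim\mathcal{X}<\infty$) together with the local picture on the regular strata — where $\omega_{\mathcal{X}}$ is a shift-twist of the constant sheaf $\Lambda$, which has Tor-dimension $0$ over $\Lambda$ — propagated to all of $\mathcal{X}$ by Gabber's finiteness theorem \cite[\S XIII]{ILO14} applied to the proper alteration.

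It then remains to record that this potential dualizing complex is a genuine one, i.e. that $\mathbf{D}_{\mathcal{X}}=\inthom(-,\omega_{\mathcal{X}})$ preserves $D^b_c(\mathcal{X},\Lambda)$ and that biduality $\mathscr{F}\to\mathbf{D}_{\mathcal{X}}\mathbf{D}_{\mathcal{X}}\mathscr{F}$ holds; this is again established in \cite[\S XVII]{ILO14}. The mechanism to keep in mind: both claims are \'etale-local and insensitive to nilpotents, so one reduces — via proper pushforward along alterations (using \cite[\S XIII]{ILO14}) and devissage along closed immersions — to a generating family such as $\{i_*\Lambda\}$ with $i$ the inclusion of an integral closed subscheme; biduality is then checked on stalks at geometric points, where it unwinds to the defining pinnings of $\omega_{\mathcal{X}}$ and local duality.

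The genuinely deep ingredient is Gabber's existence theorem in the singular case, which rests on the prime-to-$\ell$ local uniformization results and absolute purity from \cite{ILO14}; since we are free to cite \cite{ILO14} wholesale, the only new work on our side is the easy bookkeeping matching our hypotheses — universally catenary with equicodimensional irreducible components — to the existence of a dimension function, which is precisely what Proposition~\ref{dimensionfunction} accomplishes.
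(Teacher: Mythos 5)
Your proposal is correct and takes essentially the same route as the paper: both reduce the statement to Gabber's existence–uniqueness theorem for potential dualizing complexes and his biduality theorem in \cite[Exp.\ XVII]{ILO14} (the paper cites Theorem XVII.5.1.1 and Th\'eor\`eme XVII.6.1.1 specifically), with Proposition~\ref{dimensionfunction} supplying the dimension function those results require. Your extra paragraphs unwinding the alteration/absolute-purity/descent mechanism inside \cite{ILO14} are accurate background but not new work beyond what the citation already supplies.
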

\begin{proof}
The existence and uniqueness is \cite[Theorem XVII.5.1.1]{ILO14}, while the rest follows from \cite[Theoreme XVII.6.1.1]{ILO14}.
\end{proof}

\begin{remark}
\label{EtaleDualizingRegular} 
Choose $\mathcal{X}$ as in Theorem~\ref{DCExistScheme}. Assume additionally than $\mathcal{X}$ is regular of (locally constant) dimension $d$. Then the twisted constant sheaf $\mathbf{Z}/n[2d](d)$ comes equipped with the required pinning data thanks to absolute cohomological purity \cite[Theorem XVI.3.1.1]{ILO14}, so it follows that there is a unique isomorphism $\mathbf{Z}/n[2d](d)\cong \omega_{\mathcal{X}}$ compatible with the pinnings.
\end{remark}

Using the preceding results in algebraic geometry and the algebraization results in Proposition~\ref{schemescomparison1}, we construct dualizing complexes in rigid geometry using Lemma~\ref{gluing} on gluing.

\begin{theorem}[Existence of dualizing complexes in rigid geometry]
\label{dualizingcomplex}
Let $X$ be a rigid space over $K$. 
\begin{enumerate}
\item Existence: There exists a natural dualizing complex $\omega_X \in D^{(b)}_{zc}(X,\mathbf{Z}/n)$, characterized up to unique isomorphism by the requirement that its formation commutes with passage to open subsets and is given by the algebraization (in the sense of Proposition~\ref{schemescomparison1}) of the potential dualizing complexes from Theorem~\ref{DCExistScheme} when $X$ is affinoid. Moreover, one has $\omega_X \cong (\mathbf{Z}/n)[2d](d)$ for $X$ smooth of pure dimension $d$, and canonical isomorphisms $\omega_Z \simeq Ri^! \omega_X$ for any finite morphism $i:Z \to X$.

\item $!$-compatibility for $(n,p)=1$: If $X$ is separated and taut and $(n,p)=1$, then $\omega_X \cong R\pi_X^!(\mathbf{Z}/n)$ where $\pi_X: X \to \Spa K$ is the structure map. 

\item Biduality: The dualizing functor $\mathbf{D}_X(-) = \inthom (-,\omega_X)$ induces a contravariant self-equivalence of $D^{(b)}_{zc}(X,\mathbf{Z}/n)$ satisfying biduality $\mathrm{id} \cong \mathbf{D}_X \circ \mathbf{D}_X$ via the natural map.

\item Duality and finite morphisms: For any finite morphism $i:Z \to X$, there are natural identifications of functors $Ri^! \mathbf{D}_X \cong \mathbf{D}_Z i^\ast$ and $i_{\ast} \mathbf{D}_Z \cong \mathbf{D}_X i_\ast$.

\item Duality and open immersions: For a Zariski-open immersion $j:U \to X$, there are natural isomorphisms $j^* \mathbf{D}_X \simeq \mathbf{D}_U j^*$ and $Rj_* \mathbf{D}_U \simeq \mathbf{D}_X j_!$.

\item Base change: If $L/K$ is an extension of nonarchimedean fields, with $a^\ast : D_{zc}(X,\mathbf{Z}/n) \to D_{zc}(X_L,\mathbf{Z}/n)$ the natural pullback map, then $a^\ast \omega_X \cong \omega_{X_L}$.

\item Compatibility with algebraic geometry: If $X=\mathcal{X}^{an}$ for a finite type $K$-scheme $\mathcal{X}$, then $\omega_X = (\omega_{\mathcal{X}})^{an}$. 
\end{enumerate}
\end{theorem}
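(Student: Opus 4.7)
The plan is to construct $\omega_X$ by gluing affinoid-local data obtained from algebraic geometry via Theorem~\ref{DCExistScheme}, and then to deduce the structural properties (biduality, functoriality, base change) by reducing them to their algebraic counterparts using Proposition~\ref{schemescomparison1}, Proposition~\ref{schemescomparison2}, and the \'etale descent statement Theorem~\ref{ZCLocal}. Concretely, for each open affinoid $U = \mathrm{Spa}(A) \subset X$, set $\omega_U := \mu_{\Spec(A)}^{\ast}(\omega^{\mathrm{alg}}_{\Spec(A)})$, where $\omega^{\mathrm{alg}}_{\Spec(A)} \in D^b_{ctf}(\Spec(A),\mathbf{Z}/n)$ is the potential dualizing complex provided by Theorem~\ref{DCExistScheme} (applicable since $\Spec(A)$ is excellent and equicodimensional by Proposition~\ref{dimensionfunction}). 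By Proposition~\ref{schemescomparison1}, $\omega_U \in D^b_{zc}(U,\mathbf{Z}/n)$.

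\textbf{Gluing step (main obstacle).} To glue the $\omega_U$'s into a global $\omega_X$ via Lemma~\ref{gluing} (applied on the analytic site with basis given by open affinoids), one must produce canonical transition isomorphisms $\omega_U|_V \simeq \omega_V$ for open affinoid inclusions $V = \mathrm{Spa}(B) \subset U = \mathrm{Spa}(A)$. The map $\Spec(B) \to \Spec(A)$ is flat and preserves the canonical dimension function of Proposition~\ref{dimensionfunction} (since $B$ is affinoid of the same rigid dimension as $A$ on each component meeting $V$, and dimensions are preserved under rational localization). Hence the restriction $\omega^{\mathrm{alg}}_{\Spec(A)}|_{\Spec(B)}$ is itself a potential dualizing complex on $\Spec(B)$ (the pinning data at geometric points is transported by flat pullback), so the uniqueness part of Theorem~\ref{DCExistScheme} yields a canonical isomorphism with $\omega^{\mathrm{alg}}_{\Spec(B)}$. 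Analytifying and using the pullback compatibility on the algebraic side gives the required $\omega_U|_V \simeq \omega_V$. Cocycle compatibility is automatic from uniqueness, and the required $\mathrm{Ext}^{<0}(\omega_U,\omega_U) = 0$ follows from biduality (applied in the algebraic case) together with $H^i(R\Gamma(U,\mathbf{Z}/n))=0$ for $i<0$. This gluing produces $\omega_X \in \mathcal{D}(X,\mathbf{Z}/n)$ whose local Zariski-constructibility (and thus membership in $D^{(b)}_{zc}$) follows from Theorem~\ref{ZCLocal}. I anticipate this gluing step to be the most delicate part, since one must carefully verify the preservation of the dimension function and pinning data under the maps $\Spec(B) \to \Spec(A)$ coming from rational localizations.

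\textbf{Deducing (1)--(7) other than the $!$-comparison.} The smooth case $\omega_X \simeq \mathbf{Z}/n[2d](d)$ is immediate from Remark~\ref{EtaleDualizingRegular} after analytification. For (4), the identification $\omega_Z \simeq Ri^!\omega_X$ for finite $i$ follows from the algebraic analogue (which is built into the construction of the algebraic dualizing complex) plus Proposition~\ref{schemescomparison2}'s compatibility of $Ri^!$ with analytification for finite morphisms; the standard adjunction identities $Ri^!\mathbf{D}_X \simeq \mathbf{D}_Z i^{\ast}$ and $i_{\ast}\mathbf{D}_Z \simeq \mathbf{D}_X i_{\ast}$ then follow formally. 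Biduality (3) and $\inthom$-preservation reduce, via Theorem~\ref{ZCLocal}, to the affinoid case; there they follow from Theorem~\ref{DCExistScheme} via Proposition~\ref{schemescomparison1} and the $\inthom$-compatibility of Proposition~\ref{schemescomparison2} (valid since $\omega^{\mathrm{alg}}$ has finite Tor dimension). Part (5) is a direct adjunction argument: $j^{\ast}\omega_X \simeq \omega_U$ by construction, whence $j^{\ast}\mathbf{D}_X \simeq \mathbf{D}_U j^{\ast}$ from $\inthom$--pullback compatibility for \'etale $j$; dually, $\mathbf{D}_X j_! F \simeq Rj_{\ast}\inthom(F, j^! \omega_X) \simeq Rj_{\ast}\mathbf{D}_U F$ using $j^! = j^{\ast}$. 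Base change (6) reduces to the same statement for algebraic dualizing complexes, which again follows by uniqueness in Theorem~\ref{DCExistScheme} after verifying that pullback along $\Spec(A_L) \to \Spec(A)$ sends the potential dualizing complex to a potential dualizing complex. The GAGA comparison (7) is immediate from the construction on affinoids and the uniqueness of the glued object.

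\textbf{The $!$-comparison in (2).} When $(n,p)=1$, Huber's theory gives a second candidate $\omega_X^{\mathrm{Hub}} := R\pi_X^!(\mathbf{Z}/n)$, whose formation commutes with restriction to open subspaces. By uniqueness of $\omega_X$ granted by the gluing procedure, it suffices to produce a canonical isomorphism $\omega_X \simeq \omega_X^{\mathrm{Hub}}$ locally on $X$. Locally one can factor the structure map of a quasi-compact open as a closed immersion followed by a smooth morphism (for instance after embedding an affinoid into a polydisc). For the smooth factor of pure dimension $d$, Huber's Poincar\'e duality \cite[Theorem 7.5.3]{Hub96} gives $R\pi^!\mathbf{Z}/n \simeq \mathbf{Z}/n[2d](d)$, which matches $\omega$ on smooth spaces by part (1). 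For the closed immersion, $Ri^!$ for finite (in particular closed) morphisms has already been identified with $\omega_Z \simeq Ri^!\omega_X$ in (4). Combining, one gets a canonical isomorphism $\omega_X \simeq \omega_X^{\mathrm{Hub}}$, which patches to a global isomorphism by Theorem~\ref{ZCLocal} applied to the difference (which is a lisse object on a cover, hence descends).
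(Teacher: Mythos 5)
Your proposal follows essentially the same route as the paper: analytify the ILO14 potential dualizing complexes on affinoids, produce transition isomorphisms from the uniqueness of potential dualizing complexes after checking compatibility of dimension functions, glue via the BBDG-type Lemma~\ref{gluing}, and deduce (1)--(7) by the same formal adjunction and algebraization arguments (including the local factorization through a polydisc for (2)). The only imprecision is in the gluing step: the transport of pinnings in \cite{ILO14} is stated for \emph{regular} morphisms with the dimension function corrected by fiber codimension, not for arbitrary flat pullback — but since the algebraized maps of rational localizations are regular (by excellence of affinoids), this is a matter of citing the right statement rather than a gap.
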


\begin{proof}
Let us first construct $\omega_X$ by glueing together the analytifications of the dualizing complexes coming from Theorem~\ref{DCExistScheme}.  Let $U=\Spa A$ be any affinoid rigid space, and set $\mathcal{U}=\Spec A$. Then $\mathcal{U}$ satisfies the hypotheses of Theorem~\ref{DCExistScheme}. Let $\omega_{\mathcal{U}} \in D^b_c(\mathcal{U},\mathbf{Z}/n)$ be the dualizing complex provided by that theorem.

By Propositions \ref{schemescomparison1} and \ref{schemescomparison2}, we have an equivalence $(-)^{an}: D^b_c(\mathcal{U},\mathbf{Z}/n) \to D^b_{zc}(U,\mathbf{Z}/n)$ compatible with $\inthom$s. We now define $\omega_U := (\omega_{\mathcal{U}})^{an}$, so $(\mathbf{D}_{\mathcal{U}}\mathscr{F})^{an} \cong \mathbf{D}_U(\mathscr{F}^{an})$ for every $\mathscr{F} \in D^b_c(\mathcal{U},\mathbf{Z}/n)$. Since the dualizing functor $\mathbf{D}_{\mathcal{U}}$ induces a contravariant self-equivalence on $D^b_c(\mathcal{U},\mathbf{Z}/n)$ which satisfies biduality, and $(-)^{an}$ is an equivalence of categories, we now conclude the analogous results for $\mathbf{D}_U$.

To construct $\omega_X$ over an arbitrary rigid space $X$, we apply Lemma \ref{gluing}, taking $\mathcal{B}$ to be the collection of affinoid opens $U \subset X$, and letting $K_U = \omega_U$ as constructed in the previous paragraph. Note that the full faithfulness in Proposition~\ref{schemescomparison1} and Theorem~\ref{DCExistScheme} show that $\mathbf{Z}/n \cong \inthom(\omega_U,\omega_U)$ for all $U \in \mathcal{B}$, so all negative self-exts vanish. Moreover, for any inclusion of open affinoid subsets $g: V \subset U$, there is a natural isomorphism $g^{\ast} \omega_U \cong \omega_V$ compatible with compositions by Lemma \ref{dualizingcomplexweirdpullback} below (applied with $d=0$). The gluing lemma now applies, and gives a unique $\omega_{X} \in D(X,\mathbf{Z}/n)$ equipped with a transitive system of isomorphisms $\omega_X|_U \simeq \omega_U$ for all open affinoids $U\subset X$. 

We now prove this construction has all the required properties.

\begin{enumerate}
\item By construction, the complex $\omega_X$ is characterized by the properties demanded in the first sentence of (1); these also characterize $\omega_X$ uniquely (up to unique isomorphism) by the uniqueness assertions in Theorem~\ref{DCExistScheme} and Lemma~\ref{dualizingcomplexweirdpullback} through the equivalence in Proposition~\ref{schemescomparison1}. It is also clear from the construction and Remark~\ref{EtaleDualizingRegular} that $\omega_X|_{X^{sm}} \cong \mathbf{Z}/n[2d](d)$, where $d$ is the dimension. For the compatibility with $Ri^!$ for finite morphisms, we reduce to the affinoid case following our construction, and use compatibility of the analytification functor with $Ri^!$ (Proposition~\ref{schemescomparison2}) to reduce to the corresponding result in algebraic geometry (\cite[Proposition XVII.4.1.2]{ILO14}). The compatibility with restriction to open subsets is clear from our construction. As these properties guarantee uniqueness of $\omega_X$ up to isomorphism, we are done with (1).

\item The identification with $R\pi_X^!(\mathbf{Z}/n)$ can be checked locally, where it follows by factoring $\pi_X$ as a closed immersion followed by a smooth map and using the results on Poincare duality proved in Huber's book.

\item This follows from the corresponding assertion in the affinoid case (which was explained above whilst constructing $\omega_X$) as the property of being Zariski-constructible is local in the analytic topology (Theorem~\ref{ZCLocal}).

\item This is a formal argument given duality and known adjunctions. For compatibility with $i_*$, fix $G \in D^{(b)}_{zc}(Z,\mathbf{Z}/n)$. Then we have isomorphisms
\begin{align*}
 i_* \mathbf{D}_Z(G) =& i_* \inthom(\mathbf{Z}/n,\mathbf{D}_Z(G)) \\
 =& i_* \inthom(G, \omega_Z) \\
 =& i_* \inthom(G, i^! \omega_X) \\
 =&  \inthom(i_* G, \omega_X) \\
 =&  \mathbf{D}_X(i_* G).
 \end{align*}
 where the second isomorphism is by biduality, the third by $\omega_Z = i^! \omega_X$, and the fourth by the defining property of $i^!$. Comparing the first and last term gives $i_* \mathbf{D}_Z = \mathbf{D}_X i_*$.
 
For compatibility with $i^!$, fix additionally $F \in D^{(b)}_{zc}(X,\mathbf{Z}/n)$.  Then we have bifunctorial isomorphisms
\begin{align*} 
\mathrm{RHom}(i^* \mathbf{D}_X(F), G) =& \mathrm{RHom}(\mathbf{D}_X(F), i_* G)\\
 =& \mathrm{RHom}(\mathbf{D}_X(i_* G), F)  \\
 =& \mathrm{RHom}(i_* \mathbf{D}_Z(G),F)  \\
 =& \mathrm{RHom}(\mathbf{D}_Z(G), Ri^! F) \\
 =& \mathrm{RHom}(\mathbf{D}_Z(Ri^! F),G),
 \end{align*}
 where first equality is by adjunction for $(i^*,i_*)$, the second and last by duality, the third by the equality $i_* \mathbf{D}_Z = \mathbf{D}_X i_*$ we just showed, and the fourth by adjunction for $(i_*,i^!)$.

\item The compatibility with $j^*$ is built into the construction. The rest is again a formal argument using duality and known adjunctions. For $F \in D^{(b)}_{zc}(U,\mathbf{Z}/n)$ and $G \in D^{(b)}_{zc}(X,\mathbf{Z}/n)$, we have
\begin{align*}
\mathrm{RHom}(F, Rj_* \mathbf{D}_U(G)) =& \mathrm{RHom}(j^* F, \mathbf{D}_U(G)) \\
=& \mathrm{RHom}(G, \mathbf{D}_U (j^* F)) \\
=& \mathrm{RHom}(G, j^* \mathbf{D}_X(F)) \\
=& \mathrm{RHom}(j_! G, \mathbf{D}_X(F)) \\
=& \mathrm{RHom}(F, \mathbf{D}_X(j_! G)),
\end{align*}
with first equality by the adjunction $(j^*,Rj_*)$, the second and last by duality, third by $j^* \mathbf{D}_X = \mathbf{D}_U j^*$, and the fourth by the adjunction $(j_!, j^*)$.

\item By our construction of dualizing complexes, it suffices to construct a natural system of such isomorphisms over affinoids. Thus, assume $X=\mathrm{Spa}(A)$ is affinoid with base change $X_L = \mathrm{Spa}(A_L)$. Write $\mathcal{X}$ and $\mathcal{X}_L$ for the natural algebraization, and let $f:\mathcal{X}_L \to \mathcal{X}$ be the natural map. It is enough to construct a natural isomorphism $f^* \omega_{\mathcal{X}}  \simeq \omega_{\mathcal{X}_L}$. We claim this follows from \cite[Proposition 4.1.1]{ILO14} (and uniqueness of potential dualizing complexes). To apply this lemma, we need to know that $f$ is a regular map, and that the dimension function $y \mapsto \delta'(y) := \delta(f(y)) - \mathrm{codim}_{f^{-1}(f(y))}(y)$ on $\mathcal{X}_L$ agrees with the standard dimension function $y \mapsto \dim(\overline{\{y\}})$. The regularity of $f$ is discussed in the last paragraph of the proof of Proposition~\ref{OperationsBC} below, and essentially comes from \cite{A}. To obtain agreement of the dimension functions, it suffices to know that $\delta'(y) = 0$ for a closed point $y$ (as any point on $\mathcal{X}_L$ is linked to a closed point by a finite chain of immediate specializations). Thus, we must check that $\dim(\overline{\{f(y)\}}) =  \mathrm{codim}_{f^{-1}(f(y))}(y)$. Since $y$ is a closed point, this follows from \cite[Lemma 2.1.5]{ConradIrr} applied to the affinoid algebra of functions on integral scheme $\overline{\{y\}} \subset \mathcal{X}$. 

\item To deduce this from our construction, it suffices to show the following: if $\mathcal{X} = \mathrm{Spec}(A)$ is an affine finite-type $K$-scheme and $U = \mathrm{Spa}(B) \subset X = \mathcal{X}^{an}$ is an open affinoid with natural map $\nu:U \to \mathcal{X}$, then there is a natural isomorphism $\nu^* \omega_{\mathcal{X}} \simeq \omega_U$. This follows by Lemma~\ref{dualizingcomplexweirdpullback} below applied to the map $\mathrm{Spec}(B) \to \mathrm{Spec}(A)$ with $d=0$.
\end{enumerate}
\end{proof}

In the course of the previous proof, we used the following lemma.

\begin{lemma}\label{dualizingcomplexweirdpullback} Let $g:V = \Spec B \to U = \Spec A$ be a regular morphism between excellent affine $\mathbf{Q}$-schemes of finite Krull dimension. Suppose moreover that $U$ and $V$ have equicodimensional irreducible components, that $g$ maps closed points to closed points, and that $V \times_{U} \Spec \kappa(g(x))$ is equidimensional of dimension $d$ for all closed points $x\in V$ for some (constant) $d\geq 0$. Then there is a unique isomorphism $g^{\ast} \omega_U[2d](d) \cong \omega_V$ compatible with the pinnings.
\end{lemma}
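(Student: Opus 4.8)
The plan is to exhibit $g^\ast\omega_U[2d](d)$, equipped with suitable pinnings, as a potential dualizing complex on $V$ relative to the canonical dimension function, and then quote the uniqueness statement of Theorem~\ref{DCExistScheme}. First note that $U$ and $V$ both satisfy the hypotheses of Theorem~\ref{DCExistScheme}: they are excellent (hence Noetherian and universally catenary) $\mathbf{Z}[1/n]$-schemes of finite Krull dimension with equicodimensional irreducible components, so $\omega_U$ and $\omega_V$ are defined relative to their canonical dimension functions $\delta_U(x) = \dim\overline{\{x\}}$ and $\delta_V(y) = \dim\overline{\{y\}}$ (Proposition~\ref{dimensionfunction}).

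The key input is the compatibility of potential dualizing complexes with regular morphisms, \cite[Proposition 4.1.1]{ILO14}: for the regular morphism $g$, the pinnings of $\omega_U$ induce --- via absolute cohomological purity along the (regular) fibers of $g$ --- pinnings on $g^\ast\omega_U$ exhibiting it as a potential dualizing complex on $V$ relative to the dimension function $\delta_g(y) := \delta_U(g(y)) - \mathrm{codim}_{g^{-1}(g(y))}(y)$. Twisting, $g^\ast\omega_U[2d](d)$ becomes a potential dualizing complex relative to $\delta_g + d$. Hence the lemma will follow from the uniqueness (up to unique isomorphism) of potential dualizing complexes in Theorem~\ref{DCExistScheme} once we verify the identity of dimension functions
\[
\delta_g + d = \delta_V \qquad \text{on } V.
\]

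To prove this identity, note that both sides are dimension functions on $V$ (the left-hand one because a dimension function shifted by a constant is again a dimension function; the right-hand one by Proposition~\ref{dimensionfunction}), so their difference is locally constant; since every connected component of the Noetherian scheme $V$ is open and closed, hence quasicompact, hence contains a closed point of $V$, it suffices to check the identity at the closed points of $V$. Fix a closed point $y \in V$, so $\delta_V(y) = 0$; we must show $\delta_g(y) = -d$. By hypothesis $g(y)$ is a closed point of $U$, so $\delta_U(g(y)) = 0$; and the fiber $F := g^{-1}(g(y)) = V\times_U\Spec\kappa(g(y))$ is equidimensional of dimension $d$ by hypothesis, is closed in $V$ (because $g(y)$ is closed in $U$ and $g$ is continuous), and contains $y$ as a closed point. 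Since $F$ is regular (hence catenary) and equidimensional of dimension $d$, one checks $\mathrm{codim}_F(y) = \dim F = d$; therefore $\delta_g(y) = 0 - d = -d$, as wanted.

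The bulk of the work --- and the only real subtlety --- lies in getting the bookkeeping of dimension functions exactly right: one must be sure that the pinnings on $g^\ast\omega_U$ produced by \cite[Proposition 4.1.1]{ILO14} are those attached to the dimension function $\delta_g$ above, which is what forces the Tate twist $(d)$ and the shift $[2d]$ in the final statement, and one must check that the hypotheses ``$g$ carries closed points to closed points'' and ``the fiber over every closed point is equidimensional of dimension $d$'' are precisely what make $\delta_g + d$ coincide with the canonical dimension function of $V$ at closed points. Everything else is formal, given the algebro-geometric inputs (Theorem~\ref{DCExistScheme}, Proposition~\ref{dimensionfunction}, and absolute purity) already available.
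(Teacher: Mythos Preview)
Your proposal is correct and takes essentially the same approach as the paper: both apply \cite[Exp.\ XVII, Prop.\ 4.1.1]{ILO14} to identify $g^\ast\omega_U$ as a potential dualizing complex for the dimension function $\delta_g(y)=\delta_U(g(y))-\mathrm{codim}_{g^{-1}(g(y))}(y)$, verify $\delta_g+d=\delta_V$ by checking at closed points (where the hypotheses on $g$ force both sides to vanish), and conclude by uniqueness of potential dualizing complexes. Your write-up is slightly more detailed than the paper's (e.g., in explaining why it suffices to check at closed points, and why the closed fiber is regular), but the argument is the same.
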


This lemma applies if $g$ arises from an \'etale map of affinoid rigid spaces $\Spa B \to \Spa A$, with $d=0$. This is the only case we will need.

\begin{proof}
This is a variant of the argument used in Theorem~\ref{dualizingcomplex} (6). By \cite[Exp. XVII, Prop. 4.1.1]{ILO14}, $g^{\ast} \omega_{U}$ is a potential dualizing complex for the dimension function $\tilde{\delta}:|V|\to \mathbf{Z}$ defined by $\tilde{\delta}(x)=\dim \overline{ \{ g(x) \} } - \mathrm{codim}_{g^{-1}(g(x))}(x)$. Our assumptions guarantee that $\tilde{\delta}(x) +d = \dim \overline{ \{ x \} } = 0$ for all closed points $x$. Since the difference of any two dimension functions is locally constant, this implies that $\tilde{\delta}(x) +d = \dim \overline{ \{ x \} } $ for all $x \in V$, and therefore that $g^{\ast} \omega_U[2d](d)$ is a potential dualizing complex for the canonical dimension function on $V$. We now conclude by the uniqueness of potential dualizing complexes.
\end{proof}

\begin{remark}[Duality and proper maps] \label{dualityproper}
In Theorem~\ref{dualizingcomplex}, we have only discussed the functor $Rf^!$ when $p$ is invertible in the coefficient ring, or when $f$ is finite. However, we expect that for any proper map $f:X \to Y$, the functor $Rf_* : D_{zc}^{(b)}(X,\mathbf{Z}/p^n) \to D_{zc}^{(b)}(Y,\mathbf{Z}/p^n)$ admits a right adjoint $Rf^!$ naturally isomorphic to $\mathbf{D}_X f^\ast \mathbf{D}_Y$. One can check that this expectation holds iff there is a natural isomorphism $\mathbf{D}_Y Rf_\ast \cong Rf_\ast \mathbf{D}_X$. For $f$ proper and smooth, ongoing work of Zavyalov confirms these expectations.
\end{remark}

\subsection{Miscellany}

We collect some auxiliary results.

First, we note that the entire formalism is compatible with changing the nonarchimedean base field. More precisely, let $K \to L$ be an extension of characteristic zero nonarchimedean fields. For any rigid space $X/ \Spa K$, there is a natural map of \'etale sites $X_{L,\mathrm{\acute{e}t}} \to X_{\mathrm{\acute{e}t}}$ which induces a pullback functor $D(X) \to D(X_L)$ sending $D_{zc}$ into $D_{zc}$.

\begin{proposition}
\label{OperationsBC}
 Notation as above, the change of base field functors $D_{zc}(X) \to D_{zc}(X_L)$ are compatible (under the appropriate boundedness conditions) with the operations $f^{\ast}$, $\otimes$, $\inthom$, Verdier duality, $Rf_{\ast}$ for proper $f$, $Rf_!$ and $Rf_{\ast}$ on lisse complexes for Zariski-compactifiable morphisms $f$, and $Rf^{!}$ if either $f$ is a finite morphism or $p$ is invertible in the coefficient ring. \end{proposition}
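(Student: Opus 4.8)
The plan is to reduce everything to the affinoid case, where all the operations (except the two pushforwards on lisse complexes and the finite $Rf^!$, which need an extra argument) are computed through the algebraization equivalence of Proposition~\ref{schemescomparison1}, and then invoke the compatibility of base change with the six operations in algebraic geometry. Concretely, suppose first that $X = \Spa A$ is affinoid, with algebraization $\mathcal{X} = \Spec A$ and $\mathcal{X}_L = \Spec A_L$, and let $g:\mathcal{X}_L \to \mathcal{X}$ be the natural morphism of schemes. The key point is that the square relating the analytification functors $(-)^{an}$ over $X$ and over $X_L$ to the scheme-theoretic pullback $g^*$ commutes: this is essentially the construction of the map of \'etale sites $X_{L,\et} \to \mathcal{X}_{L,\et} \to \mathcal{X}_{\et}$ and $X_{L,\et} \to X_{\et} \to \mathcal{X}_{\et}$, which agree. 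Granting this, the compatibility for $f^*$, $\otimes$, $\inthom$ (with first argument of finite Tor dimension), $Rf_*$ for proper $f$, and $Rf^!$ for finite $f$ all follow by combining Proposition~\ref{schemescomparison2} (compatibility of $(-)^{an}$ with the operations) with the corresponding compatibilities of $g^*$ with those operations on schemes, the latter being standard (for proper pushforward this is smooth/proper base change as in SGA4, for $\inthom$ and $Ri^!$ one uses that $g$ is a flat, even regular, morphism, cf.\ \cite{ILO14}). For Verdier duality, one combines this with Theorem~\ref{dualizingcomplex}(6), which already records $a^*\omega_X \cong \omega_{X_L}$, together with the compatibility of $\inthom$ just established.

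Next I would globalize. The base-change functor $D_{zc}(X) \to D_{zc}(X_L)$ is compatible with restriction to open subsets, and by Theorem~\ref{ZCLocal} Zariski-constructibility can be checked locally; the operations $f^*$, $\otimes$, $\inthom$ are all local on the relevant space, so their compatibility with base change follows immediately from the affinoid case by covering by affinoids. For $Rf_*$ with $f$ proper, I would reduce to $Y$ affinoid (the statement is local on $Y$), hence $X$ qcqs, and then the affinoid case applies directly. For Verdier duality the statement is again local on $X$, so it reduces to the affinoid case together with Theorem~\ref{dualizingcomplex}(6).

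It remains to treat $Rf_!$ and $Rf_*$ on lisse complexes for Zariski-compactifiable $f$, and $Rf^!$ for $p$ invertible. For the first, factor $f = \overline{f}\circ j$ with $j$ a Zariski-open immersion and $\overline{f}$ proper (the statement is local on the target, so we may assume $Y$ affinoid and such a factorization exists). The compatibility for $R\overline f_*$ is the proper case already handled, and $j_!$ commutes with base change tautologically, so the content is the compatibility of $Rj_*$ on lisse complexes with base field extension. By Proposition~\ref{schemescomparison1}(3) a lisse complex on $X$ is the analytification of a lisse complex on the algebraization $\mathcal{X}$, and by Corollary~\ref{openpushforward} (and the compatibility of $(-)^{an}$ with $Rg_*$ for the open immersion $g$ of schemes, Proposition~\ref{schemescomparison1}(1)) one reduces to the compatibility of $g^*$ with $Rg'_*$ for open immersions of schemes — i.e.\ ordinary base change in algebraic geometry, valid since the scheme-level base change map is flat. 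For $Rf^!$ with $(n,p)=1$, one factors $f$ locally on the source as a closed immersion followed by a smooth morphism of pure relative dimension $d$, as in Proposition~\ref{schemescomparison2}; for the smooth part $Rf^! \cong f^*(d)[2d]$ by Huber's Poincar\'e duality, which is manifestly compatible with base change, and the closed-immersion part is the finite $Rf^!$ case already done.

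The main obstacle I anticipate is verifying cleanly that $g$ (equivalently the scheme map $\Spec A_L \to \Spec A$, and its rational-localization variants) is a \emph{regular} morphism, so that the results of \cite{ILO14} on base change of dualizing complexes and the flatness needed for the $\inthom$ and $Rj_*$ arguments actually apply; this is the point flagged in the proof of Theorem~\ref{dualizingcomplex}(6) as coming from \cite{A} (Popescu/Andr\'e-type results on completions and analytic base change of excellent rings), and it is the one genuinely non-formal input. Everything else is bookkeeping with commuting squares of functors, reduction to affinoids via Theorem~\ref{ZCLocal}, and citation of the algebraic compatibilities.
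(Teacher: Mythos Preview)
Your reduction-to-algebraization strategy works well for $f^*$, $\otimes$, $\inthom$, Verdier duality, $Rj_*$ on lisse complexes, and $Rf^!$ for finite $f$: in each of these cases the relevant spaces can be taken affinoid, the algebraization equivalence of Proposition~\ref{schemescomparison1} applies, and the scheme-level compatibility follows from regular base change in \cite{ILO14}. This is essentially what the paper does for those operations as well.

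However, there is a genuine gap in your treatment of $Rf_*$ for proper $f$. You write: ``reduce to $Y$ affinoid \dots\ and then the affinoid case applies directly.'' But if $f:X \to Y$ is proper with $Y = \Spa A$ affinoid, the source $X$ need \emph{not} be algebraizable: there is no reason for $X$ to arise as $\mathcal{X}^{an}$ for a proper $\Spec A$-scheme $\mathcal{X}$ (e.g.\ non-algebraic proper rigid surfaces already obstruct this). So Proposition~\ref{schemescomparison2} simply does not apply to $f$, and your ``affinoid case'' only covers the situation where $X$ itself is affinoid, which forces $f$ finite. The paper handles the proper case quite differently: when $p \nmid n$ it invokes Huber's general base change theorem \cite[Theorem~4.1.1(c)]{Hub96} directly, while for $n = p^a$ it uses that both $g^* Rf_* \mathscr{F}$ and $Rf'_* g'^* \mathscr{F}$ are Zariski-constructible (hence overconvergent) by Theorem~\ref{properdirectimage}, reduces to checking stalks at rank one geometric points, and concludes by Lemma~\ref{changeofbasefieldp} (a direct computation with the primitive comparison theorem). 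No algebraization of $X$ is needed.

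A smaller point: your identification of the regularity of $\Spec A_L \to \Spec A$ as the one non-formal input is correct and matches the paper, which reduces it (via Noether normalization and completion at maximal ideals) to the formal smoothness of $K[[x_1,\dots,x_n]] \to L[[x_1,\dots,x_n]]$.
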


\begin{proof} The compatibilities for $f^{\ast}$, $\otimes$ and $j_!$ are trivial. The compatibilities for $Rf_{\ast}$ in the proper case and $Rj_{\ast}$ in the Zariski-open lisse case are the hardest; the rest follow from these. 

For $Rf_{\ast}$ in the case of a proper map $f:Y \to X$, one easily reduces to the two disjoint cases $p \nmid n$ and $n=p^a$. The first case follows from Huber's general base change theorem \cite[Theorem 4.1.1.b]{Hub96}. The second case can be reduced, via \cite[Theorem 4.1.1.b']{Hub96}, to the situation where $K$ and $L$ are algebraically closed, and then by Theorem \ref{properdirectimage} it can be checked on stalks at classical points of $X_L$ which map to classical points of $X$. At these points it reduces to Lemma \ref{changeofbasefieldp} below.

For $Rj_{\ast}$ on lisse sheaves in the case of a Zariski-open immersion $j:U \to X$, we can assume that $X = \Spa A$ is affinoid. Write $j_L : U_L \to X_L=\Spa A\widehat{\otimes}_K L$ for the base change, and let $j^{alg}:\mathcal{U} \to \mathcal{X} = \Spec A$ and $j^{alg}_L: \mathcal{U}_L \to \mathcal{X}_L = \Spec A_L$ be the evident algebraizations. By (multiple applications of) Proposition \ref{schemescomparison1}, we are reduced to proving that $g^{\ast} Rj^{alg}_{\ast} \mathcal{F} \cong Rj_{L,\ast}^{alg} g'^{\ast} \mathcal{F}$ for any lisse sheaf $\mathcal{F}$ on $\mathcal{U}$, where $g: \mathcal{X}_L \to \mathcal{X}$ is the natural map and $g': \mathcal{U}_L \to \mathcal{U}$ is its base change to $\mathcal{U}$. Since $g$ is regular (see next paragraph), this follows from regular base change \cite[Exp. XVII, Prop. 4.2.1]{ILO14}.  (The results for $\inthom$ and $Ri^!$ can be handled in an entirely analogous way, with the endgame supplied by \cite[Exp. XVII, Prop. 4.2.2 and Cor. 4.2.3]{ILO14}.)

The claimed regularity of $A \to A\widehat{\otimes}_K L$ can be reduced by Noether normalization to the regularity of the map $K\left \langle x_1, \dots, x_n \right \rangle \to L \left \langle x_1,\dots, x_n \right \rangle$. By standard excellence properties of affinoid rings, regularity of this map can be verified after $\mathfrak{m}$-adic completion at all maximal ideals $\mathfrak{m}$ in the source.  This finally reduces us to the fact that for any separable extension of nonarchimedean fields $L/K$ the ring map $K[[x_1,\dots,x_n]] \to L[[x_1,\dots,x_n]]$ is regular, which follows from the formal smoothness of this map, cf. \cite{A}.
\end{proof}

In the previous proof, we used the following lemma.

\begin{lemma}\label{changeofbasefieldp} Let $C' /C / \mathbf{Q}_p$ be an extension of algebraically closed nonarchimedean fields. Then for any proper rigid space $X/C$ and any $\mathscr{F} \in D^{b}_{zc}(X,\mathbf{Z}/p^a)$, the natural map $R\Gamma(X,\mathscr{F}) \to R\Gamma(X_{C'}, \mathscr{F}_{C'})$ is an isomorphism.
\end{lemma}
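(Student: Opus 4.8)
The plan is to perform two rounds of d\'evissage, reducing to the constant sheaf $\mathbf{F}_p$ on a smooth proper rigid space over $C$, and then invoke the smooth proper base change theorem with $p$-torsion coefficients from $p$-adic Hodge theory. Throughout, the key soft observation is that the base change arrow $R\Gamma(X,\mathscr{F}) \to R\Gamma(X_{C'},\mathscr{F}_{C'})$ is a natural transformation between exact functors on $D^b_{zc}(X,\mathbf{Z}/p^a)$, so the full subcategory of objects on which it is an isomorphism is thick and triangulated.

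\textbf{First reduction.} A proper rigid space over a field is qcqs, so Proposition~\ref{Dbzcgenerators} applies to $X/C$; combined with the previous remark, it suffices to treat $\mathscr{F}=g_\ast M$ with $g:Y\to X$ a finite morphism and $M$ a constant constructible $\mathbf{Z}/p^a$-sheaf on $Y$. Since $g$ is finite, $Rg_\ast=g_\ast$ commutes with base change, so $\mathscr{F}_{C'}=g'_\ast(M_{C'})$ where $g':Y_{C'}\to X_{C'}$, and $R\Gamma(X,g_\ast M)=R\Gamma(Y,M)$ while $R\Gamma(X_{C'},g'_\ast M_{C'})=R\Gamma(Y_{C'},M_{C'})$, compatibly with base change maps. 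As $Y$ is finite over the proper $C$-space $X$ it is itself proper over $C$, so the statement for $(X,g_\ast M)$ reduces to that for $(Y,M)$. Finally a finite $\mathbf{Z}/p^a$-module carries a finite filtration with graded pieces $\mathbf{F}_p$, so $\underline{M}$ is a successive extension of copies of $\underline{\mathbf{F}_p}$; one more use of the thick-triangulated remark reduces us to $\mathscr{F}=\mathbf{F}_p$ on a proper rigid space $X/C$.

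\textbf{Second reduction.} As $\mathrm{char}\,C=0$, Temkin's resolution \cite[Theorem 1.1.13 (i)]{Temkin} gives a proper hypercover $\epsilon:X^\bullet\to X$ with each $X^i$ smooth over $C$; since $X^i\to X$ is proper and $X\to\Spa C$ is proper, each $X^i$ is smooth and proper over $C$. Cohomological descent identifies $R\Gamma(X,\mathbf{F}_p)$ with the totalization of $i\mapsto R\Gamma(X^i,\mathbf{F}_p)$, and likewise after base change to $C'$ via the proper hypercover $X^\bullet_{C'}\to X_{C'}$; the base change map for $X$ is obtained from those for the $X^i$ by applying the exact functors $(-)_{C'}$ and $R\Gamma(X_{C'},-)$ to this totalization. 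By the truncation argument used in the proof of Theorem~\ref{properdirectimage} (the $m$-th cohomology of a totalization depends only on the $\Delta_{\le m+1}$-truncation of the cosimplicial diagram), it is enough to treat, for each $m$, the finitely many $X^i$ appearing, i.e.\ to treat the case $X$ smooth and proper over $C$.

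\textbf{The smooth proper case, and the main point.} It remains to prove that $R\Gamma_{\et}(X,\mathbf{F}_p)\to R\Gamma_{\et}(X_{C'},\mathbf{F}_p)$ is an isomorphism for $X$ smooth and proper over the algebraically closed field $C$ of characteristic $0$. This is the smooth proper base change theorem with $p$-torsion coefficients on rigid spaces: applying \cite[Theorem 10.5.1]{SWBerkeley} to the structure morphism $X\to\Spa C$ shows that $R\Gamma_{\et}(X,\mathbf{F}_p)$ is a perfect $\mathbf{F}_p$-complex whose formation is compatible with base change along $\Spa C'\to\Spa C$, which is exactly the claim. This last step is the one of substance: it rests on Scholze's deep finiteness and base-change results in $p$-adic Hodge theory, and it is here — rather than in the formal d\'evissage of the first two steps, which parallels the reductions in the proof of Theorem~\ref{properdirectimage} — that the characteristic-zero hypothesis and the $p$-adic Hodge-theoretic input are genuinely used.
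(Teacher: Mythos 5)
Your proposal is correct in substance but follows a genuinely different route from the paper's proof after the common first reduction. Both arguments begin identically: use Proposition~\ref{Dbzcgenerators} plus exactness of finite pushforward to reduce to $\mathscr{F}=\underline{\mathbf{F}_p}$ on a proper rigid space $X$. From that point the paper does \emph{not} pass to the smooth case. It instead invokes the primitive comparison theorem twice (once for $X$, once for $X_{C'}$; this theorem holds for arbitrary proper rigid spaces over algebraically closed $p$-adic fields, smoothness not required) to reduce the claim to the assertion that
\[
R\Gamma(X,\mathcal{O}^+_X/p)\otimes_{\mathcal{O}_C/p}\mathcal{O}_{C'}/p \;\longrightarrow\; R\Gamma(X_{C'},\mathcal{O}^+_{X_{C'}}/p)
\]
is an almost isomorphism, and then proves that locally on $X$ by writing almost $\mathcal{O}^+/p$-cohomology as continuous $\mathbf{Z}_p^d$-cohomology along a perfectoid toric pro-\'etale torsor and comparing the resulting Koszul complexes. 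You instead insert a second d\'evissage -- a Temkin proper hypercover with smooth pieces and cohomological descent, the same device the paper deploys in the proof of Theorem~\ref{properdirectimage} -- to reduce to $X$ smooth and proper, and then cite \cite[Theorem 10.5.1]{SWBerkeley}. Both routes are legitimate; yours trades an explicit almost-computation for resolution of singularities plus a black box.

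One point deserves a flag. In this paper \cite[Theorem 10.5.1]{SWBerkeley} is cited (in the proof of Theorem~\ref{properdirectimage}) as giving \emph{local constancy} of $R^i f_*\mathbf{F}_p$ for $f$ smooth proper. What you need here is the sharper assertion that for $X$ smooth proper over $\Spa C$ the complex $R\Gamma_{\et}(X,\mathbf{F}_p)$ is perfect and its formation commutes with base change along $\Spa C'\to\Spa C$, i.e.\ with change of the algebraically closed nonarchimedean ground field; this is a base change along a morphism that is not topologically of finite type, hence not automatically covered by a base-change statement phrased for morphisms of rigid spaces over a fixed $C$. If the cited theorem is stated over $\Spa(\mathbf{Q}_p,\mathbf{Z}_p)$ with base change allowed along arbitrary maps of analytic adic spaces (or in the diamond/v-sheaf framework), then $\Spa C'\to\Spa C$ is admissible and your argument closes. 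Otherwise you would need to supply the change-of-field compatibility separately -- most naturally via the smooth proper primitive comparison theorem and flat base change for coherent cohomology -- which is in effect the paper's argument specialized to the smooth case. Either way the gap, if there is one, is a matter of citation rather than mathematics, but you should verify which form of the theorem is actually available in the reference.
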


\begin{proof} By an easy induction on $a$ and Proposition \ref{Dbzcgenerators}, we can assume that $\mathscr{F}=f_{\ast}\mathbf{F}_p$ for some finite map $f:X'\to X$. Replacing $X$ by $X'$, we can assume further that $\mathscr{F} = \mathbf{F}_p$ is constant. By two applications of the primitive comparison theorem, it's enough to check that the natural map \[ R\Gamma(X,\mathcal{O}^+_{X}/p) \otimes_{\mathcal{O}_C / p} \mathcal{O}_{C'} / p \to R\Gamma(X_{C'}, \mathcal{O}^+_{X_{C'}}/p)\] is an almost isomorphism. This can be deduced from a purely local statement: if $U=\Spa A / C$ is any affinoid, then the natural map \[R\Gamma(U,\mathcal{O}^+_{U}/p) \otimes_{\mathcal{O}_C / p} \mathcal{O}_{C'} / p \to R\Gamma(U_{C'}, \mathcal{O}^+_{U_{C'}}/p)\] is an almost isomorphism. 

To prove the local statement, choose a perfectoid $\mathbf{Z}_{p}^{d}$-torsor $A \to A_\infty$. Then \[ R\Gamma(U,\mathcal{O}^+_{U}/p) \cong^{a} R\Gamma_{cts}(\mathbf{Z}_{p}^{d},A_{\infty}^{\circ} / p),\] and similarly for $U_{C'}$.  The result now follows by writing $R\Gamma_{cts}(\mathbf{Z}_{p}^{d},-)$ as the usual $d+1$-term Koszul complex and observing that the natural map $A_{\infty}^{\circ} \widehat{\otimes}_{\mathcal{O}_C} \mathcal{O}_{C'} \to A_{C',\infty}^{\circ}$ is an almost isomorphism (e.g., because both sides provide perfectoid rings of definition for the Tate ring $A_\infty \widehat{\otimes}_C C'$).
\end{proof}

Secondly, with biduality in hand, we can somewhat extend our results on pushfoward.

\begin{proposition}\label{morepushforwards} Let $f:X \to Y$ be a Zariski-compactifiable morphism, and let $\mathscr{F} \in D_{zc}^{(b)}(X,\mathbf{Z}/n)$ be an object such that one of the following holds true:

\begin{enumerate}
\item $\mathscr{F}$ is lisse or $\mathbf{D}_X \mathscr{F}$ is lisse, or

\item  $\mathscr{F} = j^{\ast}\mathscr{F}'$ for some compactification $X\overset{j}{\to} X' \overset{\overline{f}}{\to} Y$ and some $\mathscr{F}' \in D_{zc}^{(b)}(X',\mathbf{Z}/n)$. 
\end{enumerate}

Then $Rf_\ast \mathscr{F}$ and $Rf_!\mathscr{F}$ lie in $D^{(b)}_{zc}(Y,\mathbf{Z}/n)$.
\end{proposition}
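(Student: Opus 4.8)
The plan is to reduce both cases to results already established in this section, namely Theorem~\ref{properdirectimage} (proper pushforward preserves $D^{(b)}_{zc}$), Corollary~\ref{openpushforward} (pushforward of lisse complexes along Zariski-open immersions), Corollary~\ref{finiteshriekpullback} and the extension noted in the remark after it ($Rf^!$ on $D^{(b)}_{zc}$ for $(n,p)=1$), and the duality statements in Theorem~\ref{dualizingcomplex}. Since $D^{(b)}_{zc}$ is a thick triangulated subcategory (Proposition~\ref{ZCStability}(1)), and since $Rf_! = R\overline{f}_\ast \circ j_!$ while $Rf_\ast = R\overline{f}_\ast \circ Rj_\ast$ for a chosen compactification $f = \overline{f}\circ j$ with $\overline{f}$ proper and $j$ a Zariski-open immersion, Theorem~\ref{properdirectimage} lets us assume throughout that $f = j:X \to Y$ is itself a Zariski-open immersion, with $Y$ affinoid after localizing via Theorem~\ref{ZCLocal}. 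For $Rf_!$ there is nothing to prove: $j_!\mathscr{F}$ is Zariski-constructible with respect to the stratification $\{X, Y\setminus X\}$ for any $\mathscr{F} \in D^{(b)}_{zc}(X,\mathbf{Z}/n)$.

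\textbf{Case (1).} If $\mathscr{F}$ is lisse, then $Rj_\ast\mathscr{F} \in D^{(b)}_{zc}(Y,\mathbf{Z}/n)$ is exactly Corollary~\ref{openpushforward}. If instead $\mathbf{D}_X\mathscr{F}$ is lisse, I would invoke the duality compatibility for open immersions from Theorem~\ref{dualizingcomplex}(5), which gives $Rj_\ast \mathbf{D}_X = \mathbf{D}_Y j_!$; applying this with $\mathbf{D}_X\mathscr{F}$ in place of $\mathscr{F}$ and using biduality (Theorem~\ref{dualizingcomplex}(3)) yields
\[
Rj_\ast \mathscr{F} \cong Rj_\ast \mathbf{D}_X(\mathbf{D}_X\mathscr{F}) \cong \mathbf{D}_Y\big(j_!(\mathbf{D}_X\mathscr{F})\big).
\]
Since $\mathbf{D}_X\mathscr{F}$ is lisse, $j_!(\mathbf{D}_X\mathscr{F})$ is Zariski-constructible (again the stratification observation), and $\mathbf{D}_Y$ preserves $D^{(b)}_{zc}(Y,\mathbf{Z}/n)$ by Theorem~\ref{dualizingcomplex}(3), so $Rj_\ast\mathscr{F}$ is Zariski-constructible. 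One subtlety to address: Theorem~\ref{dualizingcomplex}(5) is stated for $Rj_\ast$ of a dualizing-type object but the duality identities there are formal consequences of biduality and adjunctions valid over any characteristic zero base, so this is legitimate; if one prefers to avoid relying on the $!$-machinery for general $n$, one can instead observe that over the affinoid $Y$ this case algebraizes (Proposition~\ref{schemescomparison1}, Proposition~\ref{schemescomparison2}) to the corresponding statement for $j^{alg}:\mathcal{X}\to\mathcal{Y}=\mathrm{Spec}\,A$, where it follows from Gabber's duality theory in \cite[Exp.~XVII]{ILO14} together with \cite[Exp.~XIII, Theorem 1.1.1]{ILO14}.

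\textbf{Case (2).} Here $\mathscr{F} = j^\ast\mathscr{F}'$ for some $\mathscr{F}' \in D^{(b)}_{zc}(X',\mathbf{Z}/n)$ with $X' \xrightarrow{\overline{f}} Y$ proper and $X \xrightarrow{j} X'$ the given Zariski-open immersion. I would use the triangle $i_\ast Ri^!\mathscr{F}' \to \mathscr{F}' \to Rj_\ast j^\ast\mathscr{F}'$ associated to the closed complement $i:Z \hookrightarrow X'$ of $j$, where $Z$ carries its reduced structure. Rearranging, $Rj_\ast\mathscr{F} = Rj_\ast j^\ast\mathscr{F}'$ sits in a triangle with $\mathscr{F}'$ and $i_\ast Ri^!\mathscr{F}'[1]$; thus $Rf_\ast\mathscr{F} = R\overline{f}_\ast Rj_\ast\mathscr{F}$ sits in a triangle with $R\overline{f}_\ast\mathscr{F}'$ and $R\overline{f}_\ast i_\ast Ri^!\mathscr{F}'[1] = R(\overline{f}\circ i)_\ast Ri^!\mathscr{F}'[1]$. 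The first of these lies in $D^{(b)}_{zc}(Y,\mathbf{Z}/n)$ by Theorem~\ref{properdirectimage}. For the second, $\overline{f}\circ i: Z \to Y$ is proper, so again by Theorem~\ref{properdirectimage} it suffices to know $Ri^!\mathscr{F}' \in D^{(b)}_{zc}(Z,\mathbf{Z}/n)$ --- but $i$ is a Zariski-closed immersion, and the main obstacle is precisely that $Ri^!$ for a \emph{closed} immersion is not covered directly by Corollary~\ref{finiteshriekpullback} (which handles finite morphisms) unless one uses the triangle $Ri^! \to i^\ast \to i^\ast Rj_\ast j^\ast$ to express $i_\ast Ri^!\mathscr{F}'$ in terms of $\mathscr{F}'$ and $Rj_\ast j^\ast\mathscr{F}'$. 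This is circular as stated, so the cleaner route is to argue directly: $\mathscr{F} = j^\ast\mathscr{F}'$, and over the affinoid $Y$ everything algebraizes; the composite $X \xrightarrow{j} X' \xrightarrow{\overline{f}} Y$ with $X'$ proper over $Y$ means $Rf_\ast j^\ast\mathscr{F}' = R\overline{f}_\ast Rj_\ast j^\ast\mathscr{F}'$, and one reduces the constructibility of $Rj_\ast j^\ast\mathscr{F}'$ on the proper $Y$-space $X'$ to Gabber's theorem \cite[Exp.~XIII, Theorem 1.1.1]{ILO14} after algebraizing $X'$ via relative rigid GAGA (Proposition~\ref{schemescomparison1}(2)) and $j$, $\mathscr{F}'$ accordingly. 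Then Theorem~\ref{properdirectimage} finishes it. The statement for $Rf_!$ in case (2) is immediate since $Rf_!\mathscr{F} = R\overline{f}_\ast j_!\mathscr{F} = R\overline{f}_\ast j_! j^\ast\mathscr{F}'$, and $j_! j^\ast\mathscr{F}'$ is Zariski-constructible on $X'$ (it sits in a triangle with $\mathscr{F}'$ and $i_\ast i^\ast\mathscr{F}'$), so Theorem~\ref{properdirectimage} applies.

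\textbf{Main obstacle.} The genuinely delicate point is handling $Rj_\ast$ on a general Zariski-constructible complex in case (2): one must avoid a circular appeal to $Ri^!$ for closed immersions and instead either algebraize to a proper $Y$-scheme and invoke Gabber's finiteness theorem, or bootstrap via the dimension induction already used in Corollary~\ref{finiteshriekpullback}. I expect the algebraization route to be cleanest, since $X'$ is proper over the affinoid $Y$ and hence fully algebraizable.
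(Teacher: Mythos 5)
Your opening claim that ``$j_!\mathscr{F}$ is Zariski-constructible with respect to the stratification $\{X, Y\setminus X\}$ for any $\mathscr{F} \in D^{(b)}_{zc}(X,\mathbf{Z}/n)$'' is false: this is precisely the phenomenon flagged in Warning~\ref{ZCWarning}(1). A Zariski-closed subset of the Zariski-open $X$ need not be Zariski-locally-closed in $Y$ (the Zariski closure in $Y$ may fail to be Zariski-closed), and the paper's example of infinitely many skyscrapers on $(\mathbf{A}^1)^{an}$ with $j_!$ to $(\mathbf{P}^1)^{an}$ shows $j_!$ genuinely destroys Zariski-constructibility in general. The claim happens to be harmless when $\mathscr{F}$ is lisse (then $\{X,Y\setminus X\}$ really does witness constructibility) and in case (2) (where, as you observe later, the triangle $j_!j^\ast\mathscr{F}'\to\mathscr{F}'\to i_\ast i^\ast\mathscr{F}'$ does the job). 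But it leaves a genuine gap in case (1) when only $\mathbf{D}_X\mathscr{F}$ is lisse: you never establish $Rf_!\mathscr{F}\in D^{(b)}_{zc}$ in that sub-case. The paper closes this gap by first proving $Rj_\ast\mathscr{F}$ is Zariski-constructible (exactly as you do, via $Rj_\ast\mathscr{F}\cong\mathbf{D}_{X'}j_!\mathbf{D}_X\mathscr{F}$) and \emph{then} deducing constructibility of $j_!\mathscr{F}$ from the triangle $j_!\mathscr{F}\to Rj_\ast\mathscr{F}\to i_\ast i^\ast Rj_\ast\mathscr{F}$, so that $R\overline{f}_\ast$ may be applied. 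You would need to add this step.

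Your worry about circularity in case (2) is unfounded: a Zariski-closed immersion is a finite morphism, so Corollary~\ref{finiteshriekpullback} does apply directly to $Ri^!$, with no appeal back to the present proposition. This is exactly what the paper does, with two triangles: $Rf_!\mathscr{F}\to R\overline{f}_\ast\mathscr{F}'\to R(\overline{f}\circ i)_\ast i^\ast\mathscr{F}'$, and $R(\overline{f}\circ i)_\ast Ri^!\mathscr{F}'\to R\overline{f}_\ast\mathscr{F}'\to Rf_\ast\mathscr{F}$, with the known terms handled by Theorem~\ref{properdirectimage} and Corollary~\ref{finiteshriekpullback}. Your alternative algebraization route (reducing $Rj_\ast$ on the proper $Y$-space $X'$ to Gabber's finiteness theorem after algebraizing via relative rigid GAGA) is plausible and would also work, but it is a detour forced by the misapprehension about finite maps; the paper's argument is the simpler one you were looking for.
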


It follows from the above proposition that if $f$ is a Zariski locally closed immersion, any $\mathscr{F}$ satisfying (1) automatically satisfies (2): we may simply take $\mathscr{F}' = Rf_* \mathscr{F}$ by the proposition.

\begin{proof}For the first case, the result is already proved for $\mathscr{F}$ lisse. Suppose now that $\mathbf{D}_X \mathscr{F}$ is lisse, and choose a compactification $X\overset{j}{\to} X' \overset{\overline{f}}{\to} Y$. We first show that $Rj_\ast \mathscr{F}$ is Zariski-constructible. To see this, use Theorem~\ref{dualizingcomplex} to write
\[Rj_\ast \mathscr{F} \cong Rj_\ast \mathbf{D}_X \mathbf{D}_X \mathscr{F} \cong \mathbf{D}_{X'} j_! \mathbf{D}_X \mathscr{F}.\] 
Then $\mathbf{D}_X \mathscr{F}$ is lisse by assumption, so $j_!\mathbf{D}_X \mathscr{F}$ is Zariski-constructible by Corollary~\ref{openpushforward}, and then duality preserves Zariski-constructibility. Now the triangle $j_!\mathscr{F} \to Rj_\ast \mathscr{F} \to i_{\ast} i^\ast Rj_\ast \mathscr{F} \to $ shows that $j_!\mathscr{F}$ is also Zariski-constructible. Applying $R\overline{f}_\ast$ now gives the claim.

For the second case, let $i:Z \to X'$ be the complementary closed immersion. Then we get a triangle $Rf_!\mathscr{F} \to R\overline{f}_{\ast}\mathscr{F}' \to R(\overline{f} \circ i)_{\ast} i^{\ast} \mathscr{F}' \to$, and the second and third terms are Zariski-constructible by Theorem \ref{properdirectimage}. Likewise, we get a triangle $R(\overline{f} \circ i)_{\ast} Ri^{!} \mathscr{F}' \to R\overline{f}_{\ast}\mathscr{F}' \to Rf_{\ast}\mathscr{F} \to$, and the first two terms are Zariski-constructible by Theorem \ref{properdirectimage} and Proposition \ref{finiteshriekpullback}.
\end{proof}

\begin{remark}[Unbounded variants]
We briefly discuss without proof how the results discussed above extend to the unbounded derived category. We shall need the following notion:

\begin{definition}
Given a non-archimedean base field $K$ and the coefficient ring $\mathbf{Z}/n$, we say $(\dagger)$ holds if $\mathrm{Gal}(\overline{K}/K)$ has finite $\ell$-cohomological dimension for all primes $\ell | n$.
\end{definition}
This condition is very mild, and holds for example if $K$ is separably closed, or if $K$ is a local field, or if $K$ has separably closed residue field and $(n,p)=1$. One can also check that $(\dagger)$ is stable under replacing $K$ by $K_x$, where $K_x$ is the residue field of any rigid space $X/K$ at any (adic) point $x \in X$. The main reason for introducing this condition is the following:

\begin{proposition}
\label{daggergood}
Fix $K$ and $\Lambda=\mathbf{Z}/n$ such that  $(\dagger)$ holds. Then for any rigid space $X/K$, the derived category $D(X_{\mathrm{\acute{e}t}},\Lambda)$ is left-complete and compactly generated, and the functor $R \lim :  D(X_{\mathrm{\acute{e}t}}^{\mathbf{N}},\Lambda) \to  D(X_{\mathrm{\acute{e}t}},\Lambda)$ has bounded cohomological amplitude on any finite-dimensional open subspace of $X$.
\end{proposition}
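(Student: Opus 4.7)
The plan is to reduce all three assertions to a single technical input: on any quasi-compact finite-dimensional open $U \subset X$, the étale cohomological dimension of $U$ with $\Lambda$-coefficients is bounded uniformly in terms of $\dim U$ and the $\ell$-cohomological dimensions of $\mathrm{Gal}(\overline{K}/K)$ for $\ell | n$. Once this is in hand, left-completeness, compact generation, and the amplitude bound on $R\lim$ all follow by standard topos-theoretic arguments.

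First, I would establish the cohomological dimension bound. By reducing to an affinoid cover, it suffices to consider $U = \mathrm{Spa}(A)$ a $d$-dimensional $K$-affinoid. The stability of $(\dagger)$ under replacing $K$ by $K_x$ (noted just before the proposition) implies that every completed residue field $K_x$ of a point $x \in U$ satisfies $(\dagger)$, with cohomological dimension controlled by that of $K$ together with the local dimension of $X$ at $x$. Huber's estimates on the étale cohomological dimension of affinoid adic spaces, e.g.\ \cite[\S 2.8]{Hub96}, then bound $\mathrm{cd}_\Lambda(U_{\mathrm{\acute{e}t}})$ by a constant depending only on $d$ and the cohomological dimension of $K$.

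The three conclusions then follow as standard consequences of this bound. For left-completeness of $D(X_{\mathrm{\acute{e}t}}, \Lambda)$: since $X$ admits a basis of quasi-compact opens of uniformly bounded $\Lambda$-cohomological dimension (on any fixed finite-dimensional piece), the derived category of the associated topos is left-complete, via the standard Postnikov-tower argument applicable in this setting. For compact generation: the collection of sheaves $j_!\Lambda_U$ with $j: U \hookrightarrow X$ running over a basis of quasi-compact finite-dimensional opens forms a generating set, and each such $j_!\Lambda_U$ is compact because $\mathrm{RHom}(j_!\Lambda_U, -) = R\Gamma(U, -)$ commutes with filtered colimits once $\mathrm{cd}_\Lambda(U_{\mathrm{\acute{e}t}}) < \infty$. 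For the amplitude bound on $R\lim$: the Milnor short exact sequence
\[
0 \to R^1\lim \mathcal{H}^{i-1}(F_\bullet) \to \mathcal{H}^i(R\lim F_\bullet) \to \lim \mathcal{H}^i(F_\bullet) \to 0
\]
combined with the cohomological dimension bound on $U$ yields that the amplitude of $R\lim F_\bullet$ over $U$ exceeds that of $F_\bullet$ by at most a constant depending on $\dim U$ and the bound from $(\dagger)$.

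The main obstacle is the first step, namely pinning down precisely how Huber's estimates on étale cohomological dimension of affinoids depend on the Galois cohomological dimensions of residue fields, and then invoking the stability of $(\dagger)$ under passage to $K_x$ to convert this into a uniform bound. The subtlety is that the completed residue field $K_x$ is not a finitely generated extension of $K$ in the naive sense; one handles this by passing through the algebraic residue field on which Serre's classical bounds apply directly, and then using the insensitivity of Galois cohomology under completion of nonarchimedean fields. Once the bound is in place, the remaining deductions are formal.
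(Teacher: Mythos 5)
The paper offers no argument of its own here (it simply declares the statement well-known and points to \cite{Roos} for the $R\lim$ assertion), and your proposal is a faithful reconstruction of the standard route: deduce everything from local finiteness of the $\Lambda$-cohomological dimension of affinoids, which under $(\dagger)$ follows from Huber's bounds in \cite[\S 2.8]{Hub96} via the residue fields $K_x$. That strategy is correct, and your observation that the three conclusions then follow by standard topos-theoretic arguments is right. Two details in your write-up, however, are genuinely wrong as stated and need repair.

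First, the generators. You take $j_!\Lambda_U$ for $j:U\hookrightarrow X$ running over quasi-compact \emph{opens}. These do not generate $D(X_{\mathrm{\acute{e}t}},\Lambda)$: already for $X=\Spa K$ the only open is $X$ itself, while the \'etale topos is that of discrete $G_K$-sets, and $\mathrm{RHom}(\Lambda, F)=R\Gamma(G_K,F)$ can vanish for nonzero $F$. You must instead take $j:U\to X$ ranging over \'etale maps with $U$ affinoid (any such $U$ is finite-dimensional, so your cohomological-dimension bound still applies and compactness of $j_!\Lambda_U$ goes through as you say).

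Second, the Milnor sequence you display for $\mathcal{H}^i(R\lim F_\bullet)$ presupposes $R^q\lim=0$ for $q\geq 2$, which is exactly what \emph{fails} for towers of sheaves on a general site — this failure is the whole point of Roos's work and the reason the proposition carries the hypothesis $(\dagger)$ at all. The correct argument: write $R\lim F_\bullet$ as the fiber of the telescope $\prod_n F_n \to \prod_n F_n$; the derived product $R\prod$ on $D(U_{\mathrm{\acute{e}t}},\Lambda)$ has cohomological amplitude $[0,N]$ with $N=\mathrm{cd}_\Lambda(U_{\mathrm{\acute{e}t}})$ (compute sections on a basis of finite cohomological dimension), whence $R\lim$ shifts amplitude by at most $N+1$; equivalently, use the spectral sequence $R^p\lim \mathcal{H}^q(F_\bullet)\Rightarrow \mathcal{H}^{p+q}(R\lim F_\bullet)$ together with the vanishing $R^p\lim=0$ for $p>N+1$. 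Relatedly, for left-completeness you do not need a \emph{uniform} bound on a non-finite-dimensional $X$: finiteness of $\mathrm{cd}_\Lambda(U)$ for each $U$ in a basis suffices, since after applying $R\Gamma(U,-)$ the tower $R\Gamma(U,\tau^{\geq -n}K)$ is eventually constant in each degree and the genuine Milnor sequence for limits of complexes of abelian groups applies. Finally, since $p\mid n$ is allowed, you should check that the cohomological-dimension input you import from \cite{Hub96} covers $\ell=p$ (or supply the $p$-primary case separately, e.g.\ via the fibration of $X_{\mathrm{\acute{e}t}}$ over the Galois topos of $K$ and the finiteness of $\mathrm{cd}_p$ for affinoids over $C$); this is part of the bookkeeping you flag at the end, but it is a real hypothesis to verify, not merely a matter of constants.
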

\begin{proof} This is well-known; cf. \cite{Roos} for the final statement. 
\end{proof}

Let us now formulate the promised unbounded variants of the results discussed in this paper. Let $f:X \to Y$ be a map of rigid spaces over $K$, and let $\mathscr{F} \in D_{zc}(X,\mathbf{Z}/n)$. 
\begin{enumerate}
\item Pullback: The pullback $f^*$ takes $D_{zc}$ into $D_{zc}$.

\item Proper pushforward: If $\mathscr{F}$ is bounded below or $(\dagger)$ holds, then $Rf_* \mathscr{F} \in D_{zc}$.

\item General pushforward: Say $\mathscr{F}$ is lisse and $f$ is Zariski-compactifiable. If $\mathscr{F}$ is bounded below or $(\dagger)$ holds, then $Rf_! \mathscr{F}, Rf_* \mathscr{F} \in D_{zc}$.

\item Duality: The functor $\mathbf{D}_X(-) = \inthom(-,\omega_X)$ carries $D^{(-)}_{zc}$ into $D^{(+)}_{zc}$. Furthermore, if $(\dagger)$ holds, then $\mathbf{D}_X(-)$ also carries $D^{(+)}_{zc}$ into $D^{(-)}_{zc}$, and gives an autoequivalence of $D_{zc}(X,\mathbf{Z}/n)$ satisfying biduality.

\item $!$-pullback: If $f$ is a finite map, then $Rf^!$ takes $D_{zc}$ to $D_{zc}$. 

\item $!$-pullback for good coefficients: If $(p,n) = 1$, $f$ is any taut separated map and $(\dagger)$ holds true, then $Rf^!$ preserves $D_{zc}$.

\item Tensor product: $D^{(-)}_{zc}(X,\mathbf{Z}/n)$ is stable under $\otimes$ inside $D(X,\mathbf{Z}/n)$. 

\item Internal Hom: The bifunctor $\inthom(-,-)$ carries $D^{(-)}_{zc} \times D^{(+)}_{zc}$ into $D^{(+)}_{zc}$.

\end{enumerate}

These assertions are all proven by using homological arguments to reduce to the locally bounded case (using Proposition~\ref{daggergood} as needed). We omit the proofs.  The most notably difficult case is (4), where the reduction to the bounded situation is non-formal, and requires the following lemma.

\begin{lemma}\label{affinoidduallemma}Let $X$ be a $d$-dimensional rigid space. Then for any $\mathscr{F} \in \mathrm{Sh}_{zc}(X,\mathbf{Z}/n)$, the complex $\inthom(\mathscr{F},\omega_X)$ is concentrated in degrees $[-2d,0]$.
\end{lemma}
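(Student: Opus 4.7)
The plan is to argue by induction on $d = \dim X$, reducing the key cohomological input (an amplitude bound for $Rj_{\ast}$ on lisse sheaves) to a statement in algebraic geometry via algebraization.

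By Theorem~\ref{ZCLocal}, the claim is \'etale-local on $X$, so I may assume $X = \Spa(A)$ is affinoid; replacing $X$ by its reduction (which has the same \'etale site) I also assume $X$ is reduced. For the base case $d = 0$, $X$ is a finite disjoint union of rigid points, $\omega_X$ is the constant sheaf $\mathbf{Z}/n$ placed in degree $0$ by Theorem~\ref{dualizingcomplex}~(1) (smooth case), and $\mathbf{Z}/n$ is a self-injective ring (a product of Artinian local Gorenstein rings), so $\mathrm{Ext}^{i}_{\mathbf{Z}/n}(M,\mathbf{Z}/n)=0$ for $i>0$ and any finite $\mathbf{Z}/n$-module $M$. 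Passing to stalks at geometric points then gives $\inthom(\mathscr{F},\omega_X) \in D^{[0,0]}$.

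For the inductive step, by Zariski-constructibility of $\mathscr{F}$ together with generic smoothness of reduced characteristic zero rigid spaces and the openness of the smooth locus (by excellence), I may choose a dense Zariski-open $j:U \hookrightarrow X$ such that $U$ is smooth of pure dimension $d$ and $\mathscr{F}|_{U}$ is lisse; let $i:Z \hookrightarrow X$ be the closed complement, of dimension $<d$. Applying $\inthom(-,\omega_X)$ to the standard triangle $j_! \mathscr{F}|_{U} \to \mathscr{F} \to i_{\ast} \mathscr{F}|_{Z}$, and using $Ri^{!}\omega_X \cong \omega_Z$ and $j^{\ast}\omega_X \cong \omega_U$ from Theorem~\ref{dualizingcomplex}~(1), yields the triangle
\[
i_{\ast}\inthom(\mathscr{F}|_Z,\omega_Z) \;\to\; \inthom(\mathscr{F},\omega_X) \;\to\; Rj_{\ast}\inthom(\mathscr{F}|_U,\omega_U).
\]
By induction applied to the rigid space $Z$, the first term lies in $D^{[-2(d-1),0]} \subseteq D^{[-2d,0]}$. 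For the third term, $\omega_U \cong \mathbf{Z}/n[2d](d)$ since $U$ is smooth of pure dimension $d$, so $\inthom(\mathscr{F}|_U,\omega_U) \cong (\mathscr{F}|_U)^{\vee}[2d](d)$ is a lisse sheaf placed in cohomological degree $-2d$ (again by self-injectivity of $\mathbf{Z}/n$, computed stalkwise).

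The main obstacle is thus to establish the amplitude bound $Rj_{\ast}\mathcal{G} \in D^{[0,2d]}(X,\mathbf{Z}/n)$ for any lisse sheaf $\mathcal{G}$ on $U$; granted this, the third term above lies in $D^{[-2d,0]}$, and the long exact sequence of cohomology then pins the middle term in the same range, completing the induction. To prove this bound I algebraize: the Zariski-open immersion $j:U \hookrightarrow X$ is the analytification of an open immersion $j^{alg}:\mathcal{U} \hookrightarrow \mathcal{X} = \Spec(A)$ of affine schemes, the lisse sheaf $\mathcal{G}$ is the analytification of a lisse sheaf $\mathcal{G}^{alg}$ on $\mathcal{U}$ by Proposition~\ref{schemescomparison1}~(3), and $Rj_{\ast}\mathcal{G} \cong (Rj^{alg}_{\ast}\mathcal{G}^{alg})^{an}$ by Proposition~\ref{schemescomparison1}~(1). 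Since analytification is $t$-exact, the amplitude question reduces to the scheme-theoretic version: the stalks of $Rj^{alg}_{\ast}\mathcal{G}^{alg}$ at geometric points of $\mathcal{X}$ compute the \'etale cohomology of $\mathcal{G}^{alg}$ on open subschemes of the strictly henselian local rings of $\mathcal{X}$, which have Krull dimension $\leq d$ and hence \'etale cohomological dimension $\leq 2d$ by Gabber's cohomological finiteness results, see \cite[Exp. XVIII-A]{ILO14}.
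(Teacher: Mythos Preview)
Your argument is correct and follows essentially the same route as the paper's proof: localize to an affinoid, induct on $d$, use the triangle coming from $j_!\mathscr{F}|_U \to \mathscr{F} \to i_*\mathscr{F}|_Z$, handle the closed part by induction, identify $\omega_U \cong \mathbf{Z}/n[2d](d)$ on the smooth open, and reduce the remaining amplitude bound for $Rj_*$ of a lisse sheaf to algebraic geometry via Proposition~\ref{schemescomparison1}, invoking \cite[Exp.~XVIII-A]{ILO14}. Two small remarks: the paper obtains the sharper bound $Rj^{alg}_*\mathcal{G} \in D^{[0,2d-1]}$ directly from \cite[Exp.~XVIII-A, Th\'eor\`eme~1.1]{ILO14} rather than your stalkwise $2d$ bound (either suffices here), and your word ``dense'' for $U$ should be dropped if $X$ has irreducible components of dimension $<d$ --- what you actually need (and use) is only that the complement $Z$ has dimension $<d$.
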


In particular, for any finite-dimensional $X$, the functor $\mathbf{D}_X(-)$ preserves $D^b$ without assuming that $X$ is quasi-compact.

\begin{proof}
One could deduce this from the last assertion in \cite[Theorem XVII.5.1.1]{ILO14} using \cite[\S 3]{GabberNotest} as well as our algebraization Proposition~\ref{schemescomparison1}. For the convenience of the reader, we give a direct proof, essentially mimicing the argument in Corollary~\ref{inthomexists} when $\mathscr{G} = \omega_X$, while controlling the amplitude of the terms showing up. 

Without loss of generality, we may assume $X$ is affinoid and reduced. We proceed induction on $d$, the $d=0$ case being trivial. Let $j:U \to X$ be a smooth Zariski-open subset of pure dimension $d$ with complement of dimension $<d$ such that $\mathscr{F}|U$ is locally constant. Let $i:Z=\Spa B \to X$ be the inclusion of the closed complement of $U$. Applying $\inthom(-,\omega_X)$ to the triangle $j_! j^\ast \mathscr{F} \to \mathscr{F} \to i_{\ast} i^\ast \mathscr{F} \to$, we get a triangle 
\[ i_{\ast} \inthom(i^{\ast} \mathscr{F},\omega_Z) \to \inthom(\mathscr{F},\omega_X) \to (Rj_{\ast} j^\ast \mathscr{F}^\vee)[2d](d) \to,\]
where we used the isomorphism $\omega_U \simeq \mathbf{Z}/n[2d](d)$ (coming from the smoothness of $U$) to simplify the last term, and Theorem~\ref{dualizingcomplex} (4) for the first term. By induction, the first term here is concentrated in degrees $[-2d+2,0]$. It thus suffices to show that $Rj_{\ast} j^\ast \mathscr{F}^\vee$ is concentrated in degrees $[0,2d-1]$. Proposition~\ref{schemescomparison1} identifies $Rj_{\ast} j^\ast \mathscr{F}^\vee$ as the analytification of $Rj^{alg}_{\ast} \mathcal{G}$, where $j^{alg}: \mathcal{U}=\Spec A - \Spec B \to \mathcal{X}=\Spec A$ is the natural algebraization of $j$ and $\mathcal{G}$ is some locally constant sheaf on $\mathcal{U}$. By \cite[Exp. XVIII-A, Theorem 1.1]{ILO14}, $Rj^{alg}_{\ast} \mathcal{G}$ is concentrated in degrees $[0,2d-1]$, so the result follows.
\end{proof}

\end{remark}

\subsection{Adic coefficients}
\label{ss:adic}
We fix a characteristic zero nonarchimedean base field $K$ of residue characteristic $p > 0$. Fix any prime $\ell$. In this section we explain a variant of the theory of Zariski-constructible sheaves with $\mathbf{Z}_\ell$-coefficients, using the formalism\footnote{The paper \cite{Sch} assumes that a prime number is topologically nilpotent in the base field $K$. This is the reason we assume that the residue characteristic $p$ of $K$ is $> 0$. Since we only use the relatively formal aspects of \cite{Sch}, we expect that this assumption can be removed once a theory of adic coefficients over nonarchimedean base fields of residue characteristic $0$ has been developed.} from \cite{Sch}.

For a rigid space $X/K$, let $X_v$ denote the $v$-site of $X$ from \cite{Sch}. Let ${\mathbf{Z}_\ell} = \lim_n \underline{\mathbf{Z}/\ell^n}$ be the displayed inverse limit of constant sheaves, regarded as a sheaf of (abstract) rings. Any perfect complex $M \in D(\mathbf{Z}_\ell)$ yields a ``constant'' $\ell$-complete sheaf $\underline{M} := \lim_n \underline{M/\ell^n}\in D(X_v, \mathbf{Z}_\ell)$. Our goal is to build a theory of Zariski-constructible $\mathbf{Z}_\ell$-complexes on a rigid space $X$ where the locally constant objects are twisted forms of $\underline{M}$ for $M \in D_{perf}(\mathbf{Z}_\ell)$. We first recall the basic notion of ``\'etale $\mathbf{Z}_\ell$-sheaves'' that is introduced in \cite{Sch} for the purposes of defining operations.

\begin{construction}
Fix $n \geq 1$. For any strictly totally disconnected perfectoid space $Y$, the usual derived category $D(Y_{\et},\mathbf{Z}/\ell^n)$ is left-complete and identifies with a full subcategory $D(Y_v,\mathbf{Z}/\ell^n)$ via pullback along $Y_v \to Y_{\et}$. Moreover, containment in this subcategory can be checked $v$-locally (see \cite[Proposition 14.10, Proposition 14.11 (iii), Theorem 14.12 (ii)]{Sch}). 

For any $v$-stack $X$, let $D_{\et}(X,\mathbf{Z}/\ell^n) \subset D(X_v, \mathbf{Z}/\ell^n)$ be the full subcategory spanned by objects whose pullback along any map $Y \to X$ with $Y$ a strictly totally disconnected perfectoid space lies in the subcategory $D(Y_{\et}, \mathbf{Z}/\ell^n) \subset D(Y_v,\mathbf{Z}/\ell^n)$; this condition can be checked after pullback along a $v$-cover (\cite[Remark 14.14]{Sch}. Imposing this condition for a complex $K$ is equivalent to imposing it for each $v$-cohomology sheaf $\mathcal{H}^i(K)$ (regarded as a complex) (\cite[Proposition 14.16]{Sch}). Moreover, if $X$ is a locally spatial diamond (e.g., one attached to a rigid space), then $D_{\et}(X,\mathbf{Z}/\ell^n)$ admits a classical description: it agrees with the left-completion $\widehat{D}(X_{et},\mathbf{Z}/\ell^n)$ of the usual derived category $D(X_{\et},\mathbf{Z}/\ell^n)$ (\cite[Proposition 14.15]{Sch}).  

Finally, for any $v$-stack $X$, write $D_{\et}(X,\mathbf{Z}_\ell) \subset D_{\ell-\text{comp}}(X_v,\mathbf{Z}_\ell)$ as the full subcategory derived $\ell$-complete objects in $D(X_v,\mathbf{Z}_\ell)$ whose mod $\ell$-reduction lies in the subcategory $D_{\et}(X,\mathbf{Z}/\ell)$ mentioned above (\cite[Definition 26]{Sch}). If $X$ is a locally spatial diamond, then derived $\ell$-completeness as well as the previous remark on left-completeness give an equivalence 
\begin{equation}
\label{DetLimit}
\mathcal{D}_{\et}(X,\mathbf{Z}_\ell) = \lim_n \mathcal{D}_{\et}(X,\mathbf{Z}/\ell^n) \simeq \lim_n \widehat{\mathcal{D}}(X_{\et},\mathbf{Z}/\ell^n)
\end{equation}
at the level of the corresponding $\infty$-categories, thus giving a classical description of the left side in the case of rigid spaces, see \cite[Proposition 26.2]{Sch}. In fact, we could have defined $D_{\et}(X,\mathbf{Z}_\ell)$ as the homotopy category of the right side above, and thus avoided ever mentioning the ambient category $D_{\ell-\text{comp}}(X_v,\mathbf{Z}_\ell)$; one reason we introduce the latter is that it carries an obvious $t$-structure, which we shall use in our proofs.

As in \cite[\S 26]{Sch}, all operations between the categories $D_{\et}(-,\mathbf{Z}_\ell)$ of $\ell$-adic complexes introduced above are always interpreted in the $\ell$-completed sense, i.e., the functor in question takes values in $\ell$-complete complexes by fiat, and agrees after reduction mod $\ell$ with the corresponding functor for finite coefficients. For instance, if $j:U \to X$ is a Zariski-open immersion and $M \in D^b_{\et}(U,\mathbf{Z}_\ell) \subset D^b_{\ell-\text{comp}}(U_v,\mathbf{Z}_\ell)$, then $j_! M \in D^b_{\ell-\text{comp}}(X_v,\mathbf{Z}_\ell)$ is defined to be the derived $\ell$-completion of $j^{top}_! M \in D^b(X_v,\mathbf{Z}_\ell)$ where $j^{top}_!$ denotes the topos theoretic $!$-extension (without any completions); then one can see that $j_! M$ lies in $D^b_{\et}(X,\mathbf{Z}_\ell)$ and $j_! M \otimes_{\mathbf{Z}_\ell} \mathbf{F}_\ell$ agrees with $j_!(M \otimes_{\mathbf{Z}_\ell} \mathbf{F}_\ell)$, where the latter is defined in the classical way.
\end{construction}

In the above setting, we can introduce Zariski-constructible complexes:

\begin{definition}
Let $X$ be a rigid space. We define full subcategories
\[ D^{(b)}_{lis}(X,\mathbf{Z}_\ell) \subset D^{(b)}_{zc}(X,\mathbf{Z}_\ell) \subset D^{(b)}_{\et}(X,\mathbf{Z}_\ell) \subset D^{(b)}_{\ell-\text{comp}}(X_v,\mathbf{Z}_\ell)\]
as follows: 
\begin{itemize}
\item An object $K \in D^{(b)}_{\et}(X,\mathbf{Z}_\ell)$ lies in $D^{(b)}_{lis}(X,\mathbf{Z}_\ell)$ (and is called {\em lisse}) if $K/\ell \in D^{(b)}(X_{\et},\mathbf{Z}/\ell)$ is lisse in our previous sense (Definition~\ref{zcdefinition}).
\item An object $K \in D^{(b)}_{\et}(X,\mathbf{Z}_\ell)$ lies in $D^{(b)}_{zc}(X,\mathbf{Z}_\ell)$ (and is called {\em Zariski-constructible}) if $K/\ell \in D^{(b)}(X_{\et},\mathbf{Z}/\ell)$ has Zariski-constructible cohomology sheaves.
\end{itemize}
As before, we write $\mathcal{D}^{(b)}_{lis}(X,\mathbf{Z}_\ell)$ and $\mathcal{D}^{(b)}_{zc}(X,\mathbf{Z}_\ell)$ for the corresponding full $\infty$-categories inside $\mathcal{D}_{\et}(X,\mathbf{Z}_\ell)$. 
\end{definition}

\begin{remark}
\label{ZCZellLimit}
Let us explain an inverse limit description of $\mathcal{D}^{(b)}_{zc}(X,\mathbf{Z}_\ell)$, similarly to \eqref{DetLimit}. For each $n \geq 1$, let $\mathcal{D}^{(b)}_{zc,\ell-\text{ftd}}(X,\mathbf{Z}/\ell^n) \subset \mathcal{D}^{(b)}_{zc}(X,\mathbf{Z}/\ell^n)$ be the full subcategory spanned by objects $M$ such that $M \otimes_{\mathbf{Z}/\ell^n}^L \mathbf{Z}/\ell$ is locally bounded. These are compatible under the base change functors changing $n$. We claim that the equivalence in \eqref{DetLimit} restricts to an equivalence
\[ \mathcal{D}^{(b)}_{zc}(X,\mathbf{Z}_\ell) \simeq \lim_n \mathcal{D}^{(b)}_{zc,\ell-\text{ftd}}(X,\mathbf{Z}/\ell^n).\]
Indeed, since $\mathbf{Z}_\ell$ has global dimension $1$, it is clear that the equivalence in \eqref{DetLimit} gives a fully faithful functor from the left to the right. The essential surjectivity follows by observing that, under the equivalence in \eqref{DetLimit}, the condition that $M \in \mathcal{D}_{\et}(X,\mathbf{Z}_\ell)$ lies inside $\mathcal{D}^{(b)}_{zc}(X,\mathbf{Z}_\ell)$ can be checked after reduction mod $\ell$. 
\end{remark}

As both locally constant sheaves and Zariski-constructible sheaves with $\mathbf{Z}/\ell$-coefficients form weak Serre subcategories of the category of $\mathbf{Z}/\ell$-sheaves on $X_{\et}$, both categories introduced above form triangulated subcategories of $D^{(b)}_{\et}(X,\mathbf{Z}_\ell)$. These categories admit an algebraic description on affinoids:

\begin{lemma}
\label{ladicschemecomp}
Let $X=\mathrm{Spa}(A)$ be an affinoid rigid space with the natural algebraization $\mathcal{X} = \mathrm{Spec}(A)$. The pullback along $X \to \mathcal{X}$ induces equivalences
\[ D^b_{lis}(\mathcal{X},\mathbf{Z}_\ell) \simeq D^{(b)}_{lis}(X,\mathbf{Z}_\ell) \quad \text{and} \quad  D^b_{zc}(\mathcal{X},\mathbf{Z}_\ell) \simeq D^{(b)}_{zc}(X,\mathbf{Z}_\ell)\]
of triangulated categories.
\end{lemma}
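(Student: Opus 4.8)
The plan is to deduce this from the finite-coefficient equivalences of Proposition~\ref{schemescomparison1} by passing to the $\ell$-adic limit.

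Since $X = \Spa(A)$ and $\mathcal{X} = \Spec(A)$ are quasi-compact, the superscript $(b)$ coincides with $b$ throughout, and ``locally bounded'' means genuinely bounded; in particular the $\ell$-ftd subcategory $\mathcal{D}^{(b)}_{zc,\ell\text{-ftd}}(X,\mathbf{Z}/\ell^m)$ of Remark~\ref{ZCZellLimit} is $\{ M \in \mathcal{D}^b_{zc}(X,\mathbf{Z}/\ell^m) : M \otimes^L_{\mathbf{Z}/\ell^m}\mathbf{Z}/\ell \text{ bounded} \}$, and likewise with ``lisse'' in place of ``Zariski-constructible''. Remark~\ref{ZCZellLimit} (and its evident lisse analog, obtained by replacing ``Zariski-constructible'' by ``lisse'' throughout) then gives equivalences of $\infty$-categories
\[ \mathcal{D}^{(b)}_{zc}(X,\mathbf{Z}_\ell) \simeq \lim_m \mathcal{D}^{b}_{zc,\ell\text{-ftd}}(X,\mathbf{Z}/\ell^m), \qquad \mathcal{D}^{(b)}_{lis}(X,\mathbf{Z}_\ell) \simeq \lim_m \mathcal{D}^{b}_{lis,\ell\text{-ftd}}(X,\mathbf{Z}/\ell^m). \]
The identical reasoning --- the scheme-theoretic analog of \eqref{DetLimit} together with derived $\ell$-completeness --- produces the matching descriptions $\mathcal{D}^b_{zc}(\mathcal{X},\mathbf{Z}_\ell) \simeq \lim_m \mathcal{D}^b_{c,\ell\text{-ftd}}(\mathcal{X},\mathbf{Z}/\ell^m)$ and $\mathcal{D}^b_{lis}(\mathcal{X},\mathbf{Z}_\ell) \simeq \lim_m \mathcal{D}^b_{lis,\ell\text{-ftd}}(\mathcal{X},\mathbf{Z}/\ell^m)$ on the algebraic side.

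It therefore suffices to produce, for each $m$, an equivalence of the finite-coefficient categories, compatible with the transition functors. Since $\mathcal{X} = \Spec(A) = \mathcal{S}$ is (trivially) proper over $\mathcal{S}$, Proposition~\ref{schemescomparison1}~(2) shows $(-)^{an}\colon D^b_c(\mathcal{X},\mathbf{Z}/\ell^m) \to D^b_{zc}(X,\mathbf{Z}/\ell^m)$ is an equivalence, and part~(3) shows it restricts to an equivalence on the respective subcategories of lisse objects. As $(-)^{an}$ is $t$-exact and commutes with $\otimes^L$ (see \S\ref{ss:AlgZC} and Proposition~\ref{schemescomparison2}), it commutes with derived reduction mod $\ell$ and, being conservative on (Zariski-)constructible cohomology sheaves, it detects boundedness of that reduction; hence it carries the $\ell$-ftd subcategory isomorphically onto the $\ell$-ftd subcategory, both in the lisse and the constructible case. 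Compatibility with the transition functors (extension of scalars along $\mathbf{Z}/\ell^{m+1}\to\mathbf{Z}/\ell^m$) is the naturality of $(-)^{an}$.

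Assembling, $\lim_m (-)^{an}$ is an equivalence of the two pairs of limit $\infty$-categories displayed above, and passing to homotopy categories yields the claimed equivalences of triangulated categories. The only genuine work is the $\ell$-adic bookkeeping: checking that the algebraic $\ell$-adic categories $D^b_{zc}(\mathcal{X},\mathbf{Z}_\ell)$, $D^b_{lis}(\mathcal{X},\mathbf{Z}_\ell)$ admit the stated inverse-limit descriptions (the analog for the noetherian scheme $\mathcal{X}$ of the set-up of \cite{Sch}, which one may take as their definition), and that $(-)^{an}$ is compatible with the $\ell$-completed operations and with the $\ell$-ftd conditions. No geometric input beyond Propositions~\ref{schemescomparison1} and~\ref{schemescomparison2} is required.
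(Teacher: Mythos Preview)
Your proof is correct and follows essentially the same approach as the paper: both deduce the result from the inverse-limit description in Remark~\ref{ZCZellLimit} together with the finite-coefficient equivalences of Proposition~\ref{schemescomparison1}, passing to the $\ell$-ftd subcategories at each level. Your version is simply a more detailed unpacking of the paper's one-sentence proof, including the useful remark that the algebraic $\mathbf{Z}_\ell$-categories must be given (or shown to admit) the analogous limit description.
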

\begin{proof}
This follows from the description in Remark~\ref{ZCZellLimit} together with Proposition~\ref{schemescomparison1} that implies the corresponding statements with $\mathbf{Z}/\ell^n$-coefficients by passing to the full subcategory of objects with finite Tor dimension.
\end{proof}

Using the aforementioned algebraic description, we can show that local constancy mod $\ell$ implies local constancy, justifying our definition of lisse complexes.

\begin{lemma}
\label{LisseLC}
Let $X$ be a rigid space and let $M \in D^{(b)}_{lis}(X,\mathbf{Z}_\ell)$. Then $M$ is locally constant. More precisely, for any cover $\{U_i\}$ of $X$ by connected affinoids, there exist perfect complexes $N_i \in D_{perf}(\mathbf{Z}_\ell)$ such that $M|_{U_i}$ is locally isomorphic (for the $v$- or in fact even the pro-(finite \'etale) topology of $U_i$) to $\underline{N_i}$. In particular, each $v$-cohomology sheaf $\mathcal{H}^i(M)$ is locally constant as well. 
\end{lemma}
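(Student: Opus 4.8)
The plan is to use the algebraization result Lemma~\ref{ladicschemecomp} to transport the question to schemes, solve it there using weakly contractible pro-\'etale covers, and then analytify back.

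\emph{Reduction.} The conclusion is local on each member of the cover, so it suffices to fix one connected affinoid $U=\mathrm{Spa}(A)$ and a geometric point $\bar x$ of $U$, and to produce a perfect complex $N$ together with a $v$-cover (resp.\ a pro-(finite \'etale) cover) of $U$ on which $M|_U$ restricts to $\underline N$. Put $N:=\lim_n (M/\ell^n)_{\bar x}$, the $\ell$-adic stalk of $M$ at $\bar x$. Since lisse complexes with $\mathbf{Z}/\ell^n$-coefficients form a triangulated subcategory, each $M/\ell^n$ is lisse (being built from $M/\ell$ by extensions), so $(M/\ell^n)_{\bar x}$ is a perfect $\mathbf{Z}/\ell^n$-complex of bounded amplitude; hence the derived $\ell$-complete limit $N$ is a perfect $\mathbf{Z}_\ell$-complex, and this will be the $N_i$ of the statement.

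\emph{Passage to schemes.} By Lemma~\ref{ladicschemecomp}, $M|_U\simeq\mathcal M^{an}$ for a unique $\mathcal M\in D^b_{lis}(\mathcal X,\mathbf{Z}_\ell)$ with $\mathcal X=\mathrm{Spec}(A)$ connected, and the comparison is compatible with stalks at geometric points, so $\lim_n(\mathcal M/\ell^n)_{\bar x}\cong N$. It therefore suffices to prove the following algebraic claim: on a connected scheme $\mathcal X$, every $\mathcal M\in D^b_{lis}(\mathcal X,\mathbf{Z}_\ell)$ becomes isomorphic to $\underline N$ (with $N$ its $\ell$-adic stalk) after pullback along a weakly contractible pro-\'etale cover $W\to\mathcal X$, and likewise after pullback along the universal pro-(finite \'etale) cover $\mathcal X_\infty\to\mathcal X$. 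Granting this, one analytifies: $W^{an}\to U$ is a $v$-cover with $M|_{W^{an}}\simeq(\mathcal M|_W)^{an}\simeq(\underline N)^{an}=\underline N$, using that analytification commutes with formation of constant sheaves and with the $\ell$-adic limit defining $\underline N$; the pro-(finite \'etale) statement follows identically, using that finite \'etale covers of $\mathrm{Spec}(A)$ and of $\mathrm{Spa}(A)$ correspond.

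\emph{The scheme claim.} Choose a connected weakly contractible affine $W$ in the basis of $\mathcal X_{\proet}$ provided by the pro-\'etale formalism (Bhatt--Scholze); then $\pi_1(W)=1$ and $H^i(W_{\proet},\mathcal G)=0$ for all abelian sheaves $\mathcal G$ and all $i>0$. Consequently, for each $n$ the global sections functor $R\Gamma(W,-)$ is exact and conservative on $\mathbf{Z}/\ell^n$-complexes with constant cohomology sheaves, and essentially surjective onto $D_{perf}(\mathbf{Z}/\ell^n)$, hence an equivalence; applied to $\mathcal M/\ell^n|_W$ (whose cohomology sheaves are constant, being locally constant on $W$ with $\pi_1(W)=1$) it gives a \emph{natural} isomorphism $\mathcal M/\ell^n|_W\cong\underline{(\mathcal M/\ell^n)_{\bar x}}$. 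By naturality these are compatible in $n$, so passing to the limit yields $\mathcal M|_W\cong\lim_n\underline{(\mathcal M/\ell^n)_{\bar x}}=\underline N$, as $\underline{(-)}$ commutes with this limit over $W$. For the pro-(finite \'etale) version one runs the same argument over $\mathcal X_\infty$, using $\pi_1(\mathcal X_\infty)=1$ together with the vanishing $H^{>0}(\mathcal X_{\infty,\et},\mathbf{Z}/\ell^n)=0$ (obtained by writing this cohomology as the filtered colimit, over the tower of finite \'etale covers of $\mathcal X$, of finite-coefficient cohomology).

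\emph{Cohomology sheaves and main obstacle.} Since $M$ is $v$-locally isomorphic to $\underline N$, each $\mathcal H^i(M)$ is $v$-locally isomorphic to $\underline{H^i(N)}$, hence locally constant for the \'etale topology by $v$-descent, which gives the last assertion. The crux of the argument is the input on the covers: that weakly contractible pro-\'etale covers exist and kill all positive-degree cohomology, plus the compatibility of analytification with pullback along such covers and with $\ell$-adic completion. The pro-(finite \'etale) refinement is the subtlest point, as it hinges on the cohomological vanishing over $\mathcal X_\infty$; all other steps are either formal or reduce to the cited results.
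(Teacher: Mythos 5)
Your argument has the right overall shape (transport via Lemma~\ref{ladicschemecomp}, trivialize over the universal pro-(finite \'etale) cover of $\mathrm{Spec}(A)$, analytify), and this matches the paper's structure. However, the critical step is exactly the one you wave at: the vanishing $H^{>0}(\mathcal{X}_{\infty,\et},\mathbf{Z}/\ell^n)=0$ for the universal pro-(finite \'etale) cover $\mathcal{X}_\infty \to \mathcal{X}=\mathrm{Spec}(A)$. You propose to get this ``by writing this cohomology as the filtered colimit, over the tower of finite \'etale covers of $\mathcal{X}$, of finite-coefficient cohomology,'' but this colimit description of $H^*(\mathcal{X}_\infty)$ does not by itself yield vanishing: the groups $H^i(\mathcal{X}_j,\mathbf{Z}/\ell^n)$ for connected finite \'etale $\mathcal{X}_j\to\mathcal{X}$ are generally nonzero for $i\geq 2$, and there is no a priori reason for them to die in the colimit. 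What makes the colimit vanish is that each $\mathcal{X}_j=\mathrm{Spec}(B_j)$ is a $K(\pi,1)$, so that its cohomology \emph{is} continuous group cohomology of $\pi_1^{\et}(\mathcal{X}_j)$; then the colimit over the tower computes the cohomology of the trivial group. This $K(\pi,1)$ property is a nontrivial theorem of Achinger which the paper invokes at precisely this point; it is the essential input you are missing, and without it the pro-(finite \'etale) version of your scheme claim is not established.

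Secondary remarks: your detour through weakly contractible pro-\'etale covers is not needed once the pro-(finite \'etale) version is in hand (the latter implies the former, since every pro-(finite \'etale) cover refines to a $v$-cover), and it has its own wrinkles: in the Bhatt--Scholze basis, weakly contractible affine schemes typically have a profinite, non-trivial $\pi_0$, so ``connected weakly contractible $W$'' is not generally available, and a lisse $\ell$-adic complex over such a (disconnected) $W$ need not be constant even though each component has trivial $\pi_1$. If you keep this route, you must address the variation over $\pi_0(W)$. Finally, your reduction to the claim ``$\mathcal{M}|_W\cong\underline{N}$ via a natural isomorphism compatible in $n$'' is phrased as if the identifications $\mathcal{M}/\ell^n|_W\cong\underline{(\mathcal{M}/\ell^n)_{\bar x}}$ were canonical; you should say a word about why the category of constant perfect $\mathbf{Z}/\ell^n$-complexes on a (cohomologically trivial, simply connected) base is equivalent to $D_{perf}(\mathbf{Z}/\ell^n)$ in a way compatible with reduction, since this is what ensures the limit makes sense.
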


This lemma is analogous to \cite[Remark 6.6.13]{BSProetale}, with Achinger's theorem replacing Artin's theorem.

\begin{proof}
We may assume $X = \mathrm{Spa}(A)$ is a connected affinoid. Proposition~\ref{ladicschemecomp} then implies that $M$ is uniquely pulled back from some $M' \in D^b_{lis}(\mathrm{Spec}(A),\mathbf{Z}_\ell)$. Let $B := \colim_i B_i$ be a universal cover of $A$, i.e., this is filtered colimit of connected finite \'etale covers $A \to B_i$ with $B$ itself being simply connected. Thus, $\mathrm{Spec}(B)$ admits no non-trivial locally constant sheaves of finitely generated $\mathbf{Z}_\ell$-modules. Moreover, Achinger has shown \cite[\S 1.5]{AchingerKpi1} that each $\mathrm{Spec}(B_i)$ is a $K(\pi,1)$, which implies that $R\Gamma(\mathrm{Spec}(B),\mathbf{Z}_\ell) = \mathbf{Z}_\ell$. The combination of these two properties of $\mathrm{Spec}(B)$ implies that taking the ``constant'' sheaf gives an equivalence $D_{perf}(\mathbf{Z}_\ell) \simeq D^b_{lis}(\mathrm{Spec}(B),\mathbf{Z}_\ell)$, so $M'|_{\mathrm{Spec}(B)} \in D_{lis}^b(\mathrm{Spec}(B),\mathbf{Z}_\ell)$ is the ``constant'' $\mathbf{Z}_\ell$-complex attached to a perfect $\mathbf{Z}_\ell$-complex $N$. Analytifying this cover then solves the problem, i.e., taking $Y := \lim_i \mathrm{Spa}(B_i)$ where each $B_i$ is given the natural topology and the inverse limit is computed in $v$-sheaves, we obtain a pro-(finite \'etale) cover $Y \to X$ such that $M|_Y \simeq \underline{N}$ for some $N \in D_{perf}(\mathbf{Z}_\ell)$, as wanted.
\end{proof}

Next, we observe that all operations defined before extend to $\mathbf{Z}_\ell$-sheaves.

\begin{theorem}\label{operationsZell}
On the category of rigid spaces over $K$, the following operations (defined in \cite[\S 26]{Sch}) restrict to operations on $D^{(b)}_{zc}(-,\mathbf{Z}_\ell)$ and are compatible with reduction modulo $\ell^n$.
\begin{enumerate}
\item $f^*$, $\otimes$, and $\inthom$.
\item Verdier duality.
\item $Rf_*$ for $f$ proper.
\item $Rf_!$ and $Rf_*$ on lisse complexes for Zarisk-compactifiable morphisms $f$.
\item $Rf^!$ if either $f$ is a finite morphism or $p \neq \ell$.
\end{enumerate}
Moreover, proper base change holds, and all of these operations are compatible with extensions of the nonarchimedean base field.
\end{theorem}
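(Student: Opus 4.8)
The plan is to deduce everything from the finite‑coefficient statements already established in this section, via the inverse limit description $\mathcal{D}^{(b)}_{zc}(X,\mathbf{Z}_\ell)\simeq\lim_n\mathcal{D}^{(b)}_{zc,\ell\text{-ftd}}(X,\mathbf{Z}/\ell^n)$ of Remark~\ref{ZCZellLimit}, together with the fact that by construction in \cite[\S 26]{Sch} each of the operations on $\mathbf{Z}_\ell$‑sheaves is the $\ell$‑completed limit of the corresponding operations on $\mathbf{Z}/\ell^n$‑sheaves. Compatibility with reduction modulo $\ell^n$ is thus automatic, and what must be checked is that Zariski‑constructibility together with local boundedness is preserved, that proper base change holds, and that base‑field change is compatible. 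A recurring device: for $M\in D(X,\mathbf{Z}/\ell^n)$ set $M/^{\mathbf L}\ell:=\mathrm{cone}(M\xrightarrow{\ell}M)\cong M\otimes^{L}_{\mathbf{Z}}\mathbf{Z}/\ell$; from the triangle relating $M/^{\mathbf L}\ell$ to $M\otimes^{L}_{\mathbf{Z}/\ell^n}\mathbf{Z}/\ell$ and its shift one sees that local boundedness of the two is equivalent, so membership in $\mathcal{D}^{(b)}_{zc,\ell\text{-ftd}}$ can be tested with $M/^{\mathbf L}\ell$, and $M\mapsto M/^{\mathbf L}\ell$ commutes with all covariant operations (being a finite colimit of shifts of $M$).

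First I would treat the covariant operations $f^\ast$, $\otimes$, $Rf_\ast$ for $f$ proper, and $Rf_!$, $Rf_\ast$ on lisse complexes for Zariski‑compactifiable $f$. Writing $K=\lim_n K_n$ with $K_n\in\mathcal{D}^{(b)}_{zc,\ell\text{-ftd}}(X,\mathbf{Z}/\ell^n)$, an operation $F$ (in the $\ell$‑completed sense) satisfies $F(K)=\lim_n F_n(K_n)$, and $F_n(K_n)$ is Zariski‑constructible over $\mathbf{Z}/\ell^n$ by the mod‑$\ell^n$ results: Proposition~\ref{ZCStability} for $f^\ast$ and $\otimes$, Theorem~\ref{properdirectimage} for proper $Rf_\ast$ (where $\ell\mid\ell^n$ is allowed), and Corollary~\ref{openpushforward} for the lisse pushforwards (passing through the algebraization Lemma~\ref{ladicschemecomp} and Gabber's theorem \cite{ILO14}, and using Lemma~\ref{LisseLC} to know lisse mod $\ell$ is genuinely locally constant). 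To see $F_n(K_n)$ stays $\ell$‑ftd, use that $F_n(K_n)/^{\mathbf L}\ell\cong F_1(K_n/^{\mathbf L}\ell)$ with $K_n/^{\mathbf L}\ell$ a locally bounded Zariski‑constructible $\mathbf{Z}/\ell$‑complex, so local boundedness follows from the same mod‑$\ell$ statements.

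Verdier duality needs slightly more care since $\mathbf{D}_X$ is contravariant. Here I would first set $\omega_{X,\mathbf{Z}_\ell}:=\lim_n\omega_{X,\mathbf{Z}/\ell^n}$, using that the complexes of Theorem~\ref{dualizingcomplex} form a compatible system under reduction (both sides reduce on a smooth locus to a Tate twist, and the gluing is compatible); then $\omega_{X,\mathbf{Z}_\ell}\in\mathcal{D}^{(b)}_{zc}(X,\mathbf{Z}_\ell)$ and equals $\mathbf{Z}_\ell[2d](d)$ on the smooth locus of pure dimension $d$. Since $\mathbf{Z}_\ell$ has global dimension one, every object of $\mathcal{D}^{(b)}_{zc}(X,\mathbf{Z}_\ell)$ has perfect $\mathbf{Z}_\ell$‑complexes as stalks, which gives $\mathbf{D}_X(K)\otimes^L_{\mathbf{Z}_\ell}\mathbf{Z}/\ell\cong\mathbf{D}_{X,\mathbf{Z}/\ell}(K\otimes^L_{\mathbf{Z}_\ell}\mathbf{Z}/\ell)$; the right side is Zariski‑constructible by Theorem~\ref{dualizingcomplex}, and local boundedness of $\mathbf{D}_X(K)$ follows from that of its mod‑$\ell$ reduction plus derived $\ell$‑completeness. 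Biduality and the identification $\omega_X\cong R\pi_X^!(\mathbf{Z}_\ell)$ for $p\neq\ell$ are checked the same way (an iso iff an iso mod $\ell$), reducing to Theorem~\ref{dualizingcomplex}(2),(3); $\inthom$ is handled by the dévissage of Corollary~\ref{inthomexists}; and $Rf^!$ for $f$ finite (via $\omega_Z=Ri^!\omega_X$ and duality) or for $p\neq\ell$ (via $Rf^!=Ri^!\circ g^\ast(d)[2d]$ after factoring through a closed immersion and a smooth map, as in the remark after Corollary~\ref{finiteshriekpullback}) reduces mod $\ell^n$ to Corollary~\ref{finiteshriekpullback} and Huber \cite{Hub96}.

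Finally, proper base change and compatibility with $K\to L$ are assertions that certain maps of derived $\ell$‑complete $\mathbf{Z}_\ell$‑complexes are isomorphisms, hence can be tested after $\otimes^L_{\mathbf{Z}_\ell}\mathbf{Z}/\ell^n$, where (both sides already being Zariski‑constructible by the parts above) they become Theorem~\ref{properbc} and Proposition~\ref{OperationsBC}. The main obstacle is not any single deep input but the consistent homological bookkeeping around the $\ell$‑ftd condition: one must repeatedly use that $-/^{\mathbf L}\ell$ detects membership in $\mathcal{D}^{(b)}_{zc}$ and commutes with the covariant operations, and that global dimension one of $\mathbf{Z}_\ell$ forces Zariski‑constructible $\mathbf{Z}_\ell$‑complexes to have perfect stalks, so that even the contravariant operations commute with reduction mod $\ell$ — precisely the ingredients already isolated in Remark~\ref{ZCZellLimit} and Lemma~\ref{LisseLC}.
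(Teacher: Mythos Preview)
Your proposal is correct and follows essentially the same approach as the paper: use the inverse limit description of Remark~\ref{ZCZellLimit}, note that the operations from \cite[\S 26]{Sch} are compatible with reduction mod $\ell$ by construction, and then check Zariski-constructibility after reduction mod $\ell$ using the finite-coefficient results. The one place where the paper is slightly more careful is the construction of $\omega_{X,\mathbf{Z}_\ell}$: rather than appealing to the smooth locus, it obtains the transitive system of isomorphisms $\omega_n\otimes^L_{\mathbf{Z}/\ell^n}\mathbf{Z}/\ell^m\simeq\omega_m$ directly from the pinning data on potential dualizing complexes (over affinoids, then by BBDG gluing), and then uses $\mathrm{Ext}^{<0}_{\mathbf{Z}/\ell^n}(\omega_n,\omega_n)=0$ to lift this $1$-categorical compatible system to a genuine object of the $\infty$-categorical limit $\lim_n\mathcal{D}^{(b)}_{zc,\ell\text{-ftd}}(X,\mathbf{Z}/\ell^n)$.
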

\begin{proof} 
Let us first define the dualizing complex $\omega_X \in \mathcal{D}^{(b)}_{zc}(X,\mathbf{Z}_\ell)$, thereby defining the operation that is supposed to give Verdier duality. Given a rigid space $X$ and an integer $n \geq 1$, we have constructed in Theorem~\ref{dualizingcomplex} a dualizing complex $\omega_n \in \mathcal{D}^{(b)}_{zc}(X,\mathbf{Z}/\ell^n)$. Given two integers $n \geq m$, we claim that there is a transitive system of isomorphisms $a_{nm}:\omega_{n} \otimes^L_{\mathbf{Z}/\ell^{n}} \mathbf{Z}/\ell^m \simeq \omega_m$ in $D^b_{zc}(X, \mathbf{Z}/\ell^m)$: for $X=\mathrm{Spa}(A)$ being affinoid, this follows by a similar isomorphism for potential dualizing complexes on $\mathrm{Spec}(A)$ (see discussion on potential dualizing complexes following Theorem~\ref{DCExistScheme}, and use the pinning data there to see transitivity), and the general case follows by BBDG glueing (as in the proof of Theorem~\ref{dualizingcomplex}). By canonicity as well as the fact that $\mathrm{Ext}^{< 0}_{\mathbf{Z}/\ell^n}(\omega_n,\omega_n) = 0$ for all $n \geq 1$, the system $\{\omega_n\}$ lifts naturally to an object of the $\infty$-category $\lim_n \mathcal{D}^{(b)}_{zc,\ell-\text{ftd}}(X,\mathbf{Z}/\ell^n)$ from Remark~\ref{ZCZellLimit}. Using the equivalence there, the inverse limit $\omega_X := \lim_n \omega_n \in \mathcal{D}(X_v,\mathbf{Z}_\ell)$ then lies in $\mathcal{D}^{(b)}_{zc}(X,\mathbf{Z}_\ell)$; this  object comes equipped with a transitive system of isomorphisms $\omega_X \otimes_{\mathbf{Z}_\ell}^L \mathbf{Z}/\ell^n \simeq \omega_n$, thus providing our candidate dualizing complex $\omega_X$.

All the operations are now defined on the larger category $\mathcal{D}_{\et}(X,\mathbf{Z}_\ell)$, and are compatible with reduction mod $\ell$; the claims in the proposition now follow from the analogous statements mod $\ell$.
\end{proof}

\begin{remark}[Relating Verdier duality with finite and $\mathbf{Z}_\ell$-coefficients]
\label{VDZellZmodell}
For any $n \geq 1$, the reduction modulo $\ell^n$-functor $D^{(b)}_{zc}(X,\mathbf{Z}_\ell) \to D^{(b)}_{zc}(X,\mathbf{Z}_\ell/\ell^n)$ carries the dualizing complex $\omega_{X,\mathbf{Z}_\ell} := \omega_X \in D^{(b)}_{zc}(X,\mathbf{Z}_\ell)$ constructed in Proposition~\ref{operationsZell} to the dualizing complex $\omega_{X,\mathbf{Z}/\ell^n} := \omega_X \in D^{(b)}_{zc}(X,\mathbf{Z}_\ell)$ from Theorem~\ref{dualizingcomplex} (1), which gives the formula $\mathbf{D}_{X,\mathbf{Z}_\ell}(-)/\ell^n \simeq \mathbf{D}_{X,\mathbf{Z}/\ell^n}(-/\ell^n)$ relating the Verdier duality operations under reduction modulo $\ell^n$. Moreove, using the formula $\mathrm{RHom}_{\mathbf{Z}_\ell}(\mathbf{Z}/\ell^n,\mathbf{Z}_\ell) = \mathbf{Z}/\ell^n[-1]$, it follows that the restriction of scalars functor $\mathrm{Res}:D^{(b)}_{zc}(X,\mathbf{Z}_\ell/\ell^n) \to  D^{(b)}_{zc}(X,\mathbf{Z}_\ell)$ satisfies  $\mathbf{D}_{X,\mathbf{Z}_\ell} \circ \mathrm{Res} = \mathrm{Res} \circ \mathbf{D}_{X,\mathbf{Z}/\ell^n}[-1]$.
\end{remark}

\begin{remark}
\label{Proetalevsv}
In the entire discussion in this section, we could have used the pro-\'etale topology from \cite{Sch} instead of the $v$-topology without any modification: this follows from the full faithfulness results in \cite[\S 14]{Sch} for the ``change of topology map'' and the observation that any $\mathscr{F} \in \mathrm{Sh}_{lis}(X, \mathbf{Z}_\ell)$ as defined above is in fact locally constant in the pro-\'etale topology by Lemma~\ref{LisseLC}. Nevertheless, we have preferred to formulate things using the $v$-topology since the operations defined in \cite[\S 26]{Sch} are defined using the $v$-topology. 
\end{remark}

As our final goal in this section, we define the ``standard'' or ``constructible'' $t$-structure on $D^{(b)}_{zc}(X,\mathbf{Z}_\ell)$. We first explain how to do the analogous construction in algebraic geometry; we use the pro-\'etale approach from \cite{BSProetale}, but a closely related result can be found in \cite[Theorem 3.6 (v)]{EkedahlAdic}.

\begin{proposition}[The constructible $t$-structure for $\mathbf{Z}_\ell$-sheaves on a noetherian scheme]
\label{Conststrscheme}
Let $Y$ be a noetherian scheme. Then the standard $t$-structure on $D(Y_{proet},\mathbf{Z}_\ell)$ restricts to one on $D^b_{cons}(Y_{proet},\mathbf{Z}_\ell)$. 
\end{proposition}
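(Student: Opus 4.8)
The plan is to verify the two axioms of a $t$-structure for the pair $(D^{b}_{cons}(Y_{proet},\mathbf{Z}_\ell) \cap D^{\leq 0}, D^{b}_{cons}(Y_{proet},\mathbf{Z}_\ell) \cap D^{\geq 0})$, where $D^{\leq 0}, D^{\geq 0}$ refer to the standard ($=$ degreewise) $t$-structure on $D(Y_{proet},\mathbf{Z}_\ell)$. Since the standard $t$-structure already gives a $t$-structure on the ambient category $D(Y_{proet},\mathbf{Z}_\ell)$, the only thing to check is that the truncation functors $\tau^{\leq 0}$ and $\tau^{\geq 1}$ preserve the subcategory $D^{b}_{cons}(Y_{proet},\mathbf{Z}_\ell)$; the orthogonality and the existence of the defining triangle are then automatic. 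Equivalently, it suffices to prove: if $K \in D^{b}_{cons}(Y_{proet},\mathbf{Z}_\ell)$, then each cohomology sheaf $\mathcal{H}^{i}(K)$ (for the standard $t$-structure) again defines an object of $D^{b}_{cons}(Y_{proet},\mathbf{Z}_\ell)$, i.e. is a ``constructible $\mathbf{Z}_\ell$-sheaf'' in the sense of \cite{BSProetale}.

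First I would recall the definition of $D^b_{cons}(Y_{proet},\mathbf{Z}_\ell)$ from \cite{BSProetale}: an object is an $\ell$-adically complete complex $K$ with bounded, constructible cohomology, where ``constructible'' means that $K/\ell$ has locally constant constructible cohomology sheaves along a stratification (equivalently, $K \cong R\lim_n K_n$ with $K_n \in D^b_{cons}(Y_{proet},\mathbf{Z}/\ell^n)$ a compatible system, each of finite Tor-amplitude over $\mathbf{Z}/\ell^n$ after reduction). The key observation is that the heart condition is detected modulo $\ell$: a complex $K \in D(Y_{proet},\mathbf{Z}_\ell)$ with $K$ derived $\ell$-complete lies in the $t$-structure truncation $D^{\leq 0}$ (resp. $D^{\geq 0}$) if and only if $K/\ell = K \otimes^L_{\mathbf{Z}_\ell} \mathbf{Z}/\ell$ lies in $D^{\leq 0}$ (resp. $D^{\geq -1}$), by the usual Nakayama-type argument for derived complete modules over $\mathbf{Z}_\ell$. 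Combining this with the fact that $\mathbf{Z}_\ell$ has global dimension $1$, one sees that for $K \in D^b_{cons}$, the truncation $\tau^{\leq 0}K$ is again derived $\ell$-complete (truncations of derived complete complexes over a Noetherian ring are derived complete) and its reduction mod $\ell$ sits in a triangle relating it to truncations of $K/\ell$, which are constructible by the mod-$\ell$ theory (the standard $t$-structure on $D^b_{cons}(Y_{proet},\mathbf{Z}/\ell)$, which is classical since $\mathbf{Z}/\ell$ is a field and constructibility of sheaves is stratification-theoretic). Hence $\tau^{\leq 0}K \in D^b_{cons}(Y_{proet},\mathbf{Z}_\ell)$, and symmetrically for $\tau^{\geq 1}K$.

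More concretely, the steps I would carry out are: (i) reduce to showing each $\mathcal{H}^i(K)$ is a constructible $\mathbf{Z}_\ell$-sheaf; (ii) show $\mathcal{H}^i(K)$ is derived $\ell$-complete — this follows because $\mathcal{H}^i$ commutes with $R\lim$ up to a $\lim^1$ term when things are bounded, or more cleanly because the standard truncation functors preserve derived $\ell$-completeness over the Noetherian ring $\mathbf{Z}_\ell$ (cf. \cite[Tag 091N]{Stacks}-type results); (iii) compute $\mathcal{H}^i(K) \otimes^L_{\mathbf{Z}_\ell}\mathbf{Z}/\ell$: using the global dimension $1$ of $\mathbf{Z}_\ell$ there is a short exact sequence $0 \to \mathcal{H}^i(K)/\ell \to \mathcal{H}^i(K/\ell) \to \mathcal{H}^{i+1}(K)[\ell] \to 0$ of abelian sheaves, and each $\mathcal{H}^j(K/\ell)$ is constructible in the classical sense by hypothesis on $K$; (iv) conclude that $\mathcal{H}^i(K)/\ell$ and $\mathcal{H}^i(K)[\ell]$ are classically constructible $\mathbf{Z}/\ell$-sheaves, hence by the definition in \cite{BSProetale} that $\mathcal{H}^i(K)$ is a constructible $\mathbf{Z}_\ell$-sheaf; (v) boundedness of $K$ gives vanishing of $\mathcal{H}^i(K)$ outside a finite range, so $\tau^{\leq 0}K, \tau^{\geq 1}K \in D^b_{cons}$.

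The main obstacle I anticipate is item (iii)–(iv): making sure that the notion of ``constructible $\mathbf{Z}_\ell$-sheaf'' used in \cite{BSProetale} really is detected by the pair $(\text{mod }\ell\text{ reduction}, \ell\text{-torsion})$ being classically constructible, and that a subquotient/extension argument stays inside the constructible subcategory — i.e. that constructible $\mathbf{Z}_\ell$-sheaves form a weak Serre subcategory closed under the relevant operations. This is essentially the content of the corresponding results in \cite{BSProetale} on the constructible subcategory being abelian and closed under kernels/cokernels/extensions, so once those are invoked the argument closes; the only subtlety is bookkeeping between the derived-complete and the abelian-sheaf pictures, which I would handle by systematically working with $R\lim$ of the mod-$\ell^n$ systems and citing the comparison between $D^b_{cons}(Y_{proet},\mathbf{Z}_\ell)$ and $\lim_n D^b_{cons}(Y_{proet},\mathbf{Z}/\ell^n)$ from \cite{BSProetale}.
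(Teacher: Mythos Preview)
Your overall strategy --- reduce to showing each $\mathcal{H}^i(K)$ is constructible, then use the Bockstein short exact sequence
\[ 0 \to \mathcal{H}^i(K)/\ell \to \mathcal{H}^i(K/\ell) \to \mathcal{H}^{i+1}(K)[\ell] \to 0 \]
to relate $\mathcal{H}^i(K)/\ell$ and $\mathcal{H}^i(K)[\ell]$ to the known constructible sheaf $\mathcal{H}^i(K/\ell)$ --- is exactly the route the paper takes. But your step (iv) has a genuine gap, and it is precisely the point the paper spends most of its effort on.

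The problem is that $\mathcal{H}^i(K)/\ell$ and $\mathcal{H}^i(K)[\ell]$ are a priori only \emph{pro-\'etale} sheaves: they are (co)kernels of the endomorphism $\ell$ on the pro-\'etale sheaf $\mathcal{H}^i(K)$, which is not yet known to be classical. Classical sheaves form a subcategory of pro-\'etale sheaves closed under images and cokernels, but \emph{not} under arbitrary subobjects. So the fact that $\mathcal{H}^i(K)/\ell$ injects into the classical constructible sheaf $\mathcal{H}^i(K/\ell)$ does not allow you to invoke ``subquotients of constructible \'etale sheaves on a noetherian scheme are constructible'' \cite[Tag 09BH]{Stacks}: that result applies only to \'etale subquotients, and you have not established that $\mathcal{H}^i(K)/\ell$ is classical. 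The weak-Serre properties you cite from \cite{BSProetale} do not help here either --- they concern the constructible subcategory inside the \'etale world, not arbitrary pro-\'etale subobjects of classical sheaves.

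The paper closes this gap by proving that each $\mathcal{H}^i(K)$ has \emph{bounded $\ell$-power torsion}. One chooses a finite stratification $\{k_j:Y_j \to Y\}$ along which $K$ becomes lisse, uses that $k_{j,!}$ and $k_j^*$ on pro-\'etale sheaves are exact and commute with $\mathcal{H}^i(-)$, and filters $\mathcal{H}^i(K)[\ell^n]$ by pieces of the form $k_{j,!}\bigl(\mathcal{H}^i(k_j^*K)[\ell^n]\bigr)$; each such piece has bounded torsion because the lisse cohomology sheaves do. Once the system $\{\mathcal{H}^i(K)[\ell^n]\}_n$ is known to be Mittag--Leffler, the Bockstein for $\ell^n$ identifies $\mathcal{H}^i(K)/\ell \subset \mathcal{H}^i(K/\ell)$ with the image of $\mathcal{H}^i(K/\ell^n) \to \mathcal{H}^i(K/\ell)$ for $n \gg 0$; as an image of a map between classical sheaves, this is classical, and then the \'etale subquotient argument applies. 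Your proposal does not supply --- or even allude to --- this bounded-torsion step, and without it the argument does not close.
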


In the statement above and the proof below, we use the notions from \cite[\S 5, 6]{BSProetale}. In particular, we refer to an object of $D^b(Y_{proet})$ as {\em classical}  if it is in the essential image of the (fully faithful) pullback along $\nu:Y_{proet} \to Y_{et}$ (see \cite[\S 5.1]{BSProetale}). Classical  abelian sheaves on $Y_{proet}$ are thus equivalent to abelian sheaves on $Y_{et}$ and form an abelian Serre subcategory of all abelian sheaves on $Y_{proet}$.

\begin{proof}
Given $M \in D^b_{cons}(Y_{proet},\mathbf{Z}_\ell)$, we must show that each $\mathcal{H}^i(M)$ lies in $D^b_{cons}(Y_{proet},\mathbf{Z}_\ell)$. Using the definition of constructibility \cite[\S 5]{BSProetale}, we must show that the abelian pro-\'etale sheaves $\mathcal{H}^i(M)/\ell$ and $\mathcal{H}^i(M)[\ell]$ are constructible $\mathbf{F}_\ell$-sheaves for all $i$. Note that these sheaves can be regarded as subobjects (resp. quotient objects) of some $\mathcal{H}^i(M/\ell)$ via the Bockstein sequence for $\ell$. As \'etale subquotients of \'etale constructible constructible sheaves on a noetherian scheme are constructible \cite[Tag 09BH]{Stacks}, it suffices to show that the pro-\'etale sheaves $\mathcal{H}^i(M)/\ell$ and $\mathcal{H}^i(M)[\ell]$ are classical. In fact, by the Bockstein sequence and stability of classical sheaves under cokernels in all pro-\'etale sheaves, it suffices to prove that each $\mathcal{H}^i(M)/\ell$ is \'etale. By definition of constructibility, we know that  $\mathcal{H}^i(M/\ell^n)$ is classical for all $i$ and $n$. As classical sheaves are stable under images, it is then enough to show that $\mathcal{H}^i(M)/\ell \subset \mathcal{H}^i(M/\ell)$ is exactly the image of $\mathcal{H}^i(M/\ell^n) \to \mathcal{H}^i(M/\ell)$ for $n \gg 0$. By the Bockstein sequences for $\ell^n$, this would follow if we knew that the projective system $\{\mathcal{H}^i(M)[\ell^n]\}_{n \geq 1}$ are Mittag-Leffler for each $i$, i.e., if each $\mathcal{H}^i(M)$ had bounded $\ell$-power torsion. If $M$ is lisse, this is clear. In general, recall the following fact from \cite[\S 6.2]{BSProetale}: if $k:Z \to Y$ is a (necessarily constructible, as $Y$ is noetherian) locally closed immersion, then the functors $k^*$ and $k_!$ on the derived category of all pro-\'etale sheaves preserve limits and colimits and commute with $\mathcal{H}^i(-)$. By \cite[Proposition 6.6.11]{BSProetale}, we know that $Y$ admits a finite stratification $\{k_j:Y_j \to Y\}$ such that each $N_j := k_j^* M$ is lisse. The aforementioned properties of $k_{j,!}$ and $k_j^*$ then show that $\mathcal{H}^i(M)[\ell^n]$ admits a finite filtration whose graded pieces have the form $k_{j,!} k_j^* (\mathcal{H}^i(N_j)[\ell^n])$ for lisse complexes $N_i$. But then each $\mathcal{H}^i(N_j)$ is also lisse and hence has bounded $\ell$-power torsion, so the corresponding claim for $\mathcal{H}^i(M)$ follows by devissage.
\end{proof}

\begin{theorem}[The constructible $t$-structure for $\mathbf{Z}_\ell$-sheaves on a rigid space]
Let $X/K$ be a rigid space. Then there exists a natural ``constructible'' $t$-structure $({}^c D^{\leq 0}_{zc}(X,\mathbf{Z}_\ell), {}^c D^{\geq 0}_{zc}(X,\mathbf{Z}_\ell))$ on $D^{(b)}_{zc}(X,\mathbf{Z}_\ell)$ with the following properties:
\begin{enumerate}
\item An object $K$ lies in ${}^c D^{\leq 0}_{zc}(X,\mathbf{Z}_\ell)$ if and only if $K/\ell \in D^{\leq 0}(X,\mathbf{Z}/\ell)$.
\item An object $K$ lies in the heart if and only if there exists a locally finite stratification $X = \{X_i\}$ by Zariski locally closed subsets such that $K|_{X_i}$ is locally constant and concentrated in degree $0$ in the obvious sense (i.e., isomorphic locally on $X_{i,v}$ to an object of the form $\underline{N}$ with $N$ a finitely generated $\mathbf{Z}_\ell$-module).
\end{enumerate}

The restrictions appearing in part (2) above are in the sense of the operations in Proposition~\ref{operationsZell} (see also Remark~\ref{Proetalevsv}).

\end{theorem}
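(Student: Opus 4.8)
My plan is to build the $t$-structure on affinoids by transporting the constructible $t$-structure of algebraic geometry (Proposition~\ref{Conststrscheme}) across the algebraization equivalence, and then to glue the resulting local $t$-structures over an arbitrary rigid space. First I would treat an affinoid $X = \mathrm{Spa}(A)$ with algebraization $\mathcal{X} = \mathrm{Spec}(A)$: Lemma~\ref{ladicschemecomp} gives an equivalence $D^{(b)}_{zc}(X,\mathbf{Z}_\ell) \simeq D^b_{zc}(\mathcal{X},\mathbf{Z}_\ell)$, and the right-hand side carries the restriction of the standard $t$-structure by Proposition~\ref{Conststrscheme}; I transport it to the left-hand side. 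The transported $t$-structure satisfies property (1) because the equivalence is compatible with reduction modulo $\ell$ and because on the scheme side the Bockstein sequence $0 \to \mathcal{H}^i(K)/\ell \to \mathcal{H}^i(K/\ell) \to \mathcal{H}^{i+1}(K)[\ell] \to 0$, together with derived $\ell$-completeness and Nakayama, shows that $K$ is connective if and only if $K/\ell$ is. It satisfies property (2) by unwinding the heart: on the scheme side the heart is the category of constructible $\mathbf{Z}_\ell$-sheaves in degree $0$, and by \cite[\S 6.6]{BSProetale} any such sheaf becomes lisse after a finite stratification into locally closed subschemes; transporting this across the equivalence (which matches lisse objects and Zariski-locally-closed strata with their analytic counterparts, cf. Proposition~\ref{schemescomparison1}) and invoking Lemma~\ref{LisseLC} to see that lisse $\mathbf{Z}_\ell$-complexes are locally constant yields exactly the description in (2).

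Next I would pass to a general rigid space $X$. The key point is that property (1) furnishes an intrinsic and manifestly \'etale-local description of the would-be connective part: the standard $t$-structure on $D(X_{\et},\mathbf{Z}/\ell)$ restricts to $D^{(b)}_{zc}(X,\mathbf{Z}/\ell)$ simply because the cohomology sheaves of a Zariski-constructible complex are Zariski-constructible by definition, so the condition ``$K/\ell \in D^{\leq 0}(X,\mathbf{Z}/\ell)$'' makes sense and is local on $X$. It then suffices to check that the affinoid-local $t$-structures constructed above are compatible with restriction to open affinoids, and to apply the usual gluing of $t$-structures along an open cover. The stability of $D^{(b)}_{zc}(X,\mathbf{Z}_\ell)$ under the truncations is checked modulo $\ell$ using the inverse-limit description of Remark~\ref{ZCZellLimit} (which shows that $K$ lies in $D^{(b)}_{zc}(X,\mathbf{Z}_\ell)$ iff $K/\ell$ is Zariski-constructible), and the compatibility with open restriction reduces, again modulo $\ell$, to the compatibility of the algebraic constructible $t$-structure with pullback along the scheme maps induced by rational localizations and \'etale maps of affinoids; these maps are flat (hence $t$-exact for the standard $t$-structure) and universally generalizing, so they preserve both constructibility and local constancy. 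This is precisely the mechanism already isolated in the proof of Theorem~\ref{ZCLocal} via Lemmas~\ref{MaxLisseOpen} and~\ref{MaxLisseOpenBC}. The remaining $t$-structure axioms descend from the mod-$\ell$ picture as well: orthogonality follows from $R\mathrm{Hom}_{\mathbf{Z}_\ell}(A,B)\otimes^L_{\mathbf{Z}_\ell}\mathbf{Z}/\ell \simeq R\mathrm{Hom}_{\mathbf{Z}/\ell}(A/\ell, B/\ell)$, derived $\ell$-completeness, and left $t$-exactness of $R\mathrm{Hom}$ modulo $\ell$, while the truncation triangles are supplied affinoid-locally by the algebraic $t$-structure and glued using the compatibility just discussed.

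The hard part will be this last compatibility: verifying that the algebraic constructible $t$-structure and its truncation functors are compatible with pullback along the (non-finite-type) scheme morphisms coming from rational localizations of affinoids, so that the affinoid-local $t$-structures genuinely patch. This is the only step that is not formal, and it is essentially the same input that powers Theorem~\ref{ZCLocal}; once it is granted, the rest is bookkeeping with the Bockstein sequence and $\ell$-completeness.
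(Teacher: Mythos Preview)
Your proposal is correct and matches the paper's approach: transport the algebraic $t$-structure to affinoids via Lemma~\ref{ladicschemecomp}, verify (1) and (2) there, then glue using $t$-exactness of restriction. One minor recalibration: the compatibility of the $t$-structure with pullback along the scheme maps from rational localizations, which you flag as the hard part, is actually automatic (pullback of sheaves is exact, so the standard $t$-structure is preserved tautologically); the genuine role of Lemmas~\ref{MaxLisseOpen} and~\ref{MaxLisseOpenBC} is in establishing property (2) for general $X$, where the paper uses them to build a \emph{canonical} stratification on each affinoid (take the maximal lisse open for $\mathcal{H}^*(K/\ell)$ and induct) that is compatible with restriction by Lemma~\ref{MaxLisseOpenBC} and hence glues to a global stratification.
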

\begin{proof}
First assume $X=\mathrm{Spa}(A)$ is affinoid. In this case, to obtain a $t$-structure by the description in (1), we may use Lemma~\ref{ladicschemecomp} to translate to a similar question on $Y=\mathrm{Spec}(A)$. Thus, it suffices to show that the $t$-structure on $D^b_{cons}(Y_{proet},\mathbf{Z}_\ell)$ constructed in Proposition~\ref{Conststrscheme} is characterized by the property that $K \in D^{\leq 0}(Y_{proet})$ exactly when $K/\ell \in D^{\leq 0}(Y_{et})$; this follows from repleteness of $Y_{proet}$, exactness and full faithfulness of pullback along $Y_{proet} \to Y_{et}$, and standard facts on derived completions. Moreover, part (2) also follows from the reasoning at the end of the proof of Proposition~\ref{Conststrscheme} as well as the fact that Zariski-closed subsets of $\mathrm{Spa}(A)$ are the same as closed subsets of $\mathrm{Spec}(A)$. 

For future reference, still in the affinoid case, we remark that once we know (2) is satisfied for some stratification, there is in fact {\em canonical} stratification where (2) is satisfied. Indeed, if we take the open stratum $X_0 \subset X$ to be the maximal Zariski dense open provided by Proposition~\ref{MaxLisseOpen} for $\mathcal{H}^*(K/\ell)$ and continue inductively, we obtain a stratification $\{X_i\}_{i \geq 0}$ of $X$ by Zariski locally closed subsets such that $K|_{X_i}$ is lisse by Lemma~\ref{LisseLC}. To check that $K|_{X_i}$ is concentrated in degree $0$ in the sense of (2), we may refine the canonical stratification to ensure it is finer than a given stratification witnessing the property in (2), take stalks,  and then deduce the result for the canonical stratification itself. We observe also that this canonical stratification has the feature that it is compatible with restricting to smaller affinoids by Lemma~\ref{MaxLisseOpenBC}.

We now deduce the general case by glueing. Indeed, first observe that the pullback along maps of affinoids is $t$-exact with respect to the $t$-structure we constructed in the first paragraph: right $t$-exactness is clear from the description in (1), while left $t$-exactness follows from the description of the heart in (2) and the boundedness of the $t$-structure on affinoids. As the condition appearing in part (1) is of a local nature, it follows that for any rigid space $X$, we can glue the $t$-structures defined above on the affinoid opens of $X$ to produce a $t$-structure on $D^{(b)}_{zc}(X,\mathbf{Z}_\ell)$ satisfying part (1). For part (2), thanks to the last sentence of the previous paragraph, we may simply glue together the canonical stratification on affinoids constructed in the previous paragraph to obtain the desired stratification.
\end{proof}

\section{Perverse sheaves}

In this section, we use the results of \S \ref{sec:SixFun} to define a notion of perverse sheaves on rigid spaces over nonarchimedean base field $K$ of  characteristic $0$. Our results with finite coefficents work for any rigid space, with $\mathbf{Z}_\ell$ coefficients when $K$ has residue characteristic $p$ (due to the corresponding requirement in \S \ref{ss:adic}), and $\mathbf{Q}_\ell$-coefficients when one further restricts to the qcqs case.

\subsection{Finite coefficients}
\label{ss:PervFin}
Let $K$ be a nonarchimedean field of characteristic $0$ and let $X/K$ be a rigid space. In this subsection, we use $\mathbf{Z}/n$-coefficients for some $n \geq 1$. In this section, we develop a theory of perverse sheaves on $X$ that enjoys the same pleasant formal properties as its counterpart in algebraic geometry \cite{BBDG}.

\begin{definition} \label{perverse} Let $X/K$ be a rigid space. 
\begin{enumerate}
\item Define $\pervleqzero(X) \subset D_{zc}^{(b)}(X)$ as the full subcategory of complexes $\mathscr{F}$ such that $ \dim \mathrm{supp} \mathcal{H}^j(\mathscr{F}) \leq -j$ for all $j \in \mathbf{Z}$. 

\item Define $\pervgeqzero(X) \subset D_{zc}^{(b)}(X)$ as the full subcategory of complexes $\mathscr{F}$ such that $\mathbf{D}_X(\mathscr{F}) \in \pervleqzero(X)$.
\end{enumerate}
\end{definition}

The main results about this definition are summarized as follows.

\begin{theorem}[Properties of the perverse $t$-structure]
\label{perverseproperties} 
In the setup above, we have the following.
\begin{enumerate}
\item The pair $(\pervleqzero(X),\pervgeqzero(X))$ define a t-structure on $D_{zc}^{(b)}(X)$. Write $\Perv(X)=\Perv(X,\mathbf{Z}/n)$ for the heart of this $t$-structure, and write $\pervcoh^n : D^{(b)}_{zc}(X) \to \Perv(X)$ for the associated cohomology functors.

\item For a Zariski-open immersion $j$ (resp. Zariski-closed immersion $i$), we have the following exactness properties with respect to the perverse $t$-structure:
\begin{enumerate}
\item $j^*$ and $i_*$ are $t$-exact.
\item $j_!$ is right $t$-exact in the context of Proposition~\ref{morepushforwards} (2), i.e., if $\mathscr{F} \in \pervleqzero(U,\mathbf{Z}/n)$ arises as the pullback of some object from $D^{(b)}_{zc}(X,\mathbf{Z}/n)$, then $j_! \mathscr{F} \in \pervleqzero(X,\mathbf{Z}/n)$.
\item $i^*$ is right $t$-exact and $Ri^!$ is left $t$-exact.
\item $Rj_*$ is left $t$-exact in the context of Proposition~\ref{morepushforwards} (2).
\end{enumerate}

\item $\Perv(X)$ is stable under Verdier duality. 

\item If $X = \mathcal{X}^{an}$ for a finite type $K$-scheme $\mathcal{X}$, the functor $D^b_c(\mathcal{X}) \to D^b_{zc}(X)$ induces a fully faithful functor $\Perv(\mathcal{X}) \to \Perv(X)$. If $\mathcal{X}$ is proper over $\Spec K$, this functor is an equivalence of categories.

\item Say $j:U \subset X$ is the inclusion of any Zariski locally closed subset and $\mathscr{L}$ is a perverse sheaf on $U$ that admits an extension to $D^{(b)}_{zc}(X,\mathbf{Z}/n)$ under $j^*$ (e.g., if one of $\mathscr{L}$ or $\mathbf{D}_{U}(\mathscr{L})$ is lisse, see Proposition~\ref{morepushforwards}). Then there is a naturally associated intermediate extension $j_{! \ast}\mathscr{L} \in \Perv(X)$ such that $j^{\ast}j_{! \ast} \mathscr{L}\cong \mathscr{L}$. Moreover, $\mathbf{D}_X(j_{! \ast}\mathscr{L})\cong j_{! \ast}\mathbf{D}_{U}(\mathscr{L})$. 

\item If $X$ is quasicompact, $\Perv(X)$ is Noetherian and Artinian. The simple objects have the form $j_{!*} (\mathscr{L}[d])$, where $j:U \to X$ is a Zariski-locally closed immersion with $U$ smooth of dimension $d$ and $\mathscr{L}$ is a simple locally constant sheaf on $U$. 

\item Perversity is stable under pushforward along finite morphisms. 

\item Assume $p$ is invertible on $\Lambda$. If $K$ is algebraically closed and $\mathfrak{X}$ is a formal model of $X$ with special fiber $\mathfrak{X}_s$, the nearby cycles functor $R\lambda_{\mathfrak{X} \ast}: D^{b}_{zc}(X) \to D^{b}_{c}(\mathfrak{X}_s)$ is t-exact for the perverse t-structures.

\end{enumerate}

\end{theorem}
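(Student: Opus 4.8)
The plan is to reduce the perverse $t$-exactness of the nearby cycles functor $R\lambda_{\mathfrak{X}\ast}$ to the corresponding fact for finite-type schemes over a complete discretely valued field (or more precisely, to the algebraic theory of nearby cycles as in \cite[Exp.~XIII]{ILO14} or \cite{BBDG}, \cite{IllusieNC}), together with the locality results already established in this paper. Since we have the characterization $\pervgeqzero(X) = \mathbf{D}_X(\pervleqzero(X))$ and $R\lambda_{\mathfrak{X}\ast}$ commutes with Verdier duality up to a shift (as $K$ is algebraically closed and $p$ is invertible on $\Lambda$, so that nearby cycles for the special fiber commute with duality — see \cite[Th\'eor\`eme 4.2]{IllusieNC} or the discussion in \cite{ILO14}), it suffices to prove right $t$-exactness, i.e., that $R\lambda_{\mathfrak{X}\ast}$ carries $\pervleqzero(X)$ into $\phantom{}^{\mathfrak{p}}D^{\leq 0}_c(\mathfrak{X}_s)$. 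By d\'evissage (Proposition~\ref{ZCStability}(2)) and stability of $\pervleqzero$ under extensions, one reduces to the case $\mathscr{F} = j_!\mathscr{L}[d]$ where $j:U\hookrightarrow X$ is a smooth Zariski-locally closed subset of pure dimension $d$ and $\mathscr{L}$ is lisse on $U$; in fact by further d\'evissage it suffices to treat $\mathscr{F}$ of the form $i_*\mathscr{G}$ for $i:Z\hookrightarrow X$ a Zariski-closed immersion with $Z$ equidimensional and $\mathscr{G}$ a shifted lisse sheaf in the appropriate perverse range, using that $R\lambda_{\mathfrak{X}\ast}$ is compatible with proper pushforward (here, along the closed immersion of formal models induced by $Z$).

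The heart of the argument is then the local computation: one must show that for $U/K$ smooth of pure dimension $d$ with a formal model $\mathfrak{U}$, and $\mathscr{L}$ lisse on $U$, the complex $R\lambda_{\mathfrak{U}\ast}(\mathscr{L}[d])$ lives in perverse degrees $\leq 0$ on $\mathfrak{U}_s$. Here I would invoke Temkin's resolution of singularities to arrange (after a proper modification of the formal model, which only improves the situation as modifications are compatible with nearby cycles and proper pushforward) that $\mathfrak{U}$ is a semistable (or at least strictly semistable / regular with normal crossings special fiber) formal scheme, so that the special fiber $\mathfrak{U}_s$ is a normal crossings divisor. For such models the nearby cycles complex $R\lambda_{\mathfrak{U}\ast}(\Lambda)$ is explicitly computable (it is the familiar complex built from the strata of $\mathfrak{U}_s$, concentrated in degrees $[0,d-1]$ or so with controlled support dimensions), and the perverse amplitude estimate $\dim\operatorname{supp}\mathcal{H}^j \leq d-j$ can be read off directly from this description; twisting by the lisse sheaf $\mathscr{L}$ does not change the support dimensions. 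Alternatively, and perhaps more cleanly, one can import this directly from the algebraic statement: by Proposition~\ref{schemescomparison1} the local structure of $R\lambda$ near a point of $\mathfrak{U}_s$ is identified with algebraic nearby cycles for a finite-type scheme over the fraction field of a complete DVR (the completed local ring of $\mathfrak{U}$ at the generic point of a stratum), and the perverse $t$-exactness of nearby cycles over a trait with algebraically closed residue field is \cite[Th\'eor\`eme 4.2]{IllusieNC}. One must check that the perverse $t$-structures match up under $\ssp$, which is where the dimension function on $\mathfrak{X}_s$ (relative to which perversity is measured) has to be compared with the one pulled back from $X$ via specialization; this is a codimension-counting bookkeeping matter.

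The main obstacle I anticipate is precisely this last reconciliation of perversities across the specialization map: the perverse $t$-structure on $D^b_c(\mathfrak{X}_s)$ is defined using the dimension function on the special fiber, while the one on $D^{(b)}_{zc}(X)$ uses dimension in the rigid space, and a priori $\dim\mathfrak{X}_s = \dim X$ but the local codimension bookkeeping at non-maximal strata must be handled carefully (this is exactly the point where one uses that $\ssp$ is closed and that the preimage of a point of codimension $c$ in $\mathfrak{X}_s$ behaves correctly — see the analogue in \cite[Exp.~XIII, \S 2]{ILO14}). A secondary technical point is checking that the various compatibilities of $R\lambda_{\mathfrak{X}\ast}$ used above — with proper pushforward, with duality (needing $p$ invertible and $K$ algebraically closed), and with passage to formal modifications — hold at the level of $D^{(b)}_{zc}$; these should follow from Huber's treatment of nearby cycles in \cite[\S 3.5]{Hub96} combined with the finiteness afforded by Theorem~\ref{properdirectimage} and the locality Theorem~\ref{ZCLocal}, but they need to be stated. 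Once these are in place, right $t$-exactness plus duality gives the theorem.
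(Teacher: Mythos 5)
Your proposal takes a genuinely different route from the paper, and it contains real gaps. The paper's proof is much shorter: after the same first step (reducing by duality to right $t$-exactness), it invokes the purely formal characterization of $\phantom{}^{\mathfrak{p}}D^{\leq 0}$ from \cite[R\'eciproque 4.1.6]{BBDG} --- namely that $R\lambda_{\mathfrak{X}\ast}\mathscr{F}$ lies in $\phantom{}^{\mathfrak{p}}D^{\leq 0}(\mathfrak{X}_s)$ iff $R\Gamma(\mathfrak{Y}_s,\mathfrak{j}^*R\lambda_{\mathfrak{X}\ast}\mathscr{F})$ is in nonpositive degrees for every affine \'etale $\mathfrak{j}:\mathfrak{Y}_s\to \mathfrak{X}_s$. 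Since this cohomology is identified with $R\Gamma(Y,j^*\mathscr{F})$ for the affinoid $Y$ obtained by lifting $\mathfrak{j}$ to formal schemes and taking adic generic fibers, the whole statement collapses to rigid-analytic Artin--Grothendieck vanishing (\cite{BM,H}): $R\Gamma$ on an affinoid of a perverse-connective complex lives in nonpositive degrees. This is dimension-insensitive and requires no d\'evissage, no choice of nice formal model, and no comparison of dimension functions across $\ssp$ --- precisely the points where your argument gets stuck.

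The gaps in your approach are the following. First, your d\'evissage to $j_!\mathscr{L}[d]$ with $j:U\hookrightarrow X$ a Zariski-open immersion needs $R\lambda_{\mathfrak{X}\ast}$ to interact well with $j_!$, but a Zariski-open immersion of rigid spaces does not in general lift to a morphism of formal models over a fixed $\mathfrak{X}$; the formal open complementary to the closure of $X\setminus U$ in $\mathfrak{X}$ has generic fiber strictly contained in $U$. Second, the reduction via Temkin to a (strictly) semistable formal model $\pi:\mathfrak{X}'\to\mathfrak{X}$ would need $R\pi_{s,\ast}$ on special fibers to preserve $\phantom{}^{\mathfrak{p}}D^{\leq 0}$; for proper but non-affine $\pi$ this fails (proper pushforward is not right $t$-exact for the perverse $t$-structure, only bounded by fiber dimension), so you cannot transport the answer from $\mathfrak{X}'_s$ back to $\mathfrak{X}_s$ for free. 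Third, and most fundamentally, the ``import from the algebraic statement'' route appeals to $t$-exactness of nearby cycles over a trait (Illusie, or Exp.~XIII of \cite{ILO14}); but here $K$ is algebraically closed of characteristic $0$, so $\mathcal{O}_K$ is a non-discrete rank-one valuation ring (think $\mathcal{O}_{\mathbf{C}_p}$), and there is no trait over which $\mathfrak{X}$ is finite type. Your anticipated ``codimension bookkeeping'' difficulty is real, but the explicit-SNC route you propose does not resolve it; the paper avoids it entirely by never counting dimensions on $\mathfrak{X}_s$, replacing that with the affine vanishing criterion.
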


We expect that the $t$-exactness in (8) holds true without the assumption on $p$ (using the perverse $t$-structure on the target constructed in \cite{GabberNotest}). The right $t$-exactness ought to follow from the relevant affinoid vanishing theorem, generalizing \cite{BM, H}, that has been announced by Gabber.

\begin{proof}
\begin{enumerate}
\item We give two proofs: one via localizing to \cite{GabberNotest}, and one via a direct argument.

{\em Proof via \cite{GabberNotest}:} We have seen before that $X \mapsto \mathcal{D}^{(b)}_{zc}(X)$ is a stack for the analytic topology on $X$. Moreover, pullback along open inclusions $U \subset X$ of rigid spaces preserves $\pervleqzero(-)$ by definition, and $\pervgeqzero(-)$ as Verdier duality localizes. Consequently, these pullbacks are perverse $t$-exact once we know the perverse $t$-structure exists. Given a diagram of stable $\infty$-categories equipped with $t$-structures and $t$-exact transition maps, the inverse limit carries a unique $t$-structure compatible with those of the terms. Using the stackyness of $\mathcal{D}^{(b)}_{zc}(-)$, we thus conclude that it suffices to prove (1) when $X=\mathrm{Spa}(A)$ is affinoid. In this case, using Proposition~\ref{schemescomparison1} as well as the compatibility of the notion of dimension and duality with analytification, it is enough to prove the corresponding statements for $D^b_{cons}(\mathcal{X},\mathbf{Z}/n)$ where $\mathcal{X} = \mathrm{Spec}(A)$; we do this next via \cite{GabberNotest}.

Consider the strong perversity function $p:\mathcal{X} \to \mathbf{Z}$ given by $p(x) = -\dim(\overline{\{x\}})$. The results of \cite[\S 2 \& 6]{GabberNotest} show that there is a natural perverse $t$-structure on $D^b_c(\mathcal{X},\mathbf{Z}/n)$ attached to the function $p(-)$. It is clear from the definition in \cite[\S 2]{GabberNotest} as well as the compatibility of the notion of dimension with analytification that the connective part ${}^p D^{\leq 0}_c(\mathcal{X},\mathbf{Z}/n) \subset D^b_c(X,\mathbf{Z}/n)$ of this $t$-structures agrees with $\pervleqzero(X) \subset D_{zc}^b(X)$ under the equivalence $(-)^{an}:D^b_c(\mathcal{X},\mathbf{Z}/n) \simeq D^b_{zc}(X)$ from Proposition~\ref{schemescomparison1}. It remains to identify $\pervgeqzero(X) \subset D_{zc}^b(X)$ as defined above (via stalks of the dual) with ${}^p D^{\geq 0}_c(\mathcal{X},\mathbf{Z}/n) \subset D^{\geq 0}_c(\mathcal{X},\mathbf{Z}/n)$ as defined in \cite[\S 2]{GabberNotest} (via costalks). For this, it suffices to show the following pair of assertions: 
\begin{itemize}
\item[$(\ast)$] For any $\mathscr{G} \in D^b(\mathcal{X},\mathbf{Z}/n)$ and any geometric point $\overline{x} \to x \in \mathcal{X}$, the costalk $i_{\overline{x}}^! \mathscr{G}$ of $\mathscr{G}$ identifies with the $\mathbf{Z}/n$-linear dual of the stalk $i_{\overline{x}}^* \mathbf{D}_{\mathcal{X}}(\mathscr{G})$ of the Verdier dual of $\mathscr{G}$.
\end{itemize}
Indeed, assume $(\ast)$. Fix some $\mathscr{F} \in D^b_{zc}(X,\mathbf{Z}/n)$ arising as the analytification of $\mathscr{G} \in D^b_c(\mathcal{X},\mathbf{Z}/n)$. Assume first that $\mathscr{F} \in \pervgeqzero(X,\mathbf{Z}/n)$. Then for any irreducible Zariski-closed subset $Z \subset X$ of dimension $i$, we know by assumption $\mathcal{H}^{-j}(\mathbf{D}_X(F))$ vanishes after restriction to a Zariski open subset of $Z$ for all $j < i$. This implies a similar constraint on $\mathscr{G}$ by $t$-exactness of analytification and its compatibility with duality and the notion of dimension. Using $(\ast)$ and passing to the limit then shows that $\mathscr{G} \in {}^p D^{\geq 0}(\mathcal{X},\mathbf{Z}/n)$. Conversely, if $\mathscr{G} \in {}^p D^{\geq 0}(\mathcal{X},\mathbf{Z}/n)$, then $(\ast)$ and the compatibility of analyitification with duality and the notion of dimension shows that $\mathscr{F} \in \pervgeqzero(X,\mathbf{Z}/n)$. 

It remains to prove $(\ast)$. This follows by passage to the limit from (the algebraic version of) Theorem~\ref{dualizingcomplex} (4) applied to quasi-finite maps of the form $\mathcal{U} \hookrightarrow \mathcal{Z} \hookrightarrow \mathcal{X}$, with the first map being a dense open immersion, and the second map being the closed immersion of an irreducible closed subset.

{\em Direct proof.} We now explain a direct proof of the existence of the perverse $t$-structure on $D^b_{zc}(X)$ when $X$ is finite dimensional by induction on $\dim X$. The result is trivial when $\dim X=0$. For the moment, fix a smooth dense Zariski-open subset $j:U \to X$, with closed complement $i: Z \to X$. It is trivial from the definition that $i^{\ast} : D_{zc}^b(X) \to D_{zc}^b(Z)$ carries $\pervleqzero$ into $\pervleqzero$, and then (using biduality) that $Ri^!$ carries $\pervgeqzero$ into $\pervgeqzero$. By induction, we can assume that (1) is true for $Z$.

Write $D_{zc.U-lis}^b(X) \subset D_{zc}^b(X)$ for the full subcategory spanned by complexes whose cohomology sheaves are lisse after restriction to $U$. One trivially checks that $\pervleqzero(U) \cap D_{lis}^b(U)$ and $\pervgeqzero(U) \cap D_{lis}^b(U)$ define a $t$-structure on $D_{lis}^b(U)$, which locally on connected components is the obvious shift by $\dim U$ of the standard $t$-structure. Moreover, a complex $\mathscr{F} \in D^b_{zc.U-lis}(X)$ lies in $\pervleqzero(X)$ iff $j^{\ast}\mathscr{F} \in D^{ \leq -\dim X}_{lis}(U)$ and $i^{\ast}\mathscr{F} \in \pervleqzero(Z)$. By duality, this implies that $\mathscr{F} \in D^b_{zc.U-lis}(X)$ lies in $\pervgeqzero(X)$ iff $j^{\ast}\mathscr{F} \in D^{ \geq -\dim X}_{lis}(U)$ and $i^{!}\mathscr{F} \in \pervgeqzero(Z)$. 

On the other hand, by \cite[Theoreme 1.4.10]{BBDG} we can glue the perverse $t$-structure on $D^b_{lis}(U)$ and the perverse $t$-structure on $D^b_{zc}(Z)$ to get an actual $t$-structure on $D_{zc.U-lis}^{b}(X)$. The key technical ingredient here is Theorem \ref{openpushforward}, which guarantees that $Rj_{\ast}$ carries $D^b_{lis}(U)$ into $D^b_{zc.U-lis}(X)$. This together with the induction hypothesis implies that the truncation functors $\phantom{}^{\mathfrak{p}}\tau^{\leq i}$ preserve $D^b_{zc.U-lis}(X)$.

It is clear that this glued t-structure agrees with the restriction of the putative perverse $t$-structure from Definition \ref{perverse} to the full subcategory $D_{zc.U-lis}^{b}(X) \subset D_{zc}^{b}(X)$. Since $D_{zc}^b(X)$ is the filtered colimit of $D_{zc.U-lis}^{b}(X)$ over (the opposite category of) all $U \subset X$ as above, we deduce that $\pervleqzero$ and $\pervgeqzero$ define an honest $t$-structure on $D^{b}_{zc}(X)$.

\item The right $t$-exactness in part (a) is clear, while the left $t$-exactness follows as both functors commute with Verdier duality. 

Part (b) is clear.

The claim for $i^*$ in part (c) is clear and that for $Ri^!$ then follows by duality.

For part (d), it suffices to identify $\mathbf{D}_X Rj_* \mathscr{F}$ with $j_! \mathbf{D}_U(\mathscr{F})$ (whenever $\mathscr{F}$ satisfies the hypothesis in the proposition). These sheaves are isomorphic over $U$ as duality is local, so it is enough to show that $i^* \mathbf{D}_X Rj_* \mathscr{F} = 0$ for $i:Z \to X$ being the complementary closed immersion. But this follows as $i^* \mathbf{D}_X = \mathbf{D}_Z Ri^!$ on $D^b_{zc}(X)$ and $Ri^! Rj_* = 0$ on all of $D(U)$.

\item Clear from the definitions.

\item By Proposition \ref{schemescomparison1}, $(-)^{an}: D^b_c(\mathcal{X}) \to D^b_{zc}(X)$ is fully faithful, and is an equivalence in the proper case. It remains to show that $(-)^{an}$ is perverse $t$-exact.  Right $t$-exactness is clear, while left $t$-exactness follows as $(-)^{an}$ is compatible with duality (e.g., via  Lemma~\ref{dualizingcomplexweirdpullback}).

\item As usual, we define $j_{!*} \mathscr{L}$ to be the image of the map ${}^{\mathfrak{p}} \mathcal{H}^0(j_! \mathscr{L}) \to {}^{\mathfrak{p}} \mathcal{H}^0(Rj_* \mathscr{L})$ of perverse sheaves, noting that this makes sense by part (2) and Proposition~\ref{morepushforwards}. The remaining claims are immediate, using the formula $\mathbf{D}_X Rj_* \mathscr{L} = j_! \mathbf{D}_U(\mathscr{L})$ from (2) for the last part.

\item It suffices to prove every perverse sheaf has finite length.  We prove the claim by induction on dimension $d=\dim(X)$. Clearly we can assume $X$ is reduced.

When $d=0$, the space $X$ identifies with $\bigsqcup_{i=1}^n \mathrm{Spa}(K_i)$ with $K_i/K$ a finite extension. For such spaces, the claim is clear after translating from \'etale sheaves to Galois representations, ultimately because finite $\mathbf{Z}/n$-modules have finite length in the category of all $\mathbf{Z}/n$-modules.

Next, we show that for any Zariski locally closed immersion $j:U \to X$ and any lisse sheaf $\mathscr{L}$ on $U$, the intermediate extension $j_{!\ast}\mathscr{L}[\dim U]$ is a perverse sheaf of finite length. As pushforward along closed immersions is exact and fully faithful with essential image closed under passage to subquotients, we may assume $j$ is a dense Zariski-open immersion. Using induction on dimension as well as the fact that $j_{!\ast}$ is exact up to perverse sheaves supported on the Zariski-closed space $i:Z := X-U \hookrightarrow X$ which has dimension $<d$, it is enough to prove that $j_{!\ast} \mathscr{L}$ is simple if $\mathscr{L}$ is so. As $j^*$ is perverse $t$-exact, it suffices to show that $j_{!\ast} \mathscr{L}$ admits no non-trivial subobjects or quotients supported on $X-U$. The statement for quotients follows from the surjection ${}^{\mathfrak{p}} \mathcal{H}^0(j_! \mathscr{L}) \to j_{!*} \mathscr{L}$, the right perverse $t$-exactness of $j_!$, and the fact that $\inthom(j_!(-),i_*(-)) = 0$; the statement for subobjects follows by duality.

We now handle the general case. Given a perverse sheaf $\mathscr{F}$ on $X$, let $U \subset X$ be a dense Zariski-open subset such that $\mathscr{F}|_U[-\dim(U)]$ is lisse. Then we have a correspondence 
\[ j_{!*} (\mathscr{F}|_U) \gets {}^{\mathfrak{p}} \mathcal{H}^0(j_! (\mathscr{F}|_U)) \to \mathscr{F}\]
of perverse sheaves with both maps having cones have perverse cohomology sheaves supported on $Z=X-U$. As $\dim(Z) < \dim(X)$, induction on dimension and the previous paragraph show that $\mathscr{F}$ has finite length. 

The claimed description of simple objects also follows from the proof above (and is similar to the algebraic case). Indeed, say $\mathscr{F}$ is simple and supported on some Zariski-closed subset $Z \subset X$. Replacing $X$ with $Z$, we can assume $\mathscr{F}$ is supported everywhere. Let $j:U \subset X$ be a Zariski-dense Zariski-open subset of (locally constant) dimension $d$ such that $\mathscr{F}|_U = \mathscr{L}[d]$ for a lisse sheaf $\mathscr{L}$ on $U$. As $j_{!*}$ preserves injections and has a left-inverse, the simplicity of $\mathscr{F}$ implies that $\mathscr{L}$ must be simple.  Both maps in the correspondence  $j_{!*} (\mathscr{F}|_U) \gets {}^{\mathfrak{p}} \mathcal{H}^0(j_! (\mathscr{F}|_U)) \to \mathscr{F}$ used in the previous paragraph must then be surjective by simplicity of the targets. The kernels of both maps are supported on $X-U$ while the simple targets are supported on all of $X$. It follows that kernels of both maps identify with the maximal perverse subsheaf of $ {}^{\mathfrak{p}} \mathcal{H}^0(j_! (\mathscr{F}|_U)) $ supported on $X-U$. In particular, both maps are isomorphic, so $\mathscr{F} = j_{!*}(\mathscr{F}|_U)$, as wanted.

\item Right t-exactness is clear, and commutation of finite pushforward with Verdier duality (Theorem~\ref{dualizingcomplex} (4)) gives left t-exactness.

\item  By the commutation of nearby cycles with Verdier duality \cite{H2}, it's enough to show that $R\lambda_{\mathfrak{X}\ast}$ is right $t$-exact for the perverse t-structures.  Fix some $\mathscr{F} \in \pervleqzero(X)$. By \cite[R\'eciproque 4.1.6]{BBDG}, to check that $R\lambda_{\mathfrak{X}\ast} \mathscr{F} \in \phantom{}^{\mathfrak{p}}D^{\leq 0}(\mathfrak{X}_s)$ it suffices to show that for any \'etale map $\mathfrak{j}:\mathfrak{Y}_s \to \mathfrak{X}_s$ from an affine scheme $\mathfrak{Y}_s$, the complex $R\Gamma(\mathfrak{Y}_s, \mathfrak{j}^{\ast}R\lambda_{\mathfrak{X}\ast}\mathscr{F})$ is concentrated in non-positive degrees. 

Let $j:Y \to X$ be the \'etale map obtained by deforming $\mathfrak{j}$ to a map of formal schemes and then passing to the rigid generic fiber. Note that $Y$ is affinoid. Then $R\Gamma(\mathfrak{Y}_s, \mathfrak{j}^{\ast}R\lambda_{\mathfrak{X}\ast}\mathscr{F}) \cong R\Gamma(Y,j^{\ast}\mathscr{F})$ by basic properties of the nearby cycles functor in this setting, and $j^\ast \mathscr{F} \in \pervleqzero(Y)$. But $R\Gamma(Y,\mathscr{G})$ is concentrated in degrees $\leq 0$ for any $\mathscr{G} \in \pervleqzero(Y)$ by rigid analytic Artin-Grothendieck vanishing \cite{BM, H}.
\end{enumerate}
\end{proof}

\subsection{$\mathbf{Z}_\ell$-coefficients}
\label{ss:ZellCoeff}

Let $K$ be a nonarchimedean field of characteristic $0$ and residue characteristic $p > 0$, let $\ell$ be a prime number (including possibly $\ell=p$), and let $X/K$ be a rigid space. Our goal is to define a perverse $t$-structure on the category $D^{(b)}_{zc}(X,\mathbf{Z}_\ell)$ (introduced in \S \ref{ss:adic}) that agrees on $\ell$-torsion objects with our previous construction. The definition of the connective part is the same, but that of the coconnective part needs to modified to account for the fact that the standard $t$-structure on $D_{perf}(\mathbf{Z}_\ell)$ is not quite self-dual. A similar issue occurs in algebraic geometry (see \cite[\S 3.3]{BBDG}), and our fix is also similar: there are {\em two} perverse $t$-structures with $\mathbf{Z}_\ell$-coefficients that are exchanged by Verdier duality and which differ from each other by torsion (Proposition~\ref{IntPerverseProp}).

\begin{construction}[The $\mathfrak{p}$- and $\mathfrak{p}^+$-perverse $t$-structures]
\label{cons:IntPerv}
Consider the following full subcategories of $D^{(b)}_{zc}(X,\mathbf{Z}_\ell)$:
\begin{itemize}
\item $\pervleqzero(X,\mathbf{Z}_\ell)$ is the collection of all $K$'s with $K/\ell \in \pervleqzero(X,\mathbf{F}_\ell)$.
\item $\pervgeqzero(X,\mathbf{Z}_\ell)$ is the collection of all those $K$'s with $\mathbf{D}_X(K) \in {}^{\mathfrak{p}} D^{\leq 1}_{zc}(X,\mathbf{Z}_\ell)$ and such that, locally on $X$, there exists some $c$ with $\ell^c \cdot {}^{\mathfrak{p}} \mathcal{H}^1(\mathbf{D}_X(K)/\ell^n) = 0$ for all $n$.
\end{itemize}
We refer to the pair $(\pervleqzero(X,\mathbf{Z}_\ell), \pervgeqzero(X,\mathbf{Z}_\ell))$ as the {\em $\mathfrak{p}$-perverse $t$-structure} on $D^{(b)}_{zc}(X,\mathbf{Z}_\ell)$; it will be shown to be a $t$-structure later (Proposition~\ref{IntPerverseProp}).

Write $({}^{\mathfrak{p}^+} D^{\leq 0}(X,\mathbf{Z}_\ell), {}^{\mathfrak{p}^+} D^{\geq 0}(X,\mathbf{Z}_\ell))$ for the dual of the pair $(\pervleqzero(X,\mathbf{Z}_\ell), \pervgeqzero(X,\mathbf{Z}_\ell))$, i.e., 
\[ {}^{\mathfrak{p}^+} D^{\leq 0}_{zc}(X,\mathbf{Z}_\ell) = \mathbf{D}_X \pervgeqzero(X,\mathbf{Z}_\ell) \quad \text{and} \quad  {}^{\mathfrak{p}^+} D^{\geq 0}_{zc}(X,\mathbf{Z}_\ell) =  \mathbf{D}_X \pervleqzero(X,\mathbf{Z}_\ell).\]
We refer to the pair $({}^{\mathfrak{p}^+} D^{\leq 0}_{zc}(X,\mathbf{Z}_\ell), {}^{\mathfrak{p}^+} D^{\geq 0}_{zc}(X,\mathbf{Z}_\ell))$ as the {\em $\mathfrak{p}^+$-perverse $t$-structure} on  $D^{(b)}_{zc}(X,\mathbf{Z}_\ell)$.
\end{construction}

\begin{example}[The case of a point]
\label{PerverseLisseIntegral}
Assume $X=\mathrm{Spa}(K)$ is a geometric point, so $K$ is algebraically closed. In this case, we may identify $D^{(b)}_{zc}(X,\mathbf{Z}_\ell) = D_{perf}(\mathbf{Z}_\ell)$. Under this equivalence, the $\mathfrak{p}$-perverse $t$-structure on $D_{perf}(\mathbf{Z}_\ell)$ identifies with the standard $t$-structure (and is thus a $t$-structure). Indeed, the identification of the connective part is clear. For the coconnective part,  we must show that $M \in D_{perf}(\mathbf{Z}_\ell)$ lies in $D^{\geq 0}$ exactly when $M^\vee := \mathrm{RHom}(M,\mathbf{Z}_\ell) \in D^{\leq 1}$ with $\mathrm{Ext}^1(M,\mathbf{Z}_\ell)$ being torsion. This follows easily by using biduality $M = \mathrm{RHom}(M^\vee,\mathbf{Z}_\ell)$ as well as the fact that $\mathrm{Hom}(N,\mathbf{Z}_\ell) = 0$ if $N$ is torsion.

More generally, a similar argument shows the following: for a smooth rigid space $X/K$ of dimension $d$, intersecting the $\mathfrak{p}$-perverse $t$-structure with $D^b_{lis}(X,\mathbf{Z}_\ell)$ gives (homological) $d$-fold shift of the standard $t$-structure on $D^b_{lis}(X,\mathbf{Z}_\ell)$.
\end{example}

To compare the two $t$-structures in Construction~\ref{cons:IntPerv}, we shall need the following notion.

\begin{definition}
We say that an object $K \in D^{(b)}_{zc}(X,\mathbf{Z}_\ell)$ is {\em locally bounded torsion} if, locally on $X$, there exists some $c$ with $\ell^c \cdot \mathcal{H}^*(K)= 0$. 
\end{definition}

The main result of this subsection is the following analog of some remarks in \cite[\S 3.3]{BBDG}:

\begin{proposition}[Properties of $\mathbf{Z}_\ell$-perverse sheaves]
\label{IntPerverseProp}
\begin{enumerate}
\item The $\mathfrak{p}$-perverse $t$-structure is indeed a $t$-structure on $D^{(b)}_{zc}(X,\mathbf{Z}_\ell)$. Consequently, the same holds for $\mathfrak{p}^+$-perverse $t$-structure.

\item For any $n \geq 1$, the reduction modulo $\ell^n$-functor $D^{(b)}_{zc}(X,\mathbf{Z}_\ell) \to D^{(b)}_{zc}(X,\mathbf{Z}_\ell/\ell^n)$ is right $t$-exact with respect to $\mathfrak{p}$-perverse $t$-structure on the source and the perverse $t$-structure on the target. 

\item For any $n \geq 1$, the restriction of scalars functor $\mathrm{Res}:D^{(b)}_{zc}(X,\mathbf{Z}_\ell/\ell^n) \to  D^{(b)}_{zc}(X,\mathbf{Z}_\ell)$ is $t$-exact with respect to the same pair of $t$-structures as in (2).

\item We have $\pervleqzero(X,\mathbf{Z}_\ell) \subset  {}^{\mathfrak{p}^+} D^{\leq 0}_{zc}(X,\mathbf{Z}_\ell) \subset {}^{\mathfrak{p}} D^{\leq 1}_{zc}(X,\mathbf{Z}_\ell)$.

\item Given $K \in D^{(b)}_{zc}(X,\mathbf{Z}_\ell)$, we have $K \in {}^{\mathfrak{p}^+} D^{\leq 0}_{zc}(X,\mathbf{Z}_\ell)$ if and only if $K \in {}^{\mathfrak{p}} D^{\leq 1}_{zc}(X,\mathbf{Z}_\ell)$ with ${}^{\mathfrak{p}} \mathcal{H}^1(K)$ being locally bounded torsion. (Note that ${}^{\mathfrak{p}} \mathcal{H}^1(-)$ makes sense by part (1).)
\end{enumerate}
\end{proposition}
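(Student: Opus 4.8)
\textbf{Proof proposal for Proposition~\ref{IntPerverseProp}.}

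The plan is to first establish parts (1)--(3) together by bootstrapping from the finite-coefficient case, then deduce (4) and (5) as relatively formal consequences. For (1), I would verify the three axioms of a $t$-structure for the pair $(\pervleqzero(X,\mathbf{Z}_\ell), \pervgeqzero(X,\mathbf{Z}_\ell))$ directly. The Hom-vanishing axiom ($\mathrm{Hom}(A,B)=0$ for $A\in\pervleqzero$, $B\in\pervgeqzero[-1]$) should follow by a devissage: reduce mod $\ell$ using the fact that $\mathbf{Z}_\ell$ has global dimension $1$ (so every object sits in a triangle relating it to its mod-$\ell$ reduction and a torsion-free piece), and then invoke the Hom-vanishing for the mod-$\ell$ perverse $t$-structure from Theorem~\ref{perverseproperties}(1), together with the observation that $\mathrm{Hom}(N,\mathbf{Z}_\ell)=0$ for $N$ torsion (exactly as in Example~\ref{PerverseLisseIntegral}). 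The truncation axiom is the crux: given $K\in D^{(b)}_{zc}(X,\mathbf{Z}_\ell)$, I would build the truncation triangle by a limit argument. Using the inverse-limit description $\mathcal{D}^{(b)}_{zc}(X,\mathbf{Z}_\ell)\simeq\lim_n \mathcal{D}^{(b)}_{zc,\ell\text{-ftd}}(X,\mathbf{Z}/\ell^n)$ from Remark~\ref{ZCZellLimit}, one takes the perverse truncations $\phantom{}^{\mathfrak{p}}\tau^{\leq 0}(K/\ell^n)$ mod $\ell^n$, checks they form a compatible system (using right $t$-exactness of reduction, which is part of what must be proved), and defines $\phantom{}^{\mathfrak{p}}\tau^{\leq 0}K := \lim_n \phantom{}^{\mathfrak{p}}\tau^{\leq 0}(K/\ell^n)$. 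One then has to check this lands in the correct subcategories --- connectivity is clear from the definition; coconnectivity of the cofiber requires analyzing $\mathbf{D}_X$ of the cofiber and its perverse $\mathcal{H}^1$, which is where the bounded-torsion condition enters. Here I expect it is cleanest to transport the whole question to $\mathrm{Spec}(A)$ via Lemma~\ref{ladicschemecomp} and Proposition~\ref{schemescomparison1} when $X$ is affinoid, use the algebraic $\mathbf{Z}_\ell$-perverse $t$-structure machinery (the analog of \cite[\S 3.3]{BBDG}, available on noetherian schemes via the constructible $t$-structure of Proposition~\ref{Conststrscheme} combined with \cite{GabberNotest}), and then glue using Lemma~\ref{gluing} as in the proof of Theorem~\ref{dualizingcomplex}; the negative-self-Ext vanishing needed for gluing follows from Hom-vanishing in the heart.

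For parts (2) and (3): right $t$-exactness of reduction mod $\ell^n$ is immediate from the definition of $\pervleqzero(X,\mathbf{Z}_\ell)$ (it is phrased entirely in terms of the mod-$\ell$ reduction, and $K/\ell^n \to K/\ell$ together with connectivity bookkeeping gives it). For $\mathrm{Res}$, right $t$-exactness is formal (restriction of scalars commutes with reduction mod $\ell$ appropriately), while left $t$-exactness is the subtle direction: using the formula $\mathbf{D}_{X,\mathbf{Z}_\ell}\circ\mathrm{Res} = \mathrm{Res}\circ\mathbf{D}_{X,\mathbf{Z}/\ell^n}[-1]$ from Remark~\ref{VDZellZmodell}, one translates left $t$-exactness of $\mathrm{Res}$ into a statement about how the shift-by-one interacts with the coconnectivity condition defining $\pervgeqzero(X,\mathbf{Z}_\ell)$ --- precisely, if $M\in\pervgeqzero(X,\mathbf{Z}/\ell^n)$ then $\mathbf{D}_{X,\mathbf{Z}/\ell^n}(M)\in\pervleqzero(X,\mathbf{Z}/\ell^n)$, so $\mathrm{Res}\,\mathbf{D}_{X,\mathbf{Z}/\ell^n}(M)[-1]$ has $\mathbf{D}_{X,\mathbf{Z}_\ell}$ equal to $\mathrm{Res}(M)$ shifted, and one checks the $\mathcal{H}^1$ of the dual is killed by $\ell^n$ (it is literally $\ell^n$-torsion since it is in the image of $\mathrm{Res}$), verifying the locally-bounded-torsion clause with $c=n$.

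For part (4): the inclusion $\pervleqzero(X,\mathbf{Z}_\ell)\subset {}^{\mathfrak{p}^+}D^{\leq 0}$ says that if $\mathbf{D}_X(K)\in\pervgeqzero$ then... unwinding the definition of $\pervgeqzero$ via biduality $\mathbf{D}_X\mathbf{D}_X K\cong K$, this amounts to: $K\in\pervleqzero$ implies $\mathbf{D}_X(\mathbf{D}_X K) = K \in {}^{\mathfrak{p}}D^{\leq 1}$ with torsion $\phantom{}^{\mathfrak{p}}\mathcal{H}^1$ --- but $\pervleqzero\subset {}^{\mathfrak{p}}D^{\leq 0}\subset{}^{\mathfrak{p}}D^{\leq 1}$ trivially and $\phantom{}^{\mathfrak{p}}\mathcal{H}^1 = 0$ in that case, done. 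The inclusion ${}^{\mathfrak{p}^+}D^{\leq 0}\subset{}^{\mathfrak{p}}D^{\leq 1}$ is just the membership condition in the definition of $\pervgeqzero$ read off of $\mathbf{D}_X$ (apply $\mathbf{D}_X$ to $K\in{}^{\mathfrak{p}^+}D^{\leq 0}=\mathbf{D}_X\pervgeqzero$, getting $\mathbf{D}_X K\in\pervgeqzero\subset{}^{\mathfrak{p}}D^{\geq 0}$, hence by a duality/adjunction argument $K\in{}^{\mathfrak{p}}D^{\leq 1}$; more directly, unwind the two defining conditions of $\pervgeqzero$). Part (5) is then essentially the definition unwound: $K\in{}^{\mathfrak{p}^+}D^{\leq 0}$ iff $\mathbf{D}_X K\in\pervgeqzero$ iff $\mathbf{D}_X\mathbf{D}_X K = K\in{}^{\mathfrak{p}}D^{\leq 1}$ and (locally) $\ell^c\cdot{}^{\mathfrak{p}}\mathcal{H}^1(K/\ell^n)=0$ for all $n$; the remaining point is to identify "${}^{\mathfrak{p}}\mathcal{H}^1(K)$ locally bounded torsion" with "$\ell^c\cdot{}^{\mathfrak{p}}\mathcal{H}^1(K/\ell^n)=0$ for all $n$ locally", which follows from right $t$-exactness of reduction mod $\ell^n$ (part (2)) giving a surjection-like relation ${}^{\mathfrak{p}}\mathcal{H}^1(K)/\ell^n \twoheadrightarrow$ image in ${}^{\mathfrak{p}}\mathcal{H}^1(K/\ell^n)$ plus an $\mathrm{Ext}$-contribution, and the fact that bounded $\ell$-power torsion of a perverse sheaf (a constructible object) is controlled by finitely many strata via the devissage in the proof of Proposition~\ref{Conststrscheme}. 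The main obstacle throughout is part (1)'s truncation axiom --- specifically, showing the limit construction of perverse truncations converges inside the Zariski-constructible, locally bounded category and produces a cofiber whose dual has bounded-torsion $\mathcal{H}^1$; I expect this to require the affinoid reduction to schemes plus the gluing lemma rather than a purely analytic argument.
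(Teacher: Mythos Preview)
Your handling of parts (2)--(4) matches the paper's: (2) is immediate from the definition, (3) follows from the duality formula in Remark~\ref{VDZellZmodell} exactly as you say, and (4) is biduality unwound.

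For part (1), the paper takes your second route (localize to an affinoid, algebraize to $\mathrm{Spec}(A)$), but the argument on the scheme side is not an inverse limit of mod-$\ell^n$ truncations. Instead it proceeds by induction on $\dim(\mathcal{X})$: the base case is a point (Example~\ref{PerverseLisseIntegral}), and for the inductive step one picks a smooth dense open $U\subset\mathcal{X}$ with closed complement $Z$ and uses BBDG's open--closed $t$-structure gluing \cite[Th\'eor\`eme 1.4.10]{BBDG} to glue the (shifted standard) $t$-structure on $D^b_{lis}(U,\mathbf{Z}_\ell)$ with the inductively known one on $D^b_{cons}(Z,\mathbf{Z}_\ell)$, then takes the colimit over all such $U$. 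Your inverse-limit-of-truncations idea is plausible but runs into exactly the coconnectivity problem you flag and is not what the paper does. Also, Lemma~\ref{gluing} is the wrong tool for globalizing from affinoids to $X$: that lemma glues \emph{complexes}, not $t$-structures. The paper simply observes that both $\pervleqzero$ and $\pervgeqzero$ are defined by local conditions (the latter because Verdier duality localizes), so the $t$-structure axioms are local and one may assume $X$ affinoid from the start.

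For part (5), there is a genuine gap in your ``only if'' direction. Unwinding the definition gives: $K\in{}^{\mathfrak p}D^{\leq 1}$ and there is a $c$ with $\ell^c\cdot{}^{\mathfrak p}\mathcal{H}^1(K/\ell^n)=0$ for all $n$; you must deduce that ${}^{\mathfrak p}\mathcal{H}^1(K)$ itself is bounded torsion. Your sketch (Bockstein relations plus the stratification devissage from Proposition~\ref{Conststrscheme}) gives information in the wrong direction: it controls ${}^{\mathfrak p}\mathcal{H}^1(K/\ell^n)$ in terms of ${}^{\mathfrak p}\mathcal{H}^1(K)$, not conversely. The paper's argument is: replace $K$ by ${}^{\mathfrak p}\mathcal{H}^1(K)[-1]$, observe (via a variant of the dimension induction in (1)) that any $\mathbf{Z}_\ell$-perverse sheaf has $\ell^\infty$-torsion bounded by some $\ell^{c'}$, conclude that $K/\ell^n$ is killed by $\ell^{2\max(c,c')}$ for all $n$, and then invoke a completeness fact in the replete $v$-topos --- namely, if $M$ is derived $\ell$-complete and $M/\ell^n$ is killed by $\ell^c$ for all $n$, then $M$ is killed by $\ell^c$. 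This last step, passing from uniform bounds on all reductions to a bound on $K$ itself, is the missing ingredient in your proposal.
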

\begin{proof}
\begin{enumerate}
\item  All assertions are local, so we may assume $X=\mathrm{Spa}(A)$ is affinoid. We proceed by induction on $\dim(X)$. If $\dim(X) = 0$, then we can reduce to the case where $X$ is a point. In this case, the claim follows by Example~\ref{PerverseLisseIntegral}. In general, we translate the theorem to a similar question about $D^b_{cons}(\mathrm{Spec}(A),\mathbf{Z}_\ell)$ with evident definitions, and proceed by imitating the glueing method of \cite[\S 1.4]{BBDG}. Fix a smooth dense Zariski-open $j:U \subset \mathcal{X}$ of dimension $d$ with complementary closed $i:Z \subset \mathcal{X}$. Consider the full subcategory $D_{U-lis} \subset D^b_{cons}(\mathrm{Spec}(A),\mathbf{Z}_\ell)$ spanned by complexes $K$ which are lisse over $U$. Then $D_{U-lis}$ admits a semi-orthogonal decomposition into $D_{lis}^b(U,\mathbf{Z}_\ell)$ as well as $D^b_{cons}(Z,\mathbf{Z}_\ell)$ as in \cite[\S 1.4.3]{BBDG}. Moreover, for $K \in D_{U-lis}$, one checks that $K \in {}^p D^{\leq 0}_{cons}(X,\mathbf{Z}_\ell)$ (resp. $K \in {}^p D^{\geq 0}_{cons}(X,\mathbf{Z}_\ell)$) exactly when its $*$-pullbacks (resp. $!$-pullbacks) to $U$ and $Z$ lie in ${}^p D^{\leq 0}_{lis}(U,\mathbf{Z}_\ell)$ and ${}^p D^{\leq 0}_{cons}(Z,\mathbf{Z}_\ell)$ (resp. ${}^p D^{\geq 0}_{lis}(U,\mathbf{Z}_\ell)$ and ${}^p D^{\geq 0}_{cons}(Z,\mathbf{Z}_\ell)$): this is clear for ${}^p D^{\leq 0}$ over both $U$ and $Z$ as well as for ${}^p D^{\geq 0}$ over $U$, and follows for ${}^p D^{\leq 0}$ over $Z$ by the formula $i^* \mathbf{D}_{\mathcal{X}} = \mathbf{D}_Z Ri^!$. One can then use \cite[Theorem 1.4.10]{BBDG} to glue the $\mathfrak{p}$-perverse $t$-structures on $D_{lis}^b(U,\mathbf{Z}_\ell)$ as well as $D^b_{cons}(Z,\mathbf{Z}_\ell)$ (which are $t$-structures by Example~\ref{PerverseLisseIntegral} and induction respectively) to conclude that intersecting the  $\mathfrak{p}$-perverse $t$-structure with $D_{U-lis}$ gives a $t$-structure on $D_{U-lis}$. Taking the colimit over all such $U$'s then proves (1).

\item Clear from the definition.

\item The right $t$-exactness is again clear from the definition. The left $t$-exactness follows by unwinding definitions from Remark~\ref{VDZellZmodell}.

\item Both containments are immediate from biduality.

\item Fix some $K \in D^{(b)}_{zc}(X,\mathbf{Z}_\ell)$. Both directions can be checked locally on $X$, so we may assume $X$ is qcqs and thus $K$ is bounded. 

We first prove the ``only if'' direction, so assume that $K \in  {}^{\mathfrak{p}^+} D^{\leq 0}(X,\mathbf{Z}_\ell)$.  Unwinding definitions and using biduality, this means that $K \in {}^{\mathfrak{p}} D^{\leq 1}_{zc}(X,\mathbf{Z}_\ell)$ and that there exists some $c \geq 1$ such that $\ell^c \cdot {}^{\mathfrak{p}} \mathcal{H}^1(K/\ell^n) = 0$ for all $n$. We shall prove that $\ell^c \cdot  {}^{\mathfrak{p}} \mathcal{H}^1(K) = 0$. Since $K \in {}^{\mathfrak{p}} D^{\leq 1}_{zc}(X,\mathbf{Z}_\ell)$ and  $\pervleqzero(X,\mathbf{Z}_\ell) \subset  {}^{\mathfrak{p}^+} D^{\leq 0}_{zc}(X,\mathbf{Z}_\ell)$, we are allowed to replace $K$ with ${}^{\mathfrak{p}} \mathcal{H}^1(K)[-1]$, so we may assume that $K$ is concentrated in cohomological degree $1$ with respect to the $\mathfrak{p}$-perverse $t$-structure. Moreover, by a variant of the argument used to prove (1), one checks that there exists a constant $c'$ such that $K$ (or any perverse $\mathbf{Z}_\ell$-sheaf) has $\ell^\infty$-torsion bounded by $\ell^{c'}$, i.e., that the perverse $\mathbf{Z}_\ell$-sheaves $\ker(\ell^n:K \to K)$ are killed by $\ell^{c'}$ for all $n$. Our hypothesis on $K$ then shows that the complex $K/\ell^n$ is killed by $\ell^{2 \max(c,c')}$ for all $n$. But this implies $K$ must be killed by $\ell^{2\max(c,c')}$ by generalities on derived $\ell$-complete sheaves in the replete topos\footnote{This follows from (the replete topos variant of) the following statement (which appears in \cite{BhattLuriepadicRH} and whose proof we leave as an exercise here): If $M$ is any derived $\ell$-complete abelian group such that there exists some $c \geq 0$ with $\ell^c \cdot (M/\ell^nM) = 0$ for all $n \geq c+1$, then $\ell^c M = 0$.} of all $v$-sheaves on $X$, so we are done.

For the ``if'' direction, assume that $K \in {}^{\mathfrak{p}} D^{\leq 1}_{zc}(X,\mathbf{Z}_\ell)$ and the object ${}^{\mathfrak{p}} \mathcal{H}^1(K)$ is killed by $\ell^c$ for some $c$. As reduction modulo powers of $\ell$ is right $t$-exact for the perverse $t$-structure, it is then trivially true that $\ell^c \cdot {}^{\mathfrak{p}} \mathcal{H}^1(K/\ell^n) = 0$ for all $n \geq 1$. It is then immediate from the definitions that $K \in {}^{\mathfrak{p}^+} D^{\leq 0}(X,\mathbf{Z}_\ell)$.

\end{enumerate}
\end{proof}

\begin{remark}
\label{IntPervLocalTors}
Proposition~\ref{IntPerverseProp} (5) cannot be strengthened to the assertion that ${}^{\mathfrak{p}} \mathcal{H}^1(K)$ is bounded torsion globally on $X$ for $K \in  {}^{\mathfrak{p}^+} D^{\leq 0}(X,\mathbf{Z}_\ell)$. Indeed, given a countable discrete subset $S := \{x_1,x_2,x_3,...\} \subset X := (\mathbf{A}^1)^{an}$ of classical points, one may take $K = \bigoplus i_{x_n,*} \mathbf{Z}/\ell^n[-1]$ to obtain a counterexample (where $i_{x_n}:\mathrm{Spa}(k(x_n)) \to X$ is the inclusion of the point at $x_n$). 
\end{remark}

\subsection{$\mathbf{Q}_\ell$-coefficients}
\label{ss:Qell}
We continue with notation from \S \ref{ss:ZellCoeff}. There are some subtleties with passing from $\mathbf{Z}_\ell$ to $\mathbf{Q}_\ell$-coefficents for rigid spaces that are not qcqs.\footnote{In fact, similar issues arise in algebraic geometry but are typically not as consequential as non-qcqs schemes are much rarer than non-compact rigid spaces. For instance, the affine line over $K$ is qcqs when regarded as a scheme simply because it is a noetherian scheme, but its analytification is not a qcqs rigid space.} Thus, in this subsection, we assume $X$ is qcqs (e.g., $X$ could be affinoid or proper over $K$), so $D^{(b)}_{zc}(X,\mathbf{Z}_\ell) = D^b_{zc}(X,\mathbf{Z}_\ell)$. In this setting, we shall prove in Theorem~\ref{PervQell} that the basic theory of perverse sheaves with $\mathbf{Q}_\ell$-coefficients behaves as well as can be expected.

Our constructions will take place in the following category:

\begin{definition}[$\mathbf{Q}_\ell$-constructible sheaves]
\label{def:QellZC}
Set $D^b_{zc}(X,\mathbf{Q}_\ell) := D^b_{zc}(X,\mathbf{Z}_\ell) \otimes_{\mathbf{Z}_\ell} \mathbf{Q}_\ell$ (i.e., objects remain the same and endomorphisms are tensored with $\mathbf{Q}_\ell$).  
\end{definition}

\begin{remark}[$\mathbf{Q}_\ell$-sheaves as Verdier quotient]
\label{QellVerdQuot}
The category $D^b_{zc}(X,\mathbf{Q}_\ell)$ can also be described as the Verdier quotient of $D^b_{zc}(X,\mathbf{Z}_\ell)$ by its full subcategory of objects annihilated by a power of $\ell$. In fact, the analogous statement holds true with $D^b_{zc}(X,\mathbf{Z}_\ell)$ replaced by any $\mathbf{Z}_\ell$-linear triangulated category $\mathcal{C}$. To see this, let $\mathcal{C}_{tors} \subset \mathcal{C}$ be the full subcategory of objects annihilated by a power of $\ell$. As the multiplication by $\ell$ map on any object of $\mathcal{C}$ has cone in $\mathcal{C}_{tors}$, it follows that $\mathcal{C}/\mathcal{C}_{tors}$ is naturally $\mathbf{Q}_\ell$-linear, so there is a natural map $\mathcal{C} \otimes_{\mathbf{Z}_\ell} \mathbf{Q}_\ell \to \mathcal{C}/\mathcal{C}_{tors}$. Conversely, as the Verdier quotient $\mathcal{C}/\mathcal{C}_{tors}$ can be regarded as the localization $S^{-1} \mathcal{C}$, where $S$ is the collection of maps in $\mathcal{C}$ whose cone lies in $\mathcal{C}_{tors}$, one also immediately constructs a natural map $\mathcal{C}/\mathcal{C}_{tors} \to \mathcal{C} \otimes_{\mathbf{Z}_\ell} \mathbf{Q}_\ell$. We leave it to the reader to check that these constructions give mutually inverse equivalences of categories.
\end{remark}

\begin{remark}[Problems in the non-qcqs case]
\label{rmk:Qellnonqcqs}
While Definition~\ref{def:QellZC} makes sense for any rigid space $X$, it is the ``wrong'' definition to use when $X$ is not qcqs. For example, the object $K$ described in Remark~\ref{IntPervLocalTors} is nonzero in $D^b_{zc}(X,\mathbf{Q}_\ell)$ yet vanishes after restriction to any quasi-compact open in $X$. While there are several candidate replacements (e.g., based on Remark~\ref{QellVerdQuot}, one might work with the quotient of $D^{(b)}_{zc}(X,\mathbf{Z}_\ell)$ by the full subcategory of locally bounded torsion objects; alternately, one might attempt to work with Zariski constructible $\mathbf{Q}_\ell$-complexes defined using the pro\'etale site), we were unable to develop enough machinery to construct a reasonable intersection cohomology theory (e.g., a self-dual theory with a GAGA theorem) using any of these approaches, so we restrict to the qcqs case in our discussion.
\end{remark}

Using our results on integral coefficients, we obtain a well-behaved perverse $t$-structure with $\mathbf{Q}_\ell$-coefficients:

\begin{theorem}[Properties of $\mathbf{Q}_\ell$-perverse sheaves]
\label{PervQell}
The $\mathfrak{p}$- and $\mathfrak{p}^+$- perverse $t$-structures from \S \ref{ss:ZellCoeff} induce a $t$-structure on $D^b_{zc}(X,\mathbf{Q}_\ell)$, and they are the same $t$-structure; we call this the perverse $t$-structure on $D^b_{zc}(X,\mathbf{Q}_\ell)$. This $t$-structure satisfies all the properties in Theorem~\ref{perverseproperties} with the following changes: one works with proper $\mathcal{X}$ in (4),  only finite maps of qcqs spaces in (7), and replaces the assumption $p \nmid \#\Lambda$ with $p \neq \ell$ in (8).
\end{theorem}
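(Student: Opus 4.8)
The plan is to deduce Theorem~\ref{PervQell} from the $\mathbf{Z}_\ell$-statements established in \S\ref{ss:ZellCoeff} by passing to the isogeny category. First I would show that the two $t$-structures agree and define a $t$-structure on $D^b_{zc}(X,\mathbf{Q}_\ell) = D^b_{zc}(X,\mathbf{Z}_\ell) \otimes_{\mathbf{Z}_\ell} \mathbf{Q}_\ell$. The key input is Proposition~\ref{IntPerverseProp}(5): on a qcqs $X$, the objects $K$ with $K \in {}^{\mathfrak{p}^+}D^{\leq 0}(X,\mathbf{Z}_\ell)$ but $K \notin {}^{\mathfrak{p}}D^{\leq 0}(X,\mathbf{Z}_\ell)$ are exactly those in ${}^{\mathfrak{p}}D^{\leq 1}_{zc}$ with ${}^{\mathfrak{p}}\mathcal{H}^1(K)$ bounded torsion; since bounded torsion objects become zero in $D^b_{zc}(X,\mathbf{Q}_\ell)$ (using Remark~\ref{QellVerdQuot} to identify the $\mathbf{Q}_\ell$-category with the Verdier quotient by torsion objects), the two $t$-structures have the same connective and coconnective parts after $\otimes\mathbf{Q}_\ell$. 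To see that the pair of subcategories one gets this way is actually a $t$-structure on $D^b_{zc}(X,\mathbf{Q}_\ell)$, I would invoke the standard fact (as in \cite[\S 3.3]{BBDG}) that a $t$-structure on a triangulated category descends to a Verdier quotient by a thick subcategory provided that subcategory is generated by objects lying in the heart (here the torsion perverse sheaves), so that truncation functors pass to the quotient; the boundedness of $\ell^\infty$-torsion in perverse $\mathbf{Z}_\ell$-sheaves noted in the proof of Proposition~\ref{IntPerverseProp}(1) is what makes this work.

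Next I would transport the enumerated properties (1)--(8) of Theorem~\ref{perverseproperties} through the functor $D^b_{zc}(X,\mathbf{Z}_\ell) \to D^b_{zc}(X,\mathbf{Q}_\ell)$, which is $t$-exact by construction. Duality (property (3)) is immediate since the two $\mathbf{Z}_\ell$-$t$-structures are swapped by $\mathbf{D}_X$ and become equal rationally. The $t$-exactness statements for $j^*, i_*, j_!, Rj_*, i^*, Ri^!$ (property (2)) follow from Theorem~\ref{operationsZell} together with the corresponding $\mathbf{Z}_\ell$-exactness, which itself follows from the mod-$\ell$ statements as in the proof of the finite-coefficient case; one only needs that the relevant operations are defined on qcqs spaces and preserve $D^b_{zc}(-,\mathbf{Z}_\ell)$, which is Theorem~\ref{operationsZell}. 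For property (4) one restricts to proper $\mathcal{X}$ because the equivalence $D^b_c(\mathcal{X},\mathbf{Z}_\ell) \simeq D^b_{zc}(\mathcal{X}^{an},\mathbf{Z}_\ell)$ is only available in the proper case for $\mathbf{Z}_\ell$ (via Lemma~\ref{ladicschemecomp} and Proposition~\ref{schemescomparison1}(2)); for property (7) one restricts to finite maps of qcqs spaces so that $f_*$ preserves the qcqs hypothesis and hence $D^b_{zc}(-,\mathbf{Q}_\ell)$; for property (8) the restriction $p \neq \ell$ replaces $p \nmid \#\Lambda$ simply because $\mathbf{Q}_\ell$ is not torsion, and the nearby-cycles $t$-exactness is obtained by rationalizing the $\mathbf{Z}_\ell$-statement, which in turn reduces mod $\ell$ to Theorem~\ref{perverseproperties}(8) via compatibility of nearby cycles with reduction. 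The finiteness statement (6)---$\Perv(X,\mathbf{Q}_\ell)$ noetherian and artinian for $X$ qcqs---follows since $\Perv(X,\mathbf{Z}_\ell)$ is noetherian and artinian (as in Theorem~\ref{perverseproperties}(6), proved the same way using the algebraic description and the finiteness of $\mathbf{Z}_\ell$-modules up to isogeny), and these properties pass to the isogeny category of the heart; the intermediate extension (5) is constructed by the same image-of-a-map recipe, using that the $\mathbf{Q}_\ell$-perverse $t$-structure is now self-dual.

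The last claims---compatibility with analytification in the proper case and with change of the nonarchimedean base field---follow by rationalizing the corresponding $\mathbf{Z}_\ell$-assertions from Theorem~\ref{operationsZell} and Lemma~\ref{ladicschemecomp}, using $t$-exactness of all functors in sight and the identification of the two $t$-structures rationally, exactly as in the finite-coefficient argument.

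I expect the main obstacle to be the bookkeeping around property (5), the intermediate extension: one must check that $j_{!*}$ with $\mathbf{Q}_\ell$-coefficients is well-defined on the nose, i.e.\ that ${}^{\mathfrak{p}}\mathcal{H}^0(j_!\mathscr{L}) \to {}^{\mathfrak{p}}\mathcal{H}^0(Rj_*\mathscr{L})$ makes sense and its image is independent of choices, given that $j_!$ and $Rj_*$ on $D^b_{zc}(-,\mathbf{Q}_\ell)$ are only a priori controlled through Proposition~\ref{morepushforwards} and Theorem~\ref{operationsZell}; this requires knowing that a perverse $\mathbf{Q}_\ell$-sheaf on a Zariski-locally-closed $U$, or its dual, that is lisse does extend, which one gets by lifting to $\mathbf{Z}_\ell$ (choosing a $\mathbf{Z}_\ell$-lattice perverse sheaf), applying the integral Proposition~\ref{morepushforwards}, and rationalizing. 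A secondary subtlety is verifying that the Verdier quotient description of $D^b_{zc}(X,\mathbf{Q}_\ell)$ really is compatible with all six operations (so that the perverse formalism is internally consistent), but this is handled uniformly by Remark~\ref{QellVerdQuot} together with the fact that every operation in Theorem~\ref{operationsZell} kills bounded torsion objects modulo bounded torsion, so descends to the quotient.
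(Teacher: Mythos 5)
Your proof is correct and follows essentially the same route as the paper: deduce the equality and existence of the induced $t$-structure on $D^b_{zc}(X,\mathbf{Q}_\ell)$ from Proposition~\ref{IntPerverseProp}(5) together with the qcqs hypothesis (which makes bounded torsion die in the Verdier quotient), and then transport properties (2)--(8) by rationalizing the $\mathbf{Z}_\ell$-versions. The paper's own proof is much terser but uses the same ingredients; your elaborations on the Verdier quotient mechanism, the lattice argument for intermediate extensions, and the reason for restricting to proper $\mathcal{X}$ in (4) (namely that $\mathcal{X}^{an}$ must be qcqs for $D^b_{zc}(-,\mathbf{Q}_\ell)$ to be defined) all fill in details the paper leaves implicit.
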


\begin{proof}
The first part is immediate from Proposition~\ref{IntPerverseProp} (using part (5) there and the fact $X$ is qcqs to get the equality of the two $t$-structures).  It remains to verify the properties in Theorem~\ref{perverseproperties} (2) - (7).

Property (3): this is immediate from the fact that the $\mathfrak{p}$- and $\mathfrak{p}^+$-perverse $t$-structures on $D^{(b)}_{zc}(X,\mathbf{Z}_\ell)$ are exchanged by Verdier duality.

Property (2): parts (a), (b) and the right $t$-exactness in (c) is clear from the definition, while the left $t$-exactness in (c) was implicitly asserted in the proof of Proposition~\ref{IntPerverseProp} (1). For part (d), we use the stronger property $\mathbf{D}_X Rj_* \mathscr{F} =j_! \mathbf{D}_U(\mathscr{F})$ proven in Theorem~\ref{perverseproperties} (2) (d) and invert $\ell$. 

Property (4): the equivalence is clear. For perverse $t$-exactness, one simply notes that entire discussion in this section also holds true in the algebro-geometric context (and in fact was borrowed from there, see \cite[\S 3.3]{BBDG}), and that analytification is compatible with duality and passing to perverse cohomology sheaves with finite coefficients.

Properties (5)-(8): these follow by the same proof as in Theorem~\ref{perverseproperties}.
\end{proof}

\subsection{Intersection cohomology}

In this section, fix a rigid space $X/K$, a prime $\ell$, and  a coefficient ring $\Lambda \in \{\mathbf{Z}/\ell^n,\mathbf{Q}_\ell\}$. If $\Lambda \in \{\mathbf{Z}_\ell, \mathbf{Q}_\ell\}$, we assume $K$ has positive residue characteristic $p > 0$. If $\Lambda=\mathbf{Q}_\ell$, then we also assume that $X$ is qcqs.  Note that we have a reasonable (e.g., self-dual) theory of perverse $\Lambda$-sheaves in this context by \S \ref{ss:PervFin} and \S \ref{ss:Qell} respectively.

\begin{construction}[Intersection cohomology of rigid spaces]
  Let $j:U \subset X$ a Zariski-dense Zariski-open subset such that $U_{red}$ is smooth. Write $\mathrm{IC}_{X,\Lambda} := j_{!*} \Lambda[\dim(X)] \in \Perv(X,\Lambda)$; one can show that this is independent of the choice of $U$. We call $\mathrm{IC}_{X,\Lambda}$ the {\em intersection cohomology complex on $X$} and write $IH^*(X,\Lambda) := H^*(X,\mathrm{IC}_{X,\Lambda})$ for its cohomology, called the {\em intersection homology of $X$}.
\end{construction}

We then have the following result on these objects:

\begin{theorem}[Basic properties of intersection cohomology]
\label{ICProperties}
Write $C/K$ for a completed algebraic closure.
\begin{enumerate}
\item $IH^*(X_C,\Lambda)$ are finitely generated $\Lambda$-modules if either $X$ is proper or if $X$ is qcqs and $p \neq \ell$. 
\item If $X = \mathcal{X}^{an}$ for a proper $K$-scheme $X$, then $IH^*(X_C,\mathbf{Q}_\ell) \simeq IH^*(\mathcal{X}_C,\mathbf{Q}_\ell)$.
\item If $X$ is proper and equidimensional of dimension $d$ and $p \neq \ell$, there is a natural Poincar\'e duality isomorphism 
\[ IH^i(X_C,\Lambda)^\vee \simeq IH^{-i}(X_C,\Lambda)(d)\]
for all $i$. 
\end{enumerate}
\end{theorem}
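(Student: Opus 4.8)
\textbf{Proof plan for Theorem~\ref{ICProperties}.}

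The plan is to reduce everything to known statements about perverse sheaves and the six functors established in \S\ref{sec:SixFun}, together with the case-by-case input for finiteness. For part (1), I would argue as follows. The object $\mathrm{IC}_{X_C,\Lambda}$ lies in $D^{(b)}_{zc}(X_C,\Lambda)$ by construction (Theorem~\ref{perverseproperties}~(5) and Theorem~\ref{PervQell}), and $IH^*(X_C,\Lambda) = H^*(X_C,\mathrm{IC}_{X_C,\Lambda}) = H^*(R\pi_{X_C,*}\mathrm{IC}_{X_C,\Lambda})$ where $\pi_{X_C}:X_C \to \Spa C$ is the structure map. If $X$ is proper, then $\pi_{X_C}$ is proper, so $R\pi_{X_C,*}$ preserves $D^{(b)}_{zc}$ by Theorem~\ref{properdirectimage} (and its $\mathbf{Z}_\ell$/$\mathbf{Q}_\ell$ variants in Theorem~\ref{operationsZell}); since a Zariski-constructible complex on $\Spa C$ is just a bounded complex of finitely generated $\Lambda$-modules, we are done. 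If instead $X$ is merely qcqs and $p\neq\ell$, properness is unavailable, but one can invoke the rigid-analytic finiteness theorem for $\ell$-adic cohomology on qcqs spaces (this is the $\ell\neq p$ half of \cite{Hub96}, specifically the constructibility of $Rf_*$ for $f$ the structure map together with the cohomological dimension bounds used in the proof of Theorem~\ref{properdirectimage}); applied to the Zariski-constructible complex $\mathrm{IC}_{X_C,\Lambda}$ this yields finite generation of each $IH^i(X_C,\Lambda)$, with vanishing outside a bounded range since $X_C$ is finite-dimensional.

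For part (2), I would use the GAGA-type comparison of Proposition~\ref{schemescomparison1} and its compatibility with the operations (Proposition~\ref{schemescomparison2}, Theorem~\ref{dualizingcomplex}~(7)). Since $X = \mathcal{X}^{an}$ with $\mathcal{X}$ proper over $K$, analytification $D^b_c(\mathcal{X}_C,\mathbf{Q}_\ell)\to D^b_{zc}(X_C,\mathbf{Q}_\ell)$ is an equivalence compatible with $j^*$, $j_!$, $Rj_*$, Verdier duality, and hence with formation of perverse cohomology sheaves and intermediate extensions (Theorem~\ref{PervQell}, property (4), together with the matching of the perverse $t$-structures). Choosing a Zariski-dense Zariski-open $\mathcal{U}\subset\mathcal{X}$ with $\mathcal{U}_{red}$ smooth and taking its analytification (which, by rigid GAGA for Zariski-closed subsets, is a legitimate choice of $U$ in the construction of $\mathrm{IC}_{X,\mathbf{Q}_\ell}$ — here one uses the independence of $\mathrm{IC}$ on the choice of $U$), we get $(\mathrm{IC}_{\mathcal{X}_C,\mathbf{Q}_\ell})^{an}\simeq \mathrm{IC}_{X_C,\mathbf{Q}_\ell}$. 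Then $IH^*(X_C,\mathbf{Q}_\ell) = H^*(R\pi_{X_C,*}\mathrm{IC}_{X_C,\mathbf{Q}_\ell}) \simeq H^*((R\pi_{\mathcal{X}_C,*}\mathrm{IC}_{\mathcal{X}_C,\mathbf{Q}_\ell})^{an}) = IH^*(\mathcal{X}_C,\mathbf{Q}_\ell)$ by the compatibility of proper pushforward with analytification (Proposition~\ref{schemescomparison1}~(1) in the $\mathbf{Q}_\ell$-form).

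For part (3), the argument is the standard one: $\mathrm{IC}_{X_C,\Lambda}$ is Verdier self-dual up to the expected twist. Concretely, for $X$ equidimensional of dimension $d$, the smooth dense open $U$ has $\mathrm{IC}_{X_C,\Lambda}|_{U_C} = \Lambda[d]$, which is self-dual in the sense $\mathbf{D}_{U_C}(\Lambda[d]) = \Lambda[d](d)$ by $\omega_{U_C}\simeq\Lambda[2d](d)$ (Theorem~\ref{dualizingcomplex}~(1)); since $\mathbf{D}_X(j_{!*}\mathscr{L})\simeq j_{!*}\mathbf{D}_U(\mathscr{L})$ (Theorem~\ref{perverseproperties}~(5), Theorem~\ref{PervQell}~(5)), we get $\mathbf{D}_{X_C}(\mathrm{IC}_{X_C,\Lambda})\simeq \mathrm{IC}_{X_C,\Lambda}(d)$. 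Now $X$ proper gives $R\pi_{X_C,*} = R\pi_{X_C,!}$, and Verdier duality for the proper map $\pi_{X_C}$ (valid since $p\neq\ell$, via Huber's six functors \cite{Hub96}, with $\omega_{\Spa C} = \Lambda$) yields $R\pi_{X_C,*}\mathbf{D}_{X_C}(\mathrm{IC}) \simeq \mathbf{D}_{\Spa C}(R\pi_{X_C,*}\mathrm{IC})$, i.e. $R\Gamma(X_C,\mathrm{IC}_{X_C,\Lambda})$ is self-dual up to a shift by $2d$ and a twist by $d$; passing to cohomology and using the finiteness from (1) gives the stated isomorphism $IH^i(X_C,\Lambda)^\vee \simeq IH^{-i}(X_C,\Lambda)(d)$.

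\textbf{Main obstacle.} The formal six-functor manipulations are routine given \S\ref{sec:SixFun}; the one point demanding genuine care is the \emph{well-definedness and self-duality} of $\mathrm{IC}_{X,\Lambda}$ in the non-smooth, possibly non-equidimensional setting — i.e., checking that $j_{!*}\Lambda[\dim X]$ does not depend on the choice of the dense smooth open $U$ and that its dual is again of the same form. This is where one must lean on Theorem~\ref{perverseproperties}~(5)–(6) (exactness and functoriality of intermediate extension, the description of simple perverse sheaves) rather than merely on the abstract six operations, and where the subtleties of Zariski-closures in rigid geometry (Warning~\ref{ZCWarning}) could in principle intrude; in the proper/qcqs cases under consideration they do not, because the relevant closed complements algebraize (rigid GAGA) or are handled by the locality statement Theorem~\ref{ZCLocal}.
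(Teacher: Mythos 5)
Your arguments for parts (2) and (3), and for the proper case of part (1), match the paper's proof essentially step for step: (2) reduces to $\mathrm{IC}_{X,\Lambda}=(\mathrm{IC}_{\mathcal{X},\Lambda})^{an}$ via the compatibility of analytification with $j_!$, $Rj_*$, perverse truncation, duality, and image; (3) reduces via compatibility of duality with $R\Gamma$ for proper $X$ and with $j_{!*}$ to the self-duality $\mathbf{D}_U(\Lambda[d])\simeq\Lambda[d](d)$ on the smooth open $U$. These are fine.

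However, there is a genuine gap in your treatment of part (1) in the qcqs, $p\neq\ell$ case. You invoke ``the rigid-analytic finiteness theorem for $\ell$-adic cohomology on qcqs spaces'' from \cite{Hub96}, ``specifically the constructibility of $Rf_*$ for $f$ the structure map.'' This does not apply as stated: the structure map $X_C\to\Spa C$ of a qcqs non-proper rigid space is not proper, so Theorem~\ref{properdirectimage} is unavailable, and Huber's own finiteness theorems in \cite{Hub96} concern \emph{Huber-constructible} sheaves, not Zariski-constructible ones --- and as Warning~\ref{ZCWarning}~(2) emphasizes, these two notions essentially only overlap on lisse sheaves (in particular $\mathrm{IC}_{X_C,\Lambda}$ is not Huber-constructible for singular $X$). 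The cohomological dimension bounds ``from the proof of Theorem~\ref{properdirectimage}'' also concern fibers of proper maps and do not give you what you need. The paper's actual argument for this case is different: choose a formal model $\mathfrak{X}$ of $X$ and apply Huber's \emph{nearby cycles} constructibility theorem \cite{Hub98}, which converts the Zariski-constructible complex $\mathrm{IC}_{X_C,\Lambda}$ into a genuine constructible complex on the special fiber $\mathfrak{X}_{\bar s}$; finiteness then comes from SGA4-type finiteness on the (finite type $\bar k$-)scheme $\mathfrak{X}_{\bar s}$. An alternative that would also close the gap in the $\ell\neq p$ case --- but that you did not articulate --- would be to use a Mayer--Vietoris reduction to affinoids and then the full faithfulness of Proposition~\ref{schemescomparison1}~(2), which identifies $H^i(\Spa A,\mathscr{F})$ with $H^i(\Spec A,\mathscr{F}^{\mathrm{alg}})$, and then apply Gabber's finiteness for excellent schemes; either way you must replace the appeal to \cite{Hub96} with something that actually speaks to Zariski-constructible coefficients.

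One small additional remark: you correctly identified well-definedness and self-duality of $\mathrm{IC}_{X,\Lambda}$ as the conceptual crux, but the construction and Theorem~\ref{perverseproperties} already package this, so the worry about Zariski-closures (Warning~\ref{ZCWarning}) does not in fact intrude in the setting at hand, as you note.
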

Ongoing work by Zavyalov suggests that the third part should hold true without the assumption on $\ell$.

\begin{proof}
\begin{enumerate}
\item For $X$ proper, we obtain the result from Theorem~\ref{properdirectimage}.  For $X$ qcqs and $p \neq \ell$, we can then choose a formal model of $X$ and deduce the claim from the constructibility of nearby cycles \cite{Hub98}.

\item  It is enough to prove that $\mathrm{IC}_{X,\Lambda} = (\mathrm{IC}_{\mathcal{X},\Lambda})^{an}$. This follows from the definition of either side as an appropriate image, and the compatibility of $(-)^{an}$ with all the constituent operations (namely, $j_!$, $j_*$, perverse truncations, and images).

\item As $X$ is proper, Huber's results show that $R\Gamma(X_C,-)$ is compatible with duality \cite[Ch. 7]{Hub96}, so it is enough to show that $\mathbf{D}_X(IC_{X,\Lambda}) \simeq IC_{X,\Lambda}(d)$. Using Theorem~\ref{perverseproperties} (4), this amounts to checking that $\mathbf{D}_U(\Lambda[d]) \simeq \Lambda[d](d) $ for a smooth rigid space $U$ of dimension $d$, which follows immediately as $\omega_U = \Lambda[2d](d)$ (Remark~\ref{EtaleDualizingRegular}).

\end{enumerate}
\end{proof}

\begin{remark}[Intersection cohomology for Zariski-compactifiable spaces]
\label{conj:ICQell}
We expect that there is a well-behaved notion of intersection cohomology with $\mathbf{Q}_\ell$-coefficients on any rigid space, not merely the qcqs ones. Since we did not construct a good category of perverse $\mathbf{Q}_\ell$-sheaves (see Remark~\ref{rmk:Qellnonqcqs}), let us formulate a precise conjecture. Say $X$ is a rigid space equipped with a Zariski open immersion $j:X \hookrightarrow \overline{X}$ with $\overline{X}$ proper over $K$. One can then define a candidate intersection cohomology complex $\mathrm{IC}_{X,\mathbf{Q}_\ell} := j^* \mathrm{IC}_{\overline{X},\mathbf{Q}_\ell} \in D^b_{zc}(X,\mathbf{Z}_\ell) \otimes_{\mathbf{Z}_\ell} \mathbf{Q}_\ell$ as well as the resulting intersection homology groups $IH^*(X,\mathbf{Q}_\ell) := \mathrm{Ext}^*(\mathbf{Q}_\ell, \mathrm{IC}_{X,\mathbf{Q}_\ell})$ (where the Exts are computed in  $D^b_{zc}(X,\mathbf{Z}_\ell) \otimes_{\mathbf{Z}_\ell} \mathbf{Q}_\ell$). We conjecture that these objects are independent of the compactification. Note that $IH^*(X,\mathbf{Q}_\ell)$ will be finite dimensional $\mathbf{Q}_\ell$-vector space using Theorem~\ref{morepushforwards}.
\end{remark}

\subsection{Some conjectures}
\label{ss:conj}

Given the results in this paper, it is natural to expect that most of the important foundational theorems on perverse sheaves in complex geometry or arithmetic algebraic geometry admit analogs for Zariski-constructible sheaves in $p$-adic analytic geometry. In this section, we formulate some conjectures along these lines.

Let $K/\mathbf{Q}_p$ be a finite extension, with residue field $k$ of cardinality $q$; let $C/K$ be a completed algebraic closure. Let $X$ be a rigid space over $K$. Let us begin by describing a conjecture on $\ell$-adic intersection cohomology; we believe this conjecture is accessible in the algebraic case thanks to de Jong's alterations theorem \cite{dJAlt}.

\begin{conjecture}[$\ell$-adic intersection cohomology]
\label{conj:ICell}
Assume $X$ is qcqs. Fix a prime $\ell \neq p$. 
\begin{enumerate}

\item {\em Nearby cycles:} For any formal model $\mathfrak{X}/\mathcal{O}_K$ and any $\ell \neq p$, the nearby cycle sheaf $\mathscr{F}=R\lambda_{\mathfrak{X}\ast}(\mathrm{IC}_{X,\mathbf{Q}_{\ell}})$ is a mixed $\ell$-adic perverse sheaf on the geometric special fiber $\mathfrak{X}_{\overline{s}}$. Moreover, if $X$ is equidimensional of dimension $d$, then $\mathrm{IC}_{\mathfrak{X}_s,\mathbf{Q}_{\ell}}$ occurs as a summand of the $d$th graded piece of the weight filtration of $\mathscr{F}$. 

\item {\em Weights:} For any prime $\ell \neq p$ and any $g \in W_K$ projecting to a nonnegative power of geometric Frobenius, the eigenvalues of $g$ acting on $IH^\ast(X_C,\mathbf{Q}_{\ell})$ are $q$-Weil numbers of weight $\geq 0$ .
\end{enumerate}
\end{conjecture}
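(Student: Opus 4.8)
The plan is to derive (2) from (1) together with the weight formalism of \cite{BBDG} applied on the algebraic special fibre, and to establish (1) by reduction to the case of a strictly semistable formal model using alteration theorems. Throughout one is free to replace $K$ by a finite extension: this changes Frobenius weights only by a Tate twist, hence does not affect their sign, and replaces $W_K$ by a finite-index open subgroup, which does not affect the conclusion of (2).

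Granting (1): fix a formal model $\mathfrak{X}/\mathcal{O}_K$ of $X$ and use the basic compatibility of nearby cycles with global sections to identify $R\Gamma(X_C,\mathrm{IC}_{X,\mathbf{Q}_\ell})\cong R\Gamma(\mathfrak{X}_{\overline s}, R\lambda_{\mathfrak{X}*}\mathrm{IC}_{X,\mathbf{Q}_\ell})$, where $\mathfrak{X}_{\overline s}$ is of finite type over $\overline k=\overline{\mathbf{F}}_q$. By (1) the complex $R\lambda_{\mathfrak{X}*}\mathrm{IC}_{X,\mathbf{Q}_\ell}$ is a mixed perverse $\mathbf{Q}_\ell$-sheaf, and — as one sees directly on the semistable model appearing in the proof of (1) below — it is mixed of weights $\geq 0$. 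Since $\mathfrak{X}_{\overline s}$ is proper over $\overline k$ when $X$ is proper, the stability of ``mixed of weights $\geq 0$'' under $Rf_*$ from \cite{BBDG}, combined with the fact that Frobenius acts on the \'etale cohomology of a proper $\overline{\mathbf{F}}_q$-scheme with eigenvalues that are $q$-Weil numbers of weight $\geq 0$, gives the asserted bound on $IH^*(X_C,\mathbf{Q}_\ell)$. (For $X$ merely qcqs one argues with $Rf_!$ instead, using constructibility of nearby cycles \cite{Hub98} and the fact that nothing is lost in passing from $X$ to a quasi-compact formal model.)

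For (1), the main inputs are resolution and alteration theorems for rigid spaces (Temkin \cite{Temkin}, already used in the proof of Theorem~\ref{properdirectimage}; de Jong \cite{dJAlt}), semistable reduction after alteration and finite base change, the commutation of nearby cycles with proper pushforward, and the decomposition theorem for proper maps. Using alterations one produces a proper surjective $f:X'\to X$ with $X'$ smooth and admitting a strictly semistable formal model over $\mathcal{O}_{K'}$ for some finite $K'/K$; enlarging $\mathfrak{X}$, we may assume $f$ extends to a proper map of formal schemes with special fibre $f_s:\mathfrak{X}'_s\to\mathfrak{X}_s$. Since $f$ is generically finite and we work with $\mathbf{Q}_\ell$-coefficients, $\mathbf{Q}_\ell[\dim X]$ is a direct summand of $Rf_*\mathbf{Q}_\ell[\dim X]$ over the smooth locus, so by the decomposition theorem $\mathrm{IC}_{X,\mathbf{Q}_\ell}$ is a direct summand of ${}^{\mathfrak{p}}\mathcal{H}^0(Rf_*\mathbf{Q}_\ell[\dim X])$. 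Applying $R\lambda_{\mathfrak{X}*}$ and using $R\lambda_{\mathfrak{X}*}Rf_*\cong Rf_{s,*}R\lambda_{\mathfrak{X}'*}$ reduces the mixedness (and weight-$\geq 0$) of $R\lambda_{\mathfrak{X}*}\mathrm{IC}_{X,\mathbf{Q}_\ell}$ to that of $R\lambda_{\mathfrak{X}'*}\mathbf{Q}_\ell[\dim X]$, which is classical: the Rapoport--Zink/Steenbrink description exhibits it as an iterated extension of Tate-twisted shifted constant sheaves on the strata of the special-fibre divisor, manifestly mixed of weights $\geq 0$, and these properties pass through $Rf_{s,*}$ over $\mathbf{F}_q$, through direct summands, and through perverse cohomology. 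The \emph{moreover} clause should follow by identifying the top monodromy-weight graded piece of $R\lambda_{\mathfrak{X}'*}\mathbf{Q}_\ell[\dim X]$ in the semistable case and pushing forward along $f_s$, exactly as in the algebraic analogue.

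The hard part will be making the decomposition-theorem step unconditional: we do not yet have a Weil~II or purity package intrinsic to rigid geometry, and the usual proofs of the decomposition theorem rest on exactly such inputs, so care is needed to avoid circularity. The cleanest route is presumably to run the entire weight argument on the algebraic special fibre $\mathfrak{X}_{\overline s}$, over the finite field $k$, where \cite{BBDG} and Gabber's results apply, and to bootstrap from the strictly semistable case, where the nearby-cycles complex and its monodromy-weight filtration are completely explicit. A secondary difficulty is the \emph{moreover} assertion, which demands a rigid counterpart of the statement that the top monodromy-weight graded piece of the nearby cycles of an intersection complex again contains the intersection complex of the special fibre — part of the local-invariant-cycle package in the algebraic theory, which would have to be redeveloped here. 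Finally, the algebraic case of the conjecture is recovered from the same argument by combining the GAGA comparison of Theorem~\ref{ICProperties}(2) with the reduction to semistable models, which also feeds into the analogue for Zariski-compactifiable $X$ contemplated in Remark~\ref{conj:ICQell}.
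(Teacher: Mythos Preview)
This statement is a \emph{conjecture} in the paper, not a theorem: the paper offers no proof, only the remark that it ``is accessible in the algebraic case thanks to de Jong's alterations theorem.'' So there is no paper proof to compare against.

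Your outline identifies the right obstruction, and you are candid about it: the step ``by the decomposition theorem $\mathrm{IC}_{X,\mathbf{Q}_\ell}$ is a direct summand of ${}^{\mathfrak{p}}\mathcal{H}^0(Rf_*\mathbf{Q}_\ell[\dim X])$'' invokes exactly what the paper does \emph{not} have. Indeed, the decomposition theorem for projective maps of rigid spaces is stated as a separate open conjecture in the very same section (\S\ref{ss:conj}), so your argument is currently conditional on another conjecture of equal or greater depth. Your suggested workaround---running the weight argument entirely on the special fibre over $k$---is reasonable in spirit, but note that to get $\mathrm{IC}_{X,\mathbf{Q}_\ell}$ as a summand of $Rf_*\mathbf{Q}_\ell[\dim X]$ you still need a splitting \emph{on the generic fibre}, and it is not clear how to manufacture that splitting from purely special-fibre data without something like a rigid decomposition theorem or a weight filtration on $D^b_{zc}(X,\mathbf{Q}_\ell)$ itself.

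A smaller issue: in your reduction of (2) to (1), the argument for non-proper qcqs $X$ via $Rf_!$ is not right as stated. The identification $R\Gamma(X_C,\mathrm{IC}_{X,\mathbf{Q}_\ell})\cong R\Gamma(\mathfrak{X}_{\overline s},R\lambda_{\mathfrak{X}*}\mathrm{IC}_{X,\mathbf{Q}_\ell})$ involves $R\Gamma = Rf_*$ on the special fibre, and $Rf_*$ over a finite field preserves ``weights $\leq w$'', not ``weights $\geq w$''. So even granting (1) with the weight-$\geq 0$ refinement, you do not immediately get (2) in the non-proper case.
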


Next, we formulate a conjecture on the $p$-adic Hodge theoretic properties of $p$-adic intersection cohomology. The first part of this conjecture can be proven in the algebraic case using the decomposition theorem.

\begin{conjecture}[$p$-adic intersection cohomology]
Say $X$ is proper over $K$. 
\begin{enumerate}
\item Each $IH^i(X_C,\mathbf{Q}_p)$ is a de Rham $G_K$-representation. (Moreover, assuming the conjecture in Remark~\ref{conj:ICQell}, this should be  true for any Zariski compactifiable rigid space.)

\item If $\mathbf{L}$ is a de Rham $\mathbf{Z}_p$-local system on a smooth Zariski-open subset $j:U \to X$, then $H^\ast(X_{\overline{K}},\mathrm{IC}(\mathbf{L}[\dim(X)]))$ is de Rham.
\end{enumerate}
\end{conjecture}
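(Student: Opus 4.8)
\emph{Overall plan.} The plan is to reduce both parts to Scholze's $p$-adic de Rham comparison theorem for smooth proper rigid spaces (together with its refinements with coefficients due to Liu--Zhu and Diao--Lan--Liu--Zhu), by using resolution of singularities to replace $X$ by a smooth proper space and a decomposition-theorem type statement to extract the intersection cohomology complex as a direct summand of a proper pushforward. Throughout I use that de Rham $G_K$-representations form a subcategory of $\mathrm{Rep}_{\mathbf{Q}_p}(G_K)$ closed under subquotients, extensions, and tensor products.

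\emph{Part (1).} Since $\mathrm{IC}_{X,\mathbf{Q}_p}$ depends only on $X_{red}$, I may assume $X$ reduced. Using Temkin's resolution of singularities in the rigid-analytic form appearing in the proof of Theorem~\ref{properdirectimage}, I would produce a proper birational morphism $\pi\colon\widetilde X \to X$ with $\widetilde X$ smooth over $K$, an isomorphism over a dense Zariski-open $U \subseteq X$ of dimension $n := \dim X$. The next step is to show $\mathrm{IC}_{X,\mathbf{Q}_p}$ is a direct summand of $R\pi_*\mathbf{Q}_p[n]$ in $D^b_{zc}(X,\mathbf{Q}_p)$; granting this, $IH^i(X_C,\mathbf{Q}_p)$ is a direct summand of $H^{i+n}(\widetilde X_C,\mathbf{Q}_p)$, which is de Rham by Scholze's comparison theorem, and so $IH^i(X_C,\mathbf{Q}_p)$ is de Rham. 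When $X = \mathcal X^{an}$ is algebraic one can bypass the rigid decomposition theorem: by the GAGA isomorphism $IH^*(X_C,\mathbf{Q}_p) \cong IH^*(\mathcal X_C,\mathbf{Q}_p)$ (the $\ell = p$ instance of Theorem~\ref{ICProperties}(2)), it suffices to treat a proper $K$-scheme $\mathcal X$, where Hironaka resolution, the decomposition theorem \cite{BBDG} for $\mathbf{Q}_p$-sheaves over a characteristic-$0$ base, and the de Rham comparison for smooth proper $K$-varieties combine to give the claim.

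\emph{Part (2).} The architecture is the same but uses the relative theory. Given a de Rham $\mathbf{Z}_p$-local system $\mathbf{L}$ on a smooth Zariski-open $j\colon U \hookrightarrow X$, I would choose, again by Temkin, a proper birational $\pi\colon\widetilde X \to X$ with $\widetilde X$ smooth, an isomorphism over a dense open, with $\pi^{-1}(U) \cong U$, and such that the boundary $D := \widetilde X \smallsetminus U$ is a strict normal crossings divisor; let $j'\colon U \hookrightarrow \widetilde X$. By the logarithmic Riemann--Hilbert correspondence of Diao--Lan--Liu--Zhu, $Rj'_*\mathbf{L}$ (and $j'_!\mathbf{L}$) is de Rham in the derived sense on $\widetilde X$, so $H^*(U_C,\mathbf{L}) = H^*(\widetilde X_C, Rj'_*\mathbf{L})$ is de Rham. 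The remaining step is a decomposition theorem with coefficients exhibiting $\mathrm{IC}(\mathbf{L}[\dim X])$ as a direct summand of a shift of $R\pi_*(Rj'_*\mathbf{L})$, after which $H^*(X_{\overline K},\mathrm{IC}(\mathbf{L}[\dim X]))$ is a summand of a de Rham representation, hence de Rham. In the algebraic case this is again accessible through Saito's mixed Hodge modules and the $p$-adic comparison, transported by GAGA.

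\emph{Main obstacle.} The crux of both parts is a \emph{decomposition theorem for proper morphisms of rigid analytic spaces}, which is not in the literature: one needs that $\mathrm{IC}$-sheaves split off from proper pushforwards, a purity statement whose $\ell$-adic proof rests on weights, and whose rigid analog would plausibly follow from a theory of weights for nearby cycles on formal models in the spirit of Conjecture~\ref{conj:ICell}(1). A cleaner but equally substantial alternative is to introduce a full subcategory $D^b_{dR}(X) \subseteq D^b_{zc}(X,\mathbf{Q}_p)$ of ``de Rham complexes'' and prove it is preserved by $Rf_*$ for proper $f$, by $j_!$ and $Rj_*$ for Zariski-open $j$ (this is where the work of Liu--Zhu and Diao--Lan--Liu--Zhu enters, via resolution of the boundary to a normal crossings divisor), and by perverse truncation; then the $j_{!*}$ construction would land in $D^b_{dR}(X)$ automatically, and $IH^*$ would be de Rham by proper pushforward to $\Spa K$. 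Either route requires genuinely new $p$-adic Hodge-theoretic input beyond Scholze's comparison theorem; absent that, only the algebraic case (via GAGA) and the case of smooth $X$ are within immediate reach.
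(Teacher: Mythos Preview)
The statement you are attempting to prove is a \emph{Conjecture} in the paper; the paper does not give a proof. The only remark the authors make is that ``the first part of this conjecture can be proven in the algebraic case using the decomposition theorem.'' Your proposal is therefore not to be compared against a proof in the paper, and your honest conclusion---that the general rigid-analytic case hinges on a decomposition theorem for proper morphisms of rigid spaces which is not available, leaving only the algebraic case (via GAGA) and the smooth case within reach---matches the paper's stance exactly. Your sketch of the algebraic case of Part~(1) via GAGA, Hironaka resolution, the decomposition theorem, and Scholze's comparison is precisely what the paper's one-line remark alludes to.
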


Finally, we discuss the rigid analog of the BBDG decomposition theorem. As in complex geometry, the Hopf surface construction $X = (\mathbf{A}^{2,\mathrm{an}}-\{0\})/q^{\mathbf{Z}}$ (with $q \in K$ with $0 < |q| < |1|$) gives a proper smooth genus $1$ fibration $f:X \to (\mathbf{A}^{2,\text{an}}-\{0\})/\mathbf{G}_m^{\text{an}} \simeq \mathbf{P}^{1,\text{an}}$ over $K$ such that $Rf_* \mathbf{Q}_\ell$ is not formal (i.e., is not isomorphic to a direct sum of its shifted cohomology sheaves). It is thus unreasonable to expect the decomposition theorem to hold true for arbitrary proper maps between rigid spaces. Nevertheless, by analogy with the complex geometric story in \cite{SaitoDecompKahler}, the following  appears plausible:

\begin{conjecture}[Decomposition theorem]
Let $f:X \to Y$ be a projective map of rigid spaces over $K$ with $Y$ qcqs. Then $Rf_* \mathrm{IC}_{X,\mathbf{Q}_\ell}$ is a direct sum of shifts of perverse sheaves of the form $j_{!*} \mathscr{L}$, where $j:U \to Y$ is a Zariski-locally closed immersion and $\mathscr{L}$ is a $\mathbf{Q}_\ell$-local system on $U$. 
\end{conjecture}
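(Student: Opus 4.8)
The plan is to reduce the conjecture to the classical decomposition theorem of \cite{BBDG} by first localizing on the base, then algebraizing via relative rigid GAGA, and finally transporting the algebraic decomposition back up to the rigid space; the genuinely new difficulty is isolated in a single algebraic input, the decomposition theorem for a projective morphism over a base which is excellent Noetherian but not of finite type over $K$.

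First I would reduce to the case $Y=\mathrm{Spa}(A)$ affinoid. This is legitimate because $\mathcal{D}^{(b)}_{zc}(-,\mathbf{Q}_\ell)$ is a stack for the analytic topology (Theorem~\ref{ZCLocal}), while the formation of $\mathrm{IC}_{X,\mathbf{Q}_\ell}$, of $Rf_*$, of the perverse truncations ${}^{\mathfrak{p}}\tau^{\leq i}$, and of the intermediate extensions $j_{!*}$ all commute with restriction to opens of $Y$. With $Y$ affinoid and $f$ projective, $X$ is a Zariski-closed subspace of $\mathbf{P}^n_Y=(\mathbf{P}^n_{\mathrm{Spec}\,A})^{an}$, hence proper (so qcqs) over $Y$; by relative rigid GAGA (as used in the proof of Proposition~\ref{schemescomparison1}) there is a projective $\mathcal{Y}=\mathrm{Spec}\,A$-scheme $\mathcal{X}$ with $X=\mathcal{X}^{an}$ and $f=(f^{alg})^{an}$. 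By Proposition~\ref{schemescomparison1}, Proposition~\ref{schemescomparison2} and Theorem~\ref{perverseproperties}(4)--(5) (together with their $\mathbf{Z}_\ell$- and $\mathbf{Q}_\ell$-analogues obtained by passing to the limit and inverting $\ell$, via Theorem~\ref{operationsZell}, Lemma~\ref{ladicschemecomp} and Theorem~\ref{PervQell}), the functor $(-)^{an}$ carries $\mathrm{IC}_{\mathcal{X},\mathbf{Q}_\ell}$ to $\mathrm{IC}_{X,\mathbf{Q}_\ell}$ and intertwines $Rf^{alg}_*$, the perverse cohomology functors, and intermediate extensions with their rigid counterparts, while local systems on locally closed subschemes of $\mathcal{Y}$ correspond to local systems on Zariski-locally closed subspaces of $Y$ (Proposition~\ref{schemescomparison1}(3), Proposition~\ref{ladicschemecomp}). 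Thus the conjecture follows once one knows the decomposition theorem for $f^{alg}:\mathcal{X}\to\mathcal{Y}$: that $Rf^{alg}_*\,\mathrm{IC}_{\mathcal{X},\mathbf{Q}_\ell}$ is a direct sum of shifts of sheaves $(j_\alpha)_{!*}\mathscr{L}_\alpha$ with $j_\alpha:\mathcal{U}_\alpha\hookrightarrow\mathcal{Y}$ a locally closed immersion and $\mathscr{L}_\alpha$ a $\mathbf{Q}_\ell$-local system on $\mathcal{U}_\alpha$.

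The main obstacle is exactly this algebraic statement, since $\mathcal{Y}=\mathrm{Spec}\,A$ is \emph{not} of finite type over $K$ (only excellent Noetherian of characteristic $0$), so \cite{BBDG} does not apply verbatim. I see two routes. The first is to establish, or cite, the decomposition theorem for projective morphisms $f:\mathcal{X}\to\mathcal{Y}$ of finite type with $\mathcal{Y}$ excellent Noetherian of characteristic $0$: using Elkik's theorem one writes $A$ as the $t$-adic completion of a finitely generated $\mathcal{O}_K$-algebra, spreads the whole situation out over a finite type $\mathbf{Z}[1/\ell]$-algebra, and specializes to a closed point with finite residue field, where \cite{BBDG} applies; the point is that the vanishing of the obstruction classes to splitting $Rf_*\,\mathrm{IC}$ and the semisimplicity of its perverse cohomology sheaves are constructible conditions on the base and can therefore be transported back to $\mathcal{Y}$ along the approximation, with the resulting strata and local systems remaining algebraic over $\mathcal{Y}$ thanks to the excellence/constructibility machinery of \cite{ILO14} used throughout \S\ref{sec:SixFun}. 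The second route, in the spirit of the remark preceding Conjecture~\ref{conj:ICell}, is to use de Jong's alterations \cite{dJAlt} to realize $\mathrm{IC}_{\mathcal{X},\mathbf{Q}_\ell}$ as a direct summand of $Rg_*\mathbf{Q}_\ell[\dim\mathcal{X}]$ for a projective alteration $g:\mathcal{X}'\to\mathcal{X}$ with $\mathcal{X}'$ smooth, reducing to the smooth projective morphism $\mathcal{X}'\to\mathcal{Y}$ and then invoking relative Hard Lefschetz together with a semisimplicity input; this requires a usable theory of weights over the mixed-characteristic field $K$ (via the Weil--Deligne formalism, or again via reduction to a finite field). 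Either way, the crux is purity and semisimplicity of $\mathrm{IC}$'s over a base that is not of finite type over a field.

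Finally, granting the algebraic decomposition theorem, the descent back to the rigid space is immediate: applying $(-)^{an}$ to a splitting $Rf^{alg}_*\,\mathrm{IC}_{\mathcal{X},\mathbf{Q}_\ell}\cong\bigoplus_i\big(\bigoplus_\alpha (j_\alpha)_{!*}\mathscr{L}_\alpha\big)[-i]$ and using the compatibilities recorded above yields the desired decomposition of $Rf_*\,\mathrm{IC}_{X,\mathbf{Q}_\ell}$. The summands $(j_\alpha)_{!*}\mathscr{L}_\alpha$ make sense as objects of $\Perv(Y,\mathbf{Q}_\ell)$ by Theorem~\ref{perverseproperties}(5) (since each $\mathscr{L}_\alpha$ is lisse), and $Rf_*\,\mathrm{IC}_{X,\mathbf{Q}_\ell}$ already lies in $D^{b}_{zc}(Y,\mathbf{Q}_\ell)$ by Theorem~\ref{properdirectimage} in its $\mathbf{Q}_\ell$-form, so nothing further needs to be checked.
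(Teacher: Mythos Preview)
The statement you are addressing is a \emph{conjecture} in the paper, not a theorem; the paper offers no proof, only motivation (the analogy with \cite{SaitoDecompKahler}) and the cautionary Hopf surface example explaining why projectivity is needed. So there is no paper proof to compare against.

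Your reduction strategy is a reasonable outline, but it does not constitute a proof, and you correctly flag the decisive gap yourself: the decomposition theorem is not known for projective morphisms over an excellent Noetherian base that is not of finite type over a field or $\mathbf{Z}$. Neither of your two proposed routes closes this gap. For Route~1, Elkik's theorem does let you spread out (since in \S\ref{ss:conj} the field $K/\mathbf{Q}_p$ is finite, hence discretely valued), but ``specializing to a closed point with finite residue field and transporting back'' is precisely the hard step: neither the existence of a splitting nor the semisimplicity of the perverse cohomology sheaves is known to be a constructible condition on the base in any usable sense, and the decomposition theorem is not established over general finite-type $\mathbf{Z}$-schemes. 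For Route~2, reducing to the smooth source via de Jong alterations still requires a weight or Hodge formalism over the base to yield relative Hard Lefschetz and semisimplicity; no such formalism is currently available for affinoid $K$-algebras.

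There is also a subtlety in your very first step. Localizing to affinoid $Y$ is fine for checking relative Hard Lefschetz (the maps $\eta^i:{}^{\mathfrak{p}}\mathcal{H}^{-i}\to{}^{\mathfrak{p}}\mathcal{H}^{i}$ are global and being isomorphisms is an analytic-local condition), and hence by Deligne's criterion for the formality part of the statement. But the conjecture also asserts \emph{semisimplicity} of each perverse cohomology sheaf, and semisimplicity of a perverse $\mathbf{Q}_\ell$-sheaf on a qcqs $Y$ is not obviously implied by semisimplicity of its restrictions to the members of an affinoid cover: the local splittings into simples are non-canonical and need not glue. So even granting the algebraic decomposition theorem over each $\mathrm{Spec}(A_i)$, a further argument would be required to pass back to the global statement. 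In short, your proposal is a sensible roadmap and your identification of the obstruction is accurate; the conjecture remains open for exactly the reasons you isolate.
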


Finally, we also expect that Zariski-constructible sheaves on smooth rigid spaces are holonomic, in analogy with \cite{KS,BeilinsonSS}, but we do not formulate a precise statement here.

\bibliographystyle{alpha}
\bibliography{constructibility-rigid-bib}

\end{document}